\documentclass[english]{article}
\usepackage[T1]{fontenc}
\usepackage[latin9]{inputenc}
\usepackage{geometry}
\geometry{verbose,tmargin=3cm,bmargin=3cm}
\setcounter{tocdepth}{1}
\usepackage{babel}
\usepackage{array}
\usepackage{varioref}
\usepackage{refstyle}
\usepackage{wrapfig}
\usepackage{units}
\usepackage{amsthm}
\usepackage{amsmath}
\usepackage{amssymb}
\usepackage{graphicx}
\usepackage[all]{xy}
\usepackage[unicode=true,pdfusetitle,
 bookmarks=true,bookmarksnumbered=false,bookmarksopen=false,
 breaklinks=false,pdfborder={0 0 1},backref=false,colorlinks=false]
 {hyperref}

\makeatletter


\AtBeginDocument{\providecommand\subref[1]{\ref{sub:#1}}}
\AtBeginDocument{\providecommand\secref[1]{\ref{sec:#1}}}
\AtBeginDocument{\providecommand\tabref[1]{\ref{tab:#1}}}
\AtBeginDocument{\providecommand\thmref[1]{\ref{thm:#1}}}
\AtBeginDocument{\providecommand\eqref[1]{\ref{eq:#1}}}
\AtBeginDocument{\providecommand\figref[1]{\ref{fig:#1}}}
\AtBeginDocument{\providecommand\claref[1]{\ref{cla:#1}}}
\AtBeginDocument{\providecommand\claimref[1]{\ref{claim:#1}}}
\AtBeginDocument{\providecommand\Claimref[1]{\ref{Claim:#1}}}
\AtBeginDocument{\providecommand\propref[1]{\ref{prop:#1}}}
\AtBeginDocument{\providecommand\enuref[1]{\ref{enu:#1}}}
\AtBeginDocument{\providecommand\corref[1]{\ref{cor:#1}}}
\providecommand{\tabularnewline}{\\}
\RS@ifundefined{subref}
  {\def\RSsubtxt{section~}\newref{sub}{name = \RSsubtxt}}
  {}
\RS@ifundefined{thmref}
  {\def\RSthmtxt{theorem~}\newref{thm}{name = \RSthmtxt}}
  {}
\RS@ifundefined{lemref}
  {\def\RSlemtxt{lemma~}\newref{lem}{name = \RSlemtxt}}
  {}

\numberwithin{equation}{section}
\numberwithin{figure}{section}
\theoremstyle{plain}
\newtheorem{thm}{\protect\theoremname}[section]
  \theoremstyle{plain}
  \newtheorem{prop}[thm]{\protect\propositionname}
  \theoremstyle{plain}
  \newtheorem{conjecture}[thm]{\protect\conjecturename}
  \theoremstyle{plain}
  \newtheorem{cor}[thm]{\protect\corollaryname}
  \theoremstyle{remark}
  \newtheorem{claim}[thm]{\protect\claimname}
  \theoremstyle{definition}
  \newtheorem{defn}[thm]{\protect\definitionname}
  \theoremstyle{plain}
  \newtheorem*{prop*}{\protect\propositionname}
  \theoremstyle{plain}
  \newtheorem*{thm*}{\protect\theoremname}
  \theoremstyle{remark}
  \newtheorem{rem}[thm]{\protect\remarkname}
  \theoremstyle{plain}
  \newtheorem{lem}[thm]{\protect\lemmaname}

\usepackage[leftcaption]{sidecap}
\usepackage{colortbl}
\usepackage{graphics}
\usepackage[perpage,symbol]{footmisc}
\usepackage{appendix}

\sidecaptionvpos{figure}{c}

\sloppy

\newcommand{\filleddiamond}{\raisebox{0.17\height}{\scalebox{0.7}[0.4]{$\vdots$}}}

\newcommand{\FigBesBeg}[1][1.0]{%
 \let\MyFigure\figure
 \let\MyEndfigure\endfigure
 \renewenvironment{figure}[1]{\begin{SCfigure}[#1]##1}{\end{SCfigure}}}

\newcommand{\FigBesEnd}{%
 \let\figure\MyFigure
 \let\endfigure\MyEndfigure}

\DefineFNsymbols*{lamportnostar}[math]{\dagger\ddagger\S\P\|{\dagger\dagger}{\ddagger\ddagger}}
\setfnsymbol{lamportnostar}

\newcommand{\F}{\mathbf{F}}

\renewcommand{\G}{\Gamma}

\newcommand{\ff}{\le_{ff}}

\newcommand{\fg}{\le_{fg}}

\newcommand{\alg}{\le_{alg}}
\newcommand{\neqalg}{\lneq_{alg}}

\newcommand{\rk}{\mathrm{rk}}
\newcommand{\cpt}{{\cal CW}_t}
\newcommand{\cptm}{{\cal CW}_t^m}
\newcommand{\pf}{{\mathfrak{ pf}}}

\newcommand{\XC}[2]{\left[#1,#2\right]_{\Xcov}}
\newcommand{\XCO}[2]{\left[#1,#2\right)_{\Xcov}}

\DeclareMathOperator{\Sym}{Sym}
\DeclareMathOperator{\crit}{Crit}

\usepackage{bbm}

\usepackage[all]{xy}

\newref{eq}{refcmd={(\ref{#1})}}
\newref{enu}{refcmd={(\ref{#1})}}
\newref{thm}{refcmd={\ref{#1}}}
\newref{lem}{refcmd={\ref{#1}}}
\newref{sec}{refcmd={\ref{#1}}}
\newref{sub}{refcmd={\ref{#1}}}
\newref{tab}{refcmd={\ref{#1}}}
\newref{fig}{refcmd={\ref{#1}}}

\newcommand{\Xcov}{\scriptscriptstyle{\stackrel{\twoheadrightarrow}{X}}}

\newcommand{\covers}{\mathrel{\leq_{\smash{\scalebox{0.9}[0.8]{\ensuremath{\Xcov}}}}}}

\theoremstyle{plain}
\newtheorem{claim}[thm]{\protect\claimname}

\usepackage[left,modulo]{lineno}

\makeatother

  \providecommand{\claimname}{Claim}
  \providecommand{\conjecturename}{Conjecture}
  \providecommand{\corollaryname}{Corollary}
  \providecommand{\definitionname}{Definition}
  \providecommand{\lemmaname}{Lemma}
  \providecommand{\propositionname}{Proposition}
  \providecommand{\remarkname}{Remark}
  \providecommand{\theoremname}{Theorem}
\providecommand{\theoremname}{Theorem}

\begin{document}

\title{Expansion of Random Graphs:\\
New Proofs, New Results}

\author{Doron Puder%
\thanks{Supported by Adams Fellowship Program of the Israel Academy of Sciences
and Humanities, and the ERC.%
} 
\\
{\small }\\
{\small Einstein Institute of Mathematics }\\
{\small{} Hebrew University, Jerusalem}\\
{\small{} }\texttt{\small doronpuder@gmail.com}}
\maketitle
\begin{abstract}
We present a new approach to showing that random graphs are nearly
optimal expanders. This approach is based on recent deep results in
combinatorial group theory. It applies to both regular and irregular
random graphs.

Let $\Gamma$ be a random d-regular graph on $n$ vertices, and let
$\lambda$ be the largest absolute value of a non-trivial eigenvalue
of its adjacency matrix. It was conjectured by Alon \cite{Alo86}
that a random $d$-regular graph is {}``almost Ramanujan'', in the
following sense: for every $\varepsilon>0$, $\lambda<2\sqrt{d-1}+\varepsilon$
asymptotically almost surely. Friedman famously presented a proof
of this conjecture in \cite{Fri08}. Here we suggest a new, substantially
simpler proof of a nearly-optimal result: we show that a random $d$-regular
graph satisfies $\lambda<2\sqrt{d-1}+1$ a.a.s.

A main advantage of our approach is that it is applicable to a generalized
conjecture: For $d$ even, a $d$-regular graph on $n$ vertices is
an $n$-covering space of a bouquet of $d/2$ loops. More generally,
fixing an arbitrary base graph $\Omega$, we study the spectrum of
$\Gamma$, a random $n$-covering of $\Omega$. Let $\lambda$ be
the largest absolute value of a non-trivial eigenvalue of $\Gamma$.
Extending Alon's conjecture to this more general model, Friedman \cite{Fri03}
conjectured that for every $\varepsilon>0,$ a.a.s.~$\lambda<\rho+\varepsilon$,
where $\rho$ is the spectral radius of the universal cover of $\Omega$.
When $\Omega$ is regular we get a bound of $\rho+0.84$, and for
an arbitrary $\Omega$, we prove a nearly optimal upper bound of $\sqrt{3}\rho$.
This is a substantial improvement upon all known results (by Friedman,
Linial-Puder, Lubetzky-Sudakov-Vu and Addario-Berry-Griffiths).
\end{abstract}
\tableofcontents{}

\section{Introduction\label{sec:Introduction}}

\subsection*{Random $d$-regular graphs\label{sub:intro-Random--Regular-Graphs}}

Let $\Gamma$ be a finite $d$-regular graph%
\footnote{Unless otherwise specified, a graph in this paper is undirected and
may contain loops and multiple edges. A graph without loops and without
multiple edges is called here \emph{simple}.%
} on $n$ vertices ($d\geq3$) and let $A_{\Gamma}$ be its adjacency
matrix. The \emph{spectrum} of $\Gamma$ is the spectrum of $A_{\Gamma}$
and consists of $n$ real eigenvalues, 
\[
d=\lambda_{1}\geq\lambda_{2}\geq\ldots\geq\lambda_{n}\geq-d.
\]
The eigenvalue $\lambda_{1}=d$ corresponds to constant functions
and is considered the \emph{trivial }eigenvalue of $\Gamma$. Let
\marginpar{$\lambda\left(\Gamma\right)$}$\lambda\left(\Gamma\right)$
be the largest absolute value of a non-trivial eigenvalue of $\Gamma$,
i.e.~$\lambda\left(\Gamma\right)=\max\left\{ \lambda_{2},-\lambda_{n}\right\} $.
This value measures the spectral expansion of the graph: the smaller
$\lambda\left(\Gamma\right)$ is, the better expander $\Gamma$ is
(see Appendix \ref{sec:Operators-on-Non-Regular} for details). 

The well-known Alon-Boppana bound states that $\lambda\left(\Gamma\right)\geq2\sqrt{d-1}-o_{n}\left(1\right)$
(\cite{Nil91}), bounding the spectral expansion of an infinite family
of $d$-regular graphs. There is no equivalent deterministic non-trivial
upper bound: for example, if $\Gamma$ is disconnected or bipartite
then $\lambda\left(\Gamma\right)=d$. However, Alon conjectured \cite[Conj.\ 5.1]{Alo86}
that if $\Gamma$ is a \emph{random} $d$-regular graph, then $\lambda\left(\Gamma\right)\leq2\sqrt{d-1}+o_{n}\left(1\right)$
a.a.s.\,(asymptotically almost surely, i.e.\,with probability tending
to 1 as $n\to\infty$) %
\footnote{In fact, Alon's original conjecture referred only to $\lambda_{2}\left(\Gamma\right)$,
the second largest eigenvalue.%
}. 

Since then, a series of papers have dealt with this conjecture. One
approach, due to Kahn and Szemerédi, studies the Rayleigh quotient
of the adjacency matrix $A_{\Gamma}$ and shows that it is likely
to be small on all points of an appropriate $\varepsilon$-net on
the unit sphere. This approach yielded an asymptotic bound of $\lambda\left(\Gamma\right)<c\sqrt{d}$
for some unspecified constant $c$ \cite{FKS89}. In the recent work
\cite[Thm.\ 26]{DJPP13}, it is shown that this bound can be taken
to be $10^{4}$. Other works, as well as the current paper, are based
on the idea of the \emph{trace method}, which amounts to bounding
$\lambda\left(\Gamma\right)$ by means of counting closed walks in
$\Gamma$. These works include \cite{BS87}, in which Broder and Shamir
show that a.a.s.~$\lambda\left(\Gamma\right)\leq$$\sqrt{2}d^{3/4}+\varepsilon$
($\forall\varepsilon>0$); \cite{Fri91} where Friedman obtains $\lambda\left(\Gamma\right)\leq2\sqrt{d-1}+2\log d+c$
a.a.s.; and, most famously, Friedman's 100-page-long proof of Alon's
conjecture \cite{Fri08}. Friedman shows that for every $\varepsilon>0$,
$\lambda\left(\Gamma\right)\leq2\sqrt{d-1}+\varepsilon$ a.a.s.

In the current paper we prove a result which is slightly weaker than
Friedman's. However, the proof we present is substantially shorter
and simpler than the sophisticated proof in \cite{Fri08}. Our proof
technique relies on recent deep results in combinatorial group theory
\cite{PP15}. We show the following:
\begin{thm}
\label{thm:2sqrt(d-1)+1}Fix $d\geq3$ and let $\Gamma$ be a random
$d$-regular simple graph on $n$ vertices chosen at uniform distribution.
Then 
\[
\lambda\left(\Gamma\right)<2\sqrt{d-1}+1
\]
asymptotically almost surely%
\footnote{For small $d$'s better bounds are attainable - see the table in Section
\subref{From-even-to-odd}.%
}. 
\end{thm}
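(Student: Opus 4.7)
The plan is to combine the classical trace method with the asymptotic expansion for expected fixed-point counts of word maps on $S_n$ developed in \cite{PP14a}. I would work in the \emph{permutation model} for even $d=2\ell$: let $\sigma_1,\ldots,\sigma_\ell$ be i.i.d.\ uniform elements of $S_n$ and let $\Gamma$ be the $n$-covering of the bouquet $\Omega$ of $\ell$ loops obtained by gluing accordingly. By known contiguity results between this model and the uniform simple-graph model it suffices to prove the bound in the permutation model, and the odd-$d$ case is then reduced to the even case by an extra matching (this is what the ``From-even-to-odd'' subsection presumably handles).

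The main estimate is $\lambda(\Gamma)^{2k}\le \mathrm{tr}(A_\Gamma^{2k})-d^{2k}$, so by Markov it suffices to control $\mathbb{E}\bigl[\mathrm{tr}(A_\Gamma^{2k})\bigr]-d^{2k}$ for a well-chosen $k=k(n)$. A closed walk of length $2k$ at a vertex $v\in[n]$ of $\Gamma$ corresponds to a length-$2k$ word $w$ in the alphabet $\{x_1^{\pm1},\ldots,x_\ell^{\pm1}\}$ whose reduction $\hat{w}\in F_\ell$ satisfies $\sigma_{\hat{w}}(v)=v$. Consequently
\[
\mathrm{tr}(A_\Gamma^{2k})=\sum_{|w|=2k}\mathrm{fix}(\sigma_{\hat{w}}),\qquad \mathbb{E}\bigl[\mathrm{tr}(A_\Gamma^{2k})\bigr]=\sum_{|w|=2k}\mathbb{E}\bigl[\mathrm{fix}(\sigma_{\hat{w}})\bigr].
\]
The deep input from \cite{PP14a} is that for every nontrivial $u\in F_\ell$,
\[
\mathbb{E}\bigl[\mathrm{fix}(\sigma_u)\bigr]=1+O_u\!\bigl(n^{1-\pi(u)}\bigr),
\]
where $\pi(u)\in\{1,2,\ldots,\mathrm{rk}(F_\ell),\infty\}$ is the primitivity rank. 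I would then split the sum by the reduced word: words reducing to $e$ contribute $n\cdot W(2k)$, where $W(2k)$ counts length-$2k$ closed walks at the root of the $d$-regular tree and by Kesten satisfies $W(2k)\le \mathrm{poly}(k)\,(2\sqrt{d-1})^{2k}$; words whose reduction is a proper power (the $\pi=1$ case) are crude-bounded by $\sum_{m\mid 2k,\,m\ge2}d^{2k/m}\le 2k\,d^{k}$, each contributing $O(1)$ in expectation; and all remaining words contribute $1+O(1/n)$ each, totalling $d^{2k}(1+o(1))$. The cancellation with $d^{2k}$ leaves
\[
\mathbb{E}\bigl[\mathrm{tr}(A_\Gamma^{2k})\bigr]-d^{2k}\;\le\;(1+o(1))\,n\cdot W(2k)+O(k\,d^{k}).
\]

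Setting $t=2\sqrt{d-1}+1$ and applying Markov,
\[
\Pr\bigl[\lambda(\Gamma)\ge t\bigr]\;\le\;\frac{\mathbb{E}\bigl[\mathrm{tr}(A_\Gamma^{2k})\bigr]-d^{2k}}{t^{2k}}\;\le\;\mathrm{poly}(k)\cdot n\cdot\!\left(\frac{2\sqrt{d-1}}{2\sqrt{d-1}+1}\right)^{\!2k}+o(1),
\]
and choosing $k=\lfloor C\log n\rfloor$ with $C$ larger than $\bigl[2\log\!\bigl(1+\tfrac{1}{2\sqrt{d-1}}\bigr)\bigr]^{-1}$ drives this to $0$. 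The $d^k$ error term is absorbed because $d<t^{4}$ for $d\ge 3$.

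The hard part is not the Markov/optimisation step but making the word-summation rigorous. One must verify that the $O_u$ error in the $\mathrm{fix}$-expansion is uniform enough over all reduced words appearing (in particular one needs explicit control of the implied constants as $|\hat{w}|$ grows with $k=\Theta(\log n)$), and one must count proper powers and small-primitivity-rank words with sufficient precision so that their contributions do not beat the main $n\cdot W(2k)$ term. The price of using only the leading-order asymptotic from \cite{PP14a}, rather than the finer layer-by-layer expansion, is exactly the extra ``$+1$'' in the bound; recovering Friedman's $+\varepsilon$ would demand a substantially more delicate analysis of sub-leading terms in $\mathbb{E}[\mathrm{fix}(\sigma_u)]$ for words with $\pi(\hat w)$ in the intermediate range.
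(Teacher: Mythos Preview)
Your skeleton is the paper's: trace method, permutation model, the fixed-point expansion of \cite{PP14a}, then contiguity and an even-to-odd reduction. The gap is in how you handle the words with $\pi(\hat w)\ge 2$. You assert that each such word contributes $1+O(1/n)$ and that after cancelling the $d^{2k}$ coming from the ``$1$'' part the remainder may simply be dropped. It may not. Even granting you a \emph{uniform} constant in the $O(1/n)$ (which is already false: the leading correction is $|\crit(\hat w)|/n^{\pi(\hat w)-1}$ and $|\crit(\hat w)|$ is unbounded), summing over roughly $d^{2k}$ words leaves a term of order $d^{2k}/n$. With $t=2\sqrt{d-1}+1$ and $2k=2C\log n$, this term divided by $t^{2k}$ behaves like $n^{2C\log(d/t)-1}$, while your main term $nW(2k)/t^{2k}$ behaves like $n^{1-2C\log(t/(2\sqrt{d-1}))}$. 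For both to vanish you need $\log\frac{d}{2\sqrt{d-1}+1}<\log\frac{2\sqrt{d-1}+1}{2\sqrt{d-1}}$, equivalently $2d\sqrt{d-1}<(2\sqrt{d-1}+1)^2$, which fails for all $d\ge 8$. So as written the argument does not close for any $d\ge 8$.

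What the paper does instead is keep the full stratification by the \emph{value} $m=\pi(\hat w)$: the contribution of the stratum $m$ is $n^{-(m-1)}\sum_{w:\pi(\hat w)=m}|\crit(\hat w)|$, and the heart of the proof is a separate bound on each of these sums. This requires (i) a uniform control of the error in the fixed-point expansion (Proposition~\ref{prop:controling-the-O}), and (ii) a growth-rate bound $\limsup_t\bigl[\sum_{w:\pi(\hat w)=m}|\crit(\hat w)|\bigr]^{1/t}\le g(2m-1)$ for every $m$, obtained via a core-graph counting argument for reduced words together with an extended cogrowth formula to pass to non-reduced words (Proposition~\ref{prop:reduced-words-upper-bound} and Corollary~\ref{cor:non-reduced-words_upper_bound}). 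One then has $k+1$ competing terms $\bigl(g(2m-1)\bigr)^t/n^{m-1}$, $m=0,\dots,k$, and the additive constant (here $0.84$, hence $<1$) comes from optimising $n^{1/t}$ so that the largest of these is minimised; the worst stratum is an \emph{intermediate} $m$ with $2m-1\approx 4.55\sqrt{d-1}$, not $m=0,1$ or $m=k$.

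Two smaller points. Your bound $\sum_{m\mid 2k,\,m\ge 2}d^{2k/m}$ counts \emph{strings} that are periodic, not (non-reduced) words whose \emph{reduction} is a proper power in $F_\ell$; these sets are incomparable, and the correct growth rate of the latter is $2\sqrt{d-1}$, not $d^{1/2}$. (This stratum is anyway dominated by $m=0$, so the error is not the fatal one.) Finally, your diagnosis of the ``$+1$'' is slightly off: it does not arise from truncating the $1/n$-expansion of $\mathbb{E}[\mathrm{fix}]$ at leading order --- the paper controls that error to all orders --- but from the genuine size of $\mathbb{E}\bigl[\sum_{\mu\ne d}\mu^t\bigr]$ itself, which is inflated by rare ``tangle'' events (isolated vertices, small dense subgraphs); see Section~\ref{sub:the-gap}.
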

For $d$ even, or $d$ odd large enough, we obtain a better bound
of $2\sqrt{d-1}+0.84$. The same result, for $d$ even, holds also
for random $d$-regular graphs in the \emph{permutation model} (see
below). In fact, we first prove the result stated in Theorem \ref{thm:2sqrt(d-1)+1}
for random graphs in this model (with $d$ even). The derivation of
Theorem \ref{thm:2sqrt(d-1)+1} for the uniform model and $d$ even
is then immediate by results of Wormald \cite{Wor99} and Greenhill
et al.~\cite{GJKW02} showing the \emph{contiguity}%
\footnote{Two models of random graphs are contiguous if the following holds:
$\left(i\right)$ for every (relevant) $n$ they define distributions
on the same set of graphs on $n$ vertices, and $\left(ii\right)$
whenever a sequence of events has a probability of $1-o_{n}\left(1\right)$
in one distribution, it has a probability of $1-o_{n}\left(1\right)$
in the other distribution as well.\label{fn:contiguity}%
}\emph{ }of different models of random regular graphs (see Appendix
\ref{sec:contiguity}). Finally, we derive the case of odd $d$ relying
on the even case and a contiguity argument in which we loose some
in the constant and get $1$ instead of $0.84$ (Section \subref{From-even-to-odd}).

The permutation model, which we denote by ${\cal P}_{n,d}$\marginpar{${\cal P}_{n,d}$},
applies only to even values of $d$. In this model, a random $d$-regular
graph $\Gamma$ on the set of vertices $\left[n\right]$ is obtained
by choosing independently and uniformly at random $\frac{d}{2}$ permutations
$\sigma_{1},\ldots,\sigma_{\frac{d}{2}}$ in the symmetric group $S_{n}$,
and introducing an edge $\left(v,\sigma_{j}\left(v\right)\right)$
for every $v\in\left[n\right]$ and $j\in\left\{ 1,\ldots,\frac{d}{2}\right\} $.
Of course, $\Gamma$ may be disconnected and can have loops or multiple
edges.

We stress that even after Alon's conjecture is established, many open
questions remain concerning $\lambda\left(\Gamma\right)$. In fact,
very little is known about the distribution of $\lambda\left(\Gamma\right)$.
A major open question is the following: what is the probability that
a random $d$-regular graph is \emph{Ramanujan}, i.e.~that $\lambda\left(\Gamma\right)\leq2\sqrt{d-1}$?
There are contradicting experimental pieces of evidence (in \cite{MNS08}
it is conjectured that this probability tends to $27\%$ as $n$ grows;
simulations depicted in \cite[Section 7]{HLW06} suggest it may be
larger than $50\%$) . However, even the following, much weaker question
is not known: are there infinitely many Ramanujan $d$-regular graphs
for every $d\geq3$? The only positive results here are by explicit
constructions of Ramanujan graphs when $d-1$ is a prime power by
\cite{LPS88,Mar88,Mor94}. In a recent major breakthrough, Marcus,
Spielman and Srivastava \cite{marcus2013interlacing} show the existence
of infinitely many $d$-regular \emph{bipartite-Ramanujan} graphs
for every $d\geq3$ (namely, these graphs have two {}``trivial''
eigenvalues, $d$ and $-d$, while all others lie inside $\left[-2\sqrt{d-1},2\sqrt{d-1}\right]$).
Still, the original problem remains open. We hope our new approach
may eventually contribute to answering these open questions.

\subsection*{Random coverings of a fixed base graph\label{sub:intro-Random-Coverings-of}}

The hidden reason for the number $2\sqrt{d-1}$ in Alon's conjecture
and Alon-Boppana Theorem is the following: All finite $d$-regular
graphs are covered by the $d$-regular (infinite) tree $T=T_{d}$.
Let $A_{T}:\ell^{2}\left(V\left(T\right)\right)\to\ell^{2}\left(V\left(T\right)\right)$
be the adjacency operator of the tree, defined by
\[
\left(A_{T}f\right)\left(u\right)=\sum_{v\sim u}f\left(v\right).
\]
Then $A_{T}$ is a self-adjoint operator and, as firstly proven by
Kesten \cite{Kes59}, the spectrum of $A_{T}$ is $\left[-2\sqrt{d-1},2\sqrt{d-1}\right]$.
Namely, $2\sqrt{d-1}$ is the \emph{spectral radius}%
\footnote{The spectral radius of an operator $M$ is defined as $\sup\left\{ \left|\lambda\right|\,\middle|\,\lambda\in\mathrm{Spec}\, M\right\} $.%
} of $A_{T}$. In this respect, among all possible (finite) quotients
of the tree, Ramanujan graphs are {}``ideal'', having their non-trivial
spectrum as good as the {}``ideal object'' $T$.

It is therefore natural to measure the spectrum of any graph $\Gamma$
against the spectral radius of its covering tree. Several authors
call graphs whose non-trivial spectrum is bounded by this value \emph{Ramanujan},
generalizing the regular case. Many of the results and questions regarding
the spectrum of $d$-regular graphs extend to this general case. For
example, an analogue of Alon-Boppana's Theorem is given in Proposition
\ref{prop:lower-bound}.

Ideally, one would like to extend Alon\textquoteright{}s conjecture
on nearly-Ramanujan graphs to every infinite tree $T$ with finite
quotients, and show that most of its quotients are nearly Ramanujan.
However, as shown in \cite{LN98}, there exist trees $T$ with some
minimal finite quotient $\Omega$ which is not Ramanujan. All other
finite quotients of $T$ are then coverings of $\Omega$, and inherit
the {}``bad'' eigenvalues of this quotient (we elaborate a bit more
in Appendix \secref{contiguity}) . Such examples invalidate the obvious
analogue of Alon's conjecture. 

But what if we ignore this few, fixed, {}``bad'' eigenvalues originated
in the minimal quotient $\Omega$ and focus only on the remaining,
{}``new'' eigenvalues of each larger quotient? In this sense, a
generalized version of Alon's conjecture is indeed plausible. Instead
of studying the spectrum of a random finite quotient of $T$, one
may consider the spectrum of a random finite covering of a fixed finite
graph. This is the content of the generalized conjecture of Friedman
appearing here as Conjecture \ref{conj:friedman}. 

\begin{wrapfigure}{R}{0.4\columnwidth}%
\noindent \begin{centering}
\includegraphics[bb=0bp 100bp 400bp 450bp,scale=0.4]{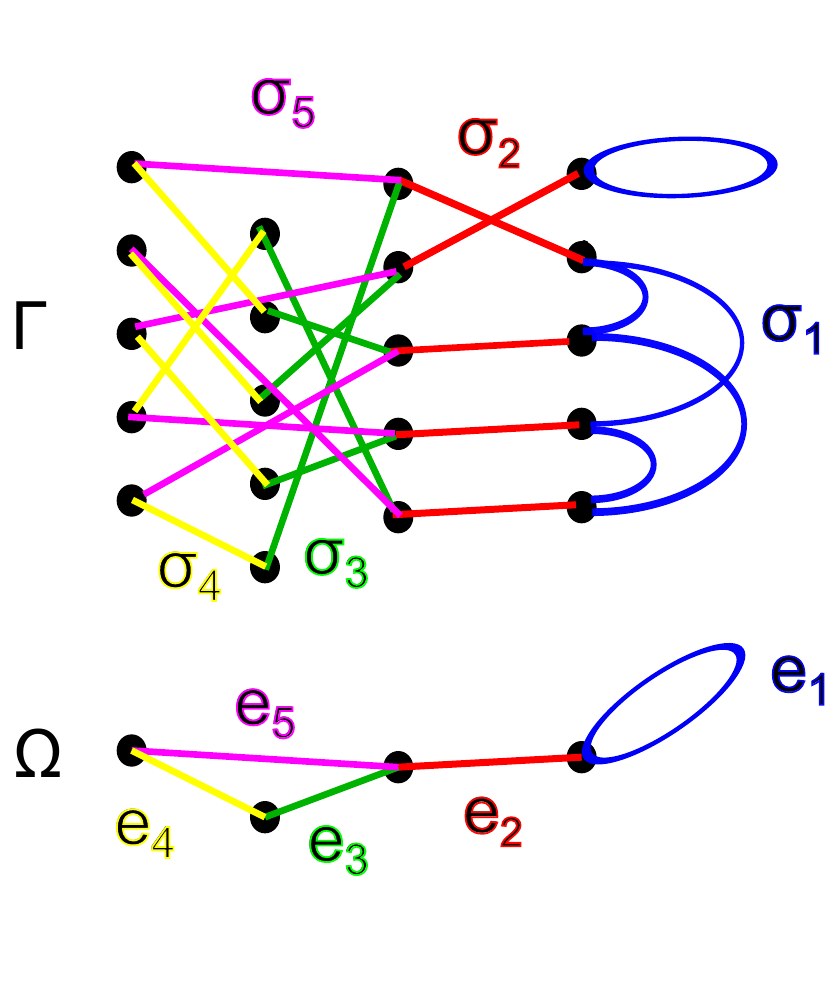}\begin{center}

\par\end{center}
\par\end{centering}

\caption{\label{fig:covering} A $5$-covering of a base graph using permutations.}
\end{wrapfigure}%
In order to describe this conjecture precisely, let us first describe
the random model we consider. This is a generalization of the permutation
model for random regular graphs, which generates families of graphs
with a common universal covering tree. A random graph $\Gamma$ in
the permutation model ${\cal P}_{n,d}$ can be equivalently thought
of as a random $n$-sheeted covering space of the bouquet with $\frac{d}{2}$
loops. In a similar fashion, fix a finite, connected base graph $\Omega$,
and let $\Gamma$ be a random $n$-covering space of $\Omega$. More
specifically, $\Gamma$ is sampled as follows: its set of vertices
is $V\left(\Omega\right)\times\left[n\right]$. A permutation $\sigma_{e}\in S_{n}$
is then chosen uniformly and independently at random for every edge
$e=\left(u,v\right)$ of $\Omega$, and for every $i\in\left[n\right]$
the edge $\left(\left(u,i\right),\left(v,\sigma_{e}\left(i\right)\right)\right)$
is introduced in $\Gamma$%
\footnote{We stress that we consider undirected edges. Although one should first
choose an arbitrary orientation for each edge in order to construct
the random covering, the orientation does not impact the resulting
probability space.%
}. We denote this model by ${\cal C}_{n,\Omega}$\marginpar{${\cal C}_{n,\Omega}$}
(so that ${\cal C}_{n,B_{\frac{d}{2}}}={\cal P}_{n,d}$, where $B_{\frac{d}{2}}$
is the bouquet with $\frac{d}{2}$ loops). For example, all bipartite
$d$-regular graphs on $2n$ vertices cover the graph $\vcenter{\xymatrix@1@R=2pt@C=25pt{
\bullet \ar@{-}@/^0.4pc/[r]_{\filleddiamond} \ar@{-}@/^.2pc/[r] \ar@{-}@/_0.41pc/[r] \ar@{-}[r]  & \bullet
}}$ with two vertices and $d$ edges connecting them. Various properties
of random graphs in the ${\cal C}_{n,\Omega}$ model were thoroughly
examined over the last decade (e.g. \cite{AL02,ALM02,Fri03,LR05,AL06,BL06,LP10}).
From now on, by a {}``random $n$-covering of $\Omega$'' we shall
mean a random graph in the model ${\cal C}_{n,\Omega}$.

A word about the spectrum of a non-regular graph is due. In the case
of $d$-regular graphs we have considered the spectrum of the adjacency
operator. In the general case, it is not apriori clear which operator
best describes in spectral terms the properties of the graph. In this
paper we consider two operators: the \emph{adjacency operator} $A_{\Gamma}$\marginpar{$A_{\Gamma}$}
defined as above, and the \emph{Markov operator} $M_{\Gamma}$\marginpar{$M_{\Gamma}$}
defined by
\[
\left(M_{\Gamma}f\right)\left(u\right)=\frac{1}{\deg\left(u\right)}\sum_{v\sim u}f\left(v\right).
\]
(A third possible operator is the \emph{Laplacian} - see Appendix
\ref{sec:Operators-on-Non-Regular}.) With a suitable inner product,
each of these operators is self-adjoint and therefore admits a real
spectrum (and see Appendix \ref{sec:Operators-on-Non-Regular} for
the relations of these spectra to expansion properties of $\Gamma$).

For a finite graph $\Omega$ on $m$ vertices, the spectrum of the
adjacency matrix $A_{\Omega}$ is 
\[
\pf\left(\Omega\right)=\lambda_{1}\geq\ldots\geq\lambda_{m}\geq-\pf\left(\Omega\right),
\]
$\pf\left(\Omega\right)$ \marginpar{$\pf\left(\Omega\right)$}being
the Perron-Frobenius eigenvalue of $A_{\Gamma}$. The spectrum of
$M_{\Omega}$ is 
\[
1=\mu_{1}\geq\ldots\geq\mu_{m}\geq-1,
\]
the eigenvalue 1 corresponding to the constant function. Every finite
covering $\Gamma$ of $\Omega$ shares the same Perron-Frobenius eigenvalue,
and moreover, inherits the entire spectrum of $\Omega$ (with multiplicity):
Let $\pi:\Gamma\to\Omega$ be the covering map, sending the vertex
$\left(v,i\right)$ to $v$ and the edge $\left(\left(u,i\right),\left(v,j\right)\right)$
to $\left(u,v\right)$. Every eigenfunction $f:V\left(\Omega\right)\to\mathbb{C}$
of any operator on $l^{2}\left(V\left(\Omega\right)\right)$ as above,
can be pulled back to an eigenfunction of $\Gamma$, $f\circ\pi$,
with the same eigenvalue. Thus, every eigenvalue of $\Omega$ (with
multiplicity) is trivially an eigenvalue of $\Gamma$ as well. We
denote by $\lambda_{A}\left(\Gamma\right)$\marginpar{$\lambda_{A}\left(\Gamma\right)$}
the largest absolute value of a \emph{new} eigenvalue of $A_{\Gamma}$,
namely the largest one not inherited from $\Omega$. Equivalently,
this is the largest absolute eigenvalue of an eigenfunction of $\Gamma$
which sums to zero on every fiber of $\pi$. In a similar fashion
we define $\lambda_{M}\left(\Gamma\right)$\marginpar{$\lambda_{M}\left(\Gamma\right)$},
the largest absolute value of a new eigenvalue of $M_{\Gamma}$. Note
that in the regular case (i.e.~when $\Omega$ is $d$-regular), $A_{\Gamma}=d\cdot M_{\Gamma}$,
and so $\lambda_{A}\left(\Gamma\right)=d\cdot\lambda_{M}\left(\Gamma\right)$.
Moreover, when $\Omega=B_{\frac{d}{2}}$ is the bouquet, $\lambda_{A}\left(\Gamma\right)=\lambda\left(\Gamma\right)$.

As in the regular case, the largest non-trivial eigenvalue is closely
related to the spectral radius of $T$, the universal covering tree
of $\Omega$ (which is also the universal covering of every connected
covering $\Gamma$ of $\Omega$). We denote by $\rho_{A}\left(\Omega\right)$
and $\rho_{M}\left(\Omega\right)$\marginpar{$\rho_{A}\left(\Omega\right),\rho_{M}\left(\Omega\right)$}
the spectral radii of $A_{T}$ and $M_{T}$, resp. (So when $\Omega$
is $d$-regular, $\rho_{A}\left(\Omega\right)=d\cdot\rho_{M}\left(\Omega\right)=2\sqrt{d-1}$.)
First, there are parallels of Alon-Boppana's bound in this more general
scenario. The first part of the following proposition is due to Greenberg,
while the second one is due to Burger:
\begin{prop}
\label{prop:lower-bound}Let $\Gamma$ be an $n$-covering of $\Omega$.
Then 
\begin{enumerate}
\item $\lambda_{A}\left(\Gamma\right)\geq\rho_{A}\left(\Omega\right)-o_{n}\left(1\right)$
\cite[Thm 2.11]{Gre95}.
\item $\lambda_{M}\left(\Gamma\right)\geq\rho_{M}\left(\Omega\right)-o_{n}\left(1\right)$
\cite[Prop. 6]{Bur87,GZ99}.
\end{enumerate}
\end{prop}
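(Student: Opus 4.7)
The plan is a trace-method argument on $A_\Gamma^{2k}$, separating the eigenvalues inherited from $\Omega$ from the $(n-1)m$ new ones, and then invoking spectral theory of $A_T$. The Markov case follows the same template with a symmetrized operator.

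First, I would bound $\mathrm{tr}(A_\Gamma^{2k})$ from below by counting closed walks in $\Gamma$ that arise from \emph{tree-like} closed walks in $\Omega$---that is, those whose lift to the universal cover $T$ is closed, equivalently, those null-homotopic in $\Omega$. For such a walk $\omega$ based at $v_0 \in V(\Omega)$, all $n$ lifts (one starting at each element of the fiber $\pi^{-1}(v_0)$) are themselves closed in $\Gamma$, since the monodromy permutation $\sigma_\omega \in S_n$ is trivial. Tree-like closed walks of length $2k$ at $v_0$ are in bijection with closed walks of length $2k$ at any fixed lift $\tilde v_0 \in V(T)$, giving
\[
\mathrm{tr}(A_\Gamma^{2k}) \;\geq\; n \sum_{v_0 \in V(\Omega)} w_{2k}^T(v_0),
\]
where $w_\ell^T(v) := (A_T^\ell)_{v,v}$ counts closed walks of length $\ell$ at $v$ in $T$.

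Next, since each of the $m = |V(\Omega)|$ eigenvalues of $A_\Omega$ pulls back to an eigenvalue of $A_\Gamma$ of the same value, the $(n-1)m$ new eigenvalues satisfy
\[
\lambda_A(\Gamma)^{2k} \;\geq\; \frac{\mathrm{tr}(A_\Gamma^{2k}) - \mathrm{tr}(A_\Omega^{2k})}{(n-1)m} \;\geq\; \frac{n\sum_{v_0} w_{2k}^T(v_0) - \mathrm{tr}(A_\Omega^{2k})}{(n-1)m}.
\]
The subtrahend $\mathrm{tr}(A_\Omega^{2k}) \leq m\,\pf(\Omega)^{2k}$ is independent of $n$, so for fixed $k$ the right-hand side exceeds $\tfrac{1}{m}\sum_{v_0} w_{2k}^T(v_0) - o_n(1)$. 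Self-adjointness of $A_T$ together with $\rho_A(\Omega) = \rho_A(T) \in \mathrm{Spec}(A_T)$ implies, via the spectral theorem applied to $\delta_{\tilde v_0}$, that $\lim_{k\to\infty} w_{2k}^T(\tilde v_0)^{1/(2k)} = \rho_A(\Omega)$ for at least one basepoint (some orbit of the deck group must ``see'' the top of the spectrum, since the $\delta_v$'s span). Combining everything and choosing $k = k(n)\to\infty$ slowly enough yields $\lambda_A(\Gamma) \geq \rho_A(\Omega) - o_n(1)$.

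For the Markov operator, I would run the same argument on the symmetrized operator $S_\Gamma := D_\Gamma^{-1/2} A_\Gamma D_\Gamma^{-1/2}$, which is isospectral to $M_\Gamma$ and whose diagonal entries $(S_\Gamma^{2k})_{v,v}$ count closed walks of length $2k$ at $v$, weighted by the product of reciprocal square-root degrees along the walk. The tree-like lifting principle carries over unchanged to this weighted count, and the spectral limit becomes $\rho_M(\Omega)$. The central obstacle is orchestrating the two limits $n \to \infty$ and $k \to \infty$: $k$ must grow to infinity so that $w_{2k}^T(\tilde v_0)^{1/(2k)}$ approaches $\rho_A(\Omega)$, yet slowly enough that the $\mathrm{tr}(A_\Omega^{2k})/n$ correction remains negligible. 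A choice like $k \sim c\log n$ with $c < (\log \pf(\Omega))^{-1}$ suffices, and the precise rate at which $w_{2k}^T(\tilde v_0)^{1/(2k)}$ converges governs the explicit form of the resulting $o_n(1)$ term.
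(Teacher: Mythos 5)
Your argument is correct, and it is worth noting that the paper itself gives no proof of this proposition at all --- it simply cites Greenberg for the adjacency case and Burger/Grigorchuk--\.Zuk for the Markov case. Your write-up is essentially the standard trace-method proof of Greenberg's theorem specialized to $n$-coverings: the key points --- that a null-homotopic closed walk in $\Omega$ has trivial monodromy and hence contributes $n$ closed lifts to $\Gamma$, that such walks at $v_0$ biject with closed walks at a lift $\tilde v_0$ in $T$, that the $(n-1)m$ new eigenvalues absorb $\mathrm{tr}(A_\Gamma^{2k})-\mathrm{tr}(A_\Omega^{2k})$, and that $\bigl(A_T^{2k}\bigr)_{\tilde v_0,\tilde v_0}^{1/2k}\to\rho_A(\Omega)$ by self-adjointness --- are all sound, and the passage to $M_\Gamma$ via the isospectral symmetrization $D_\Gamma^{-1/2}A_\Gamma D_\Gamma^{-1/2}$ with degree-weighted walk counts (which the paper itself uses in Section 6.1) goes through because the covering map preserves degrees. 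Two small quantitative remarks: the limit $\lim_k c_T(2k,\tilde v_0,\tilde v_0)^{1/2k}=\rho_A(\Omega)$ in fact holds for \emph{every} vertex of the connected tree $T$ (cf.\ equation (4.4) of the paper), so no argument about which orbit ``sees'' the top of the spectrum is needed; and for the balance of limits one needs $\pf(\Omega)^{2k}/(n-1)$ to be dominated by $\frac{1}{m}(\rho_A(\Omega)-\varepsilon_k)^{2k}$, which forces roughly $k\lesssim\log n\,/\,\bigl(2\log(\pf(\Omega)/\rho_A(\Omega))\bigr)$ rather than your stated $c<(\log\pf(\Omega))^{-1}$ --- but since any $k=k(n)\to\infty$ growing slowly enough suffices, this does not affect the validity of the conclusion.
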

When $\Omega$ is $d$-regular (but not necessarily a bouquet), this
proposition was also observed by Serre \cite{Ser90}.

As in the $d$-regular case, the only deterministic upper bounds are
trivial: $\lambda_{A}\left(\Gamma\right)\leq\pf\left(\Omega\right)$
and $\lambda_{M}\left(\Gamma\right)\leq1$. But there are interesting
probabilistic phenomena. The following conjecture is the natural extension
of Alon's conjecture. The adjacency-operator version is due to Friedman
\cite{Fri03}. We extend it to the Markov operator $M$ as well:
\begin{conjecture}
[Friedman, \cite{Fri03}]\label{conj:friedman}Let $\Omega$ be a
finite connected graph. If $\Gamma$ is a random $n$-covering of
$\Omega$, then for every $\varepsilon>0$, 
\[
\lambda_{A}\left(\Gamma\right)<\rho_{A}\left(\Omega\right)+\varepsilon
\]
asymptotically almost surely, and likewise 
\[
\lambda_{M}\left(\Gamma\right)<\rho_{M}\left(\Omega\right)+\varepsilon
\]
asymptotically almost surely.
\end{conjecture}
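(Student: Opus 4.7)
The plan is to approach both inequalities of Conjecture \ref{conj:friedman} by a trace-method argument combining the non-backtracking Hashimoto operator with the combinatorial group theory techniques of \cite{PP14a}. Working in the permutation-covering model ${\cal C}_{n,\Omega}$, I would identify $\pi_{1}(\Omega)$ with the free group $\F_{r}$, so that each closed walk $w$ in $\Omega$ determines a word in $\F_{r}$ together with a random permutation $\sigma_{w}\in S_{n}$ obtained by composing the edge permutations $\sigma_{e}$ along $w$; the number of lifts of $w$ to closed walks in $\Gamma$ is then exactly $\#\mathrm{Fix}(\sigma_{w})$.

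The first step is to reduce the spectral bound to a count of non-backtracking closed walks. By the Ihara--Bass correspondence, a ``new'' adjacency eigenvalue of modulus $>\rho_{A}(\Omega)+\varepsilon$ produces a non-backtracking eigenvalue of modulus bounded below by roughly $\sqrt{\rho_{A}(\Omega)-1}+\varepsilon'$; so setting $L=\lfloor c\log n\rfloor$, Markov's inequality reduces the task to estimating $\mathbb{E}\bigl[\mathrm{tr}\bigl((B_{\Gamma}^{\mathrm{new}})^{2L}\bigr)\bigr]$, which expands as a sum over non-backtracking closed walks $w$ in $\Omega$ of length $2L$, weighted by $\mathbb{E}[\#\mathrm{Fix}(\sigma_{w})]$ minus the contribution of pullbacks from $\Omega$. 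The Markov-operator statement is parallel, using a suitably normalized non-backtracking operator so that pullbacks of eigenfunctions of $M_{\Omega}$ are correctly subtracted.

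The second step invokes the sharp word-map asymptotics of \cite{PP14a}: for a nontrivial cyclically reduced $w\in\F_{r}$,
\[
\mathbb{E}\bigl[\#\mathrm{Fix}(\sigma_{w})\bigr]=1+\frac{a(w)}{n^{\pi(w)-1}}+O\!\left(\frac{1}{n^{\pi(w)}}\right),
\]
where $\pi(w)$ is the primitivity rank of $w$ and $a(w)$ counts the critical subgroups of $\la w\ra$ of rank $\pi(w)$. Walks whose associated word has primitivity rank $k$ reduce, via Stallings folding, to immersions of core graphs of first Betti number $k$ into $\Omega$; by Kesten's description of the spectral radius of the universal cover, the number of such walks of length $\ell$ is at most a polynomial in $\ell$ times $\rho_{A}(\Omega)^{\ell}$ (respectively $\rho_{M}(\Omega)^{\ell}$ in the normalized case). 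Summing over $k\geq 2$ and balancing the $n^{-(k-1)}$ weight against the walk count should yield $\mathbb{E}\bigl[\mathrm{tr}\bigl((B_{\Gamma}^{\mathrm{new}})^{2L}\bigr)\bigr]\leq(\rho_{A}(\Omega)+\varepsilon)^{2L}\cdot n^{o(1)}$, giving the conjectured bound after another application of Markov.

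The hard part is the treatment of Friedman's \emph{tangles}: rare configurations where a small neighborhood of $\Gamma$ carries more cycles than expected, causing a handful of walks to contribute far above their word-theoretic mean. These correspond to words of low primitivity rank on an atypical fixed-point event, and they are precisely what forces the loss from $\rho_{A}(\Omega)+\varepsilon$ to $\sqrt{3}\,\rho_{A}(\Omega)$ in all current word-map approaches. I would define a ``tangle-free'' event requiring at most one independent cycle in every ball of radius $o(\log n)$, prove its complement has probability $o(1)$, and run the trace argument on that event; tangles would then need to be shown to contribute only finitely many outlier eigenvalues, each coinciding with a pullback from $\Omega$ up to $o(1)$. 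Making this last dichotomy sharp without losing a multiplicative constant---the content of Friedman's selective-trace machinery in the regular case---is the principal obstacle, and in the non-regular setting would likely demand strengthening \cite{PP14a} to an asymptotic expansion of $\mathbb{E}[\#\mathrm{Fix}(\sigma_{w})]$ that tracks algebraic invariants of $w$ finer than its primitivity rank.
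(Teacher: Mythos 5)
The statement you are proving is not a theorem of this paper: it is stated as an open conjecture (Friedman's generalization of Alon's conjecture to random coverings), and the paper has no proof of it. What the paper actually establishes are the strictly weaker bounds $\lambda_{A}\left(\Gamma\right)<\sqrt{3}\,\rho_{A}\left(\Omega\right)+\varepsilon$ for an arbitrary base graph (Theorem \ref{thm:sqrt3-times-rho}) and $\lambda_{A}\left(\Gamma\right)<\rho_{A}\left(\Omega\right)+0.84$ when $\Omega$ is regular (Theorem \ref{thm:base-d-regular}), and Section \ref{sub:the-gap} is devoted to explaining why the method stops short of $\rho+\varepsilon$. Your proposal does not close this gap either; indeed you name the decisive step yourself and leave it unresolved.

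Two concrete problems. First, your counting claim is wrong as stated: you assert that the number of (non-backtracking) closed walks of length $\ell$ whose word has primitivity rank $m$ is $\mathrm{poly}\left(\ell\right)\cdot\rho_{A}\left(\Omega\right)^{\ell}$. The paper shows (Proposition \ref{prop:reduced-words-upper-bound}, Theorems \ref{thm:bound-for-m-in-general-Omega}, \ref{thm:prim-rank-distr-on-reduced-words} and \ref{thm:prim-rank-distr}) that the true growth rate is $\max\left\{ \sqrt{2k-1},2m-1\right\}$ for reduced words in the bouquet case and is bounded by $\left(2m-1\right)\cdot\rho$ in general --- and these bounds are tight; a generic word has primitivity rank $k$, so the high-rank classes are exponentially \emph{larger} than $\rho^{\ell}$, and the saving must come entirely from the $n^{-\left(m-1\right)}$ weight. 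Balancing that weight against the correct counts is exactly the computation that produces $\sqrt{3}\rho$ (general case) and $\rho+0.84$ (regular case) rather than $\rho+\varepsilon$. Second, the tangle-removal step --- conditioning on a tangle-free event and showing the conditioning can be threaded through $\mathbb{E}\left[{\cal F}_{w,n}\right]$, which the paper computes via the M\"{o}bius-inverted sums $R_{\left\langle w\right\rangle ,N}$ rather than the subgroup-occurrence counts $L_{M,\F_{k}}$ that would localize the rare events --- is precisely the missing idea identified in Section \ref{sub:the-gap}, and you offer no mechanism for it. A sketch whose central step is flagged as ``the principal obstacle'' is a research program, not a proof; as far as this paper is concerned the statement remains a conjecture.
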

Since $\lambda_{A}\left(\Gamma\right)$ and $\lambda_{M}\left(\Gamma\right)$
provide an indication for the quality of expansion of $\Gamma$ (see
Appendix \ref{sec:Operators-on-Non-Regular}), Conjecture \ref{conj:friedman}
asserts that if the base graph $\Omega$ is a good (nearly optimal)
expander then with high probability so is its random covering $\Gamma$.

In the same paper (\cite{Fri03}), Friedman generalizes the method
of Broder-Shamir mentioned above and shows that $\lambda_{A}\left(\Gamma\right)<\pf\left(\Omega\right)^{1/2}\rho_{A}\left(\Omega\right)^{1/2}+\varepsilon$
a.a.s. An easy variation on his proof gives $\lambda_{M}\left(\Gamma\right)<\rho_{M}\left(\Omega\right)^{1/2}+\varepsilon$
a.a.s. In \cite{LP10}, Linial and the author improve this to $\lambda_{A}\left(\Gamma\right)<3\pf\left(\Omega\right)^{1/3}\rho_{A}\left(\Omega\right)^{2/3}+\varepsilon$
(and with the same technique one can show $\lambda_{M}\left(\Gamma\right)<3\rho_{M}\left(\Omega\right)^{2/3}+\varepsilon$).
This is the best known result for the general case prior to the current
work.

Several works studied the special case where the base-graph $\Omega$
is $d$-regular (recall that in this case $\lambda_{A}\left(\Gamma\right)=d\cdot\lambda_{M}\left(\Gamma\right)$
and $\rho_{A}\left(\Omega\right)=2\sqrt{d-1}$). Lubetzky, Sudakov
and Vu \cite{LSV11} find a sophisticated improvement of the Kahn-Szemerédi
approach and prove that a.a.s.~$\lambda_{A}\left(\Gamma\right)\leq C\cdot\max\left(\lambda\left(\Omega\right),\rho_{A}\left(\Omega\right)\right)\cdot\log\rho_{A}\left(\Omega\right)$
for some unspecified constant $C$. An asymptotically better bound
of $430{,}656\sqrt{d}$ is given by Addario-Berry and Griffiths \cite{AbG10},
by further ameliorating the same basic technique (note that this bound
becomes meaningful only for $d\geq430{,}656^{2}$).

The following theorems differ from Conjecture \ref{conj:friedman}
only by a small additive or multiplicative factor, and are nearly
optimal by Proposition \ref{prop:lower-bound}. They pose a substantial
improvement upon all former results, both in the special case of a
$d$-regular base-graph $\Omega$ and, to a larger extent, in the
general case of any finite base-graph.
\begin{thm}
\label{thm:sqrt3-times-rho}Let $\Omega$ be an arbitrary finite connected
graph, and let $\Gamma$ be a random $n$-covering of $\Omega$. Then
for every $\varepsilon>0$, 
\[
\lambda_{A}\left(\Gamma\right)<\sqrt{3}\cdot\rho_{A}\left(\Omega\right)+\varepsilon
\]
asymptotically almost surely, and similarly

\[
\lambda_{M}\left(\Gamma\right)<\sqrt{3}\cdot\rho_{M}\left(\Omega\right)+\varepsilon
\]
asymptotically almost surely.
\end{thm}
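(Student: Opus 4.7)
The plan is to apply the trace method in conjunction with the combinatorial group theory results of \cite{PP14a} on expected fixed points of word maps in the symmetric group. Since the $|V(\Omega)|$ inherited eigenvalues of $\Gamma$ are exactly those of $\Omega$, for any positive integer $k$ one has
\[
\lambda_{A}(\Gamma)^{2k} \le \mathrm{tr}(A_\Gamma^{2k}) - \mathrm{tr}(A_\Omega^{2k}),
\]
so by Markov's inequality it suffices to estimate the expectation of the right-hand side and show that it is $o\bigl((\sqrt{3}\,\rho_A(\Omega)+\varepsilon)^{2k}\bigr)$ for an appropriate choice $k = k(n)$. A closed walk in $\Gamma$ of length $2k$ based at $(v,i)$ projects onto a closed walk $w$ at $v$ in $\Omega$, and its lift is closed precisely when the product $\sigma_w$ of edge-permutations along $w$ fixes $i$. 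Summing over $(v,i)$ therefore gives
\[
\mathbb{E}[\mathrm{tr}(A_\Gamma^{2k})] \;=\; \sum_{v \in V(\Omega)} \sum_w \mathbb{E}\bigl[\#\mathrm{Fix}(\sigma_w)\bigr].
\]

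Fixing a spanning tree of $\Omega$, each closed walk $w$ at $v$ freely reduces to an element $u(w)$ of $\pi_1(\Omega,v) \cong F_r$, and $\sigma_w$ is the evaluation of the word $u(w)$ on independent uniform permutations in $S_n$. The crucial input from \cite{PP14a} is the behavior of $\mathbb{E}[\#\mathrm{Fix}(u(\sigma))]$ as a function of the primitivity rank $\pi(u)$: it equals $n$ when $u=1$, equals exactly $1$ when $\pi(u)=1$ (i.e.\ $u$ primitive in $F_r$), and equals $1 + O(n^{1-\pi(u)})$ when $\pi(u) \ge 2$. The ``$1$'' in each case exactly cancels the corresponding contribution to $\mathrm{tr}(A_\Omega^{2k})$, and primitive walks contribute nothing at all. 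Consequently
\[
\mathbb{E}\bigl[\mathrm{tr}(A_\Gamma^{2k}) - \mathrm{tr}(A_\Omega^{2k})\bigr] \;\le\; (n-1)\,N_0(2k) \;+\; \sum_{\pi \ge 2} O\bigl(n^{1-\pi}\bigr)\,N_\pi(2k),
\]
where $N_0(2k)$ counts contractible closed walks of length $2k$ in $\Omega$ and $N_\pi(2k)$ those whose reduced form has primitivity rank exactly $\pi$.

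The remaining task is combinatorial: bound the walk counts. Contractible walks lift to closed walks in the universal cover $T$, so $N_0(2k) = O\bigl(\mathrm{poly}(k)\cdot \rho_A(\Omega)^{2k}\bigr)$ by standard estimates for the return probability on $T$. The heart of the argument is the bound $N_2(2k) = O\bigl(\mathrm{poly}(k)\cdot(\sqrt{3}\,\rho_A(\Omega))^{2k}\bigr)$: a walk of primitivity rank $2$ has its reduced form lying in some rank-$2$ algebraic extension of $\langle u(w)\rangle$, and one shows that the generating function for walks factoring through any such rank-$2$ extension has exponential growth rate bounded by $\sqrt{3}\,\rho_A(\Omega)$; walks with $\pi \ge 3$ are smaller by an additional factor $1/n$ and are absorbed. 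Combining,
\[
\mathbb{E}\bigl[\mathrm{tr}(A_\Gamma^{2k}) - \mathrm{tr}(A_\Omega^{2k})\bigr] \;\le\; \mathrm{poly}(k)\cdot\Bigl[\,n\,\rho_A(\Omega)^{2k} + n^{-1}(\sqrt{3}\,\rho_A(\Omega))^{2k}\,\Bigr],
\]
and choosing $k \approx (\log n)/\log 3$ balances the two terms and makes the right-hand side $o\bigl((\sqrt{3}\,\rho_A(\Omega) + \varepsilon)^{2k}\bigr)$, giving the adjacency statement via Markov. For $M_\Gamma$ the same argument applies with each step of a walk weighted by $1/\deg$; this weight is a deterministic factor pulled back from $\Omega$ and is insensitive to the random permutations, so the identical combinatorial estimates yield $\lambda_M(\Gamma) < \sqrt{3}\,\rho_M(\Omega) + \varepsilon$ a.a.s.

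The principal obstacle is clearly the sharp bound on $N_2(2k)$: it demands a careful use of the notion of critical algebraic extension from \cite{PP14a}, control over how closed walks in $\Omega$ of length $2k$ ``fold'' onto such extensions, and the extraction of the uniform spectral-radius bound $\sqrt{3}\,\rho_A(\Omega)$ for the walk-generating functions attached to rank-$2$ extensions. Everything else in the proof is more-or-less routine once this estimate is in hand.
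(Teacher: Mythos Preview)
Your overall architecture is right --- trace method, stratify closed walks by primitivity rank, balance the $m=0$ term against the higher-rank terms --- but several of the details are off in ways that matter.

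First, you have the meaning of primitivity rank backwards. In \cite{PP14a}, $\pi(u)=\infty$ means $u$ is primitive (and then $\mathbb{E}[\#\mathrm{Fix}]=1$ exactly), while $\pi(u)=1$ means $u$ is a \emph{proper power}, in which case $\mathbb{E}[\#\mathrm{Fix}]\approx 1+|\crit(u)|$, not $1$. So the $m=1$ stratum contributes a genuine term of order $\rho^{t}$ (harmless, but not zero), and it is the $\pi=\infty$ stratum that drops out.

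Second, and more seriously, your claimed bound $N_{2}(2k)=O\bigl((\sqrt{3}\,\rho)^{2k}\bigr)$ is wrong, and the $\sqrt{3}$ does not arise this way. What the paper actually proves (Theorem~\ref{thm:bound-for-m-in-general-Omega}) is that the $m$-th stratum grows like $\bigl((2m-1)\rho\bigr)^{t}$, so the $m=2$ stratum has growth rate $3\rho$. The $\sqrt{3}$ appears only at the balancing step: one has to control
\[
n\cdot\rho^{t}\;+\;\rho^{t}\;+\;\frac{(3\rho)^{t}}{n}\;+\;\frac{(5\rho)^{t}}{n^{2}}\;+\;\cdots\;+\;\frac{\bigl((2\rk(\Omega)-1)\rho\bigr)^{t}}{n^{\rk(\Omega)-1}},
\]
and choosing $n^{1/t}\approx\sqrt{3}$ equalizes the first and third terms at $(\sqrt{3}\,\rho)^{t}$, while the terms with $m\ge 3$ become $\bigl((2m-1)/3^{(m-1)/2}\bigr)^{t}\rho^{t}$, which is strictly smaller. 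Note in particular that the higher-$\pi$ strata have \emph{larger} exponential growth, not smaller; they are absorbed only because of the accompanying powers of $n^{-1}$, and this must be verified, not asserted. (Indeed, if your $N_{2}$ bound were correct, the optimal balance would yield $3^{1/4}\rho$, not $\sqrt{3}\,\rho$.)

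Third, you are missing Step~III entirely: since $t=t(n)\to\infty$, the $O(n^{1-\pi(u)})$ in $\mathbb{E}[\#\mathrm{Fix}]$ must be made \emph{uniform} over all words of length $t$. This is the content of Proposition~\ref{prop:controling-the-O} and is a genuine technical ingredient, not a formality.

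Finally, your identification of ``the principal obstacle'' is misplaced. The hard combinatorial work is not a special analysis of rank-$2$ extensions yielding $\sqrt{3}\,\rho$; it is the uniform bound $(2m-1)\rho$ for every $m$ (Lemma~\ref{lem:each-edge-twice} plus Proposition~\ref{prop:key-prop-for-top-graph}), whose proof hinges on showing that any word with $\langle w\rangle\lneq_{\mathrm{alg}} N$ traces every edge of $\Gamma_{X}(N)$ at least twice, and then summing over all $N$ of a given topological type via path-counting in the universal cover $T$.
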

For the special case where $\Omega$ is regular, we obtain the same
bound as in the case of the bouquet (Theorem \ref{thm:2sqrt(d-1)+1}
for $d$ even):
\begin{thm}
\label{thm:base-d-regular}Let $\Omega$ be a finite connected $d$-regular
graph ($d\geq3$) and let $\Gamma$ be a random $n$-covering of $\Omega$.
Then 
\[
\lambda_{A}\left(\Gamma\right)<\rho_{A}\left(\Omega\right)+0.84=2\sqrt{d-1}+0.84
\]
asymptotically almost surely.
\end{thm}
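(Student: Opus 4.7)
The plan is to extend the argument used to prove Theorem~\ref{thm:2sqrt(d-1)+1} (the permutation model, i.e.~$\Omega = B_{d/2}$) to an arbitrary finite connected $d$-regular base graph. Since the universal cover of every $d$-regular graph is the tree $T_{d}$, we have $\rho_{A}(\Omega) = 2\sqrt{d-1}$, so the target bound coincides with the bouquet bound. The backbone is the trace method applied to the non-backtracking operator $B_{\Gamma}$: by Ihara's formula, every new adjacency eigenvalue $\lambda$ of $A_{\Gamma}$ with $|\lambda| > 2\sqrt{d-1}$ corresponds to a non-backtracking eigenvalue $\gamma$ with $|\gamma| > \sqrt{d-1}$ satisfying $\lambda = \gamma + (d-1)/\gamma$. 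The ``old'' eigenvalues inherited from $\Omega$ account for the entire spectrum of $B_{\Omega}$ with multiplicity $n$; hence a high-probability bound on $\mathrm{tr}(B_{\Gamma}^{k}) - n \cdot \mathrm{tr}(B_{\Omega}^{k})$ for large even $k$ translates into the desired bound on $\lambda_{A}(\Gamma)$.

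I would then compute $\mathbb{E}\bigl[\mathrm{tr}(B_{\Gamma}^{k})\bigr]$ by fibering closed non-backtracking walks (CNBW) over $\Omega$. Every CNBW of length $k$ in $\Gamma$ projects to a CNBW of length $k$ in $\Omega$; conversely, for a fixed CNBW $p$ in $\Omega$ based at $v$, the number of lifts of $p$ to a closed walk at some $(v,i) \in V(\Gamma)$ equals the number of fixed points of the random permutation $\sigma_{p} \in S_{n}$ obtained as the product of the random edge-permutations along $p$. Fixing a spanning tree and a base vertex of $\Omega$ identifies each such $p$ with a cyclically reduced element $w_{p}$ of $\pi_{1}(\Omega, v) \cong \F_{r}$, where $r = |E(\Omega)| - |V(\Omega)| + 1$, and $\sigma_{p}$ becomes the image of $w_{p}$ under a uniformly random homomorphism $\F_{r} \to S_{n}$. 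The deep input from \cite{PP14a} is an asymptotic expansion of the form
\[
\mathbb{E}\bigl[|\mathrm{Fix}(\sigma_{p})|\bigr] \;=\; 1 \;+\; O\!\bigl(n^{1 - \mathrm{pr}(w_{p})}\bigr),
\]
where $\mathrm{pr}(w_{p})$ is the primitivity rank of $w_{p}$ in $\F_{r}$. The leading $1$ exactly captures the ``old'' contribution $n \cdot \mathrm{tr}(B_{\Omega}^{k})$, and the error term governs the new part.

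Summing the error over all CNBW of length $k$ in $\Omega$ and using the Perron--Frobenius asymptotic $\mathrm{tr}(B_{\Omega}^{k}) \sim |V(\Omega)| (d-1)^{k}$, I would stratify by primitivity rank. The dominant contributions come from proper powers ($\mathrm{pr} = 1$) and from the critical stratum ($\mathrm{pr} = 2$); the latter is precisely what produces the constant $0.84$ in the bouquet argument. Crucially, the growth rate of CNBW in any $d$-regular graph equals $(d-1)^{k}$, matching the growth of cyclically reduced words in $\F_{d/2}$ that drives the bouquet proof; consequently, the critical-rank calculation transports from $\F_{d/2}$ to $\F_{r}$ with the same constants. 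Choosing $k = k(n) \to \infty$ slowly, applying Markov's inequality to $\mathrm{tr}(B_{\Gamma}^{k}) - n \cdot \mathrm{tr}(B_{\Omega}^{k})$, and converting the resulting non-backtracking bound back to adjacency via Ihara yields the stated a.a.s.~bound.

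The principal obstacle is the combinatorial transfer from $\F_{d/2}$ to $\F_{r}$: one must verify that the primitivity-rank stratification of the subset of cyclically reduced words $\{w_{p}\} \subset \F_{r}$ realizable as CNBW in $\Omega$ behaves, at every rank, like the full stratification of cyclically reduced words in $\F_{d/2}$ --- both in exponential growth rate and in the finer second-order constants determining $0.84$. In addition, the error-term expansion of \cite{PP14a} must be applied with constants uniform as both $k$ and $|w_{p}|$ grow with $n$, and one must confirm that the choice of spanning tree does not disturb the algebraic invariants (primitivity rank, critical subgroups) on which the calculation depends. Once this bookkeeping is in place, the reduction to the bouquet calculation is essentially mechanical.
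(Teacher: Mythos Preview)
Your route is genuinely different from the paper's. The paper never touches $B_\Gamma$ or Ihara: it applies the trace method directly to $A_\Gamma$, labels all $k=|E(\Omega)|$ edges so that closed paths sit in $\F_k$ (the primitivity rank there agrees with the one in $\pi_1(\Omega,v)\cong\F_r$, since $J_v\ff\F_k$ forces every $w$-critical subgroup into $J_v$; see Lemma~\ref{lem:prim-rank-at-most-rk-Omega} and Claim~\ref{claim: only-subgroups-matter}), and passes from reduced to non-reduced word counts via the extended cogrowth formula (Theorem~\ref{thm:cogrowth-formula-extended}). The proof of Theorem~\ref{thm:base-d-regular} itself (Section~\ref{sub:Proof-of-Theorem-base-regular}) is then one sentence: Corollary~\ref{cor:bounds-for-non-red-words-and-reg-base-graph} caps the growth rate of the extra strata $d/2<m\le\rk(\Omega)$ at $d$, so they do not move the maximum in \eqref{eq:final1} and the bouquet optimization applies verbatim.

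A minor slip: old eigenvalues are inherited with multiplicity one, not $n$, so the relevant difference is $\mathrm{tr}(B_\Gamma^k)-\mathrm{tr}(B_\Omega^k)$, and the leading $1$ in the fixed-point expansion matches $\mathrm{tr}(B_\Omega^k)$, not $n\cdot\mathrm{tr}(B_\Omega^k)$.

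The substantive gap is that $\mathrm{tr}(B_\Gamma^k)-\mathrm{tr}(B_\Omega^k)=\sum_{\gamma\ \mathrm{new}}\gamma^k$ does \emph{not} dominate $|\gamma_{\max}|^k$. The vast majority of new $B$-eigenvalues are complex with $|\gamma|=\sqrt{d-1}$ (one conjugate pair per bulk adjacency eigenvalue), and each such pair contributes $2(d-1)^{k/2}\cos(k\theta)$, which can be as negative as $-2(d-1)^{k/2}$. To isolate a real outlier $\gamma_0$ you must therefore add back a noise floor of order $n|V(\Omega)|\,(d-1)^{k/2}$. Your sketch omits this entirely --- and without it the argument would yield $|\gamma_{\max}|\le\sqrt{d-1}+o(1)$, i.e.\ Friedman's theorem, which this method cannot reach. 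That noise floor is the exact analogue of the paper's $m=0$ stratum (words reducing to $1$ contribute $n\cdot\rho^t$ in \eqref{eq:final1}); it, and not the $\pi=2$ stratum, is what forces $k=\Theta(\log n)$ and generates the additive constant in the final optimization. Your identification of $\pi=1,2$ as the sole dominant strata and of $\pi=2$ as the source of the $0.84$ is therefore incorrect. Once the floor is restored, your approach runs parallel to the paper's, with Ihara's relation playing the role of the extended cogrowth formula.
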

We stress the following special case concerning random \emph{bipartite}
$d$-regular graphs. It follows as all bipartite regular graphs cover
the graph $\Omega$ consisting of two vertices and $d$ edges connecting
them.
\begin{cor}
\label{cor:bipartite}Let $\Gamma$ be a random bipartite $d$-regular
graph on $n$ vertices ($d\geq3$). Then 
\[
\lambda_{A}\left(\Gamma\right)<2\sqrt{d-1}+0.84
\]
asymptotically almost surely (as $n\to\infty$)%
\footnote{Again, for small values of $d$ a better bound is reachable - see
Sections \subref{From-even-to-odd} and \subref{Proof-of-Theorem-base-regular}.%
}.
\end{cor}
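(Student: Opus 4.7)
The plan is to derive the corollary as a direct specialization of Theorem \ref{thm:base-d-regular}. Let $\Omega$ be the base graph on two vertices $u_0,u_1$ joined by $d$ parallel edges. This $\Omega$ is finite, connected, and $d$-regular, so its universal cover is the infinite $d$-regular tree and hence $\rho_A(\Omega) = 2\sqrt{d-1}$. Because any covering of a bipartite graph is itself bipartite, every $\Gamma$ drawn from ${\cal C}_{n/2,\Omega}$ is a bipartite $d$-regular graph on $n$ vertices, with bipartition given by the two fibers $\{u_0\}\times[n/2]$ and $\{u_1\}\times[n/2]$ of the covering map.

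Next, I invoke Theorem \ref{thm:base-d-regular} with this specific choice of $\Omega$: a.a.s.
\[
\lambda_A(\Gamma) < \rho_A(\Omega) + 0.84 = 2\sqrt{d-1} + 0.84.
\]
The operator $A_\Omega$ has spectrum $\{+d,-d\}$, so the eigenvalues of $\Gamma$ inherited from $\Omega$ are exactly these two values, which are precisely the two standard ``trivial'' eigenvalues of any connected bipartite $d$-regular graph. Consequently $\lambda_A(\Gamma)$, defined as the largest absolute value of a \emph{new} eigenvalue, coincides with the largest absolute value of a non-trivial eigenvalue in the usual bipartite-regular sense; this is exactly the quantity appearing in the statement of the corollary.

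The one subtlety I anticipate is whether the phrase ``random bipartite $d$-regular graph on $n$ vertices'' is interpreted in the covering model ${\cal C}_{n/2,\Omega}$, in which case the argument above already finishes the proof, or in the uniform model on simple bipartite $d$-regular graphs on $n$ vertices. In the latter case the main (and only) additional step is to establish contiguity between ${\cal C}_{n/2,\Omega}$ and the uniform model, so that a.a.s.\ spectral statements transfer. This is the bipartite analogue of the contiguity step used to deduce Theorem \ref{thm:2sqrt(d-1)+1} from its permutation-model counterpart, and I expect it to follow from the configuration-model machinery of \cite{Wor99,GJKW02} specialized to the bipartite setting, together with a parity check ensuring $n$ is even (otherwise no bipartite $d$-regular graph on $n$ vertices exists and the statement is vacuous).
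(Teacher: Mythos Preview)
Your proposal is correct and follows essentially the same approach as the paper: the corollary is deduced by applying Theorem~\ref{thm:base-d-regular} to the base graph $\Omega$ consisting of two vertices joined by $d$ parallel edges, whose spectrum is $\{+d,-d\}$. The only minor difference is in the contiguity step for the uniform simple bipartite model: the paper (in Appendix~\ref{sec:contiguity}) invokes \cite{Ben74} to show the probability of no multiple edges in ${\cal C}_{n,\Omega}$ is bounded away from zero, and then \cite{MRRW97} for contiguity with the uniform simple bipartite model, rather than the Wormald/Greenhill et al.\ references you suggest.
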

This means that alongside the two trivial eigenvalues $\pm d$, all
other eigenvalues of the bipartite graph $\Gamma$ are a.a.s.~within
$\left[-2\sqrt{d-1}-0.84,2\sqrt{d-1}+0.84\right]$. The result applies
also to random simple bipartite regular graphs: see appendix \ref{sec:contiguity}.

To put Theorems \ref{thm:2sqrt(d-1)+1}, \ref{thm:sqrt3-times-rho}
and \ref{thm:base-d-regular} in context, Table \tabref{results}
summarizes the results mentioned above for the different cases in
question, with respect to the adjacency operator $A_{\Gamma}$. (See
also the late remark on Page \pageref{sec:Late-Remark} referring
to the very recent work \cite{friedman2014relativized}.)

\begin{table}[t]
\begin{tabular}{|>{\centering}m{2.2cm}||>{\centering}m{3cm}|>{\centering}m{3.4cm}|>{\centering}m{3cm}|}
\hline 
The base-graph $\Omega$ & Any graph & $d$-regular & $B_{\frac{d}{2}}=$ a bouquet of $\frac{d}{2}$ loops\tabularnewline
\hline 
 &  & $\rho=2\sqrt{d-1}$ & $\rho=2\sqrt{d-1}$\tabularnewline
\hline 
Deterministic lower bound for $\lambda_{A}\left(\Gamma\right)$ & $\rho-o_{n}\left(1\right)$\\
 \cite{Gre95} & $\rho-o_{n}\left(1\right)$\\
 \cite{Ser90} & $\rho-o_{n}\left(1\right)$ (Alon-Boppana) \cite{Nil91}\tabularnewline
\hline 
Conjectured probabilistic upper bound  & \multicolumn{2}{c|}{$\rho+\varepsilon$ \cite{Fri03}} & $\rho+\varepsilon$\\
\cite{Alo86}\tabularnewline
\hline 
Probabilistic upper bounds,  & $\sqrt{\pf\left(\Omega\right)\rho}+\varepsilon$ \cite{Fri03} & $\negthickspace\negthickspace\negthickspace\negthickspace\negthickspace\negthickspace\negthickspace\negthickspace\negthickspace\negthickspace\negthickspace\negthickspace\negthickspace\negthickspace\negthickspace\negthickspace\Longrightarrow\qquad\sqrt{d\rho}+\varepsilon$ & $\sqrt{d\rho}+\varepsilon$ \cite{BS87}\tabularnewline
\cline{2-4} 
ordered by asymptotic & $3\cdot\pf\left(\Omega\right)^{1/3}\rho{}^{2/3}+\varepsilon\;$ \cite{LP10} & $\negthickspace\negthickspace\negthickspace\negthickspace\negthickspace\negthickspace\Longrightarrow\qquad3\cdot d^{1/3}\rho^{2/3}+\varepsilon$ & \tabularnewline
\cline{2-4} 
strength for growing $\rho$ &  & $C\cdot\max\left(\lambda\left(\Omega\right),\rho\right)\log\rho$
\cite{LSV11} & \tabularnewline
\cline{2-4} 
 &  & $265{,}000\cdot\rho$\\
\cite{AbG10} & $6{,}200\cdot\rho$\\
\cite{FKS89,DJPP13}\tabularnewline
\cline{2-4} 
 & \cellcolor{yellow}$\sqrt{3}\cdot\boldsymbol{\rho+\varepsilon}$ \\
\textbf{(Thm \ref{thm:sqrt3-times-rho})} &  & \tabularnewline
\cline{2-4} 
 &  &  & $\rho+2\log d+c$ \cite{Fri91}\tabularnewline
\cline{2-4} 
 &  & \cellcolor{yellow}$\boldsymbol{\rho+0.84}$

\textbf{(Thm \ref{thm:base-d-regular})} & \cellcolor{yellow}$\boldsymbol{\rho+0.84}$ ~\\
\textbf{(Thm \ref{thm:2sqrt(d-1)+1})}\tabularnewline
\cline{2-4} 
 &  &  & $\rho+\varepsilon$ \cite{Fri08}\tabularnewline
\hline 
\end{tabular}\caption{Our results compared with former ones. As above, $\Omega$ is the
connected base-graph and $\rho=\rho_{A}\left(\Omega\right)$ is the
spectral radius of its universal covering tree. The results are ordered
by their asymptotic strength.}
\label{tab:results}
\end{table}

Finally, let us stress that alongside the different models for random
$d$-regular graphs, random coverings of a fixed, good expander, are
probably the most natural other source for random, good expanders
({}``good'' expanders are \emph{sparse} graphs\emph{ }with high
quality of expansion). Other known models for random graphs do not
necessarily have this property. For example, the Erdös-Rényi model
$G\left(n,p\right)$, fails to produce good expander graphs: when
$p$ is small ($O\left(\frac{1}{n}\right)$) the generic graph is
not an expander (due, e.g., to lack of connectivity), whereas for
larger values of $p$, the average degree grows unboundedly.

\section{Overview of the Proof\label{sec:Overview-of-the-proof}}

In this section we present the outline of the proof of Theorems \ref{thm:2sqrt(d-1)+1},
\ref{thm:sqrt3-times-rho} and \ref{thm:base-d-regular}. For simplicity,
only the spectrum of the adjacency operator is considered in this
section. We assume the reader has some familiarity with free groups,
although we recall the basic definitions and classical relevant results
throughout the text. For a good exposition of free groups and combinatorial
group theory we refer the reader to \cite{Bog08}.

\subsection*{Step I: The trace method}

Let $\Omega$ be a fixed base graph with $k$ edges and $\Gamma$
a random $n$-covering in the model ${\cal C}_{n,\Omega}$. In the
spirit of the trace method, the spectrum of $\Gamma$ is analyzed
by counting closed walks. More concretely, denote by ${\cal CW}_{t}\left(\Gamma\right)$\marginpar{${\cal CW}_{t}\left(\Gamma\right)$}
the set of closed walks of edge-length $t$ in $\Gamma$. If $\mathrm{Spec\left(A_{\Gamma}\right)}$\marginpar{$\mathrm{Spec\left(A_{\Gamma}\right)}$}
denotes the multiset of eigenvalues of $A_{\Gamma}$, then for every
$t\in\mathbb{N}$, 
\[
\sum_{\mu\in\mathrm{Spec}\left(A_{\Gamma}\right)}\mu^{\, t}=\mathrm{tr}\left(A_{\Gamma}^{\, t}\right)=\left|\cpt\left(\Gamma\right)\right|.
\]
Orient each of the $k$ edges of $\Omega$ arbitrarily, label them
by $x_{1},\ldots,x_{k}$ and let $X=\left\{ x_{1},\ldots,x_{k}\right\} $.
Let $\sigma_{1},\ldots,\sigma_{k}\in S_{n}$ denote the random permutations
by which $\Gamma$ is defined: for each edge $x_{j}=\left(u,v\right)$
of $\Omega$ and each $i\in\left[n\right]$, $\Gamma$ has an edge
$\left(\left(u,i\right),\left(v,\sigma_{j}\left(i\right)\right)\right)$.
Note that every closed walk in $\Gamma$ projects to a closed walk
in $\Omega$. Thus, instead of counting directly closed walks in $\Gamma$,
one can count, for every closed walk in $\Omega$, the number of closed
walks in $\Gamma$ projecting onto it. 

Let $w=x_{j_{1}}^{\varepsilon_{1}}\ldots x_{j_{t}}^{\varepsilon_{t}}\in\cpt\left(\Omega\right)\subseteq\left(X\cup X^{-1}\right)^{t}$
be a closed walk in the base graph $\Omega$, beginning (and terminating)
at some vertex $v\in V\left(\Omega\right)$. (Here $\varepsilon_{i}=\pm1$
and $x_{j}^{-1}$ means the walk traverses the edge $x_{j}$ in the
opposite orientation.) For every $i\in\left[n\right]$ there is a
unique lift of $w$ to some walk in $\Gamma$, not necessarily closed,
which begins at the vertex $\left(v,i\right)$. This lifted walk terminates
at the vertex $\left(v,j\right)$, where $j$ is obtained as follows:
let $w\left(\sigma_{1},\ldots,\sigma_{k}\right)$ denote the permutation
obtained by composing $\sigma_{1},\ldots,\sigma_{k}$ according to
$w$, namely, $w\left(\sigma_{1},\ldots,\sigma_{k}\right)=\sigma_{j_{1}}^{\varepsilon_{1}}\ldots\sigma_{j_{t}}^{\varepsilon_{t}}\in S_{n}$.
Then $j$ is the image of $i$ under this permutation: $j=w\left(\sigma_{1},\ldots,\sigma_{k}\right)\left(i\right)=\sigma_{j_{1}}^{\varepsilon_{1}}\ldots\sigma_{j_{t}}^{\varepsilon_{t}}\left(i\right)$%
\footnote{For convenience, we use in this paper the non-standard convention
that permutations are composed from left to right. %
}. Thus, the $i$-th lift of $w$ is a closed walk if and only if $i$
is a fixed point of the permutation $w\left(\sigma_{1},\ldots,\sigma_{k}\right)$,
and the number of closed walks in $\Gamma$ projecting onto $w$ is
equal to the number of fixed points of $w\left(\sigma_{1},\ldots,\sigma_{k}\right)$.
We denote this number by ${\cal F}_{w,n}={\cal F}_{w,n}\left(\sigma_{1},\ldots,\sigma_{k}\right)$\marginpar{${\cal F}_{w,n}$}.
\begin{claim}
For every even $t\in\mathbb{N}$,
\begin{equation}
\mathbb{E}\left[\lambda_{A}\left(\Gamma\right)^{t}\right]\leq\sum_{w\in\cpt\left(\Omega\right)}\left[\mathbb{E}\left[{\cal F}_{w,n}\right]-1\right].\label{eq:bounding-lambda-with-Fwn}
\end{equation}

\end{claim}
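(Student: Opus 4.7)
The plan is to combine two standard ingredients: (i) decompose the spectrum of $A_\Gamma$ into the part inherited from $\Omega$ and the ``new'' part, and (ii) apply the trace method to the full operator $A_\Gamma$, then subtract the contribution of $\Omega$ coming from the inherited eigenvalues.

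First I would recall that, as explained just above the claim in the paper, the pullback $f\mapsto f\circ\pi$ is an isometric $A$-equivariant embedding of $\ell^{2}(V(\Omega))$ into $\ell^{2}(V(\Gamma))$. Because $A_\Gamma$ is self-adjoint, the orthogonal complement (consisting of functions summing to zero on each fiber of $\pi$) is also $A_\Gamma$-invariant, and its spectrum is by definition the set of new eigenvalues. Hence as multisets
\[
\mathrm{Spec}(A_\Gamma) \;=\; \mathrm{Spec}(A_\Omega) \;\sqcup\; \mathrm{Spec}_{\mathrm{new}}(A_\Gamma).
\]
Since $t$ is even, every term $\mu^{t}$ is non-negative, so $\lambda_A(\Gamma)^{t}$ is dominated by the sum of $t$-th powers of the new eigenvalues:
\[
\lambda_A(\Gamma)^{t} \;\le\; \sum_{\mu\in\mathrm{Spec}_{\mathrm{new}}(A_\Gamma)} \mu^{t} \;=\; \mathrm{tr}\bigl(A_\Gamma^{\,t}\bigr)-\mathrm{tr}\bigl(A_\Omega^{\,t}\bigr).
\]

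Next I would invoke the identity $\mathrm{tr}(A_{H}^{\,t})=|\cpt(H)|$ for any graph $H$, together with the projection argument already set up in the paper: any closed path of length $t$ in $\Gamma$ projects via $\pi$ to a closed path in $\Omega$, and conversely a word $w\in\cpt(\Omega)$ admits exactly one lift starting at each of the $n$ vertices in a fiber, of which precisely ${\cal F}_{w,n}(\sigma_{1},\dots,\sigma_{k})$ are themselves closed in $\Gamma$ (the fixed points of $w(\sigma_1,\dots,\sigma_k)$). Therefore
\[
|\cpt(\Gamma)| \;=\; \sum_{w\in\cpt(\Omega)} {\cal F}_{w,n}, \qquad |\cpt(\Omega)| \;=\; \sum_{w\in\cpt(\Omega)} 1.
\]

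Subtracting and taking expectation over the random permutations yields
\[
\mathbb{E}\bigl[\lambda_A(\Gamma)^{t}\bigr] \;\le\; \mathbb{E}\bigl[|\cpt(\Gamma)|-|\cpt(\Omega)|\bigr] \;=\; \sum_{w\in\cpt(\Omega)} \bigl(\mathbb{E}[{\cal F}_{w,n}] - 1\bigr),
\]
which is exactly the claim. There is no real obstacle here; the argument is a direct bookkeeping combination of the covering-space spectral decomposition with the trace method, and the parity of $t$ is used only to ensure $\mu^{t}\ge 0$ so that one can upper-bound the maximum by the sum. The content of the claim is to set up the later analytic work: estimating $\mathbb{E}[{\cal F}_{w,n}]-1$ uniformly in $w$ for large $t$, which is where the input from combinatorial group theory will be needed.
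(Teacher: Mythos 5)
Your proposal is correct and follows essentially the same route as the paper: bound $\lambda_A(\Gamma)^t$ by the sum of $t$-th powers over the new spectrum (using evenness of $t$), rewrite the difference of traces as $|\cpt(\Gamma)|-|\cpt(\Omega)|$, identify the lifts of each $w\in\cpt(\Omega)$ that close up with the fixed points of $w(\sigma_1,\dots,\sigma_k)$, and take expectations. The only difference is that you spell out the justification for the multiset decomposition $\mathrm{Spec}(A_\Gamma)=\mathrm{Spec}(A_\Omega)\sqcup\mathrm{Spec}_{\mathrm{new}}(A_\Gamma)$, which the paper treats as already established in the introduction.
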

\noindent (The expectation on the l.h.s.~is over ${\cal C}_{n,\Omega}$,
which amounts to the i.i.d.~uniform permutations $\sigma_{1},\ldots,\sigma_{k}\in S_{n}$.
The expectation on the r.h.s.~is over the same $k$-tuple of permutations.)
\begin{proof}
Since $t$ is even,
\begin{eqnarray*}
\lambda_{A}\left(\Gamma\right)^{t} & = & \left(\max_{\mu\in\mathrm{Spec}\left(A_{\Gamma}\right)\setminus\mathrm{Spec}\left(A_{\Omega}\right)}\left|\mu\right|\right)^{t}\leq\sum_{\mu\in\mathrm{Spec}\left(A_{\Gamma}\right)\setminus\mathrm{Spec}\left(A_{\Omega}\right)}\mu^{t}=\sum_{\mu\in\mathrm{Spec}\left(A_{\Gamma}\right)}\mu^{t}-\sum_{\mu\in\mathrm{Spec}\left(A_{\Omega}\right)}\mu^{t}=\\
 & = & \left|{\cal CW}_{t}\left(\Gamma\right)\right|-\left|{\cal CW}_{t}\left(\Omega\right)\right|=\sum_{w\in\cpt\left(\Omega\right)}\left[{\cal F}_{w,n}\left(\sigma_{1},\ldots,\sigma_{k}\right)-1\right].
\end{eqnarray*}
(Recall that we regard the spectrum of an operator as a multiset.)
The claim is established by taking expectations.
\end{proof}
We shall assume henceforth that $t$ is an even integer. Note that
in the special case where $\Omega=B_{\frac{d}{2}}$ is a bouquet of
$\frac{d}{2}$ loops, $\mathrm{Spec}\left(A_{\Omega}\right)=\left\{ d\right\} $,
and $\cpt\left(\Omega\right)=\left(X\cup X^{-1}\right)^{t}$, i.e.~it
consists of all words of length $t$ in the letters $X\cup X^{-1}$
(not necessarily reduced), so that $\left|\cpt\left(B_{\frac{d}{2}}\right)\right|=d^{t}$.

\subsection*{Step II: The expected number of fixed points in $w\left(\sigma_{1},\ldots,\sigma_{k}\right)$}

The next stage in the proof of the main results is an analysis of
$\mathbb{E}\left[{\cal F}_{w,n}\right]$. This is where the results
from \cite{PP15} come to bear. Let $\F_{k}=\F\left(X\right)$ be
the free group on $k$ generators. Every word $w\in\cpt\left(\Omega\right)\subseteq\left(X\cup X^{-1}\right)^{t}$
corresponds to an element of $\F_{k}$ (by abuse of notation we let
$w$ denote an element of $\left(X\cup X^{-1}\right)^{t}$ and of
$\F_{k}=F\left(X\right)$ at the same time; it is important to stress
that reduction%
\footnote{By reduction of a word we mean the (repeated) deletion of subwords
of the form $x_{i}x_{i}^{-1}$ or $x_{i}^{-1}x_{i}$ for some $x_{i}\in X$.%
} of $w$ does not affect the associated permutation $w\left(\sigma_{1},\ldots,\sigma_{k}\right)$.)
The main theorem in \cite{PP15} estimates the expected number of
fixed points of the random permutation $w\left(\sigma_{1},\ldots,\sigma_{k}\right)\in S_{n}$,
where $\sigma_{1},\ldots,\sigma_{k}\in S_{n}$ are random permutations
chosen independently with uniform distribution. This theorem shows
that this expectation is related to an algebraic invariant of $w$
called its\emph{ primitivity rank}, which we now describe.

A word $w\in\F_{k}$ is \emph{primitive }if it belongs to a \emph{basis}%
\footnote{A basis of a free group is a free generating set. Namely, this is
a generating set such that every element of the group can be expressed
in a unique way as a reduced word in the elements of the set and their
inverses. For $\F_{k}$ this is equivalent to a generating set of
size $k$ \cite[Chap. 2.29]{Bog08}.\label{fn:basis}%
}\marginpar{\emph{primitive, basis}} of $\F_{k}$. For a given $w$,
one can also ask whether $w$ is primitive as an element of different
subgroups of $\F_{k}$ (which are free as well by a classical theorem
of Nielsen and Schreier \cite[Chap. 2.8]{Bog08}). If $w$ is primitive
in $\F_{k}$, it is also primitive in every subgroup $J\leq\F_{k}$
(e.g. \cite[Claim 2.5]{Pud14a}). However, if $w$ is \emph{not }primitive
in $\F_{k}$, it is sometimes primitive and sometimes not so in subgroups
containing it. Theoretically, one can go over all subgroups of $\F_{k}$
containing $w$, ordered by their rank%
\footnote{The \emph{rank }of a free group $\F$, denoted $\mathrm{rk}\left(\F\right)$,
is the size of (every) basis of $\F$.%
}, and look for the first time at which $ $$w$ is not primitive.
First introduced in \cite{Pud14a}, the primitivity rank of $w\in\F_{k}$
captures this notion:
\begin{defn}
\label{def:prim_rank} The \emph{primitivity rank} of $w\in\F_{k}$,
denoted $\pi\left(w\right)$, is \marginpar{$\pi\left(w\right)$}
\[
\pi(w)=\min\left\{ \rk\left(J\right)\,\middle|\,\begin{gathered}w\in J\le\F_{k}~s.t.\\
w\textrm{ is \textbf{not} primitive in \ensuremath{J}}
\end{gathered}
\right\} .
\]
If no such $J$ exists, i.e.\ if $w$ is primitive in $\F_{k}$,
then $\pi\left(w\right)=\infty$. \\
A subgroup $J$ for which the minimum is obtained is called \textbf{$w$}\emph{-critical},
and the set of $w$-critical subgroups is denoted\marginpar{$\crit\left(w\right)$}
$\crit\left(w\right)$. 
\end{defn}
For instance, $\pi\left(w\right)=1$ if and only if $w$ is a proper
power ($w=v^{d}$ for some $v\in\mathbf{F}_{k}$ and $d\ge2$). By
Corollary 4.2 and Lemma 6.8 in \cite{Pud14a}, the set of possible
primitivity ranks in $\F_{k}$ is $\left\{ 0,1,2,\ldots,k\right\} \cup\left\{ \infty\right\} $
(the only word $w$ with $\pi\left(w\right)=0$ being $w=1$). Moreover,
$\pi\left(w\right)=\infty$ iff $w$ is primitive. The same paper
also describes an algorithm to compute $\pi\left(w\right)$. 

The following theorem estimates $\mathbb{E}\left[{\cal F}_{w,n}\right]$,
the expected number of fixed points of $w\left(\sigma_{1},\ldots,\sigma_{k}\right)$,
where $\sigma_{1},\ldots,\sigma_{k}\in S_{n}$ are chosen independently
at random with uniform distribution:
\begin{thm}
\label{thm:avg_fixed_pts}\cite[Thm 1.8]{PP15} For every $w\in\F_{k}$,
the expected number of fixed points in $w\left(\sigma_{1},\ldots,\sigma_{k}\right)$
is
\[
\mathbb{E}\left[{\cal F}_{w,n}\right]=1+\frac{|\crit\left(w\right)|}{n^{\pi\left(w\right)-1}}+O\left(\frac{1}{n^{\pi\left(w\right)}}\right).
\]

\end{thm}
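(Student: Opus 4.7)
The plan is to expand $\mathbb{E}[\mathcal{F}_{w,n}]$ combinatorially, using the structure of finitely generated subgroups of $\F_k$ containing $w$. First, by linearity and $S_n$-symmetry, $\mathbb{E}[\mathcal{F}_{w,n}] = n \cdot \Pr[w(\sigma_1,\ldots,\sigma_k)(1) = 1]$, and the event on the right decomposes as a disjoint union over the possible trajectories $1 = a_0, a_1, \ldots, a_t = 1$ of the walk reading $w$ (with $a_\ell = \sigma_{j_\ell}^{\varepsilon_\ell}(a_{\ell-1})$). I would group these trajectories by their \emph{pattern}: the finite pointed folded $X$-labeled graph obtained by identifying positions $a_\ell$ that coincide. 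By Stallings's theorem each pattern is the core graph $\Gamma_X(J)$ of a unique finitely generated subgroup $J \leq \F_k$ containing $w$; moreover, only those $J$'s whose core graph is entirely traversed by the walk of $w$ at the basepoint can arise.

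For such a $J$, writing $V_J, E_J$ for the numbers of vertices and edges of $\Gamma_X(J)$, the number of trajectories with pattern $\Gamma_X(J)$ equals the number of injective labelings $V(\Gamma_X(J)) \hookrightarrow [n]$ with basepoint $\mapsto 1$, which is $n^{V_J - 1}(1 + O(1/n))$; and the probability that the random permutations $\sigma_j$ are consistent with any such particular labeling is $\prod_{j=1}^k (n - E_j)!/n! = n^{-E_J}(1 + O(1/n))$ by independence. Combining with the outer factor of $n$, and using the Euler identity $\rk J = E_J - V_J + 1$, the contribution of $J$ to $\mathbb{E}[\mathcal{F}_{w,n}]$ is
$$n \cdot n^{V_J - 1} \cdot n^{-E_J}(1 + O(1/n)) = n^{1-\rk J}(1+O(1/n)).$$
Since only finitely many $J$'s at each fixed rank admit a core graph covered by the walk of $w$ (whose length is $t$), the formal expansion $\mathbb{E}[\mathcal{F}_{w,n}] = \sum_{J} n^{1-\rk J}(1 + O(1/n))$ is well-defined order by order in $1/n$.

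The crucial structural input is that the $J$'s appearing in this sum are precisely the \emph{algebraic extensions} of $\langle w \rangle$ in $\F_k$ (a theorem from the theory of Stallings graphs, see Kapovich--Miasnikov and earlier work of Puder). The trivial extension $J = \langle w \rangle$ has rank $1$ for $w \neq 1$ and contributes the leading constant $1$. The smallest rank of a strictly larger algebraic extension is by definition $\pi(w)$, and the rank-$\pi(w)$ algebraic extensions of $\langle w \rangle$ are exactly the $w$-critical subgroups, of which there are $|\crit(w)|$; each contributes $n^{1-\pi(w)}$ with multiplicity one, yielding the middle term. All remaining terms are $O(n^{-\pi(w)})$. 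The main obstacle, in my view, is precisely this final structural identification --- showing that the trajectory-patterns correspond bijectively to algebraic extensions of $\langle w \rangle$ rather than to a strictly larger family of subgroups, and verifying that the combinatorial multiplicity of each rank-$\pi(w)$ algebraic extension equals exactly one. This hinges on a careful study of the interplay between the covering of a core graph by a single element and the free-factor ordering, and is where all the delicate group-theoretic content of the proof sits.
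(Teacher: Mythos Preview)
Your decomposition into trajectory patterns is correct as far as it goes, but the identification you propose at the end is false, and this is precisely where the content lies. The patterns that arise are \emph{all} the $X$-quotients of $\Gamma_{X}(\langle w\rangle)$ --- i.e.\ all $J$ with $\langle w\rangle\covers J$ in the paper's notation --- and this is in general a strictly larger family than the algebraic extensions of $\langle w\rangle$. Already for the primitive word $w=x_{1}x_{2}\in\F_{2}$ (so $\pi(w)=\infty$), the core graph $\Gamma_{X}(\langle w\rangle)$ has two quotients: itself and the bouquet $\Gamma_{X}(\F_{2})$. The latter is \emph{not} an algebraic extension (since $\langle x_{1}x_{2}\rangle\ff\F_{2}$), yet it appears as a pattern and contributes exactly $1/n$ to your sum. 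Since $\mathbb{E}[\mathcal{F}_{w,n}]=1$ exactly, that $1/n$ must be cancelled --- and it is, by the subleading term of the contribution $(n-1)/n$ from $J=\langle w\rangle$. Your argument has no mechanism for this cancellation; if one simply reads off the leading term $n^{1-\rk J}$ from each quotient $J$, the coefficient of $n^{1-r}$ counts rank-$r$ quotients, not rank-$r$ algebraic extensions, and these differ already below rank $\pi(w)$.

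The actual proof (sketched in Section~\ref{sec:Controlling-the-O}, carried out in \cite{PP14a}) does not restrict the index set but instead performs a M\"obius inversion on the locally finite poset $\left(\mathfrak{sub}_{fg}(\F_{k}),\,\covers\right)$. One passes from the raw sum $\Phi=\sum_{M}L_{M}$ (your expansion) to a ``right'' inversion $\Phi=\sum_{N}R_{\langle w\rangle,N}$, and the key structural result is Proposition~\ref{prop:R_supported-on-algebraic}: $R_{H,N}$ vanishes unless $H\alg N$. Only after this cancellation is in place does one get $\mathbb{E}[\mathcal{F}_{w,n}]=\sum_{N:\,\langle w\rangle\alg N}R_{\langle w\rangle,N}(n)$; the remaining task, showing that each $R_{\langle w\rangle,N}(n)=n^{1-\rk N}+O(n^{-\rk N})$, uses the explicit expression for $C_{M,N}$ (Proposition~\ref{prop:C_MN}) together with the free-factor criterion of Theorem~\ref{thm:distance}. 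So the ``main obstacle'' you identify is real, but its resolution is not a bijection between patterns and algebraic extensions --- no such bijection exists --- it is an algebraic cancellation encoded in the M\"obius function of $\covers$.
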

In particular, it is also shown that $\crit\left(w\right)$ is always
finite. The three leftmost columns in Table \tabref{Primitivity-Rank-and-fixed-points}
summarize the connection implied by Theorem \thmref{avg_fixed_pts}
between the primitivity rank of $w$ and the average number of fixed
points in the random permutation $w\left(\sigma_{1},\ldots,\sigma_{k}\right)$.

With Theorem \thmref{avg_fixed_pts} at hand, we can use the primitivity
rank to split the summation in \eqref{bounding-lambda-with-Fwn}.
We shall use the notation $\cptm\left(\Omega\right)=\left\{ w\in\cpt\left(\Omega\right)\,\middle|\,\pi\left(w\right)=m\right\} $\marginpar{$\cptm\left(\Omega\right)$}
for the subsets we obtain by splitting $\cpt\left(\Omega\right)$
according to primitivity ranks:
\begin{eqnarray}
\mathbb{E}\left[\lambda_{A}\left(\Gamma\right)^{t}\right] & \leq & \sum_{w\in\cpt\left(\Omega\right)}\left(\mathbb{E}\left[{\cal F}_{w,n}\right]-1\right)=\nonumber \\
 & = & \sum_{m=0}^{k}\sum_{w\in\cptm\left(\Omega\right)}\left(\frac{\left|\crit\left(w\right)\right|}{n^{m-1}}+O\left(\frac{1}{n^{m}}\right)\right)\label{eq:bounding-lambda-with-thm}
\end{eqnarray}
(note that for primitive words, i.e.~words with $\pi\left(w\right)=\infty$,
the expected number of fixed points is exactly 1, so their contribution
to the summation vanishes.)

\subsection*{Step III: A uniform bound for $\mathbb{E}\left[{\cal F}_{w,n}\right]$}

The error term $O\left(\frac{1}{n^{m}}\right)$ in \eqref{bounding-lambda-with-thm}
depends on $w$. For a given $w\in\cpt\left(\Omega\right)$, this
error term becomes negligible as $n\to\infty$. However, in order
to bound the r.h.s.~of \eqref{bounding-lambda-with-thm}, one needs
a\emph{ }uniform bound for all closed walks of length $t$ with a
given primitivity rank in $\Omega$. Namely, for every $m$ one needs
to control the $O\left(\cdot\right)$ term for all $w\in\cptm\left(\Omega\right)$
simultaneously. The third stage is therefore the following proposition:
\begin{prop*}[Follows from Prop.\ \ref{prop:controling-the-O} and Claim \ref{claim:controlling-the-O-for-m=00003D0}]
 Let $t=t\left(n\right)$ and $w\in\left(X\cup X^{-1}\right)^{t}$.
If $t^{2k+2}=o\left(n\right)$ then 
\[
\mathbb{E}\left[{\cal F}_{w,n}\right]\leq1+\frac{\left|\crit\left(w\right)\right|}{n^{\pi\left(w\right)-1}}\left(1+o_{n}\left(1\right)\right),
\]
where the $o_{n}\left(1\right)$ \emph{does not depend on $w$.}
\end{prop*}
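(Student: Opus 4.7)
The plan is to revisit the proof of Theorem \ref{thm:avg_fixed_pts} in \cite{PP14a} and track the implicit constants carefully as a function of the word length $t=|w|$. The proof there expresses $\mathbb{E}\left[\mathcal{F}_{w,n}\right]$ as a finite sum indexed by subgroups $J \leq \F_k$ that contain $w$, where each $J$ contributes a term of order $n^{1-\rk(J)}$ times a combinatorial coefficient $a_J(w)$. Schematically,
\[
\mathbb{E}\left[\mathcal{F}_{w,n}\right] \;=\; 1 \;+\; \sum_{\substack{J \,\geq\, \langle w \rangle \\ w \text{ not primitive in } J}} \frac{a_J(w)}{n^{\rk(J)-1}} \;+\; (\text{lower-order corrections in } 1/n),
\]
so isolating the subgroups of minimal rank $\pi(w)$ (namely, the $w$-critical ones) already reproduces the main term $|\crit(w)|/n^{\pi(w)-1}$. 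What remains is to show that the contribution from $J$ with $\rk(J) \geq \pi(w)+1$, together with the lower-order finite-$n$ corrections, is $o(1/n^{\pi(w)-1})$ \emph{with an error rate independent of $w$}.

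To this end I would invoke the Stallings core-graph viewpoint used throughout \cite{PP14a}: every $J$ containing $w$ corresponds to a folded $X$-labeled core graph $\widehat{\Gamma}_X(J)$ that receives a graph morphism from the length-$t$ cyclic core graph of $w$. Hence $\widehat{\Gamma}_X(J)$ is obtained by identifying vertices and edges of this length-$t$ cycle, and $\rk(J) = 1-\chi(\widehat{\Gamma}_X(J))$. A standard counting argument therefore bounds the number of $J$ with $\rk(J)=r$ that contain $w$ by a polynomial in $t$ whose degree depends only on $r$ and $k$ (each unit of rank costs roughly $t^2$ choices, since an identification is specified by a pair of vertices of the cycle). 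The coefficients $a_J(w)$ themselves enumerate certain labeled extensions of $\widehat{\Gamma}_X(J)$ of bounded combinatorial complexity and are likewise polynomial in $t$ of degree depending only on $k$.

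Combining these polynomial-in-$t$ bounds gives an estimate of the form
\[
\sum_{r > \pi(w)} \frac{1}{n^{r-1}} \sum_{\rk(J)=r} a_J(w) \;\leq\; C \sum_{r=\pi(w)+1}^{k} \frac{t^{2r}}{n^{r-1}} \;=\; O\!\left(\frac{t^{2\pi(w)+2}}{n^{\pi(w)}}\right),
\]
and the hypothesis $t^{2k+2}=o(n)$ then forces this error to be $o(1/n^{\pi(w)-1})$ at a rate independent of $w$. A parallel (and easier) bound controls the finite-$n$ corrections that would arise from the inclusion-exclusion expansion.

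The main obstacle is precisely the uniformity: in \cite{PP14a} the word $w$ is treated as fixed, so $t$-dependence is silently absorbed into the implicit constant of the $O(\cdot)$ term in Theorem \ref{thm:avg_fixed_pts}. The work here is to re-run the inclusion-exclusion / M\"obius-inversion argument underlying that theorem while keeping $t$ as a live parameter throughout, and to verify that no hidden exponential dependence on $t$ creeps into any intermediate quantity. This is essentially careful bookkeeping rather than a new idea, but it is the crux: once every coefficient and every index set is pinned down as polynomial in $t$ of degree depending only on $k$, the stated uniform bound follows by summing the geometric-type series above.
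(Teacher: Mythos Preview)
Your overall strategy—re-running the argument of \cite{PP14a} while keeping the word length $t$ as a live parameter—is exactly what the paper does, and the target estimate $O\!\left(t^{2\pi(w)+2}/n^{\pi(w)}\right)$ for the tail is correct. However, two features of your sketch do not match the actual structure of that argument. First, the schematic decomposition over ``$J$ with $w$ not primitive in $J$'' is not the one that appears: the relevant sum is over algebraic extensions $N$ of $\langle w\rangle$ (equivalently, over quotients $M$ with $\langle w\rangle\covers M$), and each such $N$ contributes a full rational function $R_{\langle w\rangle,N}(n)$, not a single monomial $a_J(w)/n^{\rk(J)-1}$. Second, and more substantively, there is no reason for your sum to terminate at $r=k$: quotients of the length-$t$ core graph of $\langle w\rangle$ can have rank up to order $t$, and the power-series coefficients in $1/n$ are nonzero for all $s\geq \pi(w)$. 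The series must be summed to infinity.

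The paper's execution avoids these issues by working directly with the Taylor expansion $\mathbb{E}[\mathcal{F}_{w,n}]=\sum_{s\geq 0} a_s(w)/n^s$ and proving the uniform bound $|a_s(w)|\leq t^{2s+2}$ for every $s$. This bound comes not from counting subgroups per se but from the explicit formula for $C_{M,N}(n)$ as a signed sum over tuples $(\sigma_0,\sigma_1,\ldots,\sigma_r)$ of permutations of $V_M$ and $E_M$ (Proposition~\ref{prop:C_MN}): one shows that the number of such tuples of total norm $q$ is at most $t^{2q}$ (using $|V_M|,|E_M|\leq t$), and combines this with the bound $\binom{t}{2}^i$ on the number of rank-$i$ quotients $M$ of $\Gamma_X(\langle w\rangle)$. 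Summing the resulting geometric series $\sum_{s\geq\pi(w)} t^{2s+2}/n^s$ (convergent once $n>t^2$) yields the additive error $t^{2+2\pi(w)}/(n-t^2)$, which is $o_n(1)$ uniformly since $\pi(w)\leq k$ and $t^{2k+2}=o(n)$. So the ``bookkeeping'' you anticipate is carried out at the level of permutation tuples and an infinite series, not a finite sum over ranks up to $k$.
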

Hence, as long as we keep $t^{2k+2}=o\left(n\right)$, we obtain:

\emph{
\begin{equation}
\mathbb{E}\left[\lambda_{A}\left(\Gamma\right)^{t}\right]\leq\left(1+o_{n}\left(1\right)\right)\sum_{m=0}^{k}\frac{1}{n^{m-1}}\sum_{w\in\cptm\left(\Omega\right)}\left|\crit\left(w\right)\right|.\label{eq:upper-bound-with-thm-2}
\end{equation}
}

\subsection*{Step IV: Counting words and critical subgroups}

The fourth step of the proof consists of estimating the exponential
growth rate (as $t\to\infty$) of the summation $\sum_{w\in\cptm\left(\Omega\right)}\left|\crit\left(w\right)\right|$
for every $m\in\left\{ 0,1,\ldots,k\right\} $. For $m=0$, the only
reduced word with $\pi\left(w\right)=0$ is $w=1$, and its sole critical
subgroup is the trivial subgroup $\left\{ 1\right\} $, so $\sum_{w\in\cpt^{0}\left(\Omega\right)}\left|\crit\left(w\right)\right|=\left|\cpt^{0}\left(\Omega\right)\right|$.
Moreover, words reducing to $1$ are precisely the completely back-tracking
closed walks, i.e.~the walks lifting to closed walks in the covering
tree. It follows that the exponential growth rate of $\left|\cpt^{0}\left(\Omega\right)\right|$
is exactly $\rho=\rho_{A}\left(\Omega\right)$, the spectral radius
of the covering tree (see Claim \ref{claim:bound-for-m=00003D0-general-Omega}).
For larger $m$ we obtain the following upper bound:
\begin{thm*}[Theorem \ref{thm:bound-for-m-in-general-Omega}]
Let $\Omega$ be a finite, connected graph with $k\geq2$ edges,
and let $m\in\left\{ 1,\ldots,k\right\} $. Then 
\[
\limsup_{t\to\infty}\left[\sum_{w\in\cptm\left(\Omega\right)}\left|\crit\left(w\right)\right|\right]^{1/t}\leq\left(2m-1\right)\cdot\rho.
\]

\end{thm*}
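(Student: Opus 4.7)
The plan is to upper-bound the sum $\sum_{w\in\cptm(\Omega)}|\crit(w)|$ by counting pairs $(w,J)$ with $w\in\cptm(\Omega)$ and $J\in\crit(w)$. For each such pair, I would attach the Stallings core graph $Y=\cJ$, a folded $X$-labeled pointed graph with first Betti number $m$. By results from \cite{Pud14a}, $J$ being $w$-critical forces $J$ to be an algebraic extension of $\langle w\rangle$ of rank exactly $m$, and the reduced word $w$ reads a closed walk in $Y$ from the basepoint $y_0$ that traverses every edge. The candidate cores $Y$ are thus constrained combinatorially: they arise as quotients of the cycle of $w$, and results from \cite{PP14a} give uniform control on the number of algebraic extensions of a given rank.

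For each candidate $Y$, I would reduce the problem to counting closed walks in the fiber product $Z_Y:=Y\times_{B_{k}}\Omega$, where $B_{k}$ is the bouquet of $k$ loops labeled by $X$ and $Y\to B_{k}$, $\Omega\to B_{k}$ are the label morphisms. A closed walk $w\in\cpt(\Omega)$ of length $t$ whose reduced element lies in $\pi_{1}(Y,y_{0})$ corresponds bijectively to a closed walk of length $t$ in $Z_Y$ based at a vertex $(y_{0},v)$ with $v\in V(\Omega)$. Summing over $v$, this count equals $\sum_{v}(A_{Z_Y}^{t})_{(y_{0},v),(y_{0},v)}$, so the problem reduces to a spectral/combinatorial estimate on $Z_Y$.

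The crux is to show that for any rank-$m$ folded $X$-labeled graph $Y$, the number of closed walks of length $t$ in $Z_Y$ grows at most like $C_Y\cdot[(2m-1)\rho]^{t}$. The heuristic is that $\widetilde{Z_Y}$ projects to $\widetilde{\Omega}$ (the universal cover of $\Omega$, whose adjacency spectral radius is exactly $\rho$), and the fibers of this projection encode the rank-$m$ ``extra direction'' in $Y$. A rank-$m$ folded graph has at most $2m-1$ non-backtracking continuations at each step — tight at $Y=B_{m}$, the bouquet of $m$ loops — so the fiber direction contributes the factor $2m-1$. Combining this with the $\rho$-growth in $\widetilde{\Omega}$, after decomposing arbitrary (not necessarily non-backtracking) walks into a non-backtracking skeleton plus tree-like backtracking excursions (which contribute only sub-exponentially), one expects to recover the claimed exponential rate $(2m-1)\rho$.

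Summing the contributions over the (effectively finite, by Step 1) set of core-graph types yields $\sum_{w\in\cptm(\Omega)}|\crit(w)|\le C\cdot[(2m-1)\rho]^{t}$, whence the $\limsup$ bound follows by taking $t$-th roots. The main obstacle is clearly the spectral/combinatorial bound in the third step: converting the intuition ``fiber branching $\cdot$ base growth'' into a rigorous estimate for general, not necessarily regular, base graphs $\Omega$. This is likely to require a careful analysis of $Z_Y$ and its universal cover — perhaps via Kesten--McKay-style asymptotics, or a direct combinatorial walk-counting argument exploiting the folded rank-$m$ structure of $Y$ together with the structure of walks in $\widetilde{\Omega}$. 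A secondary issue is ensuring that the parametrization via $(w,J,Y)$ does not over-count badly enough to harm the exponential asymptotics, which should only affect constants and not the rate itself.
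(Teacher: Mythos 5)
Your opening move is the paper's as well: replace $\sum_{w}\left|\crit\left(w\right)\right|$ by a count of pairs $\left(w,J\right)$ with $J$ a rank-$m$ proper algebraic extension of $\left\langle w\right\rangle $, and invoke the fact (Lemma \ref{lem:each-edge-twice}) that $w$ must then trace every edge of $\Gamma_{X}\left(J\right)$ at least twice. Your heuristic for the constant $\left(2m-1\right)\cdot\rho$ (reduced branching $\leq2m-1$ in a rank-$m$ core graph, times $\rho$-growth in the tree $T$) is also the correct one. But the middle of the argument has genuine gaps. First, the claimed bijection between closed walks $w\in\cpt\left(\Omega\right)$ whose reduction lies in $J=\pi_{1}^{X}\left(Y\right)$ and closed walks in the fiber product $Z_{Y}=Y\times_{B_{k}}\Omega$ is false: a non-reduced $w$ with $w\in J$ corresponds to a closed walk in the \emph{Schreier} graph $\overline{\Gamma}_{X}\left(J\right)$, which may wander into the hanging trees that were trimmed to form the core graph $Y$, so it need not define any walk in $Z_{Y}$. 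Since you are after an upper bound, an under-counting correspondence is fatal. Relatedly, backtracking excursions are not a sub-exponential correction (compare $\sqrt{2k-1}$ versus $2\sqrt{2k-1}$ for $m=0$ in the bouquet); the paper absorbs them into $\rho$ by counting arbitrary, possibly backtracking, paths in $T$ between prescribed endpoints via $c_{T}\left(\ell,u,v\right)\leq\rho^{\ell}$.

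Second, and more seriously, the set of candidate cores $Y$ is not ``effectively finite'': the relevant subgroups $J$ of rank $m$ have core graphs with up to $t/2$ edges, and there are exponentially many of them (roughly $\left(2k-1\right)^{\delta t}$ with $\delta t$ edges). Summing a bound of the form $C_{Y}\cdot\left[\left(2m-1\right)\rho\right]^{t}$ over all such $Y$ therefore destroys the exponential rate. Only the homeomorphism type $\Lambda$ of $Y$ is finite in number; the continuum of actual cores within a fixed $\Lambda$ must be controlled by the double-traversal constraint itself. This is exactly what Proposition \ref{prop:key-prop-for-top-graph} does: it parametrizes the cores of type $\Lambda$ by tuples of vertices $\widehat{v}_{1},\ldots,\widehat{v}_{r}$ of $T$, and the sum over $\widehat{v}_{i}$ of the product of the two traversal counts collapses via $\sum_{\widehat{v}}c_{T}\left(\ell_{1},\widehat{u},\widehat{v}\right)c_{T}\left(\ell_{2},\widehat{v},\widehat{u}\right)=c_{T}\left(\ell_{1}+\ell_{2},\widehat{u},\widehat{u}\right)\leq\rho^{\ell_{1}+\ell_{2}}$. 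Without this telescoping (or a substitute exploiting the ``each edge twice'' property against the multiplicity of subgroups), the spectral estimate on a single $Z_{Y}$ — which you correctly identify as unproved — would not suffice even if established. I would therefore not accept the proposal as a proof outline; the decisive mechanism is missing.
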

This upper bound is not tight in general. However, in the special
case where $\Omega$ is $d$-regular, we give better bounds:
\begin{thm*}[Follows from Corollaries \ref{cor:non-reduced-words_upper_bound}
and \ref{cor:bounds-for-non-red-words-and-reg-base-graph} and from
Theorem \ref{thm:prim-rank-distr}]
 Let $\Omega$ be a finite, connected $d$-regular graph ($d\geq3$)
with $k$ edges, and let $m\in\left\{ 0,1,\ldots,k\right\} $. Then
\[
\limsup_{t\to\infty}\left[\sum_{w\in\cptm\left(\Omega\right)}\left|\crit\left(w\right)\right|\right]^{1/t}\leq\begin{cases}
2\sqrt{2k-1} & 2m-1\leq\sqrt{2k-1}\\
2m-1+\frac{2k-1}{2m-1} & 2m-1\geq\sqrt{2k-1}
\end{cases}.
\]
Moreover, for $\Omega=B_{\frac{d}{2}}$ the bouquet, there is equality:
\[
\limsup_{t\to\infty}\left[\sum_{w\in\cptm\left(B_{\frac{d}{2}}\right)}\left|\crit\left(w\right)\right|\right]^{1/t}=\begin{cases}
2\sqrt{2k-1} & 2m-1\leq\sqrt{2k-1}\\
2m-1+\frac{2k-1}{2m-1} & 2m-1\geq\sqrt{2k-1}
\end{cases}.
\]
\end{thm*}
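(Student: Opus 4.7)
My plan is to reduce to the bouquet case $\Omega=B_{k}$, and then decompose the sum according to the length $\ell$ of the free reduction, isolating an \emph{algebraic} count of reduced words with primitivity rank $m$ from a \emph{spectral} count of length-$t$ inflations on the $2k$-regular tree. For the reduction, orient and label the $k$ edges of $\Omega$ by $x_{1},\ldots,x_{k}$: a closed walk of length $t$ based at a vertex $v\in V(\Omega)$ records a word $w\in\{x_{i}^{\pm 1}\}^{t}$, and since $\pi(w)$ and $\crit(w)$ depend only on this word as an element of $\F_{k}$ (not on $v$),
\[
\sum_{w\in\cptm(\Omega)}|\crit(w)|\;\le\;|V(\Omega)|\cdot\sum_{w\in\cptm(B_{k})}|\crit(w)|,
\]
and the constant $|V(\Omega)|$ disappears on taking $t$-th roots. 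Thus the equality claim for $B_{k}$ subsumes the upper bound for every $d$-regular $\Omega$ with $k$ edges.

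For $\Omega=B_{k}$, partition words by the length $\ell$ of their free reduction $\bar w$:
\[
\sum_{w\in\cptm(B_{k})}|\crit(w)|\;=\;\sum_{\ell=0}^{t}N(\ell,t)\,T_{m}(\ell),
\]
where $N(\ell,t)$ is the number of length-$t$ words reducing to a given reduced word of length $\ell$ --- equivalently, length-$t$ walks from a root to a given distance-$\ell$ vertex in the $2k$-regular tree, which depends only on $\ell$ by homogeneity --- and $T_{m}(\ell):=\sum_{u\text{ reduced},\,|u|=\ell,\,\pi(u)=m}|\crit(u)|$. The key algebraic input, drawn from the structure theory of critical subgroups in \cite{Pud14a,PP14a}, is a bound of the form
\[
T_{m}(\ell)\;\le\;C\cdot(2m-1)^{\ell}\cdot\mathrm{poly}(\ell).
\]
This is obtained by observing that each $w$-critical $J$ of rank $m$ forces $w$ to \emph{fill} its Stallings core graph $\Gamma_{X}(J)$ (so $|E(\Gamma_{X}(J))|\le\ell$); switching the order of summation then reduces the count to cyclically reduced closed walks of length $\ell$ in the finitely many candidate rank-$m$ core graphs, whose exponential growth is controlled by the cogrowth of the associated rank-$m$ subgroup, maximized at $2m-1$ on rank-$m$ free factors.

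With the algebraic bound in hand, it remains to estimate $H_{m}(t):=\sum_{\ell}(2m-1)^{\ell}N(\ell,t)$. I express $N(\ell,t)$ via the ``distance from root'' transfer operator on $\mathbb{Z}_{\ge0}$ --- stepping $+1$ with weight $2k-1$ and $-1$ with weight $1$ from $i\ge1$, and $+1$ with weight $2k$ from $0$ --- and apply Cram\'er's large-deviation principle with the exponential tilt $\mu=2m-1$. The tilted walk has outward drift precisely when $\mu\ge\sqrt{2k-1}$, which is the transition threshold: below it, the reflection at $0$ pins the growth to the spectral radius $\rho=2\sqrt{2k-1}$ of the $2k$-regular tree; above it, the walk escapes outward at rate $\mu+(2k-1)/\mu$. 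This gives
\[
\limsup_{t\to\infty}H_{m}(t)^{1/t}\;=\;
\begin{cases}
2\sqrt{2k-1}, & 2m-1\le\sqrt{2k-1},\\[3pt]
(2m-1)+\dfrac{2k-1}{2m-1}, & 2m-1\ge\sqrt{2k-1},
\end{cases}
\]
matching the theorem. The matching lower bound (equality for $\Omega=B_{k}$) follows by exhibiting explicit $w$ whose critical subgroup is a rank-$m$ free factor and whose inflations saturate the spectral estimate.

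The hard part will be the algebraic bound $T_{m}(\ell)\le C\cdot(2m-1)^{\ell}\mathrm{poly}(\ell)$: the naive count of \emph{all} reduced words of length $\ell$ in $\F_{k}$ already gives $(2k-1)^{\ell}$, far weaker than required. Extracting the sharper exponent requires essential use of the filling constraint on critical subgroups to effectively confine $w$ to a rank-$m$ subgraph of the $2k$-regular Cayley graph, replacing the ambient rank $k$ by $m$ in the cogrowth bound.
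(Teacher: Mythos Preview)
Your reduction to the bouquet $B_k$ is clean and correct: since $\cpt(\Omega)\subseteq(X\cup X^{-1})^t=\cpt(B_k)$ as sets of words in $\F_k$, and $\pi(w)$, $\crit(w)$ are invariants of $w\in\F_k$, the inequality for general $d$-regular $\Omega$ with $k$ edges does follow from the bouquet case. (This yields the theorem exactly as stated with $2k-1$; the paper, via Corollary~\ref{cor:bounds-for-non-red-words-and-reg-base-graph}, actually proves the sharper bound with $d-1$ in place of $2k-1$, by redoing the core-graph count inside~$\Omega$ rather than reducing to $B_k$.)

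The genuine gap is your algebraic bound $T_m(\ell)\le C\,(2m-1)^\ell\,\mathrm{poly}(\ell)$. This is \emph{false} whenever $2m-1<\sqrt{2k-1}$. Take $m=1$: reduced words with $\pi(w)=1$ are precisely proper powers, and already the squares $v^2$ with $v$ cyclically reduced of length $\ell/2$ give $\sim(2k-1)^{\ell/2}=(\sqrt{2k-1})^{\ell}$ such words, not $\mathrm{poly}(\ell)$. Your heuristic breaks at the phrase ``the finitely many candidate rank-$m$ core graphs'': there are only finitely many \emph{topological} types of rank-$m$ core graphs, but once you allow the edges of $\Gamma_X(J)$ to have total length up to $\ell/2$ (which the filling constraint permits), the number of \emph{labeled} core graphs is $\sim(2k-1)^{|E(\Gamma_X(J))|}$, i.e.\ exponential in~$\ell$. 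The paper's Proposition~\ref{prop:reduced-words-upper-bound} handles exactly this tension by parametrizing $|E(\Gamma_X(J))|=\delta t$ and optimizing over~$\delta$, which is where the threshold $\sqrt{2k-1}$ first appears and the correct bound $T_m(\ell)\lesssim\max\{\sqrt{2k-1},\,2m-1\}^{\ell}$ emerges. With that corrected exponent, your cogrowth/large-deviation step for $\sum_\ell \alpha^\ell N(\ell,t)$ is essentially the paper's extended cogrowth formula (Theorem~\ref{thm:cogrowth-formula-extended}) and recovers the stated answer.

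Your lower-bound sketch is also too thin for the regime $2m-1>\sqrt{2k-1}$. Inflating a \emph{single} reduced word $u$ with $\pi(u)=m$ only gives growth $2\sqrt{2k-1}$ (the tree's spectral radius), not $(2m-1)+\tfrac{2k-1}{2m-1}$. You need an exponential family: $\sim(2m-1)^\ell$ reduced words of each length $\ell$ with $\pi=m$, and then the cogrowth computation on $\sum_\ell(2m-1)^\ell N(\ell,t)$. Producing that family is the content of Theorem~\ref{thm:prim-rank-distr-on-reduced-words} in the paper (via explicit words in rank-$m$ free factors, plus Theorem~\ref{thm:prim-words-growth} to rule out accidental primitivity), and is not automatic.
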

\begin{rem}
In fact, in the case of the bouquet, the growth rates in the statement
of the last theorem remain the same if we assume every word has only
a single critical subgroup. That is, the r.h.s.~gives also the growth
rate of the \emph{number} of words in $\left(X\cup X^{-1}\right)^{t}$
with primitivity rank $m$ - see Theorem \ref{thm:prim-rank-distr}. 
\end{rem}
Table \ref{tab:Primitivity-Rank-and-fixed-points} summarizes the
content of Theorems \ref{thm:avg_fixed_pts}, \ref{thm:bound-for-m-in-general-Omega}
and \ref{thm:prim-rank-distr}%
\footnote{The number $2k-2+\frac{2}{2k-3}$ in the last row of the table is
the exponential growth rate of the set of primitives in $\F_{k}$,
namely of $\left|\cpt^{\infty}\left(B_{\frac{d}{2}}\right)\right|$.
(Primitive words have no critical subgroups.) This result is not necessary
for the current work, and is established in a separate paper \cite{PW14},
using completely different techniques. We use it here only to show
that our bounds for $\sum_{w\in\cptm\left(B_{\frac{d}{2}}\right)}\left|\crit\left(w\right)\right|$
are tight - see Section \ref{sec:distr-of-prim-rank}.%
}.

\begin{table}[th]
\begin{centering}
\begin{tabular}{|>{\centering}m{2cm}|>{\centering}m{2.5cm}|c|>{\centering}m{2.9cm}|>{\centering}m{2.3cm}|}
\hline 
$\pi\left(w\right)$ & Description of $w$ & $\mathbb{E}\left[{\cal F}_{w,n}\right]$ & Growth rate\\
 for the bouquet $B_{\frac{d}{2}}$ & Bound on growth rate\\
 for general $\Omega$\tabularnewline
\hline 
\hline 
$0$ & $w=1$ & $n$ & $2\sqrt{2k-1}$ & $\rho$\tabularnewline
\hline 
$1\vphantom{\Big[}$ & a power & $\sim1+|\crit\left(w\right)|$ & $2\sqrt{2k-1}$ & $\rho$\tabularnewline
\hline 
$2\vphantom{\Big[}$ & E.g. $\left[x_{1},x_{2}\right],x_{1}^{\,2}x_{2}^{\,2}$ & $\sim1+\frac{|\crit\left(w\right)|}{n}$ & $2\sqrt{2k-1}$ & $3\rho$\tabularnewline
\hline 
$3\vphantom{\Big[}$ & E.g. $x_{1}^{\,2}x_{2}^{\,2}x_{3}^{\,2}$ & $\sim1+\frac{|\crit\left(w\right)|}{n^{2}}$ & $2\sqrt{2k-1}$ & $5\rho$\tabularnewline
\hline 
$\vdots$ & $\vdots$ & $\vdots$ & $\vdots$ & $\vdots$\tabularnewline
\hline 
$\left\lfloor \frac{\sqrt{2k-1}+1}{2}\right\rfloor \vphantom{\bigg|}$ &  &  & $2\sqrt{2k-1}$ & \tabularnewline
\hline 
$\left\lceil \frac{\sqrt{2k-1}+1}{2}\right\rceil \vphantom{\bigg|}$ &  &  & $2\pi\left(w\right)-1+\frac{2k-1}{2\pi\left(w\right)-1}$$ $ & \tabularnewline
\hline 
$\vdots$ &  &  & $\vdots$ & \tabularnewline
\hline 
$k-1$ & $\vdots$ & $\vdots$ & $2k-2+\frac{2}{2k-3}$ & \tabularnewline
\hline 
$k\vphantom{\Big[}$ & E.g. $x_{1}^{\,2}\ldots x_{k}^{\,2}$ & $\sim1+\frac{|\crit\left(w\right)|}{n^{k-1}}$ & $2k$ & \tabularnewline
\hline 
\hline 
$\infty$ & primitive & $1$ & $2k-2+\frac{2}{2k-3}$ & \tabularnewline
\hline 
\end{tabular}
\par\end{centering}

\centering{}\caption{Primitivity rank, the average number of fixed points, the exponential
growth rate of $\sum_{w\in\cptm\left(B_{\frac{d}{2}}\right)}\left|\crit\left(w\right)\right|$,
and bounds on the exponential growth rate of $\sum_{w\in\cptm\left(\Omega\right)}\left|\crit\left(w\right)\right|$.\label{tab:Primitivity-Rank-and-fixed-points}}
\end{table}

Whereas in the special case of the bouquet we count words in $\F_{k}$
of a given length and a given primitivity rank, the case of a general
graph concerns the equivalent question for words which in addition
belong to some fixed subgroups of $\F_{k}$. (There is one such subgroup
for each vertex $v$ of $\Omega$: the one consisting of the words
which correspond to closed walks at $v$.) The fact that the bounds
in Corollary \ref{cor:non-reduced-words_upper_bound} are better than
those in Theorem \ref{thm:bound-for-m-in-general-Omega} explains
the gap between Theorems \ref{thm:2sqrt(d-1)+1} and \ref{thm:base-d-regular}
which are tight up to a small \emph{additive} constant, and Theorem
\ref{thm:sqrt3-times-rho} which is tight up to a small \emph{multiplicative
}factor.

\subsection*{Step V: Some analysis}

The final step is fairly simple and technical: it consists of analyzing
the upper bounds we obtain from \eqref{upper-bound-with-thm-2} together
with Theorem \ref{thm:bound-for-m-in-general-Omega} and Corollary
\ref{cor:non-reduced-words_upper_bound}. We seek the value of $t$
(as a function of $n$) which yields the best bounds.\\

The paper is arranged as follows. Section \ref{sec:Core-Graphs-and-alg-extensions}
provides some basic facts about the concepts of core graphs and algebraic
extensions which are used throughout this paper. In Section \ref{sec:counting-words-and-algebraic-extensions}
we bound the number of words and critical subgroups and establish
the fourth step of the proof (first for the special case of the bouquet,
in Section \ref{sub:counting-bouquet}, then for the intermediate
case of an arbitrary regular base graph in Section \ref{sub:d-reg-base},
and finally for the most general case in Section \ref{sub:counting-omega}).
The third step of the proof, where the error term from Theorem \ref{thm:avg_fixed_pts}
is dealt with, is carried out in Section \ref{sec:Controlling-the-O},
where we have to recall some more details from \cite{PP15}. Section
\ref{sec:Completing-the-Proof-d-reg} completes the proof of Theorems
\ref{thm:2sqrt(d-1)+1} and \ref{thm:base-d-regular} and addresses
the source of the gap between Theorem \ref{thm:2sqrt(d-1)+1} and
Friedman's result. In Section \ref{sec:Completing-the-Proof-general-Omega}
we complete the proof of Theorem \ref{thm:sqrt3-times-rho}. We end
with results on the accurate exponential growth rate of words with
a given primitivity rank in $\F_{k}$ (Section \ref{sec:distr-of-prim-rank}),
and then list a few open questions. The appendices provide some background
on the relation between different models of random $d$-regular graphs
and between different models of random coverings (Appendix \ref{sec:contiguity}),
and on the theory of spectral expansion of non-regular graphs (Appendix
\ref{sec:Operators-on-Non-Regular}).

\section{Preliminaries: Core Graphs and Algebraic Extensions\label{sec:Core-Graphs-and-alg-extensions}}

This section describes some notions and ideas which are used throughout
the current paper.

\subsection{Algebraic extensions\label{sub:Algebraic-Extensions}}

Let $H\leq J$ be subgroups of $\F_{k}$. We say that $J$ is an \emph{algebraic
extension} \marginpar{\emph{algebraic extension}}of $H$ and denote
$H\leq_{\mathrm{alg}}J$\marginpar{$H\leq_{\mathrm{alg}}J$}, if there
is \emph{no} intermediate subgroup $H\leq L\lneq J$ which is a proper
free factor%
\footnote{If $H\leq J$ are free groups then $H$ is said to be a \emph{free
factor }of $J$ if a (every) basis of $H$ can be extended to a basis
of $J$.%
} of $J$. The name originated in \cite{KM02}, but the notion goes
back at least to \cite{takahasi1951note}, and was formulated independently
by several authors. It is central in the understanding of the lattice
of subgroups of $\mathbf{F}$. For example, it can be shown that every
extension $H\leq J$ of free groups admits a unique intermediate subgroup
$H\leq_{alg}M\ff J$\marginpar{$\ff$} (where $\ff$ denotes a free
factor). Moreover, if $H\leq\mathbf{F}$ is a finitely generated subgroup,
it has only finitely many algebraic extensions in $\mathbf{F}$. Thus,
every group containing $H$ is a free extension of one of finitely
many extensions of $H$, which is a well known theorem of Takahasi
\cite{takahasi1951note}. For more information we refer the interested
reader to \cite{KM02,PP15} and especially to \cite{MVW07}.

The importance of algebraic extensions in the current paper stems
from the following easy observation:
\begin{claim}
\label{claim:critical-is-algebraic}\cite[Cor. 4.4]{Pud14a} Every
$w$-critical subgroup is an algebraic extension of $\left\langle w\right\rangle $
(the subgroup generated by $w$).\\
More precisely, $\crit\left(w\right)$ consists precisely of the algebraic
extensions of $\left\langle w\right\rangle $ of minimal rank besides
$\left\langle w\right\rangle $ itself%
\footnote{Unless $w=1$ in which case $\crit\left(w\right)=\left\{ \left\langle \right\rangle \right\} =\left\{ \left\langle w\right\rangle \right\} $.%
}.
\end{claim}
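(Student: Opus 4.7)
The plan rests on one key reformulation: for $w \in J \leq \F_k$, the word $w$ is primitive in $J$ if and only if $\langle w \rangle$ is a free factor of $J$. The ``only if'' direction is the definition of a basis; the ``if'' holds because appending $w$ to any basis of a complementary free factor yields a basis of $J$. With this dictionary in hand, $\pi(w)$ becomes the minimum rank of a subgroup $J$ containing $w$ in which $\langle w \rangle$ is \emph{not} a free factor, and $\crit(w)$ is the set of rank-minimizing such $J$.

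To prove that every $J \in \crit(w)$ is algebraic over $\langle w \rangle$, I would argue by contradiction. Suppose some intermediate subgroup $\langle w \rangle \leq L \lneq J$ is a proper free factor of $J$. Split into two cases according to whether $w$ is primitive in $L$. If yes, then $\langle w \rangle \ff L \ff J$, and by transitivity of the free-factor relation $\langle w \rangle \ff J$, contradicting $J \in \crit(w)$. If no, then $L$ itself is a subgroup containing $w$ in which $w$ is not primitive, yet $\rk(L) < \rk(J) = \pi(w)$ (strict, because $L$ is a \emph{proper} free factor), contradicting the minimality defining $\pi(w)$.

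For the sharper statement, I would establish the matching lower bound: if $J'$ is any proper algebraic extension of $\langle w \rangle$, then $\langle w \rangle$ cannot be a free factor of $J'$ — otherwise taking $L = \langle w \rangle$ in the definition of an algebraic extension exhibits a proper intermediate free factor, violating algebraicity. Hence $w$ is not primitive in $J'$, so $J'$ is a legitimate candidate in the definition of $\pi(w)$ and $\rk(J') \geq \pi(w)$. Combined with the previous paragraph, this shows both that the minimum rank among proper algebraic extensions of $\langle w \rangle$ equals $\pi(w)$, and that the proper algebraic extensions realizing this minimum are exactly the elements of $\crit(w)$.

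The argument is essentially a careful repackaging of the two definitions via the primitive/free-factor equivalence and transitivity of free factorization; both ingredients are classical and lightweight, so I do not expect any serious obstacle. The only step that demands a moment of thought is ensuring the strict rank drop $\rk(L) < \rk(J)$ in case~2 of the first part — but this is immediate from $L$ being a \emph{proper} free factor of the free group $J$.
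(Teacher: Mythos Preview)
Your proposal is correct and follows essentially the same route as the paper. The paper's argument for the first part is: if $H\in\crit(w)$ were not algebraic over $\langle w\rangle$, take a proper intermediate free factor $\langle w\rangle\leq L\lneq_{\mathrm{ff}} H$; since $w$ is not primitive in $H$ it is not primitive in the free factor $L$ either (citing \cite[Claim~2.5]{Pud14a}), and $\rk(L)<\rk(H)$ gives the contradiction. Your two-case split is just this argument with the cited lemma unwound---your Case~1 proves the contrapositive of ``not primitive in $H\Rightarrow$ not primitive in $L$'' via transitivity of free factors, and your Case~2 is the paper's conclusion. The paper does not spell out a proof of the ``more precisely'' part (it defers to \cite[Cor.~4.4]{Pud14a}), so your second paragraph, deriving $\rk(J')\geq\pi(w)$ for every proper algebraic extension $J'$ and matching it against the first part, is a welcome addition.
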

To see the claim, assume that $H$ is a $w$-critical subgroup of
$\F_{k}$. Obviously, $\left\langle w\right\rangle \lneq H$. If $H$
is not an algebraic extension of $\left\langle w\right\rangle $,
then there is a proper intermediate free factor $\left\langle w\right\rangle \leq L\lneq_{ff}H$.
Since $w$ is not primitive in $H$, it is also not primitive in $L$
(as a basis containing $w$ for $L$ would extend to a basis for $H$),
but $\rk\left(L\right)<\rk\left(M\right)$, which is a contradiction.
Below, we use properties of $w$-critical subgroups which are actually
shared by all proper algebraic extensions of $\left\langle w\right\rangle $.

\subsection{Core graphs\label{sub:Core-Graphs}}

Fix a basis $X=\left\{ x_{1},\ldots,x_{k}\right\} $ of $\F_{k}$.
Associated with every subgroup $H\le\F_{k}$ is a directed, pointed
graph whose edges are labeled by $X$. This graph is called \emph{the
core-graph associated with $H$} and is denoted by $\G_{X}\left(H\right)$\marginpar{$\Gamma_{X}\left(H\right)$}.
We illustrate the notion in Figure \ref{fig:coset_and_core_graphs}.

To understand how $\Gamma_{X}\left(H\right)$ is constructed, recall
first the notion of the Schreier (right) coset graph of $H$ with
respect to the basis $X$, denoted by $\overline{\G}_{X}\left(H\right)$\marginpar{$\overline{\G}_{X}\left(H\right)$}.
This is a directed, pointed and edge-labeled graph. Its vertex set
is the set of all right cosets of $H$ in $\F_{k}$, where the basepoint
corresponds to the trivial coset $H$. For every coset $Hw$ and every
basis-element $x_{j}$ there is a directed $j$-edge (short for $x_{j}$-edge)
going from the vertex $Hw$ to the vertex $Hwx_{j}$.%
\footnote{Alternatively, $\overline{\G}_{X}\left(H\right)$ is the quotient
$H\backslash T$, where $T$ is the Cayley graph of $\F_{k}$ with
respect to the basis $X$, and $F_{k}$ (and thus also H) acts on
this graph from the left. Moreover, this is the covering-space of
$\overline{\G}_{X}\left(F_{k}\right)=\Gamma_{X}\left(F_{k}\right)$,
the bouquet of k loops, corresponding to $H$, via the correspondence
between pointed covering spaces of a space $Y$ and subgroups of its
fundamental group $\pi_{1}\left(Y\right)$.%
}

The core graph $\G_{X}\left(H\right)$ is obtained from $\overline{\G}_{X}\left(H\right)$
by omitting all the vertices and edges of $\overline{\G}_{X}\left(H\right)$
which are not traced by any reduced (i.e., non-backtracking) walk
that starts and ends at the basepoint. Stated informally, we trim
all {}``hanging trees'' from $\overline{\G}_{X}\left(H\right)$.
To illustrate, Figure \figref{coset_and_core_graphs} shows the graphs
$\overline{\G}_{X}\left(H\right)$ and $\G_{X}\left(H\right)$ for
$H=\langle x_{1}x_{2}x_{1}^{-3},x_{1}^{\;2}x_{2}x_{1}^{-2}\rangle\leq\F_{2}$.

\begin{figure}[h]
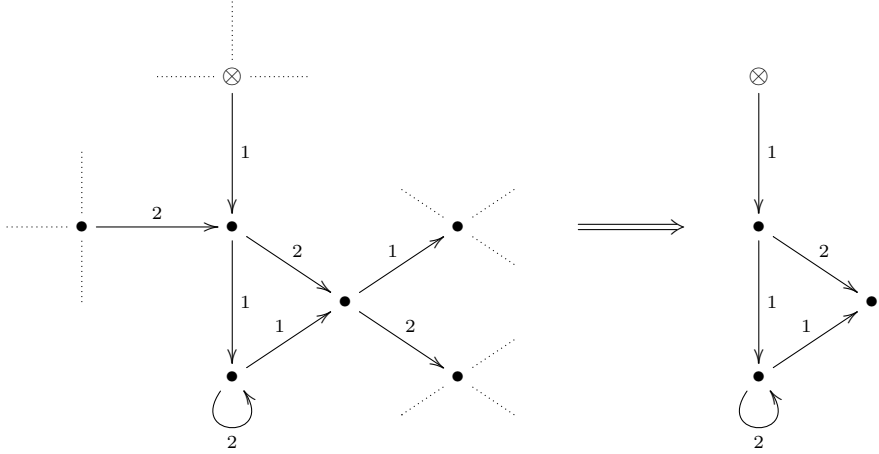

\begin{centering}
\begin{center}
\xy
(30,60)*+{\otimes}="s0";
(30,40)*+{\bullet}="s1";%
(45,30)*+{\bullet}="s2";%
(30,20)*+{\bullet}="s3";%
{\ar^{1} "s0";"s1"};%
{\ar^{2} "s1";"s2"};%
{\ar^{1} "s3";"s2"};%
{\ar^{1} "s1";"s3"};%
{\ar@(dl,dr)_{2} "s3";"s3"};%
(10,40)*+{\bullet}="t0";%
(60,40)*+{\bullet}="t1";%
(60,20)*+{\bullet}="t2";%
{\ar^{2} "t0";"s1"};%
{\ar^{1} "s2";"t1"};%
{\ar^{2} "s2";"t2"};%
{\ar@{..} "s0"; (20,60)*{}};%
{\ar@{..} "s0"; (30,70)*{}};%
{\ar@{..} "s0"; (40,60)*{}};%
{\ar@{..} "t0"; (10,50)*{}};%
{\ar@{..} "t0"; (0,40)*{}};%
{\ar@{..} "t0"; (10,30)*{}};%
{\ar@{..} "t1"; (52.5,45)*{}};%
{\ar@{..} "t1"; (67.5,45)*{}};%
{\ar@{..} "t1"; (67.5,35)*{}};%
{\ar@{..} "t2"; (67.5,25)*{}};%
{\ar@{..} "t2"; (67.5,15)*{}};%
{\ar@{..} "t2"; (52.5,15)*{}};%
{\ar@{=>} (76,40)*{}; (90,40)*{}};
(100,60)*+{\otimes}="m0";%
(100,40)*+{\bullet}="m1";%
(115,30)*+{\bullet}="m2";%
(100,20)*+{\bullet}="m3";%
{\ar^{1} "m0";"m1"};%
{\ar^{2} "m1";"m2"};%
{\ar^{1} "m3";"m2"};%
{\ar^{1} "m1";"m3"};%
{\ar@(dl,dr)_{2} "m3";"m3"};%
\endxy
\par\end{center}
\par\end{centering}

\caption{$\overline{\G}_{X}\left(H\right)$ and $\G_{X}\left(H\right)$ for
$H=\langle x_{1}x_{2}x_{1}^{-3},x_{1}^{\;2}x_{2}x_{1}^{-2}\rangle\leq\F_{2}$.
The Schreier coset graph $\overline{\G}_{X}\left(H\right)$ is the
infinite graph on the left (the dotted lines represent infinite $4$-regular
trees). The basepoint {}``$\otimes$'' corresponds to the trivial
coset $H$, the vertex below it corresponds to the coset $Hx_{1}$,
the one further down corresponds to $Hx_{1}^{\;2}=Hx_{1}x_{2}x_{1}^{-1}$,
etc. The core graph $\G_{X}\left(H\right)$ is the finite graph on
the right, which is obtained from $\overline{\G}_{X}\left(H\right)$
by omitting all vertices and edges that are not traced by reduced
closed walks around the basepoint.}

\label{fig:coset_and_core_graphs} 
\end{figure}

If $\Gamma$ is a directed pointed graph labeled by some set $X$,
walks in $\Gamma$ correspond to words in $\mathbf{F}\left(X\right)$
(the free group generated by $X$). For instance, the walk (from left
to right)
\[
\xymatrix@C=35pt{\bullet\ar[r]^{x_{2}} & \bullet\ar[r]^{x_{2}} & \bullet\ar[r]^{x_{1}} & \bullet & \bullet\ar[l]_{x_{2}}\ar[r]^{x_{3}} & \bullet\ar[r]^{x_{2}} & \bullet & \bullet\ar[l]_{x_{1}}}
\]
corresponds to the word $x_{2}^{\;2}x_{1}x_{2}^{-1}x_{3}x_{2}x_{1}^{-1}$.
The set of all words obtained from closed walks around the basepoint
in $\Gamma$ is a subgroup of $\F\left(X\right)$ which we call the
\emph{labeled fundamental group }of $\Gamma$, and denote by $\pi_{1}^{X}\left(\Gamma\right)$\marginpar{$\pi_{1}^{X}\left(\Gamma\right)$}.
Note that $\pi_{1}^{X}\left(\Gamma\right)$ need not be isomorphic
to $\pi_{1}\left(\Gamma\right)$, the standard fundamental group of
$\Gamma$ viewed as a topological space - for example, take $\vphantom{\Big|}\Gamma=\xymatrix@1{\otimes\ar@(dl,ul)[]^{x_{1}}\ar@(dr,ur)[]_{x_{1}}}$.

However, it is not hard to show that when $\Gamma$ is a core graph,
then $\pi_{1}^{X}\left(\Gamma\right)$ \emph{is }isomorphic to $\pi_{1}\left(\Gamma\right)$
(e.g. \cite{KM02}). In this case the labeling gives a canonical identification
of $\pi_{1}\left(\Gamma\right)$ as a subgroup of $\mathbf{F}\left(X\right)$.
It is an easy observation that 
\begin{equation}
\pi_{1}^{X}\left(\overline{\G}_{X}\left(H\right)\right)=\pi_{1}^{X}\left(\G_{X}\left(H\right)\right)=H\label{eq:canon_iso}
\end{equation}
This gives a one-to-one correspondence between subgroups of $\mathbf{F}\left(X\right)=\F_{k}$
and core graphs labeled by $X$. Namely, $\pi_{1}^{X}$ and $\Gamma_{X}$
are the inverses of each other in a bijection (Galois correspondence)
\begin{equation}
\left\{ {\mathrm{Subgroups}\atop \mathrm{of}\,\mathbf{F}\left(X\right)}\right\} \:{\underrightarrow{\;\Gamma_{X}\;}\atop \overleftarrow{\;\pi_{1}^{X}\;}}\:\left\{ {\mathrm{Core\, graphs}\atop \mathrm{labeled\, by}\, X}\right\} .\label{eq:pi_1_gamma}
\end{equation}
Core graphs were introduced by Stallings \cite{Sta83}. Our definition
is slightly different, and closer to the one in \cite{KM02,MVW07}
in that we allow the basepoint to be of degree one, and in that our
graphs are directed and edge-labeled. 

\medskip{}

We now list some basic properties of core graphs which are used in
the sequel of this paper (proofs can be found in \cite{Sta83,KM02,MVW07,Pud14a}).
\begin{claim}
\label{cla:core-graphs-properties} Let $H$ be a subgroup of $\F_{k}$
with an associated core graph $\G=\G_{X}\left(H\right)$. 
\begin{enumerate}
\item \label{enu:finite-rk-graph}$\rk\left(H\right)<\infty\Longleftrightarrow\G$
is finite.
\item \label{enu:euler}$\rk\left(H\right)=\left|E\left(\Gamma\right)\right|-\left|V\left(\Gamma\right)\right|+1$
for finitely generated (f.g.) subgroup $H$.
\item The correspondence \eqref{pi_1_gamma} restricts to a correspondence
between finitely generated subgroups of $\F_{k}$ and finite core
graphs. 
\end{enumerate}
\end{claim}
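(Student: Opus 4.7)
My plan is to prove the three items together, exploiting the identification $\pi_1^X(\Gamma_X(H)) = H$ from \eqref{canon_iso} and the fact that the fundamental group of a connected graph is free of rank equal to its first Betti number.

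For part (2), I would begin with the classical observation that for any connected graph $\Gamma$, the topological fundamental group $\pi_1(\Gamma)$ is free, and its rank equals $|E(\Gamma)| - |V(\Gamma)| + 1$. The standard argument is to choose a spanning tree $T \subseteq \Gamma$; each edge of $\Gamma \setminus T$ then produces a generator (the unique reduced loop through that edge and $T$), and together these form a basis. Since $\Gamma = \Gamma_X(H)$ is a core graph, the labeling gives a canonical isomorphism $\pi_1^X(\Gamma) \cong \pi_1(\Gamma)$, and by \eqref{canon_iso} this group is $H$. Hence $\rk(H) = |E(\Gamma)| - |V(\Gamma)| + 1$ whenever $\Gamma$ is finite and connected.

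For part (1), the ``$\Leftarrow$'' direction is immediate from (2): if $\Gamma_X(H)$ is finite, then $H \cong \pi_1(\Gamma_X(H))$ is finitely generated. For ``$\Rightarrow$'', I would invoke Stallings' folding construction. Given a finite generating set $w_1, \ldots, w_n$ of $H$, form the finite pointed graph $\Delta$ obtained as a wedge of $n$ labeled loops, where the $i$-th loop is subdivided and labeled so as to spell out the word $w_i$. By construction $\pi_1^X(\Delta) \supseteq \{w_1, \ldots, w_n\}$, so $\pi_1^X(\Delta) \supseteq H$. Now iteratively identify pairs of edges sharing a common endpoint and the same label/direction (Stallings foldings). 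Each folding strictly decreases $|V|$, so the process terminates after finitely many steps at a finite folded (i.e., ``$X$-labeled'' in the sense that each vertex has at most one outgoing and one incoming $j$-edge for each $j$) pointed graph $\Delta'$. Folding preserves the labeled fundamental group, so $\pi_1^X(\Delta') = H$. Finally, trim iteratively all degree-one vertices other than the basepoint to obtain a core graph $\Delta''$ with $\pi_1^X(\Delta'') = H$; by the bijection \eqref{pi_1_gamma}, $\Delta'' = \Gamma_X(H)$, which is finite.

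Part (3) is then an immediate corollary of (1), since the correspondence \eqref{pi_1_gamma} is already known to be a bijection, and (1) says that finiteness of the core graph matches finite generation of the subgroup. The only step requiring genuine work is the folding argument in (1); the main obstacle is verifying carefully that Stallings foldings preserve $\pi_1^X$ and that the trimming of hanging trees does not change $\pi_1^X$ either (which follows because any reduced closed path at the basepoint cannot traverse a pendant edge). Both verifications are standard but slightly tedious; everything else is a direct consequence of the correspondence \eqref{pi_1_gamma} and the Euler-characteristic computation of the rank.
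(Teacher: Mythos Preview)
Your approach is essentially the standard Stallings argument, which is exactly what the paper defers to by citing \cite{Sta83,KM02,MVW07,Pud14a} rather than giving its own proof. So there is nothing to compare against; your proof is the expected one.

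Two small slips worth fixing. First, for the wedge $\Delta$ of labeled loops you only claim $\pi_1^X(\Delta)\supseteq H$, and then conclude $\pi_1^X(\Delta')=H$ after folding; but from $\supseteq$ alone you cannot get equality. In fact $\pi_1^X(\Delta)=\langle w_1,\ldots,w_n\rangle=H$ outright: any closed path at the basepoint of a wedge of subdivided loops reduces to a product of the $w_i^{\pm1}$, so there is nothing extra. State the equality directly. Second, a single Stallings fold identifies two equally-labeled edges sharing an endpoint; their \emph{other} endpoints may already coincide, so $|V|$ can stay the same. What always strictly decreases is $|E|$ (or $|V|+|E|$), and that is what guarantees termination. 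With these two corrections the argument is complete.
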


A \emph{morphism} between two core-graphs is a map that sends vertices
to vertices and edges to edges, and preserves the structure of the
core graphs. Namely, it preserves the incidence relations, sends the
basepoint to the basepoint, and preserves the directions and labels
of the edges. As in Claim \claref{core-graphs-properties}, each of
the following properties is either proven in (some of) \cite{Sta83,KM02,MVW07,Pud14a}
or is an easy observation:
\begin{claim}
\label{cla:morphism-properties} Let $H,J,L\le\F_{k}$ be subgroups.
Then 
\begin{enumerate}
\item A morphism $\G_{X}\left(H\right)\to\G_{X}\left(J\right)$ exists if
and only if $H\leq J$. 
\item If a morphism $\G_{X}\left(H\right)\to\G_{X}\left(J\right)$ exists,
it is unique. We denote it by $\eta_{H\to J}^{X}$\marginpar{$\eta_{H\to J}^{X}$}.
\item Whenever $H\le L\le J$, $\eta_{H\to J}^{X}=\eta_{L\to J}^{X}\circ\eta_{H\to L}^{X}$.%
\footnote{Points (1)-(3) can be formulated by saying that \eqref{pi_1_gamma}
is in fact an isomorphism of categories, given by the functors $\pi_{1}^{X}$
and $\Gamma_{X}$.%
}
\item If $\Gamma_{X}\left(H\right)$ is a subgraph of $\Gamma_{X}\left(J\right)$,
namely if $\eta_{H\to J}^{X}$ is injective, then $H\ff J$.%
\footnote{But not vice-versa: for example, consider $\left\langle x_{1}x_{2}^{\,2}\right\rangle \ff\F_{2}$.%
}
\item Every morphism in an immersion (locally injective at the vertices). 
\end{enumerate}
\end{claim}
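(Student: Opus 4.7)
The plan is to prove all five points using the two fundamental structural features of core graphs: they are \emph{folded} (at each vertex, at most one outgoing and at most one incoming edge carries any given label $x_j$), and they realize their subgroup in the sense that $\pi_1^X(\Gamma_X(H)) = H$, as recorded in \eqref{canon_iso}.

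For (1), the forward implication is immediate: a morphism $\varphi:\Gamma_X(H)\to\Gamma_X(J)$ preserves basepoints and labels, so any closed path around the basepoint of $\Gamma_X(H)$ maps to a closed path around the basepoint of $\Gamma_X(J)$ with the same label, giving $H=\pi_1^X(\Gamma_X(H))\leq\pi_1^X(\Gamma_X(J))=J$. Conversely, assuming $H\leq J$, define $\varphi$ on vertices as follows: for each vertex $v\in\Gamma_X(H)$ pick a reduced path from the basepoint to $v$, read off its label $w_v\in\F_k$, and let $\varphi(v)$ be the endpoint of the (unique, by folding) lift of $w_v$ in $\Gamma_X(J)$ starting at its basepoint. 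Well-definedness is the content of the assumption: two choices of path give labels differing by an element of $H\leq J=\pi_1^X(\Gamma_X(J))$, hence their lifts end at the same vertex. Folding at $\Gamma_X(J)$ lets one extend $\varphi$ to edges consistently.

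Point (2) follows because connectedness of $\Gamma_X(H)$ together with folding of $\Gamma_X(J)$ force $\varphi$ to be completely determined by $\varphi(\otimes)=\otimes$: the image of any vertex is the endpoint of the lift of any labelled path from the basepoint, and the image of any edge is then forced by the labels. Point (3) is then immediate: $\eta_{L\to J}^X\circ\eta_{H\to L}^X$ is a morphism $\Gamma_X(H)\to\Gamma_X(J)$, and by (2) must equal $\eta_{H\to J}^X$. For (5), suppose two edges $e_1,e_2$ of $\Gamma_X(H)$ emanate from a common vertex $v$ and share the image $\varphi(e_1)=\varphi(e_2)$; since morphisms preserve labels and directions, $e_1$ and $e_2$ are both $j$-edges out of $v$ for the same $j$, contradicting folding at $\Gamma_X(H)$. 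The case of incoming edges is identical.

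The main content is (4), which I expect to be the only genuinely non-trivial item. The plan is the standard spanning-tree argument: choose a spanning tree $T_H$ of $\Gamma_X(H)$, and extend it (using that $\Gamma_X(H)$ is a subgraph of the connected graph $\Gamma_X(J)$) to a spanning tree $T_J$ of $\Gamma_X(J)$. Each edge $e\notin T_J$ determines a closed path at the basepoint (tree-path to the tail of $e$, then $e$, then tree-path back), whose label $b_e\in\F_k$ is a basis element; the collection $\{b_e\mid e\notin T_J\}$ is a basis of $J=\pi_1^X(\Gamma_X(J))$ by Claim \ref{cla:core-graphs-properties}, and its sub-collection $\{b_e\mid e\in E(\Gamma_X(H))\setminus T_H\}$ is a basis of $H$. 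Hence a basis of $H$ extends to a basis of $J$, giving $H\ff J$. The only subtlety is to verify that $T_H\subseteq T_J$ can be arranged and that the basis-by-non-tree-edges prescription is compatible; both follow because $\Gamma_X(H)$ sits inside $\Gamma_X(J)$ as a \emph{subgraph} and spanning trees can be extended freely.
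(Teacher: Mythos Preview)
Your arguments are correct and are the standard ones; note that the paper itself does not prove Claim~\ref{cla:morphism-properties} but defers to \cite{Sta83,KM02,MVW07,Pud14a}, so there is no in-paper proof to compare against. One small point worth tightening in part~(1): when you write ``the endpoint of the (unique, by folding) lift of $w_v$ in $\Gamma_X(J)$'', you are implicitly assuming this lift \emph{exists} in the core graph $\Gamma_X(J)$, whereas a priori it only exists in the full Schreier graph $\overline{\Gamma}_X(J)$. The fix is easy: every vertex and edge of $\Gamma_X(H)$ lies on some reduced closed path at the basepoint, whose label is a reduced word in $H\leq J$; reduced words in $J$ lift (in $\overline{\Gamma}_X(J)$) to non-backtracking closed paths at the basepoint, which by definition lie entirely in the core $\Gamma_X(J)$. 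Your spanning-tree argument for~(4) is fine; the key check (which you allude to) is that $T_J\cap E(\Gamma_X(H))=T_H$, so that for $e\in E(\Gamma_X(H))\setminus T_H$ the basis element $b_e$ computed via $T_J$ coincides with the one computed via $T_H$.
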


\section{Counting Words and Critical Subgroups\label{sec:counting-words-and-algebraic-extensions}}

In this section we bound the exponential growth rate (as $t\to\infty$)
of
\[
\sum_{w\in\cptm\left(\Omega\right)}\left|\crit\left(w\right)\right|.
\]
For the special case of the bouquet with $k=\frac{d}{2}$ loops, where
$\cpt\left(B_{\frac{d}{2}}\right)=\left(X\cup X^{-1}\right)^{t}$,
we find the accurate exponential growth rate. The bound for a general
graph $\Omega$ is given in terms of the spectral radius $\rho=\rho_{A}\left(\Omega\right)$
of the universal covering tree of $\Omega$. 

We begin with a key lemma to be used in the proofs of all cases (a
bouquet, a $d$-regular base graph and an arbitrary base graph):
\begin{lem}
\label{lem:each-edge-twice}Let $w\in\F_{k}$ and let $N\leq_{f.g.}\F_{k}$
be a proper algebraic extension of $\left\langle w\right\rangle $.\emph{
Then the closed walk in $\Gamma_{X}\left(N\right)$ corresponding
to $w$ traces every edge at least twice.}\end{lem}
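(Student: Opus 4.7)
The plan is a proof by contradiction. Suppose some edge $e\in E(\Gamma_X(N))$ is traversed by the closed path $p$ spelling $w$ at most once; I will produce a subgroup $L$ with $\langle w\rangle\le L$ and $L$ a proper free factor of $N$, contradicting $\langle w\rangle\neqalg N$. Since replacing $w$ by its freely reduced form does not change $\langle w\rangle$ and only decreases edge-traversal counts of $p$, I may assume $w$ is freely reduced. The argument splits according to whether $p$ traverses $e$ zero times or exactly once.

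\emph{Case 1: $p$ does not use $e$.} Let $\Gamma_p\subseteq\Gamma_X(N)$ be the subgraph consisting of the edges traversed by $p$ together with their endpoints. A closed walk enters and leaves every non-basepoint vertex that it visits, so each non-basepoint vertex of $\Gamma_p$ has degree $\ge 2$ in $\Gamma_p$; hence $\Gamma_p$ is itself a core graph. Since $e\notin\Gamma_p$, it is a proper subgraph of $\Gamma_X(N)$. Setting $L:=\pi_1^X(\Gamma_p)$, the inclusion $\Gamma_X(L)=\Gamma_p\hookrightarrow\Gamma_X(N)$ is an injective morphism, so Claim~\ref{cla:morphism-properties}(4) yields $L\ff N$; combined with $\langle w\rangle\le L\lneq N$, this supplies the desired intermediate proper free factor.

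\emph{Case 2: $p$ uses $e$ exactly once.} A closed walk traverses any bridge an even number of times (the same number in each direction), so $e$ is not a bridge. Thus $\Gamma_X(N)\setminus\{e\}$ is connected and admits a spanning tree $T$ of $\Gamma_X(N)$ with $e\notin T$. The non-tree edges give a free basis $\{y_f:f\notin T\}$ of $N$, where $y_f$ is the loop (tree path basepoint$\to$tail of $f$)$\cdot f\cdot$(tree path head of $f\to$basepoint); each $y_f^{\pm 1}$ traverses $f$ once and no other non-tree edge. Write $w$ as a reduced word $y_{g_1}^{\epsilon_1}\cdots y_{g_m}^{\epsilon_m}$ in this basis. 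When this product is expanded in $X^{\pm 1}$ and freely reduced, only tree portions can cancel at block boundaries, because consecutive blocks involve either distinct non-tree edges or the same non-tree edge with the same sign (basis-reducedness forbids $y_f y_f^{-1}$). Hence the traversal count of any non-tree edge $f$ by $p$ equals $|\{i:g_i=f\}|$; applied to $f=e$, this forces $y_e^{\pm 1}$ to appear in the basis expansion of $w$ exactly once. A single Nielsen transformation solving for $y_e$ then yields a basis of $N$ containing $w$, so $\langle w\rangle$ itself is a proper free factor of $N$, again contradicting $\langle w\rangle\neqalg N$.

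\emph{Anticipated obstacle.} The technically delicate point is the traversal-count identity in Case~2 — that free reduction of $\prod y_{g_i}^{\epsilon_i}$ never cancels the non-tree edge inside any $y_{g_i}$-block. This should follow by an induction on reduction steps, using that cancellations at a block boundary propagate through the telescoping tree paths and halt upon reaching a non-tree edge. Once this is established, Case~2 is closed by a standard Nielsen transformation, and Case~1 is a direct application of Claim~\ref{cla:morphism-properties}(4).
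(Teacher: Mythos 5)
Your proposal is correct and follows essentially the same route as the paper: the untraversed-edge case is handled by the traced subgraph being a proper free factor, parity rules out a single traversal of a separating edge, and a spanning tree avoiding $e$ exhibits $w$ as primitive in $N$ when $e$ is traversed exactly once. The only real difference is how Case 2 is closed: the paper writes the basis element $u_{1}=p_{1}ep_{2}$ directly as $\left(p_{1}w_{1}^{-1}\right)w\left(w_{2}^{-1}p_{2}\right)$ with both outer factors avoiding $e$, which sidesteps the Nielsen-reducedness claim (that cancellation at block boundaries consumes only tree edges) that your version relies on -- your sketch of that claim is sound, but the paper's explicit factorization makes it unnecessary.
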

\begin{proof}
First, we claim that every edge is traced at least once (in fact,
even more generally, if $H\alg N$ then $\eta_{H\to N}^{X}$ is onto:
see Definition \ref{def:quotient} and e.g.~\cite[Claim 4.2]{PP15}.
We repeat the simple argument here.) Otherwise, let $J$ be the subgroup
of $N$ corresponding to the subgraph $\Delta$ traced by $w$ (so
$\Delta=\mathrm{im}\,\eta_{\left\langle w\right\rangle \to N}^{X}$),
and $J=\pi_{1}^{X}\left(\Delta\right)$, see Section \subref{Core-Graphs}
and in particular Claim \ref{cla:morphism-properties}). Then $w\in J\lneq_{ff}N$
(Claim \claref{morphism-properties}), contradicting the fact that
$N$ is an algebraic extension of $\left\langle w\right\rangle $.

Next, we distinguish between \emph{separating} edges and \emph{non-separating}
edges in $\Gamma=\Gamma_{X}\left(N\right)$. If $e$ is a separating
edge, namely if removing $e$ separates $\Gamma$ into two connected
components, then it is obvious that the walk of $w$ in $\Gamma$
must traverse $e$ an even number of times, and since this number
is $\geq1$, it is in fact $\geq2$.

Finally, assume that $e$ is not separating, and $w$ traverses it
exactly once, so that the walk corresponding to $w$ in $\Gamma_{X}\left(N\right)$
is $w_{1}ew_{2}$ (with $w_{1},w_{2}$ avoiding $e$; we think of
$e$ as oriented according to the direction of $w$). Choose a spanning
tree $T$ of $\Gamma_{X}\left(N\right)$ which avoids $e$ to obtain
a basis for $N$ as follows. There are $r=\rk\left(N\right)$ excessive
edges $e=e_{1},e_{2},\ldots,e_{r}$ outside the tree, and they should
be oriented arbitrarily. For each $1\leq i\leq r$ let $u_{i}$ be
the word corresponding to the walk that goes from $\otimes$ to the
origin of $e_{i}$ via $T$, then traverses $e_{i}$ and returns to
$\otimes$ via $T$. It is easy to see that $\left\{ u_{1},\ldots,u_{r}\right\} $
is a basis of $N$. We claim that so is $\left\{ w,u_{2},\ldots,u_{r}\right\} $,
so that $w$ is primitive in $N$ and therefore $\left\langle w\right\rangle \ff N$,
a contradiction. 

It is enough to show that $u_{1}\in\left\langle w,u_{2},\ldots,u_{r}\right\rangle $
(see footnote on Page \pageref{fn:basis}). Let $p_{1}$ be the walk
through $T$ from $\otimes$ to the origin of $e$, and $p_{2}$ the
walk from the terminus of $e$ back to $\otimes$. Then
\[
u_{1}=p_{1}ep_{2}=p_{1}w_{1}^{-1}w_{1}ew_{2}w_{2}^{-1}p_{2}=\left(p_{1}w_{1}^{-1}\right)w\left(w_{2}^{-1}p_{2}\right)
\]
and we are done because $p_{1}w_{1}^{-1}$ and $w_{2}^{-1}p_{2}$
avoid $e$ and thus belong to $\left\langle u_{2},\ldots,u_{r}\right\rangle $.
\end{proof}
We will also use the following simple properties of the core graph
of a subgroup of rank $m$. A {}``topological edge'' of a graph
is an edge of the graph obtained after ignoring all vertices of degree
2, except for (possibly) the basepoint $\otimes$. 
\begin{claim}
\label{Claim:core-graphs-of-rank-m}Let $\Gamma=\Gamma_{X}\left(J\right)$
be the core graph of a subgroup $J\leq\F_{k}$ of rank $m$. Then,
\begin{enumerate}
\item \label{enu:deg-at-most-2m}After omitting the string to $\otimes$
if the basepoint is a leaf, all vertices of $\Gamma$ are of degree
at most $2m$.
\item \label{enu:at-most-3m-1-edges}$\Gamma$ has at most $3m-1$ topological
edges.
\end{enumerate}
\end{claim}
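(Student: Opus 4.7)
The plan is to derive both parts from the Euler-characteristic formula $|E(\Gamma)| - |V(\Gamma)| + 1 = m$ (Claim~\ref{cla:core-graphs-properties}(\ref{enu:euler})) combined with the handshake identity $\sum_v \deg(v) = 2|E(\Gamma)|$ and the elementary fact that every vertex of a core graph other than $\otimes$ has degree at least $2$.

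For part (\ref{enu:deg-at-most-2m}), let $\Gamma'$ denote the graph obtained from $\Gamma$ by omitting the string to $\otimes$ when the basepoint is a leaf. The omitted string is a pendant tree, so its removal does not change the first Betti number; in particular $|E(\Gamma')| - |V(\Gamma')| + 1 = m$, and now every vertex of $\Gamma'$ has degree at least $2$. For any vertex $v_0 \in V(\Gamma')$ the handshake identity gives
\[
\deg(v_0) = 2|E(\Gamma')| - \sum_{v \neq v_0} \deg(v) \leq 2\bigl(m + |V(\Gamma')|-1\bigr) - 2\bigl(|V(\Gamma')|-1\bigr) = 2m.
\]

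For part (\ref{enu:at-most-3m-1-edges}), pass to the topological graph $\tilde\Gamma$ obtained from $\Gamma$ by suppressing every degree-$2$ vertex other than $\otimes$. This operation preserves the rank and the degree of every retained vertex, so $|E(\tilde\Gamma)| = m + |V(\tilde\Gamma)| - 1$, while every vertex of $\tilde\Gamma$ other than $\otimes$ now has degree at least $3$. The handshake identity therefore reads
\[
\deg(\otimes) + 3\bigl(|V(\tilde\Gamma)|-1\bigr) \leq \sum_v \deg(v) = 2m + 2\bigl(|V(\tilde\Gamma)|-1\bigr),
\]
which rearranges to $|V(\tilde\Gamma)| \leq 2m + 1 - \deg(\otimes)$. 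The worst case arises when $\deg(\otimes) = 1$, i.e.\ when $\otimes$ is a leaf; then $|V(\tilde\Gamma)| \leq 2m$ and $|E(\tilde\Gamma)| = m + |V(\tilde\Gamma)| - 1 \leq 3m - 1$.

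I anticipate essentially no substantive obstacle: once the rank formula and the lower degree bound are in hand, each part reduces to a single application of handshake. The only care needed is to treat the three cases $\deg(\otimes)\in\{1,2,\geq 3\}$ uniformly in part~(\ref{enu:at-most-3m-1-edges}) (as done above), and to verify that removing the pendant string in part~(\ref{enu:deg-at-most-2m}) alters neither the rank nor the maximum degree, both of which are immediate.
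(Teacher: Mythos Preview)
Your proof is correct and follows essentially the same approach as the paper: both parts are derived from the rank formula $|E|-|V|+1=m$ combined with the handshake identity and the appropriate lower degree bounds ($\geq 2$ for part~(\ref{enu:deg-at-most-2m}), $\geq 3$ for non-basepoint vertices in part~(\ref{enu:at-most-3m-1-edges})). The paper presents part~(\ref{enu:at-most-3m-1-edges}) by substituting $v=e-m+1$ directly into $2e\geq 3(v-1)+1$, whereas you bound $|V(\tilde\Gamma)|$ first and then $|E(\tilde\Gamma)|$, but this is purely cosmetic.
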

\begin{proof}
\emph{(1)} After ignoring $\otimes$ and the string leading to $\otimes$
in case it is a leaf, all vertices of $\Gamma$ are of degree $\geq2$
. Thus all summands in the l.h.s.~of 
\[
\sum_{v\in V\left(\Gamma\right)}\left[\deg\left(v\right)-2\right]=2\left|E\left(\Gamma\right)\right|-2\left|V\left(\Gamma\right)\right|=2m-2
\]
are non-negative. So the degree of every vertex is bounded by $2+\left(2m-2\right)=2m$.
In fact, there is a vertex of degree $2m$ if and only if $\Gamma$
is topologically a bouquet of $m$ loops (plus, possibly, a string
to $\otimes$).

\emph{(2) }Consider $\Gamma$ as a {}``topological graph'' as explained
above. Let $e$ and $v$ denote the number of topological edges and
vertices. It is still true that $e-v+1=m$, but now there are no vertices
of degree $\leq2$ except for, possibly, the basepoint. Therefore,
the sum of degrees, which equals $2e$, is at least $3\left(v-1\right)+1$.
So 
\[
2e\geq3\left(v-1\right)+1=3\left(e-m\right)+1
\]
so $e\leq3m-1$.
\end{proof}

\subsection{The special case of the bouquet\label{sub:counting-bouquet}}

For the special case where $\Omega=B_{\frac{d}{2}}$ is the bouquet
of $k=\frac{d}{2}$ loops, our goal is to bound the exponential growth
rate of 
\[
\sum_{w\in\cptm\left(B_{\frac{d}{2}}\right)}\left|\crit\left(w\right)\right|=\sum_{w\in\left(X\cup X^{_{-1}}\right)^{t}:\,\pi\left(w\right)=m}\left|\crit\left(w\right)\right|.
\]
In order to estimate this number we first estimate the exponential
growth rate of the parallel quantity for \emph{reduced} words:
\begin{prop}
\label{prop:reduced-words-upper-bound} Let $k\geq2$ and $m\in\left\{ 1,2,\ldots,k\right\} $.
Then 
\[
\limsup_{t\to\infty}\left[\sum_{\substack{w\in\F_{k}:\\
\left|w\right|=t\,\&\,\pi\left(w\right)=m
}
}\left|\crit\left(w\right)\right|\right]^{1/t}\leq\begin{cases}
\sqrt{2k-1} & 2m-1\leq\sqrt{2k-1}\\
2m-1 & 2m-1\geq\sqrt{2k-1}
\end{cases}.
\]

\end{prop}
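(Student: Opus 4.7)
The plan is to switch to counting pairs $(w,N)$ with $w\in\F_k$ reduced of length $t$, $\pi(w)=m$, and $N\in\crit(w)$, whose total equals $\sum_w |\crit(w)|$. By Claim~\ref{claim:critical-is-algebraic}, each such $N$ is a proper algebraic extension of $\langle w\rangle$ of rank exactly $m$, and by Lemma~\ref{lem:each-edge-twice} the closed path of $w$ in $\Gamma_{X}(N)$ traverses every edge at least twice; in particular $|E(\Gamma_{X}(N))|\le t/2$. I then swap the order of summation and instead upper bound the number of pairs $(\Gamma,p)$, where $\Gamma$ is a labeled core graph of rank $m$ over $X$ and $p$ is a reduced closed walk at $\otimes$ in $\Gamma$ of length $t$ covering every edge at least twice (the minimality and algebraic-extension conditions on $N$ are dropped for the upper bound).

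I would parameterize $\Gamma$ by its underlying abstract pointed shape $G$ together with a folded labeling. By Claim~\ref{Claim:core-graphs-of-rank-m}, $G$ has at most $3m-1$ topological edges and all vertices of degree at most $2m$, so the topological type ranges over a set of size bounded by a constant depending only on $m$; given the type, the edge-length vector $(l_1,\dots,l_e)$ with $l_i\ge 1$ and $E:=\sum l_i\le t/2$ contributes only polynomially many further choices. For fixed abstract $G$, I bound the pairs (labeling of $G$, reduced walk $p$) jointly: a reduced walk is forced at every subdivision vertex (degree $2$), so its data reduces to the choices made at topological vertex visits. Writing $T$ for the number of such visits, the degree bound gives at most $2m(2m-1)^{T-1}$ walks, while on each of the $E$ first-visited edges the label has at most $2k-1$ options (the previous letter's inverse is ruled out by reducedness, and folding can only further restrict). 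Hence the joint count is at most $C(m)\cdot(2m-1)^T(2k-1)^E$. With $c_i$ the number of traversals of topological edge $i$, the constraints read $c_i\ge 2$, $T=\sum c_i$, and $\sum c_i l_i=t$.

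The main technical step is then to bound $(2m-1)^T(2k-1)^E$ subject to these constraints. Per topological edge, the function $c\log(2m-1)+l\log(2k-1)$ with $cl$ held fixed is convex in $c$, so its maximum over $c\ge 2,\,l\ge 1$ is attained at a boundary corner: either $c=2$ (giving per-unit-of-$cl$ rate $\tfrac12\log(2k-1)=\log\sqrt{2k-1}$) or $l=1$ (rate $\log(2m-1)$). Summing over the $\le 3m-1$ topological edges gives
\[
(2m-1)^T(2k-1)^E\le C'_{m,k}\cdot\max\bigl(\sqrt{2k-1},\,2m-1\bigr)^{t},
\]
and combining with the polynomial overhead from $G$ yields the claimed $\limsup$ bound. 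The obstacle I anticipate is organizing the joint walk-and-labeling count cleanly; the convexity observation, together with Lemma~\ref{lem:each-edge-twice}'s ``every edge traversed at least twice'' constraint (which forces $E\le t/2$), is exactly what switches the dominant rate at the crossover $2m-1=\sqrt{2k-1}$.
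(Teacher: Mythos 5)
Your proposal is correct and follows essentially the same route as the paper's proof: reduce via Claim \ref{claim:critical-is-algebraic} and Lemma \ref{lem:each-edge-twice} to counting pairs of a rank-$m$ core graph and a reduced closed walk traversing every edge twice, then multiply a $(2k-1)^{E}$ labeling count by a $(2m-1)^{T}$ walk count (using Claim \ref{Claim:core-graphs-of-rank-m}) and optimize the tradeoff. Your per-topological-edge convexity argument in $(c_i,l_i)$ is just a repackaging of the paper's optimization of $(2k-1)^{\delta}(2m-1)^{1-2\delta}$ over $\delta=E/t\in[0,\tfrac12]$, since $T\le t-2E+O_m(1)$.
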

Put differently, the $\limsup$ is bounded by $\max\left\{ \sqrt{2k-1},2m-1\right\} $
(we present it in a lengthier way to stress the threshold phenomenon).
In fact, this is not only an upper bound but the actual exponential
growth rate - see Theorem \ref{thm:prim-rank-distr-on-reduced-words}.
\begin{proof}
Note that
\begin{eqnarray}
\sum_{\substack{w\in\F_{k}:\\
\left|w\right|=t\,\&\,\pi\left(w\right)=m
}
}\left|\crit\left(w\right)\right| & = & \sum_{J\leq\F_{k}:\,\rk\left(J\right)=m}\left|\left\{ w\in\F_{k}\,\middle|\,\left|w\right|=t,\, J\in\crit\left(w\right)\right\} \right|\nonumber \\
 & \leq & \sum_{J\leq\F_{k}:\,\rk\left(J\right)=m}\left|\left\{ w\in\F_{k}\,\middle|\,\left|w\right|=t,\,\left\langle w\right\rangle \neqalg J\right\} \right|\label{eq:from-crit-to-tracing-twice}\\
 & \leq & \sum_{J\leq\F_{k}:\,\rk\left(J\right)=m}\left|\left\{ w\in J\,\middle|\,\begin{gathered}\left|w\right|=t,\, w\,\mathrm{traces\, each\, edge}\\
\mathrm{of}\,\Gamma_{X}\left(J\right)\,\mathrm{at\, least\, twice}
\end{gathered}
\right\} \right|,\nonumber 
\end{eqnarray}
where the first inequality stems from Claim \ref{claim:critical-is-algebraic}
and the second from Lemma \ref{lem:each-edge-twice}. We continue
to bound the latter sum. For each $J\leq\F_{k}$ let $\nu_{t}\left(J\right)$
denote the corresponding summand: 

\[
\nu_{t}\left(J\right)=\left|\left\{ w\in J\,\middle|\,\begin{gathered}\left|w\right|=t,\, w\,\mathrm{traces\, each\, edge}\\
\mathrm{of}\,\Gamma_{X}\left(J\right)\,\mathrm{at\, least\, twice}
\end{gathered}
\right\} \right|.
\]

We classify all $J$'s of rank $m$ by the number of edges in $\Gamma_{X}\left(J\right)$.
Consider all $X$-labeled core-graphs $\Gamma$ of total size $\delta t$
and rank $m$ (so that $\delta t$ is an integer, of course). Since
we count words of length $t$ tracing every edge at least twice, $\nu_{t}\left(J\right)=0$
if $\delta>\frac{1}{2}$. So we restrict to the case $\delta\in\left[0,\frac{1}{2}\right]$.
The counting is performed in several steps:
\begin{itemize}
\item First, let us bound the number of unlabeled and unoriented connected
pointed graphs with $\delta t$ edges and rank $m$ (here the rank
of a connected graph is $e-v+1$). As in the proof of Lemma \ref{lem:each-edge-twice},
each such graph has some spanning tree and $m$ excessive edges. The
walks through the tree from $\otimes$ to the origins and termini
of these edges cover \emph{the entire }tree. Denote these walks by
$p_{1,1},p_{1,2},p_{2,1},p_{2,2},\ldots,p_{m,1},p_{m,2}$. We {}``unveil''
the spanning tree step by step: first we unveil $p_{1,1}$. The only
unknown is its length $\in\left\{ 0,1,\ldots,\delta t-1\right\} $.
Then $p_{1,2}$ leaves $p_{1,1}$ at one of $\leq\delta t$ possible
vertices and goes on for some length $<\delta t$. Now, $p_{2,1}$
leaves $p_{1,1}\cup p_{1,2}$ at one of $\leq\delta t$ possible vertices
and goes on for $<\delta t$ new edges. This goes on $2m$ times in
total (afterward, the ends of $p_{i,1}$ and $p_{i,2}$ are connected
by an edge). In total, there are at most $\left(\left(\delta t\right)^{2}\right)^{2m}=\left(\delta t\right)^{4m}$
possible unlabeled pointed graphs of rank $m$ with $\delta t$ edges%
\footnote{A tighter bound of $\left(\delta t\right)^{3m}$ can also be obtained
quite easily. We do not bother to introduce it because this expression
is anyway negligible when exponential growth rate is considered.%
}. 
\item Next, we bound the number of labelings of each such graph $\Gamma$
(here, the labeling includes also the orientation of each edge). Label
some edge (there are $2k$ options) and then gradually label edges
adjacent to at least one edge which is already labeled (at most $2k-1$
possible labels for each edge). Over all the number of possible labelings
of $\Gamma$ is $\leq2k\cdot\left(2k-1\right)^{\delta t-1}$. 
\item For a given labeled core-graph $\Gamma$, let $J=\pi_{1}^{X}\left(\Gamma\right)$
be the corresponding subgroup. We claim that $\nu_{t}\left(J\right)\leq\left(4t^{2}\right)^{3m-1}\cdot\left(2m-1\right)^{\left(1-2\delta\right)t}$.
Indeed, note first that if the basepoint $\otimes$ is a leaf, then
every reduced $w$ must first follow the string from $\otimes$ to
the first {}``topological'' vertex (vertex of degree $\geq3$),
and then return to the string only in its final steps back to $\otimes$.
So we can assume $w$ traces a leaf-free graph of rank $m$ and at
most $\delta t$ edges. A reduced word $w\in J$ which traces every
edge at least twice, also traverses any topological edge at least
twice, each time in one shot (without backtracking). Each time $w$
traces some topological edge $\widetilde{e}$ in $\Gamma$, it begins
in one of $\leq t$ possible positions (in $w$), and from $\leq2$
possible directions of $\widetilde{e}$. So there $\leq4t^{2}$ possible
ways in which $w$ traces $\widetilde{e}$ for the first two times.
By Claim \ref{Claim:core-graphs-of-rank-m}(\ref{enu:at-most-3m-1-edges})
there are at most $3m-1$ topological edges, and so at most $\left(4t^{2}\right)^{3m-1}$
possibilities for how $w$ traces each topological edge of $\Gamma$
for the first two times. The rest of $w$ is of length (at most) $\left(1-2\delta\right)t$,
and in every step there are at most $2m-1$ ways to proceed, by Claim
\ref{Claim:core-graphs-of-rank-m}(\ref{enu:deg-at-most-2m}).
\end{itemize}
Hence,
\begin{eqnarray}
\sum_{\substack{J\leq\F_{k}:\, rk\left(J\right)=m\\
\left|\Gamma_{X}\left(J\right)\right|=\delta t
}
}\nu_{t}\left(J\right) & \leq & \left(\delta t\right)^{4m}\cdot2k\left(2k-1\right)^{\delta t-1}\cdot\left(4t^{2}\right)^{3m-1}\left(2m-1\right)^{\left(1-2\delta\right)t}\nonumber \\
 & \leq & c\cdot t^{10m-2}\cdot\left[\left(2k-1\right)^{\delta}\left(2m-1\right)^{1-2\delta}\right]^{t}\nonumber \\
 & = & c\cdot t^{10m-2}\cdot\left[\left(\frac{2k-1}{\left(2m-1\right)^{2}}\right)^{\delta}\left(2m-1\right)\right]^{t}.\label{eq:bound-for-reduced-words-1}
\end{eqnarray}
Recall that $\delta\in\left[0,\frac{1}{2}\right]$ and $\delta t\in\mathbb{N}$.
We bound $\sum_{J\leq\F_{k}:\, rk\left(J\right)=m}\nu_{t}\left(J\right)$
by $\frac{t}{2}$ times the maximal possible value of the r.h.s.~of
\eqref{bound-for-reduced-words-1} (when going over all possible values
of $\delta$). When $2m-1\leq\sqrt{2k-1}$, the r.h.s.~of \eqref{bound-for-reduced-words-1}
is largest when $\delta=\frac{1}{2}$, so we get overall
\begin{equation}
\sum_{J\leq\F_{k}:\, rk\left(J\right)=m}\nu_{t}\left(J\right)\leq c\cdot t^{10m-1}\cdot\left[\sqrt{2k-1}\right]^{t}.\label{eq:bound-for-reduced-words2-1}
\end{equation}
For $2m-1\geq\sqrt{2k-1}$, the r.h.s.~of \eqref{bound-for-reduced-words-1}
is largest when $\delta=0$, so we get overall
\[
\sum_{J\leq\F_{k}:\, rk\left(J\right)=m}\nu_{t}\left(J\right)\leq c\cdot t^{10m-1}\cdot\left[2m-1\right]^{t}.
\]
The proposition follows.
\end{proof}
The next step is to deduce an analogue result for non-reduced words.
To this goal, we use an extended version of the well known \emph{cogrowth
formula} due to Grigorchuk \cite{Gri80} and Northshield \cite{Nor92}.
Let $\Gamma$ be a connected $d$-regular graph. Let $b_{\Gamma,v}\left(t\right)$
denote the number of \emph{cycles} of length $t$ at some vertex $v$
in $\Gamma$, and let $n_{\Gamma,v}\left(t\right)$ denote the size
of the smaller set of \emph{non-backtracking cycles }of length $t$
at $v$. The spectral radius of $A_{\Gamma}$, denoted $\mathrm{rad}\left(\Gamma\right)$%
\footnote{If $\Gamma$ is finite, $\mathrm{rad\left(\Gamma\right)}=d$. If $\Gamma$
is the $d$-regular tree, $\mathrm{rad}\left(\Gamma\right)=2\sqrt{d-1}$.%
}, is equal to $\limsup_{t\to\infty}b_{\Gamma,v}\left(t\right)^{1/t}$
(in particular, this limit does not depend on $v$). The \emph{cogrowth}
of $\Gamma$ is defined as \marginpar{$\mathrm{cogr}\left(\cdot\right)$}$\mathrm{cogr}\left(\Gamma\right)=\limsup_{t\to\infty}n_{\Gamma,v}\left(t\right)^{1/t}$,
and is also independent of $v$. 

The cogrowth formula expresses $\mathrm{rad}\left(\Gamma\right)$
in terms of $\mathrm{cogr}\left(\Gamma\right)$: it determines that
$\mathrm{rad}\left(\Gamma\right)=g\left(\mathrm{cogr\left(\Gamma\right)}\right)$,
where $g:\left[1,d-1\right]\to\left[2\sqrt{d-1},d\right]$ is defined
by
\begin{equation}
g\left(\alpha\right)=\begin{cases}
2\sqrt{d-1} & \alpha\leq\sqrt{d-1}\\
\frac{d-1}{\alpha}+\alpha & \alpha\geq\sqrt{d-1}
\end{cases}.\label{eq:g}
\end{equation}

Another way to view the parameters $\mathrm{rad}\left(\Gamma\right)$
and $\mathrm{cogr}\left(\Gamma\right)$ is the following: let $T_{d}$
be the $d$-regular tree with basepoint $\otimes$, let $p:T_{d}\to\Gamma$
be a covering map such that $p\left(\otimes\right)=v$, and let $S=p^{-1}\left(v\right)\subseteq V\left(T_{d}\right)$
be the fiber above $v$. Then $b_{\Gamma,v}\left(t\right)$ is the
number of walks of length $t$ in $T_{d}$ emanating from $\otimes$
and terminating inside $S$. Similarly, $n_{\Gamma,v}\left(t\right)$
is the number of non-backtracking walks of length $t$ in $T_{d}$
emanating from $\otimes$ and terminating in $S$. This is also equal
to the number of vertices in the $t$-th sphere%
\footnote{The $t$-th sphere of the pointed $T_{d}$ is the set of vertices
at distance $t$ from $\otimes$.%
} of $T_{d}$ belonging to $S$.

For our needs we introduce (in a separate paper - \cite{Pud15+})%
\footnote{The results in \cite{Pud15+} include a new proof of the original
cogrowth formula.%
} an extended formula applying to other types of subsets $S$ of $V\left(T_{d}\right)$,
which do not necessarily correspond to a fiber of a covering map of
a graph. Even more generally, we extend the formula to a class of
functions on $V\left(T_{d}\right)$ (this extends the previous case
if $S$ is identified with its characteristic function $\mathbbm{1}_{S}$): 

For $f:V\left(T_{d}\right)\to\mathbb{R}$, denote by $\beta_{f}\left(t\right)$\marginpar{$\beta_{f}\left(t\right)$}
the sum 
\[
\beta_{f}\left(t\right)=\sum_{\substack{p:\,\,\mathrm{a\, path\, from}\,\otimes\\
\mathrm{of\, length}\, t
}
}f\left(\mathrm{end}\left(p\right)\right)
\]
over all (possibly backtracking) walks of length $t$ in $T_{d}$
emanating from $\otimes$. Similarly, denote by $\nu_{f}\left(t\right)$\marginpar{$\nu_{f}\left(t\right)$}
the same sum over the smaller set of \emph{non-backtracking} walks
of length $t$ emanating from $\otimes$.
\begin{thm}
\label{thm:cogrowth-formula-extended}[Extended Cogrwoth Formula \cite{Pud15+}]
Let $d\geq3$, $f:V\left(T_{d}\right)\to\mathbb{R}$, $\beta_{f}\left(t\right)$
and $\nu_{f}\left(t\right)$ as above. If $\nu_{f}\left(t\right)\leq c\cdot\alpha^{t}$
for every $t$ (and some $c>0$) then 
\[
\limsup_{t\to\infty}\beta_{f}\left(t\right)^{1/t}\leq g\left(\alpha\right).
\]

\end{thm}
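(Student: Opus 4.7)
The plan is to reduce to an extremal radial test function using the symmetry of $T_{d}$, and then apply a generating-function argument that mirrors the classical cogrowth formula. By the vertex transitivity of $T_{d}$, the number $K\left(t,s\right)$ of walks of length $t$ from $\otimes$ to a specified vertex at distance $s$ depends only on $d,t,s$. Since $\nu_{f}\left(s\right)$ is exactly the sum of $f$ over the $s$-th sphere in $T_{d}$, one obtains the decomposition
\[
\beta_{f}\left(t\right)=\sum_{s\geq0}K\left(t,s\right)\nu_{f}\left(s\right).
\]
Under the hypothesis $\nu_{f}\left(s\right)\leq\overline{c}\alpha^{s}$ (assuming $f\geq0$; the general case splits as $f=f^{+}-f^{-}$), it suffices to bound $u\left(t\right):=\sum_{s}K\left(t,s\right)\alpha^{s}$ and show $\limsup u\left(t\right)^{1/t}\leq g\left(\alpha\right)$.

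Next I would build a generating-function identity. Let $F\left(z\right)=\sum_{t}K\left(t,0\right)z^{t}$ be the return generating function of the simple walk on $T_{d}$, and let $Q\left(z\right)$ be the analogous series in the sub-tree $T_{d}^{\mathrm{sub}}$ where the root has degree $d-1$ and every other vertex has degree $d$. Decomposing a walk of length $t$ from $\otimes$ to a vertex $v_{s}$ at distance $s$ by the first-visit times to the successive vertices $v_{1},\ldots,v_{s}$ along the geodesic, and invoking the tree isomorphism that identifies ``the sub-tree rooted at $v_{i}$ with the edge to $v_{i+1}$ deleted'' with $T_{d}^{\mathrm{sub}}$, yields $K_{s}\left(z\right):=\sum_{t}K\left(t,s\right)z^{t}=F\left(z\right)\cdot\left(zQ\left(z\right)\right)^{s}$. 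Summing over $s$ gives
\[
U\left(z\right):=\sum_{t}u\left(t\right)z^{t}=\frac{F\left(z\right)}{1-\alpha zQ\left(z\right)}.
\]

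The final step is an algebraic verification that pinpoints the singularity of $U$. Using the quadratic recursion $G\left(z\right)=1+\left(d-1\right)z^{2}G\left(z\right)^{2}$ for the closed-walk generating function $G$ at the root of the further sub-tree (where one branch is removed), together with the easy identities $Q=1/\left(1-\left(d-1\right)z^{2}G\right)$ and $F=1/\left(1-dz^{2}G\right)$ obtained by the same first-return decomposition, a direct computation shows that at $z_{0}=1/g\left(\alpha\right)$ we have $\alpha\cdot z_{0}Q\left(z_{0}\right)=1$, while $F\left(z_{0}\right)$ remains finite (the radius of convergence of $F$ is $1/\left(2\sqrt{d-1}\right)\geq z_{0}$, with finite limit at the endpoint). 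Since $U\left(z\right)$ has non-negative coefficients, Pringsheim's theorem gives $\limsup u\left(t\right)^{1/t}\leq g\left(\alpha\right)$, which combined with the opening reduction completes the proof.

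The main delicate point will be the algebraic verification $\alpha z_{0}Q\left(z_{0}\right)=1$: one substitutes $z_{0}=\alpha/\left(\alpha^{2}+d-1\right)$, computes $\sqrt{1-4\left(d-1\right)z_{0}^{2}}=\left(\alpha^{2}-\left(d-1\right)\right)/\left(\alpha^{2}+d-1\right)$, deduces $G\left(z_{0}\right)=1+\left(d-1\right)/\alpha^{2}$ from the explicit quadratic formula, and simplifies $\left(d-1\right)z_{0}^{2}G\left(z_{0}\right)$ to $\left(d-1\right)/\left(\alpha^{2}+d-1\right)$, from which the identity drops out. A secondary subtlety is the boundary regime $\alpha\leq\sqrt{d-1}$, in which $z_{0}=1/\left(2\sqrt{d-1}\right)$ and $U$ acquires a square-root singularity rather than a pole; but this only introduces a sub-exponential correction to the coefficients, so the $\limsup$ bound remains $g\left(\alpha\right)=2\sqrt{d-1}$.
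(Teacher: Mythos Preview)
The paper does not contain its own proof of this theorem: it is quoted from \cite{Pud14c} and used as a black box. So there is no in-paper argument to compare against.

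Your approach is correct and is in fact the standard route to cogrowth-type identities. The reduction $\beta_{f}(t)=\sum_{s}K(t,s)\,\nu_{f}(s)$ is valid because non-backtracking paths of length $s$ from $\otimes$ in a tree are exactly the geodesics to the $s$-sphere, so $\nu_{f}(s)$ is the sum of $f$ over that sphere. The decomposition $K_{s}(z)=F(z)\,(zQ(z))^{s}$ is right; your first-visit argument works once you note that between the first visits to $v_{i-1}$ and $v_{i}$ the walk lives in $T_{d}\setminus\{v_{i}\}$, which from $v_{i-1}$ is exactly the half-tree with root of degree $d-1$, giving the factor $Q$. The algebraic check that $\alpha z_{0}Q(z_{0})=1$ at $z_{0}=1/g(\alpha)$ when $\alpha>\sqrt{d-1}$ is correct as written, and for $\alpha\le\sqrt{d-1}$ the denominator stays positive on $[0,1/(2\sqrt{d-1}))$ so the radius of convergence of $U$ is governed by the branch point of $G$ and $F$, giving the bound $2\sqrt{d-1}=g(\alpha)$. (Note incidentally that your $Q$ and $G$ coincide: from $G=1+(d-1)z^{2}G^{2}$ one gets $G=1/(1-(d-1)z^{2}G)$, which is your formula for $Q$.)

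One small gap: your remark that ``the general case splits as $f=f^{+}-f^{-}$'' does not work as stated. A one-sided bound $\nu_{f}(t)\le\overline{c}\,\alpha^{t}$ gives no control over $\nu_{f^{+}}$ or $\nu_{f^{-}}$ separately. The clean fix is either to assume $f\ge 0$ (which is the case in every application in this paper), or to assume the two-sided bound $|\nu_{f}(t)|\le\overline{c}\,\alpha^{t}$, under which $|\beta_{f}(t)|\le\sum_{s}K(t,s)|\nu_{f}(s)|\le\overline{c}\,u(t)$ and your argument goes through verbatim.
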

With this theorem at hand, one can obtain the sought-after bound on
the number of non-reduced words from the one on reduced words:
\begin{cor}
\label{cor:non-reduced-words_upper_bound}For every $k\geq2$ and
$m\in\left\{ 1,\ldots,k\right\} $, 
\[
\limsup_{t\to\infty}\left[\sum_{w\in\cptm\left(B_{\frac{d}{2}}\right)}\left|\crit\left(w\right)\right|\right]^{1/t}\leq\begin{cases}
2\sqrt{2k-1} & 2m-1\leq\sqrt{2k-1}\\
\frac{2k-1}{2m-1}+2m-1 & 2m-1\geq\sqrt{2k-1}
\end{cases}.
\]
\end{cor}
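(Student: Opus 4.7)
The plan is to deduce this non-reduced bound from the reduced-word bound of Proposition \ref{prop:reduced-words-upper-bound} by means of the Extended Cogrowth Formula (Theorem \ref{thm:cogrowth-formula-extended}) applied with $d = 2k$. First I would identify the $2k$-regular tree $T_{2k}$ with the Cayley graph of $\F_k$ relative to $X$, so that vertices of $T_{2k}$ correspond bijectively to reduced words in $\F_k$ and the basepoint $\otimes$ corresponds to the identity. Under this identification, arbitrary length-$t$ paths from $\otimes$ are in bijection with the words in $(X \cup X^{-1})^t = \cpt(B_{\frac{d}{2}})$, non-backtracking length-$t$ paths from $\otimes$ are in bijection with reduced words of length $t$, and in both cases the endpoint of the path is the $\F_k$-reduction of the associated word. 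In particular, $\pi(\cdot)$ and $\crit(\cdot)$ depend only on the endpoint of the path.

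Next, define $f : V(T_{2k}) \to \mathbb{R}_{\geq 0}$ by $f(v) = |\crit(v)|$ when $\pi(v) = m$ and $f(v) = 0$ otherwise (viewing $v$ as an element of $\F_k$). By the above correspondence,
\[
\beta_f(t) = \sum_{w \in \cptm(B_{\frac{d}{2}})} |\crit(w)|, \qquad \nu_f(t) = \sum_{\substack{w \in \F_k \\ |w| = t,\ \pi(w) = m}} |\crit(w)|.
\]
Set $\alpha := \max\{\sqrt{2k-1},\, 2m-1\}$. Proposition \ref{prop:reduced-words-upper-bound} then gives $\limsup_{t \to \infty} \nu_f(t)^{1/t} \leq \alpha$, so for every $\varepsilon > 0$ there is a constant $C_\varepsilon$ with $\nu_f(t) \leq C_\varepsilon(\alpha + \varepsilon)^t$ for all $t$. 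Theorem \ref{thm:cogrowth-formula-extended} therefore yields $\limsup_{t \to \infty} \beta_f(t)^{1/t} \leq g(\alpha + \varepsilon)$, and since $g$ is continuous, letting $\varepsilon \to 0^+$ produces the bound $g(\alpha)$.

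Finally, I would check that $g(\alpha)$ matches the two branches claimed. When $2m-1 \leq \sqrt{2k-1}$ we have $\alpha = \sqrt{2k-1}$, so by \eqref{g} (with $d = 2k$) $g(\alpha) = 2\sqrt{2k-1}$. When $2m-1 \geq \sqrt{2k-1}$ we have $\alpha = 2m-1 \geq \sqrt{d-1}$, so $g(\alpha) = \alpha + \tfrac{2k-1}{\alpha} = (2m-1) + \tfrac{2k-1}{2m-1}$, as required.

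The only delicate step is the bookkeeping that identifies $\beta_f$ and $\nu_f$ with the two sums above; this rests on the observation that the endpoint in the Cayley tree of a path spelled out by a word equals the free-group reduction of that word, so that primitivity rank and number of critical subgroups transfer cleanly from the word to the endpoint. Once this is in place, the argument is a direct combination of Proposition \ref{prop:reduced-words-upper-bound} and the extended cogrowth formula, which between them carry essentially all of the analytic weight.
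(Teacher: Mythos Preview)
Your proposal is correct and follows essentially the same approach as the paper: define $f_m$ on the Cayley tree of $\F_k$, identify $\nu_{f_m}$ and $\beta_{f_m}$ with the reduced and non-reduced sums respectively, and feed Proposition~\ref{prop:reduced-words-upper-bound} into the Extended Cogrowth Formula (Theorem~\ref{thm:cogrowth-formula-extended}). Your write-up is more explicit than the paper's one-paragraph proof---in particular the $\varepsilon$-argument exploiting the continuity of $g$ to pass from a $\limsup$ bound to the pointwise hypothesis of Theorem~\ref{thm:cogrowth-formula-extended}---but the underlying argument is identical.
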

\begin{proof}
Consider the the Cayley graph of $\F_{k}$ which is a $2k$-regular
tree. Every vertex corresponds to a word in $\F_{k}$, and we let
$f_{m}\left(w\right)=\mathbbm{1}_{\pi\left(w\right)=m}\left|\crit\left(w\right)\right|$.
The corollary then follows by applying Theorem \ref{thm:cogrowth-formula-extended}
on $f_{m}$, using Proposition \ref{prop:reduced-words-upper-bound}.
\end{proof}
In Section \ref{sec:distr-of-prim-rank} it is shown (Theorem \ref{thm:prim-rank-distr})
that the bound in Corollary \ref{cor:non-reduced-words_upper_bound}
represents the accurate exponential growth rate of the sum, and even
merely of the number of not-necessarily-reduced words with primitivity
rank $m$. This result uses further results from \cite{Pud15+}.
\begin{rem}
Interestingly, the threshold of $\sqrt{2k-1}$ shows up twice, apparently
independently, both in Proposition \ref{prop:reduced-words-upper-bound}
and in the (extended) cogrowth formula.
\end{rem}
Finally, for $m=0$ there is exactly one relevant reduced word: $w=1$,
and this word has exactly one critical subgroup: the trivial subgroup.
Thus, it suffices to bound the number of words in $\left(X\cup X^{-1}\right)^{t}$
reducing to $1$. This is a well-known result:
\begin{claim}
\label{claim:bound-for-m=00003D0} 
\[
\limsup_{t\to\infty}\left|\cpt^{0}\left(B_{\frac{d}{2}}\right)\right|^{1/t}=\limsup_{t\to\infty}\left|\left\{ w\in\left(X\cup X^{-1}\right)^{t}\,\middle|\, w\,\mathrm{reduces\, to}\,1\right\} \right|^{1/t}=2\sqrt{2k-1}.
\]
\end{claim}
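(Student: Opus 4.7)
The plan is to identify this count with the number of closed walks of length $t$ at the root of the $2k$-regular tree $T_{2k}$, and then invoke Kesten's classical spectral computation. A word $w = x_{j_1}^{\varepsilon_1} \cdots x_{j_t}^{\varepsilon_t} \in (X \cup X^{-1})^t$ reduces to $1$ in $\F_k$ exactly when the path it traces starting from the identity in the Cayley graph of $\F_k$ with respect to $X$ is closed. That Cayley graph is $T_{2k}$ itself, so the set in question is in bijection with the set of length-$t$ closed walks at the root, whose size I will denote $c_t$. Equivalently, $c_t = \langle \delta_\otimes,\, A_{T_{2k}}^{\,t}\, \delta_\otimes\rangle$, with $A_{T_{2k}}$ the adjacency operator on $\ell^{2}(V(T_{2k}))$.

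For the upper bound, Kesten's theorem (already invoked in the introduction to give $\mathrm{Spec}(A_{T_{2k}}) = [-2\sqrt{2k-1},\,2\sqrt{2k-1}]$) yields $\|A_{T_{2k}}\| = 2\sqrt{2k-1}$, so Cauchy--Schwarz gives $c_t \leq \|A_{T_{2k}}\|^t = (2\sqrt{2k-1})^t$ and hence $\limsup_{t\to\infty} c_t^{1/t} \leq 2\sqrt{2k-1}$.

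The matching lower bound is the only part that needs more than a spectral-norm bound, and I would handle it by a first-return decomposition at $\otimes$. Setting $G(z) = \sum_t c_t z^t$ and $F(z) = \sum_{t \geq 2} f_t z^t$, where $f_t$ counts walks returning to $\otimes$ for the first time at step $t$, one has $G = 1/(1-F)$, and the self-similarity of $T_{2k}$ reduces the computation of $F$ to a quadratic in a branch-generating function whose coefficients count walks in a $(2k-1)$-ary rooted subtree. Solving yields a closed form for $G(z)$ whose dominant singularity sits at $z = 1/(2\sqrt{2k-1})$, so $c_{2n} \sim C\cdot (4(2k-1))^{n}\, n^{-3/2}$, which establishes the lower bound. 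The main obstacle is really non-existent: the statement is entirely classical and can alternatively be settled by a direct citation to Kesten \cite{Kes59}, whose proof amounts to exactly this kind of return-walk analysis on a regular tree.
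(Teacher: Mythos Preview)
Your proposal is correct and follows essentially the same route as the paper: both identify $\cpt^{0}(B_{d/2})$ with closed walks of length $t$ at the root of $T_{2k}$ and then appeal to Kesten's value $\|A_{T_{2k}}\|=2\sqrt{2k-1}$. The only cosmetic difference is that the paper handles both inequalities at once by citing the standard fact (for bounded self-adjoint $A_\Gamma$) that $\limsup_t c_\Gamma(t,v,v)^{1/t}=\|A_\Gamma\|$, whereas you prove the upper bound via Cauchy--Schwarz and sketch the lower bound by the first-return generating-function computation --- which is exactly the content of that cited fact in this special case.
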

\begin{proof}
Denote by $c_{\Gamma}\left(t,u,v\right)$\marginpar{$c_{\Gamma}\left(t,u,v\right)$}
the number of walks of length $t$ from the vertex $u$ to the vertex
$v$ in a connected graph $\Gamma$. If, as above, $A_{\Gamma}$ denotes
the adjacency operator on $l^{2}\left(V\left(\Gamma\right)\right)$,
then $c_{\Gamma}\left(t,u,v\right)=\left\langle A_{\Gamma}^{\,\, t}\delta_{u},\delta_{v}\right\rangle _{1}$
($\left\langle \cdot,\cdot\right\rangle _{1}$ marks the standard
inner product). If $\Gamma$ has bounded degrees, then $A_{\Gamma}$
is a bounded self-adjoint operator, hence
\begin{equation}
\mathrm{rad}\left(\Gamma\right)=\left\Vert A_{\Gamma}\right\Vert =\limsup_{t\to\infty}c_{\Gamma}\left(t,u,v\right)^{1/t}\label{eq:spec-rad=00003Dlim-1}
\end{equation}
 for every $u,v\in V\left(\Gamma\right)$. Moreover, 
\begin{equation}
c_{\Gamma}\left(t,u,v\right)=\left\langle A_{\Gamma}^{\, t}\delta_{u},\delta_{v}\right\rangle _{1}\leq\left\Vert A_{\Gamma}^{\, t}\delta_{u}\right\Vert \cdot\left\Vert \delta_{v}\right\Vert \leq\left\Vert A_{\Gamma}^{\,}\right\Vert ^{t}\cdot\left\Vert \delta_{u}\right\Vert \cdot\left\Vert \delta_{v}\right\Vert =\mathrm{rad}\left(\Gamma\right)^{t}\label{eq:c_Gamma}
\end{equation}
(For these facts and other related ones we refer the reader to \cite[\textsection 6]{LP:book}).

The words of length $t$ reducing to $1$ are exactly the closed walks
of length $t$ at the basepoint of the $2k$-regular tree $T_{2k}$.
So the number we seek is\linebreak{}
 $\limsup_{t\to\infty}c_{T_{2k}}\left(t,v,v\right)^{1/t}$, which
therefore equals $\mathrm{rad}\left(T_{2k}\right)=2\sqrt{2k-1}.$
\end{proof}

\subsection{An arbitrary regular base-graph $\Omega$\label{sub:d-reg-base}}

We proceed with the observation that when $\Omega$ is $d$-regular
(but not necessarily the bouquet), the bounds from Corollary \ref{cor:non-reduced-words_upper_bound}
generally apply. We begin with a few claims that will be useful also
in the next subsection dealing with irregular base graphs. 

Let $\rk\left(\Omega\right)$\marginpar{$\rk\left(\Omega\right)$}
denote the rank of the fundamental group of a finite graph $\Omega$,
so $\rk\left(\Omega\right)=\left|E\left(\Omega\right)\right|-\left|V\left(\Omega\right)\right|+1$.
We claim there are no words in $\cpt\left(\Omega\right)$ admitting
finite primitivity rank which is greater than $\rk\left(\Omega\right)$:
\begin{lem}
\label{lem:prim-rank-at-most-rk-Omega}Let $\Omega$ be a finite,
connected graph. Then $\pi\left(w\right)\in\left\{ 0,1,\ldots,\rk\left(\Omega\right),\infty\right\} $
for every $w\in\cpt\left(\Omega\right)$.\end{lem}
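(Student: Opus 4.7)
The statement splits into easy cases and a core case. If $w = 1$ then $\pi(w) = 0$, and if $w$ is primitive in $\F_k$ then $\pi(w) = \infty$; both lie in the target set. So the only content is: for $w \in \cpt(\Omega) \setminus \{1\}$ that is \emph{not} primitive in $\F_k$, one must exhibit a witness subgroup of rank at most $\rk(\Omega)$ in which $w$ fails to be primitive.

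The natural candidate for such a witness is $H_v := \pi_1^X(\Omega,v)$, where $v$ is a basepoint of the closed path $w$. By construction $w \in H_v$, and by the Euler-characteristic identity $\rk(H_v) = |E(\Omega)| - |V(\Omega)| + 1 = \rk(\Omega)$ (any leaves of $\Omega$ not at $v$ prune out without changing $|E|-|V|$). The entire proof then reduces to the claim
\[
H_v \ff \F_k.
\]
Once this is established, the conclusion follows by a standard contrapositive: if $w$ were primitive in $H_v$, one could extend a basis of $H_v$ containing $w$ to a basis of $\F_k$ via $H_v \ff \F_k$, contradicting the assumption that $w$ is not primitive in $\F_k$. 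Hence $w$ is not primitive in $H_v$, and $H_v$ witnesses $\pi(w) \leq \rk(H_v) = \rk(\Omega)$.

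The main step is proving $H_v \ff \F_k$, and the plan is explicit. Pick a spanning tree $T$ of $\Omega$, and for each vertex $u$ let $q_u \in \F_k$ be the reduced word read along the tree path from $v$ to $u$. The classical spanning-tree presentation gives the basis
\[
H_v = \langle\, w_e := q_{u_1}\, x_{i(e)}\, q_{u_2}^{-1} \;:\; e = (u_1, u_2) \notin T \,\rangle,
\]
with one free generator per non-tree edge. Partition $X = \{x_1, \ldots, x_k\}$ into the $|V(\Omega)|-1$ tree labels and the $\rk(\Omega)$ non-tree labels, and set
\[
B := \{\, x_i : x_i \text{ labels a tree edge}\,\} \cup \{\, w_e : e \notin T \,\}.
\]
Then $|B| = k$, and $B$ generates $\F_k$ because every non-tree label $x_j$ equals $q_{u_1}^{-1} w_{e_j} q_{u_2}$ with the $q_u$'s built out of tree labels alone. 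By Nielsen--Schreier, any $k$-element generating set of $\F_k$ is a basis, so $B$ is a basis of $\F_k$ extending the given basis $\{w_e : e \notin T\}$ of $H_v$; this gives $H_v \ff \F_k$.

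The only genuine obstacle is this free-factor claim; the rest is bookkeeping. The argument crucially uses the fact from Step I that distinct edges of $\Omega$ carry distinct labels, so each $x_j$ corresponds to a unique edge and the expressions $q_{u_1}^{-1} w_{e_j} q_{u_2}$ for the non-tree generators are unambiguous. No new input beyond the material already developed in Sections 2 and 3 is needed.
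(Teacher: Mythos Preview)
Your proof is correct and follows essentially the same approach as the paper: both identify $H_v = \pi_1^X(\Omega,v)$ (the paper calls it $J_v$) as the witness subgroup, establish $H_v \ff \F_k$ via a spanning-tree basis extended by the tree-edge labels, and then use that primitivity in a free factor lifts to primitivity in the ambient group. Your write-up is somewhat more explicit about the basis $B$ and the generation argument, but the underlying idea is identical.
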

\begin{proof}
Recall from Section \ref{sec:Overview-of-the-proof} that we denote
$k=\left|E\left(\Omega\right)\right|$ and orient each of the $k$
edges arbitrarily and label them by $x_{1},\ldots,x_{k}$. With the
orientation and labeling of its edges, $\Omega$ becomes a non-pointed
$X$-labeled graph, where $X=\left\{ x_{1},\ldots,x_{k}\right\} $.
(This is not a core-graph, for it has no basepoint and may have leaves.)
So every walk in $\Omega$ of length $t$ can be regarded as an element
of $\left(X\cup X^{-1}\right)^{t}$ and (after reduction) of $\F_{k}=\F\left(X\right)$.
If a word $w\in{\cal CW}_{t}\left(\Omega\right)$ begins (and ends)
at $v\in V\left(\Omega\right)$, then $w\in J_{v}$, where $J_{v}=\pi_{1}^{X}\left(\Omega_{v}\right)$\marginpar{$J_{v},\Omega_{v}$}
is the subgroup of $\F_{k}$ corresponding to the $X$-labeled graph
$\Omega$ pointed at $v$. The rank of $J_{v}$ is independent of
$v$ and equals $\rk\left(\Omega\right)$. It is easy to see that
$J_{v}\ff\F_{k}$ (recall that `$\ff$' denotes a free factor): obtain
a basis for $J_{v}$ by choosing an arbitrary spanning tree and orienting
the edges outside the tree, as in the proof of Lemma \ref{lem:each-edge-twice}.
This basis can then be extended to a basis of $\F_{k}$ by the $x_{i}$'s
associated with the edges inside the spanning tree. So if $w$ is
primitive in $J_{v}$, is it also primitive in $\F_{k}$ and $\pi\left(w\right)=\infty$.
Otherwise, $\pi\left(w\right)\leq\rk\left(J_{v}\right)=\rk\left(\Omega\right)$. 
\end{proof}
Moreover, proper algebraic extensions of words in $\cpt\left(\Omega\right)$
are necessarily subgroups of $J_{v}$ for some $v\in V\left(\Omega\right)$:
\begin{claim}
\label{claim: only-subgroups-matter}In $w\in\cpt\left(\Omega\right)$
is a cycle around the vertex $v$ and $\left\langle w\right\rangle \neqalg N$,
then $N\leq J_{v}$.\end{claim}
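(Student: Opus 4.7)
The plan is to reduce this to a statement about free factors. Since $w$ is a cycle at $v$, we have $w\in J_v$, hence $\langle w\rangle\leq J_v\cap N$. Let $H=J_v\cap N$, so that $\langle w\rangle\leq H\leq N$. Our goal becomes showing $H=N$, which is equivalent to $N\leq J_v$.

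The crux of the argument is the claim that $H\ff N$. To see this, recall from the proof of Lemma \ref{lem:prim-rank-at-most-rk-Omega} that $J_v\ff \F_k$, say $\F_k=J_v*M$ for some complementary free factor $M$. Applying the Kurosh subgroup theorem to $N\leq J_v*M$, we obtain a decomposition
\[
N = F * \mathop{\Large *}_{c}\bigl(N\cap g_c J_v g_c^{-1}\bigr) * \mathop{\Large *}_{d}\bigl(N\cap h_d M h_d^{-1}\bigr),
\]
where $F$ is free and $g_c, h_d$ run over representatives of suitable double cosets. In particular, taking the double coset represented by $g_c=1$, the subgroup $N\cap J_v=H$ appears as one of the free factors on the right-hand side, so $H\ff N$.

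With $H\ff N$ established, we conclude by invoking the hypothesis $\langle w\rangle\lneq_{\mathrm{alg}} N$. By Definition \ref{def:prim_rank} / the definition of algebraic extension, no subgroup $L$ satisfying $\langle w\rangle\leq L\lneq_{ff} N$ can exist. Since $H$ is a free factor of $N$ containing $\langle w\rangle$, the only option is $H=N$, i.e.\ $N\cap J_v=N$, which gives exactly $N\leq J_v$.

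The main obstacle, as usual when dealing with algebraic extensions inside free products, is the step that produces $H\ff N$. I expect the cleanest presentation is the Kurosh decomposition as above, though it could equivalently be phrased via the Bass--Serre/covering-space picture: the Stallings graph of $N$ sits inside a cover of the wedge $\Gamma_X(J_v)\vee\Gamma_X(M)$, and the component of the preimage of $\Gamma_X(J_v)$ containing the basepoint contributes a free factor to $\pi_1^X$ of the cover, this factor being precisely $N\cap J_v$. Either formulation avoids the need to analyze the combinatorics of $\Gamma_X(N)$ directly.
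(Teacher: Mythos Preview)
Your proof is correct and follows essentially the same approach as the paper: both arguments hinge on showing that $J_v\cap N\ff N$ and then invoking the definition of algebraic extension to rule out a proper intermediate free factor. The only difference is cosmetic---the paper cites the general fact that a free factor of $\F_k$ intersects any subgroup $N$ in a free factor of $N$ (from \cite[Claim 3.9]{PP14a}), whereas you reprove this via the Kurosh subgroup theorem.
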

\begin{proof}
As $J_{v}\ff\F_{k}$, it follows that $J_{v}\cap N\ff N$ (see e.g.~\cite[Claim 3.9]{PP15}).
So if $w$ belongs to $N$, it belongs to the free factor $J_{v}\cap N$
of $N$, which is proper, unless $N\leq J_{v}$.
\end{proof}
If $\Omega$ is $d$-regular, $\left|E\left(\Omega\right)\right|=\frac{d}{2}\left|V\left(\Omega\right)\right|$
so that $\rk\left(\Omega\right)=\left(\frac{d}{2}-1\right)\left|V\left(\Omega\right)\right|+1\ge\frac{d}{2}$
(with equality only for the bouquet). The following Corollary distinguishes
between three classes of primitivity rank: the interval $0,1\ldots,\left\lfloor \frac{\sqrt{d-1}+1}{2}\right\rfloor $,
the interval $\left\lceil \frac{\sqrt{d-1}+1}{2}\right\rceil ,\ldots,\left\lfloor \frac{d}{2}\right\rfloor $
and $\left\lceil \frac{d}{2}\right\rceil ,\ldots,\rk\left(\Omega\right)$.
\begin{cor}
\label{cor:bounds-for-non-red-words-and-reg-base-graph}Let $\Omega$
be a finite, connected $d$-regular graph, and let $m\in\left\{ 0,1,\ldots,\rk\left(\Omega\right)\right\} $.
Then 
\[
\limsup_{t\to\infty}\left[\sum_{w\in\cptm\left(\Omega\right)}\left|\crit\left(w\right)\right|\right]^{1/t}\leq\begin{cases}
2\sqrt{d-1} & 2m-1\in\left[-1,\sqrt{d-1}\right]\\
\frac{d-1}{2m-1}+2m-1 & 2m-1\in\left[\sqrt{d-1},d-1\right]\\
d & 2m-1\in\left[d-1,2\mathrm{rk\left(\Omega\right)}-1\right]
\end{cases}.
\]
\end{cor}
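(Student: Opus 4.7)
The plan is to follow the two-step strategy that proves Corollary~\ref{cor:non-reduced-words_upper_bound}: first establish the bound for \emph{reduced} words in $\cptm(\Omega)$ (an analogue of Proposition~\ref{prop:reduced-words-upper-bound}), and then promote it to all closed paths via the extended cogrowth formula (Theorem~\ref{thm:cogrowth-formula-extended}), whose hypothesis applies because the universal cover of a $d$-regular $\Omega$ is the $d$-regular tree $T_d$.

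For the reduced step with $m\geq 1$, I would decompose
\[
\sum_{\substack{w\in\cptm(\Omega)\\ w\text{ reduced}}}|\crit(w)|\;\leq\;\sum_{v\in V(\Omega)}\sum_{\substack{J\leq J_v\\ \rk(J)=m}}\nu_t(J),
\]
where $J_v\leq\F_k$ is the subgroup carried by closed paths in $\Omega$ at $v$ and $\nu_t(J)$ counts reduced words of length $t$ in $J$ tracing every edge of $\Gamma_X(J)$ at least twice. This uses Claim~\ref{claim:critical-is-algebraic} (critical subgroups are proper algebraic extensions of $\langle w\rangle$), Claim~\ref{claim: only-subgroups-matter} (such extensions lie in $J_v$ when $w$ is a cycle at $v$), and Lemma~\ref{lem:each-edge-twice}. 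The crucial new input compared with the bouquet case $J_v=\F_k$ is that the canonical morphism $\Gamma_X(J)\to\Gamma_X(J_v)$ is an immersion into the $d$-regular graph $\Omega$ (Claim~\ref{cla:morphism-properties}(5)); combined with Claim~\ref{Claim:core-graphs-of-rank-m}(\ref{enu:deg-at-most-2m}) it bounds every vertex degree in $\Gamma_X(J)$ by $\min\{d,2m\}$, so a reduced walk admits at most $D:=\min\{d-1,2m-1\}$ continuations per step. Mirroring the three-factor count in the proof of Proposition~\ref{prop:reduced-words-upper-bound} with $|\Gamma_X(J)|=\delta t$ (forcing $\delta\in[0,\tfrac{1}{2}]$ because each edge is traced at least twice) I expect
\[
\sum_{\substack{J\leq J_v,\;\rk(J)=m\\ |\Gamma_X(J)|=\delta t}}\nu_t(J)\;\leq\;\mathrm{poly}(t)\cdot (d-1)^{\delta t}\cdot D^{(1-2\delta)t},
\]
where the $(d-1)^{\delta t}$ factor counts immersions of the underlying pointed rank-$m$ topological graph into $\Omega$ (local injectivity forbids duplicating the label-and-direction of the incoming edge). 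Optimizing over $\delta$ yields a reduced-word growth rate bounded by $\alpha:=\max\{\sqrt{d-1},\,D\}$.

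To pass to non-reduced words I apply Theorem~\ref{thm:cogrowth-formula-extended} one vertex at a time: fix $v$ and a lift $\tilde v\in V(T_d)$ of $v$ under the universal covering $p\colon T_d\to\Omega$, and define $f_{m,v}(u)=\mathbbm{1}_{u\in p^{-1}(v)}\cdot\mathbbm{1}_{\pi(\bar w)=m}\cdot|\crit(\bar w)|$, where $\bar w$ is the reduced word along the unique non-backtracking path from $\tilde v$ to $u$. Then $\nu_{f_{m,v}}(t)$ is precisely the reduced quantity just bounded, while $\beta_{f_{m,v}}(t)$ is the contribution from closed paths at $v$ to $\sum_{w\in\cptm(\Omega)}|\crit(w)|$. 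The formula gives $\limsup\beta_{f_{m,v}}(t)^{1/t}\leq g(\alpha)$, and summing over the finitely many vertices $v$ produces the three cases of the corollary by direct evaluation: $g(\sqrt{d-1})=2\sqrt{d-1}$ when $2m-1\leq\sqrt{d-1}$, $g(2m-1)=\tfrac{d-1}{2m-1}+(2m-1)$ when $2m-1\in[\sqrt{d-1},d-1]$, and $g(d-1)=d$ when $2m-1\geq d-1$. The remaining case $m=0$ is immediate from Claim~\ref{claim:bound-for-m=00003D0}: the only contributing word is $w=1$ with $|\crit(1)|=1$, and $|\cpt^0(\Omega)|$ is bounded by $|V(\Omega)|\cdot c_{T_d}(t,\tilde v,\tilde v)$, whose exponential growth rate is $\|A_{T_d}\|=2\sqrt{d-1}$.

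The main obstacle I anticipate is the immersion count: in the bouquet case the labelings of an abstract rank-$m$ graph are essentially free (subject only to the local core-graph condition), whereas here each edge must be matched to an adjacent edge of $\Omega$. What keeps the bound at $(d-1)^{\delta t}$ rather than degrading to $d^{\delta t}$ (which would spoil the first case, replacing $\sqrt{d-1}$ with $\sqrt{d}$) is precisely the local injectivity of an immersion, ensuring that after the first tree-edge each successive spanning-tree extension has at most $d-1$ available targets in $\Omega$.
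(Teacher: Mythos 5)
Your proposal is correct and follows essentially the same route as the paper: reduce to counting reduced words via Claims \ref{claim:critical-is-algebraic}, \ref{claim: only-subgroups-matter} and Lemma \ref{lem:each-edge-twice}, bound the labelings by counting immersions of the rank-$m$ core graph into $\Omega_v$ (giving the $(d-1)^{\delta t}$ factor) and the remaining steps by the degree bound $\min\{2m,d\}-1$, optimize over $\delta$, and then pass to non-reduced words with the extended cogrowth formula. Your explicit construction of the fiber-supported function $f_{m,v}$ on $T_d$ just spells out what the paper leaves implicit when it says the cogrowth formula ``applies here too.''
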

\begin{proof}
First, for words with $\pi\left(w\right)=0$, that is, words reducing
to 1, their number is $\left|V\left(\Omega\right)\right|$ times the
number of cycles of length $t$ at a fixed vertex in the $d$-regular
tree. Thus, as in the proof of Claim \ref{claim:bound-for-m=00003D0},
\[
\limsup_{t\to\infty}\left[\sum_{w\in\cpt^{0}\left(\Omega\right)}\left|\crit\left(w\right)\right|\right]^{1/t}=\limsup_{t\to\infty}\left|\cpt^{0}\left(\Omega\right)\right|^{1/t}=2\sqrt{d-1}\cdot\limsup_{t\to\infty}\left|V\left(\Omega\right)\right|^{1/t}=2\sqrt{d-1}.
\]
For $m\geq1,$ since the extended cogrowth formula (Theorem \ref{thm:cogrowth-formula-extended})
applies here too, it is enough to prove that for \emph{reduced} words
we have: 
\[
\limsup_{t\to\infty}\left[\sum_{\substack{w\in\cptm\left(\Omega\right):\\
w\,\mathrm{is\, reduced}
}
}\left|\crit\left(w\right)\right|\right]^{1/t}\leq\begin{cases}
\sqrt{d-1} & 2m-1\in\left[1,\sqrt{d-1}\right]\\
2m-1 & 2m-1\in\left[\sqrt{d-1},d-1\right]\\
d-1 & 2m-1\in\left[d-1,2\mathrm{rk\left(\Omega\right)-1}\right]
\end{cases}
\]
From Claim \ref{claim: only-subgroups-matter} we deduce that every
critical subgroup is necessarily a subgroup of $J_{v}=\pi_{1}^{X}\left(\Omega_{v}\right)$
for some vertex $v\in V\left(\Omega\right)$. As in the proof of Proposition
\ref{prop:reduced-words-upper-bound}, we denote 
\[
\nu_{t}\left(J\right)=\left|\left\{ w\in\F_{k}\,\middle|\,\begin{gathered}\left|w\right|=t,\, w\,\mathrm{traces\, each\, edge}\\
\mathrm{of}\,\Gamma_{X}\left(J\right)\,\mathrm{at\, least\, twice}
\end{gathered}
\right\} \right|
\]
 for every $J\leq\F_{k}$, and as in \eqref{from-crit-to-tracing-twice},
we obtain the bound:
\[
\sum_{\substack{w\in\cptm\left(\Omega\right):\\
w\,\mathrm{is\, reduced}
}
}\left|\crit\left(w\right)\right|\leq\sum_{v\in V\left(\Omega\right)}\sum_{J\leq J_{v}:\,\mathrm{rk}\left(J\right)=m}\nu_{t}\left(J\right).
\]
We carry the same counting argument as in the proof of Proposition
\ref{prop:reduced-words-upper-bound}:
\begin{itemize}
\item The first stage, where we count unlabeled and unoriented pointed graphs
of a certain size and rank remains unchanged.
\item For the second stage of labeling and orienting the graph, we first
choose $v$ ($\left|V\left(\Omega\right)\right|$ options), and then
we use the fact that whenever $J\leq J_{v}$, there is a core-graph
morphism $\eta:\Gamma_{X}\left(J\right)\to\Omega_{v}$, which is,
as always, an immersion (i.e.~locally injective). So we first label
an arbitrary edge incident to the basepoint $\otimes$, and this one
has to be labeled like one of the $d$ edges incident with $\otimes$
at $\Omega_{v}$. We then label gradually edges adjacent to at least
one already-labeled edge. Thus, the image of one of the endpoints
of the current edge under the core-graph morphism is already known,
and there are at most $d-1$ options to label the current edge. Overall,
the number of possible labelings is bounded by $\left|V\left(\Omega\right)\right|\cdot d\left(d-1\right)^{\delta t-1}$.
\item The third and last stage, where we estimate $\nu_{t}\left(J\right)$
for a particular $J$, is almost identical. The only difference is
that every vertex in $\Gamma_{X}\left(J\right)$ is of degree at most
$\min\left\{ 2m,d\right\} $, so overall we obtain $\nu_{t}\left(J\right)\leq\left(4t^{2}\right)^{3m-1}\cdot\left(\min\left\{ 2m,d\right\} -1\right)^{\left(1-2\delta\right)t}$. 
\end{itemize}

We conclude as in the proof of Proposition \ref{prop:reduced-words-upper-bound}.

\end{proof}

\subsection{An arbitrary base-graph $\Omega$\label{sub:counting-omega}}

We now return to the most general case of an arbitrary connected base
graph $\Omega$. Theorem \ref{thm:bound-for-m-in-general-Omega} below
is needed for proving the bound on the new spectrum of the adjacency
operator on $\Gamma$, the random covering of $\Omega$ in the ${\cal C}_{n,\Omega}$
model (the first part of Theorem \ref{thm:sqrt3-times-rho}). The
small variation needed for the second part of this theorem, dealing
with the Markov operator, is discussed in Section \ref{sub:The-Markov-Operator}. 

Recall that $T$\marginpar{$T$} denotes the universal covering of
$\Omega$ (and of $\Gamma$), and $\rho=\rho_{A}\left(\Omega\right)$
denotes the spectral radius of its adjacency operator. Recall also
that we denote $k=\left|E\left(\Omega\right)\right|$ and orient each
of the $k$ edges arbitrarily and label them by $x_{1},\ldots,x_{k}$.
With the orientation and labeling of its edges, $\Omega$ becomes
a non-pointed $X$-labeled graph, where $X=\left\{ x_{1},\ldots,x_{k}\right\} $.
Every walk in $\Omega$ of length $t$ can be regarded as an element
of $\left(X\cup X^{-1}\right)^{t}$ and (after reduction) of $\F_{k}=\F\left(X\right)$.
We also denoted $\rk\left(\Omega\right)=\left|E\left(\Omega\right)\right|-\left|V\left(\Omega\right)\right|+1$
and showed that $\pi\left(w\right)\in\left\{ 0,1,\ldots,\rk\left(\Omega\right),\infty\right\} $
for every $w\in\cpt\left(\Omega\right)$ (Lemma \ref{lem:prim-rank-at-most-rk-Omega}).
The main theorem of this subsection is the following:
\begin{thm}
\label{thm:bound-for-m-in-general-Omega}Let $\Omega$ be a finite,
connected graph, and let $m\in\left\{ 1,\ldots,\rk\left(\Omega\right)\right\} $.
Then 
\[
\limsup_{t\to\infty}\left[\sum_{w\in\cptm\left(\Omega\right)}\left|\crit\left(w\right)\right|\right]^{1/t}\leq\left(2m-1\right)\cdot\rho.
\]

\end{thm}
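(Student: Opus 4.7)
The approach extends the scheme of Proposition~\ref{prop:reduced-words-upper-bound} and Corollary~\ref{cor:non-reduced-words_upper_bound} to an arbitrary base graph $\Omega$, with the role of the Cayley graph of $\F_k$ replaced by the universal cover $T$ of $\Omega$, whose spectral radius is $\rho$.

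First, I would combine Claim~\ref{claim:critical-is-algebraic}, Claim~\ref{claim: only-subgroups-matter} and Lemma~\ref{lem:each-edge-twice} to reduce to counting over subgroups: for every $w\in\cptm(\Omega)$ based at a vertex $v$ and every $N\in\crit(w)$, the subgroup $N$ satisfies $N\le J_v$, $\rk(N)=m$, and $w$ traces each edge of $\Gamma_{X}(N)$ at least twice. Hence
\[
\sum_{w\in\cptm(\Omega)}\left|\crit(w)\right|\;\le\;\sum_{v\in V(\Omega)}\sum_{\substack{J\le J_v\\ \rk(J)=m}}\nu_{t}(J,v),
\]
where $\nu_{t}(J,v)$ counts closed walks of length $t$ at $v$ in $\Omega$ whose reduced form lies in $J$ and traces every edge of $\Gamma_{X}(J)$ at least twice.

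Next, I would separate reduced and non-reduced contributions via the universal cover. A closed walk of length $t$ at $v$ lifts uniquely to a walk of length $t$ in $T$ starting at a chosen lift $\tilde v$ and ending at some $\tilde u\in\pi^{-1}(v)$; its reduced form corresponds to the unique non-backtracking path from $\tilde v$ to $\tilde u$. For each fixed reduced $\tilde w$ (equivalently, each endpoint $\tilde u$) the number of non-reduced walks of length $t$ reducing to $\tilde w$ equals $\bigl(A_{T}^{\,t}\bigr)_{\tilde v,\tilde u}$, which is bounded by $\rho^{t}$ because $A_{T}$ is a bounded self-adjoint operator on $\ell^{2}(V(T))$ of norm $\rho$.

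For the reduced factor, the argument of Proposition~\ref{prop:reduced-words-upper-bound} transfers essentially verbatim, using the degree bound $2m$ provided by Claim~\ref{Claim:core-graphs-of-rank-m}: for a rank-$m$ subgroup $J$ whose core graph has $\delta t$ edges, the number of reduced closed walks of length $r$ in $\Gamma_{X}(J)$ at $\otimes$ tracing every edge at least twice is at most $\mathrm{poly}(t)\cdot(2m-1)^{(1-2\delta)r}$. Combining this with the $\rho^{t}$ factor from the non-reduced extensions, summing over $r\le t$, and optimising over $\delta\in[0,\tfrac12]$, yields $\nu_{t}(J,v)\le\mathrm{poly}(t)\cdot\bigl((2m-1)\rho\bigr)^{t}$.

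The final step is the summation over $J$ and $v$. Following Proposition~\ref{prop:reduced-words-upper-bound}, unlabeled pointed rank-$m$ graphs with $\delta t$ edges number at most $(\delta t)^{4m}$; one then enumerates labelings compatible with an immersion $\eta\colon\Gamma_{X}(J)\to\Omega_v$, and combines with the previous bounds to obtain the claimed estimate $\bigl((2m-1)\rho\bigr)^{t}$ up to polynomial factors, from which the $\limsup$ conclusion follows. \emph{The main difficulty lies precisely in this last step:} unlike the bouquet case, where each labeling contributes the uniform factor $(2k-1)^{\delta t}$ and produces the sharp threshold of Proposition~\ref{prop:reduced-words-upper-bound}, for arbitrary $\Omega$ the labeling is constrained by the immersion into $\Omega_v$, and this constraint must be balanced against the reduced-walk contribution so that the combined exponential rate does not exceed $(2m-1)\rho$. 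It is this limitation that prevents the sharper bound available in the regular case (Corollary~\ref{cor:bounds-for-non-red-words-and-reg-base-graph}) from carrying over and is the source of the small multiplicative slack in Theorem~\ref{thm:sqrt3-times-rho}.
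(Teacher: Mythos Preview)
Your reduction to subgroups via Claims~\ref{claim:critical-is-algebraic}, \ref{claim: only-subgroups-matter} and Lemma~\ref{lem:each-edge-twice} is correct, and you rightly locate the crux in the final summation over $J$. But the scheme you describe does not close that step, and in fact cannot: bounding the number of length-$t$ walks reducing to a fixed reduced word by $\rho^{t}$, and then summing over reduced words and over $J$, is too lossy. For a fixed $J$ with $e$ edges in its core graph your argument gives $\nu_{t}(J,v)\le\mathrm{poly}(t)\cdot(2m-1)^{t-2e}\cdot\rho^{t}$, so the sum over $J$ requires $\sum_{e}(2m-1)^{-2e}\cdot\bigl|\{J\le J_{v}:\rk J=m,\ |E(\Gamma_{X}(J))|=e\}\bigr|$ to be polynomial in $t$. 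It is not: the number of such $J$ grows exponentially in $e$ (already for $m=1$ it grows like the cogrowth of $\Omega$), and the sum diverges whenever that growth exceeds $(2m-1)^{2}$. Even for the bouquet and $m=1$ your decomposition yields roughly $\rho^{t}\cdot(2k-1)^{t/2}=\bigl(2(2k-1)\bigr)^{t}$ rather than $\rho^{t}$. So the ``balancing'' you allude to is not a refinement that can be filled in; the separation of reduced and non-reduced contributions itself must be abandoned.

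The paper's argument (Proposition~\ref{prop:key-prop-for-top-graph}) proceeds differently. It fixes only the \emph{topological} shape $\Lambda$ of $\Gamma_{X}(N)$ --- there are finitely many for each rank --- and parametrizes the subgroups $N$ with that shape by a tuple $\hat v_{1},\ldots,\hat v_{r}\in V(T)$, one vertex per topological edge of $\Lambda$. For a non-reduced $w$, the first two traversals of the $i$-th topological edge are (non-reduced) paths in $T$ between $\hat v_{\mathrm{beg}(i)}$ and $\hat v_{i}$ of some lengths $\ell_{i,1},\ell_{i,2}$, and the key identity is
\[
\sum_{\hat v_{i}\in V(T)}c_{T}\bigl(\ell_{i,1},\hat v_{\mathrm{beg}(i)},\hat v_{i}\bigr)\,c_{T}\bigl(\ell_{i,2},\hat v_{i},\hat v_{\mathrm{beg}(i)}\bigr)=c_{T}\bigl(\ell_{i,1}+\ell_{i,2},\hat v_{\mathrm{beg}(i)},\hat v_{\mathrm{beg}(i)}\bigr)\le\rho^{\ell_{i,1}+\ell_{i,2}}.
\]
Peeling off the $\hat v_{i}$ one by one absorbs the entire (infinite) sum over subgroups into tree path counts bounded by powers of $\rho$; the remaining portions of $w$ are then controlled by the degree bound $2m$ from Claim~\ref{Claim:core-graphs-of-rank-m}, producing the factor $(2m-1)^{t}$. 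It is precisely this simultaneous treatment of the sum over $N$ and over $w$ --- rather than any enumeration of labelings --- that makes the proof go through.
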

Before proceeding to the proof of this theorem, let us refer to the
case $m=0$ which is left out. These are words reducing to $1$, and
the trivial element of $\F_{k}$ has exactly one critical subgroup,
so $\sum_{w\in\cptm\left(\Omega\right)}\left|\crit\left(w\right)\right|$
equals $\left|\cpt^{0}\left(\Omega\right)\right|$.
\begin{claim}
\label{claim:bound-for-m=00003D0-general-Omega}~
\[
\limsup_{t\to\infty}\left|\cpt^{0}\left(\Omega\right)\right|^{1/t}=\rho.
\]
\end{claim}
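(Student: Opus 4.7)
The plan is to repeat the argument of Claim \ref{claim:bound-for-m=00003D0}, replacing the $2k$-regular tree by the universal cover $T$ of $\Omega$. The key observation is that a closed path $w$ in $\Omega$ reduces to the identity in $\F_{k}=\F\left(X\right)$ if and only if its lift to $T$ (from any fixed preimage of its basepoint) is again a closed path. In other words, completely back-tracking closed paths in $\Omega$ are in bijection with the closed paths in $T$ lying above them, since $T$ is a tree and a path in a tree is closed iff it back-tracks completely.

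Concretely, for each $v\in V\left(\Omega\right)$ fix an arbitrary lift $\tilde{v}\in V\left(T\right)$; then the number of words in $\cpt^{0}\left(\Omega\right)$ that start (and end) at $v$ equals $c_{T}\left(t,\tilde{v},\tilde{v}\right)$, the number of closed paths of length $t$ at $\tilde{v}$ in $T$. Summing over the finitely many vertices of $\Omega$ gives
\[
\left|\cpt^{0}\left(\Omega\right)\right|=\sum_{v\in V\left(\Omega\right)}c_{T}\left(t,\tilde{v},\tilde{v}\right).
\]
Since $\Omega$ is finite, $T$ has bounded degree, so its adjacency operator $A_{T}$ is a bounded self-adjoint operator on $\ell^{2}\left(V\left(T\right)\right)$ with $\left\Vert A_{T}\right\Vert =\mathrm{rad}\left(T\right)=\rho$.

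For the upper bound, inequality \eqref{c_Gamma} applied to $T$ yields $c_{T}\left(t,\tilde{v},\tilde{v}\right)\leq\rho^{t}$, so $\left|\cpt^{0}\left(\Omega\right)\right|\leq\left|V\left(\Omega\right)\right|\cdot\rho^{t}$, and hence $\limsup_{t\to\infty}\left|\cpt^{0}\left(\Omega\right)\right|^{1/t}\leq\rho$. For the matching lower bound, pick any vertex $v_{0}$; then $\left|\cpt^{0}\left(\Omega\right)\right|\geq c_{T}\left(t,\tilde{v}_{0},\tilde{v}_{0}\right)$, and by \eqref{spec-rad=00003Dlim-1} applied to $T$ we get $\limsup_{t\to\infty}c_{T}\left(t,\tilde{v}_{0},\tilde{v}_{0}\right)^{1/t}=\mathrm{rad}\left(T\right)=\rho$. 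Combining the two bounds gives the claim.

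There is no real obstacle here: both ingredients (the lift bijection and the spectral-radius formula for the adjacency operator of a bounded-degree connected graph) are standard, and the finiteness of $V\left(\Omega\right)$ makes the factor $\left|V\left(\Omega\right)\right|$ subexponential and thus invisible to the $t$-th root.
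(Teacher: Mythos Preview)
Your proof is correct and follows essentially the same approach as the paper: both identify $\cpt^{0}\left(\Omega\right)$ with closed paths in the universal cover $T$ via lifting, bound $c_{T}\left(t,\tilde v,\tilde v\right)\le\rho^{t}$ using \eqref{c_Gamma}, and obtain the matching lower bound from \eqref{spec-rad=00003Dlim-1}, exactly as the paper does when it says ``repeat the argument from Claim~\ref{claim:bound-for-m=00003D0}.''
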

\begin{proof}
For a given vertex $v\in V\left(\Omega\right)$, each cycle at $v$
of length $t$ reducing to $1$ lifts to a cycle in $T$ at $\widehat{v}$,
where $\widehat{v}\in p^{-1}\left(v\right)$ is some vertex at the
fiber above $v$ of the covering map $p:T\to\Omega$. The number of
cycles of length $t$ reducing to 1 at $v$ is thus $\left[A_{T}^{\,\, t}\delta_{\widehat{v}}\right]_{\widehat{v}}$,
and 
\[
\left[A_{T}^{\,\, t}\delta_{\widehat{v}}\right]_{\widehat{v}}=\left\langle A_{T}^{\,\, t}\delta_{\widehat{v}},\delta_{\widehat{v}}\right\rangle _{1}\leq\left\Vert A_{T}^{\,\, t}\right\Vert \cdot\left\Vert \delta_{\widehat{v}}\right\Vert ^{2}=\left\Vert A_{T}^{\,\, t}\right\Vert =\rho{}^{t}
\]
(the last equality follows from $A_{T}$ being self-adjoint), and
thus 
\[
\limsup_{t\to\infty}\left|\cpt^{0}\left(\Omega\right)\right|^{1/t}\leq\limsup_{t\to\infty}\left[\left|V\left(\Omega\right)\right|\cdot\rho{}^{t}\right]^{1/t}=\rho.
\]
To show there is actual equality, repeat the argument from Claim \ref{claim:bound-for-m=00003D0}. 
\end{proof}
\noindent We return to the proof of Theorem \ref{thm:bound-for-m-in-general-Omega}.
By Claim \ref{claim:critical-is-algebraic},
\begin{eqnarray}
\sum_{w\in\cptm\left(\Omega\right)}\left|\crit\left(w\right)\right| & = & \sum_{\substack{N\leq\F_{k}:\\
\rk\left(N\right)=m
}
}\left|\left\{ w\in{\cal CW}_{t}\left(\Omega\right)\,\middle|\, N\in\crit\left(w\right)\right\} \right|\nonumber \\
 & \leq & \sum_{\substack{N\leq\F_{k}:\\
\rk\left(N\right)=m
}
}\left|\left\{ w\in{\cal CW}_{t}\left(\Omega\right)\,\middle|\,\left\langle w\right\rangle \neqalg N\right\} \right|\label{eq:sum-over-betta-N}
\end{eqnarray}
and we actually bound the latter summation. For every $N\leq\F_{k}$,
we let $\beta_{t}\left(N\right)$\marginpar{$\beta_{t}\left(N\right)$}
denote the corresponding summand, namely 
\[
\beta_{t}\left(N\right)=\left|\left\{ w\in{\cal CW}_{t}\left(\Omega\right)\,\middle|\,\left\langle w\right\rangle \neqalg N\right\} \right|.
\]
Note that while a non-reduced element $w\in\cpt\left(\Omega\right)$
with $w\in N$ might not correspond to a close walk in $\Gamma_{X}\left(N\right)$,
it always does correspond to a close walk at the basepoint of the
Schreier coset graph $\overline{\Gamma}_{X}\left(N\right)$.

If $N\leq\F_{k}$ satisfies that the basepoint $\otimes$ of $\Gamma_{X}\left(N\right)$
is not a leaf, call $N$ and its core-graph CR\marginpar{CR} (cyclically
reduced). The following claim shows it is enough to consider CR subgroups.
\begin{claim}
\label{claim: CR-subgroups} If $N\leq\F_{k}$ is CR then
\[
\sum_{N'\,\mathrm{is\, conjugate\, to\, N}}\beta_{t}\left(N'\right)\leq t\beta_{t}\left(N\right).
\]
\end{claim}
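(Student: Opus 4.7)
My plan is to establish the inequality by constructing an explicit injection
\[
\Phi: A := \left\{(w, N')\,\middle|\,N'\text{ conjugate to } N,\ w \in \cpt(\Omega),\ \langle w \rangle \neqalg N'\right\} \hookrightarrow \{0,1,\ldots,t-1\} \times \left\{w' \in \cpt(\Omega)\,\middle|\,\langle w' \rangle \neqalg N\right\}.
\]
Since $|A| = \sum_{N'\text{ conj.~to }N}\beta_{t}(N')$ and the codomain has size $t\beta_{t}(N)$, this yields the claim. The idea is that each pair $(w,N')$ is sent to a pair $(i^*, w^{(i^*)})$, where $w^{(i^*)}$ is a cyclic rotation of $w$ chosen so that the conjugating element $g$ with $N' = gNg^{-1}$ can be read off as a prefix of $w$.

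Given $(w, N') \in A$, fix $g \in \F_{k}$ with $N' = gNg^{-1}$, let $e_{1}, \ldots, e_{t}$ be the edges of $w$ and set $g_{i} = e_{1} \cdots e_{i}$. There is an unbased $X$-labeled graph isomorphism $\overline{\Gamma}_{X}(N) \to \overline{\Gamma}_{X}(N')$ defined by $Nh \mapsto gNh = N' \cdot gh$. It is \emph{not} basepoint-preserving: on the right, $\otimes' = N'$ pulls back to the vertex $Ng^{-1}$, while on the left $\otimes = N$ maps to the vertex $gN$. Lifting $w$ to a path in $\overline{\Gamma}_{X}(N)$ starting at $Ng^{-1}$, the path passes through the coset $Ng^{-1}g_{i}$ at step $i$, and $Ng^{-1}g_{i} = N$ exactly when $g_{i} \in gN$; in that case $g_{i} N g_{i}^{-1} = N'$.

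The crux -- the one nontrivial step -- is to exhibit some $i^* \in \{0,\ldots,t-1\}$ with $g_{i^*} \in gN$, and this is where the CR assumption on $N$ enters. By Lemma~\ref{lem:each-edge-twice}, since $\langle w \rangle \neqalg N'$, the reduced form of $w$ traces every edge of $\Gamma_{X}(N')$ at least twice and hence visits every vertex of $\Gamma_{X}(N')$. Because $N$ is CR, $\otimes = N$ lies in the cyclic core of $\overline{\Gamma}_{X}(N)$; the unbased isomorphism carries this cyclic core onto the cyclic core of $\overline{\Gamma}_{X}(N')$, which is contained in $\Gamma_{X}(N')$. Consequently, $gN$ (the image of $\otimes$ on the right) is among the vertices traced by the reduced-$w$-path in $\overline{\Gamma}_{X}(N')$, so pulling back along the isomorphism, $N$ is visited by the reduced path of $w$ in $\overline{\Gamma}_{X}(N)$ starting at $Ng^{-1}$, and therefore also by the unreduced path. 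Take $i^*$ to be the smallest such step.

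Finally, define $\Phi(w, N') := \left(i^*, w^{(i^*)}\right)$ with $w^{(i^*)} := e_{i^*+1} \cdots e_{t} e_{1} \cdots e_{i^*} = g_{i^*}^{-1} w g_{i^*}$, which is again a closed path in $\Omega$ of length $t$. Since $g_{i^*} N g_{i^*}^{-1} = N'$ and conjugation preserves $\neqalg$,
\[
\langle w^{(i^*)}\rangle = g_{i^*}^{-1}\langle w\rangle g_{i^*} \neqalg g_{i^*}^{-1} N' g_{i^*} = N,
\]
so $\Phi(w, N')$ does lie in the codomain. Injectivity is immediate by inverting the construction: from $(i, w')$ one recovers $w$ as the $(t-i)$-fold cyclic rotation of $w'$, and then $N'$ as $g_{i} N g_{i}^{-1}$, where $g_{i}$ is the length-$i$ prefix of $w$. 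The only real content is the existence of $i^*$, which rests entirely on the careful identification of the two Schreier graphs and on the CR hypothesis; the remainder is bookkeeping.
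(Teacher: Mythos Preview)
Your proof is correct and follows essentially the same approach as the paper's: identify the Schreier graphs of conjugates as the same unbased labeled graph, use Lemma~\ref{lem:each-edge-twice} together with the CR hypothesis to guarantee that the path of $w$ visits the vertex corresponding to the basepoint of $N$, take a cyclic rotation, and count via the at-most-$t$ rotations. Your version is more carefully formalized as an explicit injection (and you correctly note that the condition $g_i N g_i^{-1}=N'$ is intrinsic, so $i^*$ is well-defined), but the underlying argument is identical to the paper's.
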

\begin{proof}
The Schreier graphs of $N$ and of any conjugate of it differ only
by the basepoint. If $N'$ is some conjugate of $N$ and $w'\in{\cal CW}_{t}\left(\Omega\right)$
satisfies $\left\langle w'\right\rangle \neqalg N'$, then the walk
corresponding to $w'$ in the Schreier graph $\overline{\Gamma}_{X}\left(N'\right)$
must visit all vertices and edges of the core of $\overline{\Gamma_{X}\left(N'\right)}$,
and in particular the basepoint of $\overline{\Gamma}_{X}\left(N\right)$
(by Lemma \ref{lem:each-edge-twice}). So there is some cyclic rotation
$w$ of $w'$ satisfying $\left\langle w\right\rangle \neqalg N$
(clearly, $w$ also belongs to $\cpt\left(\Omega\right)$). On the
other hand, each such $w$ has at most $t$ possible cyclic rotations,
each of which corresponds to one $w'$ and one $N'$. 
\end{proof}
Next, we classify the subgroups $N\leq\F_{k}$ according to their
{}``topological'' core graph $\Lambda$. As implied in the short
discussion preceding Claim \ref{Claim:core-graphs-of-rank-m}, this
is the homeomorphism class of the pointed $\Gamma_{X}\left(N\right)$.
Namely, this is the graph obtained from $\Gamma_{X}\left(N\right)$
by ignoring vertices of degree two, except for (possibly) the basepoint.
As Claim \ref{claim: CR-subgroups} allows us to restrict to one CR
representative from each conjugacy class of subgroups in $\F_{k}$,
we also restrict attention to one CR representative $\Lambda$ of
each {}``conjugacy class'' of topological core graphs. Ignoring
the basepoints, any $\Lambda'$ in the {}``conjugacy class'' of
$\Lambda$ retracts to this representative. For example, we need exactly
three such representatives in rank 2, as shown in Figure \ref{fig:topo-graphs-rk-2}. 

\begin{figure}
\begin{center}
$\xymatrix@1{
\otimes \ar@{-}@(ul,dl) \ar@{-}@(ur,dr)  &&
\otimes \ar@{-}@(ul,dl) \ar@{-}[r] & \bullet \ar@{-}@(ur,dr) &&
\otimes \ar@{-}@/^2pc/[r] \ar@{-}@/_2pc/[r] \ar@{-}[r]  & \bullet
}$
\par\end{center}\caption{\label{fig:topo-graphs-rk-2}The three CR representatives of topological
graphs of rank 2: Figure-Eight, Barbell and Theta.}
\end{figure}
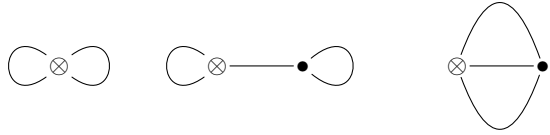
The following proposition is the key step in the proof of Theorem
\ref{thm:bound-for-m-in-general-Omega}.
\begin{prop}
\label{prop:key-prop-for-top-graph}Let $\Lambda$ be a pointed finite
connected graph without vertices of degree 1 or 2 except for possibly
the basepoint, and let $\delta$ denote its maximal degree. Then the
sum of $\beta_{t}\left(N\right)$ over all subgroup $N\leq\F_{k}$
whose core graph is topologically $\Lambda$ is at most 
\[
\left|V\left(\Omega\right)\right|\cdot\left(4t^{4}\right)^{\left|E\left(\Lambda\right)\right|}\cdot\left(\delta-1\right)^{t}\cdot\rho^{t}.
\]
\end{prop}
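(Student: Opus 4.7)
To prove this proposition, I will adapt the enumeration strategy from Proposition~\ref{prop:reduced-words-upper-bound}, replacing the role of the $2k$-regular tree (the universal cover of the bouquet) with the universal cover $T_{\Omega}$ of $\Omega$, and using its spectral radius $\rho$ to control the excursions of $w$ away from the core of $\Gamma_{X}(N)$. The argument parameterizes pairs $(N,w)$ with $N$ of topological type $\Lambda$ and $\langle w\rangle \neqalg N$, distributing the total count among structural choices for $N$ and step-by-step choices for $w$.

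By Claim~\ref{claim: only-subgroups-matter}, every $N$ contributing to the sum satisfies $N\le J_{v}$ for some $v\in V(\Omega)$, namely the image of the basepoint $\otimes$ of $\Gamma_{X}(N)$ under the canonical immersion into $\Omega$; this accounts for the factor $|V(\Omega)|$. Using Claim~\ref{claim: CR-subgroups}, I may restrict to CR representatives at a cost of a single factor of $t$, after which $\otimes$ lies on the core. Any $w\in\cpt(\Omega)$ with $w\in N$ then lifts uniquely to a closed walk at $\otimes$ in the pointed covering $\widehat{\Omega}_{N}:=T_{\Omega}/N$ of $\Omega$; this covering decomposes as the core $\Gamma_{X}(N)$ together with copies of subtrees of $T_{\Omega}$ attached at each core vertex in the directions not occupied by the core.

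The sub-exponential factor $(4t^{4})^{|E(\Lambda)|}$ enumerates, for each topological edge $\widetilde{e}$ of $\Gamma_{X}(N)$, four pieces of data of size at most $2t$ each: the length of $\widetilde{e}$, together with the position in $w$ and the direction of the first two traversals of $\widetilde{e}$ by $w$. Lemma~\ref{lem:each-edge-twice} guarantees that at least two such traversals exist. The exponential factor $(\delta-1)^{t}\rho^{t}$ then comes from the following decomposition of the remainder of $w$: whenever the walker sits at a topological vertex of $\Gamma_{X}(N)$ and chooses the next topological edge to traverse (after the two distinguished traversals of each edge are already accounted for), at most $\delta-1$ non-backtracking choices are available, contributing $(\delta-1)^{t}$ in the worst case; and each maximal excursion of $w$ into a tree region of $\widehat{\Omega}_{N}$ — a closed walk at a core vertex stepping initially into a tree direction — corresponds bijectively to a closed walk at the corresponding lift in $T_{\Omega}$, so the product of bounds over all excursions is at most $\rho^{t}$ by the self-adjointness of $A_{T_{\Omega}}$, exactly as in Claim~\ref{claim:bound-for-m=00003D0-general-Omega}.

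The main obstacle will be to carry out this decomposition of $w$ precisely, making sure the combinatorial data enumerated by the $(4t^{4})^{|E(\Lambda)|}$ factor is consistent with and does not double-count the step-by-step choices counted by $(\delta-1)^{t}\rho^{t}$. In particular, once the distinguished traversals of each topological edge and the excursion segments are fixed, the remaining core segments must be uniquely reconstructible (or at least tightly bounded) from the enumerated data; one must also be careful with cases in which $w$ enters and leaves a topological edge from the same side or re-enters a tree region multiple times. The crucial new ingredient over the bouquet case is the use of $\rho$ via the adjacency operator on $T_{\Omega}$ — rather than the cruder bound coming from the maximum degree of $\Omega$ — which is what allows the final bound to depend on $\Omega$ only through $\rho$, in place of the extended cogrowth formula (Theorem~\ref{thm:cogrowth-formula-extended}) that served this purpose in the bouquet case.
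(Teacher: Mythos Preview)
Your outline misses the central difficulty of this proposition: controlling the sum over the \emph{exponentially many} subgroups $N$ of a given topological type $\Lambda$. Recording the length of each topological edge of $\Gamma_X(N)$ does not come close to determining $N$; for each edge of $\Lambda$ there are exponentially many reduced paths in $\Omega$ (equivalently, choices of endpoint in $T_\Omega$) that can realize it. Your factor $\rho^t$, which you attribute solely to tree excursions of $w$ away from the core, does not touch this summation, and your $(\delta-1)^t$ governs only choices \emph{within} an already-fixed core. In particular, nowhere in your outline is the actual walk along the two distinguished traversals of each topological edge counted: you record their positions and directions, but a walk of length $\ell$ in $\Omega$ between two prescribed endpoints still has on the order of $\rho^\ell$ realizations, and it is precisely here --- in the choice of these traversals --- that the sum over $N$ is hiding.

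The paper's proof handles this by parameterizing each $N$ via a tuple $(\hat v_1,\ldots,\hat v_r)$ of vertices in $T_\Omega$ (the far endpoints of the topological edges, with edges ordered so that $\mathrm{beg}(i)<i$) and then ``peeling off'' the sum one edge at a time. For the last edge $e_r$, the two distinguished traversals contribute $c_T(\ell_{r,1},\hat v_{\mathrm{beg}(r)},\hat v_r)\cdot c_T(\ell_{r,2},\hat v_r,\hat v_{\mathrm{beg}(r)})$, and summing this product over all $\hat v_r\in V(T_\Omega)$ collapses it to the single \emph{closed}-walk count $c_T(\ell_{r,1}+\ell_{r,2},\hat v_{\mathrm{beg}(r)},\hat v_{\mathrm{beg}(r)})\le\rho^{\ell_{r,1}+\ell_{r,2}}$. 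This is exactly why Lemma~\ref{lem:each-edge-twice} is essential here: the second traversal is what lets you close up the walk and eliminate the free endpoint $\hat v_r$. Iterating, the total $\rho$-cost becomes $\rho^{\sum_{i,j}\ell_{i,j}+\sum_i q_i}\le\rho^t$, which simultaneously bounds the distinguished traversals, the intermediate segments, \emph{and} the sum over $N$; your tree-excursion picture captures only a piece of the $\rho^{\sum q_i}$ part. (A minor separate issue: your invocation of Claim~\ref{claim: CR-subgroups} and the extra factor of $t$ is misplaced --- that reduction is applied \emph{outside} this proposition, in deriving Theorem~\ref{thm:bound-for-m-in-general-Omega}, and the bound stated here contains no such factor.)
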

\begin{proof}
Denote $r=\left|E\left(\Lambda\right)\right|$. Order and orient the
edges of $\Lambda$ $\left\{ e_{1},e_{2},\ldots,e_{r}\right\} $ so
that $e_{1}$ emanates from $\otimes$, and for every $i\geq2$, $e_{i}$
emanates either from $\otimes$ or from a vertex which is the beginning
or endpoint of one of $e_{1},\ldots,e_{i-1}$. (This labeling and
orientation is usually not unique, but we fix one throughout this
proof.) In addition, let $v_{0}$ denote $\otimes$ and $v_{i}$ denote
the endpoint of $e_{i}$ for $1\leq i\leq r$. For example, one can
label the barbell-shaped graph as follows:~~$\xymatrix@1{\otimes \ar@(ul,dl)^{e_3} \ar[r]_{e_1} & \bullet \ar@(ur,dr)_{e_2} }$,
where $v_{0}=v_{3}$ are $\otimes$ and $v_{1}=v_{2}$ are $\bullet$.
Also, denote by $\mathrm{beg}\left(i\right)$ the smallest index $j$
such that $e_{i}$ begins at $v_{j}$, so $e_{i}$ is a directed edge
from $v_{\mathrm{beg}\left(i\right)}$ to $v_{i}$ and $\mathrm{beg\left(i\right)}<i$.
In our example, $\mathrm{beg}\left(1\right)=\mathrm{beg}\left(3\right)=0$
and $\mathrm{beg}\left(2\right)=1$.

Note that each $N$ corresponding to $\Lambda$ is determined by the
walks (words in $\F_{k}$) associated with $e_{1},\ldots,e_{r}$.
From Claim \ref{claim: only-subgroups-matter} it follows one can
restrict to subgroups $N$ which are subgroups of $J_{v}$ for some
$v\in V\left(\Omega\right)$. So fix some $v_{0}\in V\left(\Omega\right)$
and also some $\widehat{v}_{0}\in V\left(T\right)$ which projects
to $v_{0}$. We claim that every subgroup $N\leq J_{v_{0}}$ corresponding
to $\Lambda$ is completely determined by a set of vertices $\widehat{v}_{1},\ldots,\widehat{v}_{r}$
in $T$: the topological edge in $\Gamma_{X}\left(N\right)$ associated
with $e_{i}$ corresponds to the walk in $T$ from $\widehat{v}_{\mathrm{beg\left(i\right)}}$
to $\widehat{v}_{i}$. (There are some constraints on the choices
of the $\widehat{v}_{i}$'s. For example, if $v_{i}=v_{j}$ then $\widehat{v}_{i}$
and $\widehat{v}_{j}$ must belong to the same fiber of the projection
map $p:T\to\Omega$. However, as we only bound from above, we ignore
these constraints.) So instead of summing over all possible $N$'s,
we go through all possible choices of vertices $\widehat{v}_{1},\ldots,\widehat{v}_{r}$
in $T$. 

The counting argument that follows resembles the one in Proposition
\ref{prop:reduced-words-upper-bound}. Fix a particular $N\leq J_{v_{0}}$
corresponding to $\Lambda$ and let $\widehat{v}_{1},\ldots,\widehat{v}_{r}$
be the corresponding vertices in $T$. By Lemma \ref{lem:each-edge-twice},
if $w\in\left(X\cup X^{-1}\right)^{t}$ satisfies $\left\langle w\right\rangle \neqalg N$,
then its reduced form traverses every topological edge of $\Gamma_{X}\left(N\right)$
at least twice. For each $i$, assume that $w$ first traverses the
topological edge associated with $e_{i}$ starting at position $\tau_{i,1}$
(the position is in $w$, namely $0\le\tau_{i,1}\le t-1$), and in
$\ell_{i,1}$ steps, and then from position $\tau_{i,2}$ in $\ell_{i,2}$
steps (recall that $w$ is not reduced so $\ell_{i,2}$ may be different
from $\ell_{i,1}$). The directions of these traverses are $\varepsilon_{i,1},\varepsilon_{i,2}\in\left\{ \pm1\right\} $.
In total, there are less than $t^{2r}$ options for the $\tau_{i,j}$'s,
less than $t^{2r}$ options for the $\ell_{i,j}$'s and less than
$2^{2r}$ options for the $\varepsilon_{i,j}$'s: a total of less
than $\left(4t^{4}\right)^{r}$ options. There are $t-\ell_{1,1}-\ell_{1,2}-\ldots-\ell_{r,1}-\ell_{r,2}$
remaining steps, and these are divided to at most $4r$ segments (we
can always assume one of the $\tau_{i,1}$'s equals 0). Denote the
lengths of these segments by $q_{1},\ldots,q_{4r}$ (some may be 0).
The $i$'th segment reduces to some walk in $\Gamma_{X}\left(N\right)$,
with at most $\left(\delta-1\right)^{q_{i}}$ possibilities (recall
that $\delta$ marks the maximal degree of a vertex in $\Lambda$).
Overall, there are at most $\left(\delta-1\right)^{q_{1}+\ldots+q_{4r}}\leq\left(\delta-1\right)^{t}$
options to choose the reduced walks traced by these $4r$ segments
in $w$. Given such a reduced walk for the $i$'th segment, let $\widehat{x}_{i},\widehat{y}_{i}\in V\left(T\right)$
be suitable vertices in the tree such that the reduced walk lifts
to the unique reduced walk from $\widehat{x}_{i}$ to $\widehat{y}_{i}$.

Now, we sum over all subgroups $N$ corresponding to $\Lambda$ and
all words $w\in\cpt\left(\Omega\right)$ with $\left\langle w\right\rangle \neqalg N$.
By adding a factor of $\left|V\left(\Omega\right)\right|\left(4t^{4}\right)^{r}\cdot\left(\delta-1\right)^{t}$
we assume we already know $v_{0}$ and $\widehat{v}_{0}$, the $\tau_{i,j}$'s,
$\ell_{i,j}$'s, $\varepsilon_{i,j}$'s, the $q_{i}$'s and the reduced
$4r$ walks. Moreover, conditioning on knowing $\widehat{v}_{1},\ldots,\widehat{v}_{r}$,
we also know the $\widehat{x}_{i}$'s and the $\widehat{y}_{i}$'s.
Recall that $c_{\Gamma}\left(t,u,v\right)$ denotes the number of
walks of length $t$ in a graph $\Gamma$ from the vertex $u$ to
the vertex $v$, and that by \eqref{c_Gamma}, $c_{T}\left(t,u,v\right)\leq\rho^{t}$
for every $u,v\in V\left(T\right)$. For each $i=1,\ldots,r$ and
$j=1,2$, there are $c_{T}\left(\ell_{i,j},\widehat{v}_{\mathrm{beg}\left(i\right)},\widehat{v}_{i}\right)$
possible subwords corresponding to the $j$'th traverse of $e_{i}$
(even if $\varepsilon_{i,j}=-1$, because $c_{T}\left(\ell_{i,j},\widehat{v}_{\mathrm{beg}\left(i\right)},\widehat{v}_{i}\right)=c_{T}\left(\ell_{i,j},\widehat{v}_{i},\widehat{v}_{\mathrm{beg}\left(i\right)}\right)$).
Similarly, there are at most $c_{T}\left(q_{i},\widehat{x}_{i},\widehat{y}_{i}\right)$
subwords corresponding to the the $ $$i$'th intermediate segment.
Thus, if $\alpha=\left|V\left(\Omega\right)\right|\cdot\left(4t^{4}\right)^{r}\cdot\left(\delta-1\right)^{t}$
then 
\begin{eqnarray*}
\sum_{\substack{N\leq\F_{k}:\\
\Gamma_{X}\left(N\right)\cong\Lambda
}
}\beta_{t}\left(N\right) & \leq & \alpha\cdot\sum_{\widehat{v}_{1},\ldots,\widehat{v}_{r}\in V\left(T\right)}\left[\prod_{i=1}^{r}\prod_{j=1}^{2}c_{T}\left(\ell_{i,j},\widehat{v}_{\mathrm{beg}\left(i\right)},\widehat{v}_{i}\right)\right]\prod_{i=1}^{4r}c_{T}\left(q_{i},\widehat{x}_{i},\widehat{y}_{i}\right)\\
 & \leq & \alpha\cdot\left[\prod_{i=1}^{4r}\rho^{q_{i}}\right]\sum_{\widehat{v}_{1},\ldots,\widehat{v}_{r}\in V\left(T\right)}\left[\prod_{i=1}^{r}\prod_{j=1}^{2}c_{T}\left(\ell_{i,j},\widehat{v}_{\mathrm{beg}\left(i\right)},\widehat{v}_{i}\right)\right]
\end{eqnarray*}
Note that $\mathrm{beg\left(i\right)<i},$ so $c_{T}\left(\ell_{i,j},\widehat{v}_{\mathrm{beg}\left(i\right)},\widehat{v}_{i}\right)$
only depends on $\ell_{i,j}$ and $\widehat{v}_{0},\ldots,\widehat{v}_{i}$
(and not on $\widehat{v}_{i+1},\ldots,\widehat{v}_{r}$). Therefore,
if we write $f\left(i\right)=\prod_{j=1}^{2}c_{T}\left(\ell_{i,j},\widehat{v}_{\mathrm{beg}\left(i\right)},\widehat{v}_{i}\right)$,
we can split the sum to obtain:

\begin{eqnarray*}
\sum_{\substack{N\leq\F_{k}:\\
\Gamma_{X}\left(N\right)\cong\Lambda
}
}\beta_{t}\left(N\right) & \leq & \alpha\cdot\rho^{\sum q_{i}}\sum_{\widehat{v}_{1}\in V\left(T\right)}f\left(1\right)\left[\sum_{\widehat{v}_{2}\in V\left(T\right)}f\left(2\right)\left[\ldots\right]\right]
\end{eqnarray*}

The following step is the crux of the matter. We use the fact that
each topological edge is traversed twice to get rid of the summation
over vertices in $T$. We begin with the last edge $e_{r}$, where
we replace the expression $\sum_{\widehat{v}_{r}\in V\left(T\right)}f\left(r\right)$
as follows:
\begin{eqnarray*}
\sum_{\widehat{v}_{r}\in V\left(T\right)}f\left(r\right) & = & \sum_{\widehat{v}_{r}\in V\left(T\right)}c_{T}\left(\ell_{r,1},\widehat{v}_{\mathrm{beg}\left(r\right)},\widehat{v}_{r}\right)c_{T}\left(\ell_{r,2},\widehat{v}_{\mathrm{beg}\left(r\right)},\widehat{v}_{r}\right)\\
 & = & \sum_{\widehat{v}_{r}\in V\left(T\right)}c_{T}\left(\ell_{r,1},\widehat{v}_{\mathrm{beg}\left(r\right)},\widehat{v}_{r}\right)c_{T}\left(\ell_{r,2},\widehat{v}_{r},\widehat{v}_{\mathrm{beg}\left(r\right)}\right)\\
 & \overset{\left(*\right)}{=} & c_{T}\left(\ell_{r,1}+\ell_{r,2},\widehat{v}_{\mathrm{beg}\left(r\right)},\widehat{v}_{\mathrm{beg}\left(r\right)}\right)\leq\rho^{\ell_{r,1}+\ell_{r,2}}.
\end{eqnarray*}
The crucial step here is the equality $\overset{\left(*\right)}{=}$.
It follows from the fact that $\widehat{v}_{r}$ can be recovered
as the vertex of $T$ visited by the walk of length $\ell_{r,1}+\ell_{r,2}$
after $\ell_{r,1}$ steps. After {}``peeling'' the expression $\sum_{\widehat{v}_{r}\in V\left(T\right)}f\left(r\right)$,
we can go on and bound $\sum_{\widehat{v}_{r-1}\in V\left(T\right)}f\left(r-1\right)$
by $\rho^{\ell_{r-1,1}+\ell_{r-1,2}}$ and so on. Eventually, we obtain
\begin{eqnarray*}
\sum_{\substack{N\leq\F_{k}:\\
\Gamma_{X}\left(N\right)\cong\Lambda
}
}\beta_{t}\left(N\right) & \leq & \alpha\cdot\rho^{\sum q_{i}}\prod_{i=1}^{r}\rho^{\ell_{i,1}+\ell_{i,2}}=\left|V\left(\Omega\right)\right|\cdot\left(4t^{4}\right)^{r}\cdot\left(\delta-1\right)^{t}\cdot\rho^{t}.
\end{eqnarray*}

\end{proof}
Finally, we are in position to establish the upper bounds stated in
Theorem \thmref{bound-for-m-in-general-Omega}. Fix $m\in\left\{ 1,2,\ldots,\rk\left(\Omega\right)\right\} $.
Then by \eqref{sum-over-betta-N} and Claim \claimref{ CR-subgroups},
\begin{eqnarray}
\sum_{w\in\cptm\left(\Omega\right)}\left|\crit\left(w\right)\right| & \leq & \sum_{\substack{N\leq\F_{k}:\\
\rk\left(N\right)=m
}
}\beta_{t}\left(N\right)\nonumber \\
 & \leq & \sum_{\substack{\left[N\right]\in\mathrm{ConjCls\left(\F_{k},m\right)}\\
N\,\mathrm{is\, CR}
}
}t\beta_{t}\left(N\right)\label{eq:sum_over_conj_classes}
\end{eqnarray}
where the final summation is over all conjugacy classes of subgroups
of rank $m$ in $\F_{k}$, and for each class $N$ is a CR representative.
Moreover, we choose these representatives $N$ so that if $\left[N_{1}\right]$
and $\left[N_{2}\right]$ correspond the same non-pointed topological
graph, the representatives $N_{1}$ and $N_{2}$ correspond to the
same \emph{pointed} topological graph $\Lambda$.

Finally, split the summation of the CR representatives $N$ by their
topological graph $\Lambda$. By Claim \Claimref{core-graphs-of-rank-m},
each such $\Lambda$ has maximal degree at most $2m$ and at most
$3m-1$ edges, so by Proposition \propref{key-prop-for-top-graph},
the $N$'s corresponding to each $\Lambda$ contribute to the summation
in \eqref{sum_over_conj_classes} at most 
\[
t\cdot\left|V\left(\Omega\right)\right|\cdot\left(4t^{4}\right)^{3m-1}\cdot\left(2m-1\right)^{t}\cdot\rho^{t}.
\]
This finishes the proof of Theorem \thmref{bound-for-m-in-general-Omega}
as there is a finite number of topological graphs $\Lambda$ of rank
$m$. $\qed$

\section{Controlling the Error Term of $\mathbb{E}\left[{\cal F}_{w,n}\right]$\label{sec:Controlling-the-O}}

In this section we establish the third step of the proofs of Theorems
\ref{thm:2sqrt(d-1)+1}, \ref{thm:sqrt3-times-rho}, and \ref{thm:base-d-regular},
as introduced in the overview of the proof (Section \ref{sec:Overview-of-the-proof}).
Recall that according to Theorem \ref{thm:avg_fixed_pts}, for every
$w\in\F_{k}$ the following holds:
\[
\mathbb{E}\left[{\cal F}_{w,n}\right]=1+\frac{|\crit\left(w\right)|}{n^{\pi\left(w\right)-1}}+O\left(\frac{1}{n^{\pi\left(w\right)}}\right).
\]
But the $O\left(\cdot\right)$ term depends on $w$. Our goal here
is to obtain a bound on the $O\left(\cdot\right)$ term, which depends
solely on the length of $w$ and $\pi\left(w\right)$, namely a bound
which is uniform on all words of a certain length and primitivity
rank. This is done in the following proposition:
\begin{prop}
\label{prop:controling-the-O}Let $w\in\left(X\cup X^{-1}\right)^{t}$
satisfy $\pi\left(w\right)\ne0$ (so $w$ does not reduce to $1$).
If $n>t^{2}$ then 
\[
\mathbb{E}\left[{\cal F}_{w,n}\right]\leq1+\frac{1}{n^{\pi\left(w\right)-1}}\left(\left|\crit\left(w\right)\right|+\frac{t^{2+2\pi\left(w\right)}}{n-t^{2}}\right).
\]

\end{prop}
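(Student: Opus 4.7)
The plan is to revisit the proof of Theorem \ref{thm:avg_fixed_pts} from \cite{PP14a} and track the hidden constant in the $O(1/n^{\pi(w)})$ term explicitly as a function of $|w|=t$ and $\pi(w)$. That proof expresses $\mathbb{E}[{\cal F}_{w,n}]$ as a finite sum over all subgroups $N \leq \F_k$ that are algebraic extensions of $\langle w \rangle$, where each $N$ contributes a term of the form $c(w,N)\cdot (n)_v/n^e$, with $v$ and $e$ denoting the number of vertices and edges of $\Gamma_X(N)$, $(n)_v = n(n-1)\cdots(n-v+1)$, and $c(w,N)$ a non-negative combinatorial weight. By Claim \ref{cla:core-graphs-properties}(\ref{enu:euler}) this factor is $\approx n^{1-\rk(N)}$. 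The constant $1$ in the proposition comes from the ambient contribution, the main term $|\crit(w)|/n^{\pi(w)-1}$ gathers the contributions with $\rk(N) = \pi(w)$, and the remaining algebraic extensions (those of rank $r>\pi(w)$) produce the error that needs to be bounded uniformly.

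First I would bound the arithmetic factor $(n)_v/n^e$ for a non-critical $N$. Since $N$ is a proper algebraic extension of $\langle w \rangle$, Lemma \ref{lem:each-edge-twice} forces the closed path of $w$ to traverse every edge of $\Gamma_X(N)$ at least twice; hence $e \le t/2$ and $v \le t/2 + 1 \le t$. The elementary inequality $(n)_v \le n^v$ then yields $(n)_v/n^e \le n^{1-\rk(N)}$, and the finer correction $\prod_{i=0}^{v-1}(1-i/n) \ge 1 - v^2/n \ge 1 - t^2/n$ will be absorbed into the $1/(n-t^2)$ factor of the final bound (here the hypothesis $n > t^2$ is essential).

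Next I would count the algebraic extensions of $\langle w \rangle$ of each rank $r > \pi(w)$. Each such $N$ arises as a surjective folding of the path-graph of $w$ (on at most $t+1$ vertices) onto $\Gamma_X(N)$. Rather than specifying the full folding, I would encode $N$ by its topological structure: by Claim \ref{Claim:core-graphs-of-rank-m}, $\Gamma_X(N)$ has at most $3r-1$ topological edges and maximal non-basepoint degree at most $2r$, so $N$ is determined by selecting, for each topological edge, the pair of time-stamps along the path of $w$ at which the edge is first traversed (and a direction). This gives a bound of at most $(2t)^{2(3r-1)}\cdot c' \le t^{2r+O(1)}$ algebraic extensions of rank $r$; the crude form $t^{2r}$ suffices for the leading asymptotic.

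Putting the two estimates together and summing over $r > \pi(w)$, the tail contribution is bounded by a geometric series
\[
\sum_{r=\pi(w)+1}^{\infty} \frac{t^{2r}}{n^{r-1}} \cdot \frac{1}{1 - t^2/n} \;=\; \frac{t^{2\pi(w)+2}}{n^{\pi(w)-1}} \cdot \frac{1}{1-t^2/n} \;=\; \frac{t^{2+2\pi(w)}}{n^{\pi(w)-1}\,(n - t^2)},
\]
which converges precisely because $n > t^2$ and matches the error term in the statement. The main technical obstacle is pinning down the combinatorial count in the middle step so that the number of rank-$r$ algebraic extensions grows only as $t^{2r}$ (not $t^{O(r)}$ with a larger constant), since any slack there would spoil the final exponent $2+2\pi(w)$. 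The key is to use the fact that $N$ is algebraic, so no edge of $\Gamma_X(N)$ is wasted and the topological skeleton completely controls the folding. The uniformity in $w$ is then automatic because every estimate depends only on $t$ and $\pi(w)$.
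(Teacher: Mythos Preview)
There is a genuine gap in the counting step, and a confusion about which decomposition from \cite{PP14a} you are invoking.

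First, the decomposition. The sum over \emph{algebraic} extensions is $\Phi_{\langle w\rangle,\F_k}=\sum_{N:\langle w\rangle\alg N}R_{\langle w\rangle,N}$, but $R$ is defined by M\"obius inversion and is \emph{not} of the form $c(w,N)\,(n)_v/n^e$ with $c\ge 0$. A formula of that shape (with weight $1$, and with $\prod_j (n)_{e_j}$ rather than $n^e$ in the denominator) is available for $L_{M,\F_k}$, but $L$ is summed over \emph{all} quotients $M$ with $\langle w\rangle\covers M$, not only algebraic ones. So either the formula or the index set in your first paragraph is wrong; in particular, Lemma~\ref{lem:each-edge-twice} (each edge traversed twice) only applies to proper algebraic extensions, and you cannot invoke it for a generic quotient in the $L$-sum. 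You also never bound the weight $c(w,N)$.

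Second, and more fatally, your own count of rank-$r$ extensions gives $(2t)^{2(3r-1)}\sim t^{6r-2}$, yet you then write ``$\le t^{2r+O(1)}$''. That inequality is simply false for growing $r$, and with the correct exponent the geometric tail becomes $\sum_{r>\pi(w)} t^{6r}/n^{r-1}$, which needs $n>t^6$ and produces $t^{6\pi(w)+O(1)}$ in the numerator rather than $t^{2+2\pi(w)}$. You flag this as ``the main technical obstacle'' but do not resolve it; the topological-edge encoding is too loose by a factor of $3$ in the exponent. The paper's proof avoids this entirely: it works with the two-sided M\"obius function $C_{M,N}$ (Proposition~\ref{prop:C_MN}), expands $\mathbb E[{\cal F}_{w,n}]$ as a genuine power series $\sum_s a_s(w)/n^s$, and bounds $|a_s(w)|$ by combining two ingredients that each contribute a factor of $t^2$ per unit of complexity: (i) the number of quotients $M$ of rank $i$ is at most $\binom{t}{2}^i$, because by Theorem~\ref{thm:distance} such an $M$ arises from a partition of norm $\le i$ of the $\le t$ vertices of $\Gamma_X(\langle w\rangle)$; and (ii) for fixed $M$ the permutation-tuples in $\mathcal P_{M,q}$ number at most $t^{2q}$. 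Together these give $|a_s(w)|\le t^{2s+2}$, and summing the geometric series for $s\ge\pi(w)$ yields exactly $t^{2+2\pi(w)}/(n^{\pi(w)-1}(n-t^2))$. The partition count via Theorem~\ref{thm:distance} is the missing idea in your proposal.
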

Achieving such a bound requires more elaborated details from the proof
of Theorem \ref{thm:avg_fixed_pts}, which appears in \cite{PP15}.
We therefore begin with recalling relevant concepts and results from
\cite{PP15}. We then present the proof of Proposition \propref{controling-the-O}
in Section \subref{A-uniform-bound}. 

Before that, let us mention that the same statement holds for words
in $\left(X\cup X^{-1}\right)^{t}$ that reduce to 1:
\begin{claim}
\label{claim:controlling-the-O-for-m=00003D0}Let $w\in\left(X\cup X^{-1}\right)^{t}$
satisfy $\pi\left(w\right)=0$ (so $w$ reduces to $1$). If $n>t^{2}$
then 
\[
\mathbb{E}\left[{\cal F}_{w,n}\right]\leq1+\frac{1}{n^{\pi\left(w\right)-1}}\left(\left|\crit\left(w\right)\right|+\frac{t^{2+2\pi\left(w\right)}}{n-t^{2}}\right).
\]
\end{claim}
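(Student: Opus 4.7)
My plan is to observe that the case $\pi(w)=0$ is essentially trivial, since it forces $w$ to represent the identity in $\F_k$, and then to verify that the right-hand side of the claimed bound is already larger than the (deterministic) value of $\mathcal{F}_{w,n}$ in this case.

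First, I would recall from the discussion after Definition \ref{def:prim_rank} that the only element of $\F_k$ with primitivity rank $0$ is the identity: $\pi(w)=0 \iff w = 1$ in $\F_k$. Since the element $w(\sigma_1,\ldots,\sigma_k) \in S_n$ depends only on the reduced form of $w$ (as noted in Step I and Step II of the overview), the assumption $\pi(w)=0$ implies that $w(\sigma_1,\ldots,\sigma_k)$ is the identity permutation for \emph{every} choice of $\sigma_1,\ldots,\sigma_k \in S_n$. Consequently
\[
\mathcal{F}_{w,n}(\sigma_1,\ldots,\sigma_k) = n
\]
deterministically, and in particular $\mathbb{E}[\mathcal{F}_{w,n}] = n$.

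Next, I would evaluate the right-hand side of the inequality for $\pi(w) = 0$. From the footnote to Claim \ref{claim:critical-is-algebraic} we have $\crit(w) = \{\langle\rangle\}$, so $|\crit(w)| = 1$. Also $n^{\pi(w)-1} = n^{-1}$ and $t^{2+2\pi(w)} = t^2$, so the right-hand side becomes
\[
1 + n\left(1 + \frac{t^2}{n-t^2}\right) \;=\; 1 + n + \frac{nt^2}{n - t^2}.
\]
Under the hypothesis $n > t^2$ every term is positive, so this quantity is strictly greater than $n = \mathbb{E}[\mathcal{F}_{w,n}]$, which proves the desired inequality.

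There is no real obstacle here: the claim is stated only to unify the case $\pi(w)=0$ with the nontrivial case $\pi(w) \neq 0$ handled by Proposition \ref{prop:controling-the-O}, so that later inequalities such as \eqref{upper-bound-with-thm-2} can be written as a single sum over all $m \in \{0,1,\ldots,k\}$ without treating $m=0$ separately. The only content is the observation that words reducing to $1$ contribute a fixed-point count of exactly $n$, which is absorbed comfortably into the uniform bound.
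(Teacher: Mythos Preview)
Your proof is correct and follows essentially the same approach as the paper: both observe that $\pi(w)=0$ forces $w=1$ in $\F_k$, so $\mathbb{E}[\mathcal{F}_{w,n}]=n$ deterministically, and then verify that the stated bound exceeds $n$ when $n>t^2$. The paper is slightly terser, rewriting $n$ as $1+\frac{1}{n^{-1}}\bigl(|\crit(w)|-\frac{1}{n}\bigr)$ to make the comparison with the bound immediate, but the content is identical.
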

\begin{proof}
Recall that $\pi\left(w\right)=0$ if and only if $w=1$ as an element
of $\F_{k}$. But then the only $w$-critical subgroup is the trivial
one, and so $\mathbb{E}\left[{\cal F}_{w,n}\right]=n=1+\frac{1}{n^{-1}}\left(\left|\crit\left(w\right)\right|-\frac{1}{n}\right)$
which is indeed less than the bound in the statement.
\end{proof}

\subsection{The partial order {}``covers''\label{sub:The-Partial-Order}}

In Section \ref{sub:Core-Graphs} morphisms of core graphs were discussed.
Recall that a morphism $\Gamma_{X}\left(H\right)\to\Gamma_{X}\left(J\right)$
exists (and is unique) if and only if $H\le J$ (Claim \ref{cla:morphism-properties}).
A special role is played by \emph{surjective} morphisms of core graphs:
\begin{defn}
\label{def:quotient}Let $H\le J\le\F_{k}$. Whenever the morphism
$\eta_{H\to J}^{X}:\Gamma_{X}\left(H\right)\to\Gamma_{X}\left(J\right)$
is surjective, we say that \emph{$\G_{X}\left(H\right)$ covers $\G_{X}\left(J\right)$}
or that \emph{$\G_{X}\left(J\right)$ is a quotient of $\G_{X}\left(H\right)$}.
As for the groups, we say that \emph{$H$ $X$-covers $J$} and denote
this by $H\covers J$\emph{}\marginpar{$H\covers J$}\emph{.}
\end{defn}
By {}``surjective'' we mean surjective on both vertices and edges.
Note that we use the term {}``covers'' even though in general this
is \emph{not} a topological covering map (a morphism between core
graphs is always locally injective at the vertices, but it need not
be locally bijective). In contrast, the random graphs in ${\cal C}_{n,H}$
are topological covering maps, and we reserve the term {}``coverings''
for these.

For instance, $H=\langle x_{1}x_{2}x_{1}^{-3},x_{1}^{\;2}x_{2}x_{1}^{-2}\rangle\le\F_{k}$
$X$-covers the group $J=\langle x_{2},x_{1}^{\;2},x_{1}x_{2}x_{1}\rangle$,
the corresponding core graphs of which are the leftmost and rightmost
graphs in Figure \figref{quotient-graph}. As another example, a core
graph $\G$ $X$-covers $\G_{X}\left(\F_{k}\right)$ (which is merely
a wedge of $k$ loops) if and only if it contains edges of all $k$
labels.

As implied by the notation, the relation $H\covers J$ indeed depends
on the given basis\emph{ $X$ }of\emph{ $\F_{k}$}. For example, if
$H=\langle x_{1}x_{2}\rangle$ then $H\covers\F_{2}$. However, for
$Y=\left\{ x_{1}x_{2},x_{2}\right\} $, $H$ does not\emph{ }$Y$-cover
$\F_{2}$, as $\G_{Y}\left(H\right)$ consists of a single vertex
and a single loop and has no quotients apart from itself.

It is easy to see that the relation {}``$\covers$'' indeed constitutes
a partial ordering of the set of subgroups of $\F_{k}$. In fact,
restricted to f.g.~subgroups it becomes a locally-finite partial
order, which means that if $H\covers J$ then the interval of intermediate
subgroups $\XC{H}{J}=\left\{ M\le\F_{k}\,\middle|\, H\covers M\covers J\right\} $
is finite:
\begin{claim}
\label{cla:O_X(H)-is-finite}If $H\le\F_{k}$ is a f.g.~subgroup
then it $X$-covers only a finite number of groups. In particular,
the partial order {}``$\covers$'' restricted to f.g.\ subgroups
of $\F_{k}$ is locally finite.\end{claim}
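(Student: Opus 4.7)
The plan is to exploit the bijective correspondence between subgroups of $\F_k$ and $X$-labeled core graphs (equation \eqref{eq:pi_1_gamma}) together with the fact that $\Gamma_X(H)$ is finite when $H$ is finitely generated (Claim \ref{cla:core-graphs-properties}(\ref{enu:finite-rk-graph})). Concretely, I would argue that whenever $H \covers J$, the core graph $\Gamma_X(J)$ is a quotient of the finite graph $\Gamma_X(H)$ in a combinatorial sense, and there are only finitely many such quotients.

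First, I would unpack the definition: $H \covers J$ means that the morphism $\eta = \eta_{H \to J}^X : \Gamma_X(H) \to \Gamma_X(J)$ exists and is surjective on both vertices and edges. Because core graph morphisms preserve the basepoint, labels and orientations of edges (and are locally injective by Claim \ref{cla:morphism-properties}(5)), the surjection $\eta$ is completely determined by the equivalence relation $\sim_\eta$ on $V(\Gamma_X(H))$ given by $u \sim_\eta v \iff \eta(u) = \eta(v)$: once we know which vertices of $\Gamma_X(H)$ get glued, the edge set of $\Gamma_X(J)$ is forced to be the image of the edge set of $\Gamma_X(H)$ under the induced quotient map (here one uses the labeling: an $x_j$-edge from $u$ to $v$ must be sent to an $x_j$-edge from $\eta(u)$ to $\eta(v)$, and distinct such edges with the same endpoints in the quotient coincide).

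Next, since $H$ is finitely generated, $\Gamma_X(H)$ has only finitely many vertices, so there are only finitely many equivalence relations on $V(\Gamma_X(H))$. Hence there are only finitely many possible isomorphism types of $X$-labeled quotient graphs $\Gamma_X(J)$, and by the bijection \eqref{eq:pi_1_gamma} between subgroups of $\F_k$ and $X$-labeled core graphs, the set $\{ J \leq \F_k : H \covers J \}$ is finite. For the local finiteness statement, observe that whenever $H \covers J$, the interval $\XC{H}{J} = \{ M : H \covers M \covers J \}$ is contained in $\{ M : H \covers M \}$, which we have just shown to be finite.

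The only mildly delicate point — and the one I would want to double-check carefully — is the claim that a surjective core-graph morphism from a finite core graph is determined by its vertex fibers, and more specifically that the resulting quotient is automatically a legitimate core graph (folded, basepointed, with no hanging trees beyond possibly the basepoint). Foldedness follows from the fact that $\Gamma_X(H)$ is already folded and $\eta$ preserves labels and directions; the basepoint condition is immediate; and the absence of extraneous leaves follows because $\Gamma_X(J)$, being itself a core graph by definition, is automatically in this form. None of this requires new input, so the argument is complete.
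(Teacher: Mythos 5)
Your proof is correct and follows essentially the same route as the paper: the paper's argument is exactly that $\Gamma_X(H)$ is finite and hence has only finitely many quotients, each of which corresponds to a single subgroup via the bijection \eqref{eq:pi_1_gamma}. You merely spell out in more detail why there are finitely many quotients (each is determined by a partition of the finite vertex set), which the paper leaves implicit.
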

\begin{proof}
The claim follows from the fact that $\G_{X}\left(H\right)$ is finite
(Claim \claref{core-graphs-properties}\enuref{finite-rk-graph})
and thus has only finitely many quotients. Each quotient corresponds
to a single group, by \eqref{pi_1_gamma}.
\end{proof}

\subsection{Partitions and quotients\label{sub:Partitions-and-Quotients}}

It is easy to see that a quotient $\Gamma_{X}\left(J\right)$ of $\Gamma_{X}\left(H\right)$
is determined by the partition it induces on the vertex set $V\left(\G_{X}\left(H\right)\right)$
(the vertex-fibers of the morphism $\eta_{H\to J}^{X}$). However,
not every partition $P$ of $V\left(\G_{X}\left(H\right)\right)$
corresponds to a quotient core-graph. Indeed, $\Delta$, the graph
we obtain after merging the vertices grouped together in $P$, might
not be a core-graph: two distinct $j$-edges may have the same origin
or the same terminus. (For a combinatorial description of core-graphs
see e.g.~\cite[Claim 2.1]{Pud14a}.) Then again, when a partition
$P$ of $V\left(\G_{X}\left(H\right)\right)$ yields a quotient which
is not a core-graph, we can perform Stallings foldings%
\footnote{A folding means merging two equally-labeled edges with the same origin
or with the same terminus. See also Figure \figref{quotient-graph}.
For a fuller description of Stallings foldings we refer the reader
to \cite{Pud14a,PP15}. %
} until we obtain a core graph. We denote the resulting core-graph
by%
\footnote{In \cite{PP15}, the notation $\nicefrac{\Gamma_{X}\left(H\right)}{P}$
was used to denote something a bit different (the unfolded graph $\Delta$).%
} $\nicefrac{\Gamma_{X}\left(H\right)}{P}$\marginpar{$\nicefrac{\Gamma_{X}\left(H\right)}{P}$}.
Since Stallings foldings do not affect $\pi_{1}^{X}$, this core graph
$\nicefrac{\Gamma_{X}\left(H\right)}{P}$ is $\Gamma_{X}\left(J\right)$,
where $J=\pi_{1}^{X}\left(\Delta\right)$. The resulting partition
$\bar{P}$ of $V\left(\G_{X}\left(H\right)\right)$ (the blocks of
which are the fibers of $\eta_{H\rightarrow J}^{X}$) is the finest
partition of $V\left(\G_{X}\left(H\right)\right)$ which gives a quotient
core-graph and which is still coarser than $P$. We illustrate this
in Figure \figref{quotient-graph}.

\begin{figure}[h]
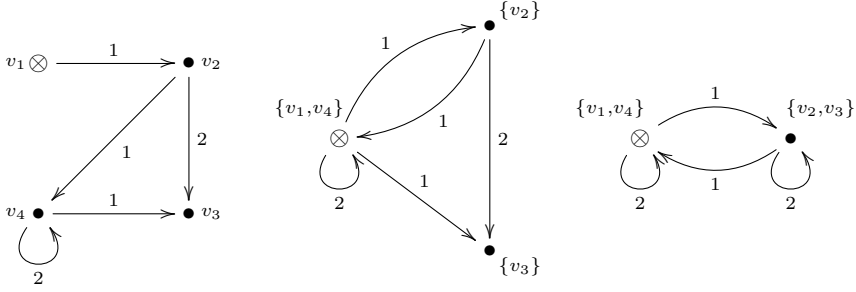

\noindent \begin{centering}
\begin{minipage}[t]{0.9\columnwidth}%
\noindent \begin{center}
\begin{center}
\xy 
(0,35)*+{\otimes}="m0"+(-3,0)*{\scriptstyle v_1};%
(20,35)*+{\bullet}="m1"+(3,0)*{\scriptstyle v_2};%
(20,15)*+{\bullet}="m2"+(3,0)*{\scriptstyle v_3};%
(0,15)*+{\bullet}="m3"+(-3,0)*{\scriptstyle v_4};%
{\ar^{1} "m0";"m1"};%
{\ar^{2} "m1";"m2"};%
{\ar^{1} "m3";"m2"};%
{\ar^{1} "m1";"m3"};%
{\ar@(dl,dr)_{2} "m3";"m3"};%
(40,25)*+{\otimes}="t0"+(-4,4)*{\scriptstyle \{v_1,v_4\}};%
(60,40)*+{\bullet}="t1"+(4,2)*{\scriptstyle \{v_2\}};%
(60,10)*+{\bullet}="t2"+(4,-2)*{\scriptstyle \{v_3\}};%
{\ar@/^1pc/^{1} "t0";"t1"};%
{\ar^{2} "t1";"t2"};%
{\ar^{1} "t0";"t2"};%
{\ar@/^1pc/^{1} "t1";"t0"};%
{\ar@(dl,dr)_{2} "t0";"t0"};%
(80,25)*+{\otimes}="t0"+(-4,4)*{\scriptstyle \{v_1,v_4\}};%
(100,25)*+{\bullet}="t1"+(4,4)*{\scriptstyle \{v_2,v_3\}};%
{\ar@/^1pc/^{1} "t0";"t1"};%
{\ar@/^1pc/^{1} "t1";"t0"};%
{\ar@(dl,dr)_{2} "t0";"t0"};%
{\ar@(dl,dr)_{2} "t1";"t1"};%
\endxy 
\par\end{center}
\par\end{center}%
\end{minipage}
\par\end{centering}

\caption{\label{fig:quotient-graph} The left graph is the core graph $\G_{X}\left(H\right)$
of $H=\left\langle x_{1}x_{2}x_{1}^{-3},x_{1}^{\;2}x_{2}x_{1}^{-2}\right\rangle \leq\F_{2}$.
Its vertices are denoted by $v_{1},\ldots,v_{4}$. The graph in the
middle is the quotient corresponding to the partition $P=\left\{ \left\{ v_{1},v_{4}\right\} ,\left\{ v_{2}\right\} ,\left\{ v_{3}\right\} \right\} $.
This is not a core graph as there are two $1$-edges originating at
$\left\{ v_{1},v_{4}\right\} $. In order to obtain a core quotient-graph,
we use the Stallings folding process and identify these two $1$-edges
and their termini. The resulting core graph, $\nicefrac{\Gamma_{X}\left(H\right)}{P}$,
is shown on the right and corresponds to the partition $\bar{P}=\left\{ \left\{ v_{1},v_{4}\right\} ,\left\{ v_{2},v_{3}\right\} \right\} $.}
\end{figure}

One can think of $\Gamma_{X}\left(J\right)=\nicefrac{\Gamma_{X}\left(H\right)}{P}$
as the core graph {}``generated'' from $\Gamma_{X}\left(H\right)$
by the partition $P$. It is now natural to look for the {}``simplest{}``
partition generating $\Gamma_{X}\left(J\right)$. Formally, we introduce
a measure for the complexity of partitions: if $P\subseteq2^{\mathcal{X}}$
is a partition of some set $\mathcal{X}$, let 
\begin{equation}
\left\Vert P\right\Vert \overset{{\scriptscriptstyle def}}{=}\left|\mathcal{X}\right|-\left|P\right|=\sum_{B\in P}\left(\left|B\right|-1\right).\label{eq:Partition_norm}
\end{equation}
Namely, $\left\Vert P\right\Vert $ is the number of elements in the
set minus the number of blocks in the partition. For example, $\left\Vert P\right\Vert =1$
iff $P$ identifies only a single pair of elements. It is not hard
to see that $\left\Vert P\right\Vert $ is also the minimal number
of identifications one needs to make in $\mathcal{X}$ in order to
obtain the equivalence relation $P$. Restricting to pairs of subgroups
$H,J$ with $H\covers J$, we can define the following distance function:
\begin{defn}
\label{def:distance} Let $H,J\fg\F_{k}$ be subgroups such that $H\covers J$,
and let $\G=\G_{X}\left(H\right)$, $\Delta=\G_{X}\left(J\right)$
be the corresponding core graphs. We define the \emph{$X$-distance}
between $H$ and $J$, denoted $\rho_{X}\left(H,J\right)$\marginpar{$\rho_{X}\left(H,J\right)$}
or $\rho\left(\G,\Delta\right)$ as
\begin{equation}
\rho_{X}\left(H,J\right)=\min\left\{ \left\Vert P\right\Vert \,\middle|\,{P\,\mathrm{is\, a\, partition\, of}\, V\left(\Gamma_{X}\left(H\right)\right)\atop \mathrm{s.t.\,}\,\nicefrac{\Gamma_{X}\left(H\right)}{P}=\Gamma_{X}\left(J\right)}\right\} .\label{eq:rho_from_partition}
\end{equation}

\end{defn}
For example, the rightmost core graph in Figure \figref{quotient-graph}
is a quotient of the leftmost one, and the distance between them is
$1$. For a more geometric description of this distance function,
as well as more details and further examples, we refer the readers
to \cite{Pud14a,PP15}. 

Of course, the distance function $\rho_{X}\left(H,J\right)$ is computable.
It turns out that it can also be used to determine whether $H$ is
a free factor of $J$:
\begin{thm}
\label{thm:distance}[\cite{Pud14a},Theorem 1.1 and Lemma 3.3] Let
$H,J\fg\F_{k}$ such that $H\covers J$. Then 
\[
rk\left(J\right)-rk\left(H\right)~~\le~~\rho_{X}\left(H,J\right)~~\le~~rk\left(J\right).
\]
Most importantly, the minimum is obtained (namely, $\mathrm{rk}\left(J\right)-\mathrm{rk}\left(H\right)=\rho_{X}\left(H,J\right)$)
if and only if $H$ is a free factor of $J$.
\end{thm}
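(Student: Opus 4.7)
The plan is to prove both bounds by tracking how vertex and edge counts evolve under the Stallings folding process, and then to extract the equality characterization from this bookkeeping.

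\textbf{Step 1 (bookkeeping identity).} Write $\Gamma=\Gamma_{X}(H)$ and $\Delta=\Gamma_{X}(J)$, and recall that $\rk=|E|-|V|+1$ for any connected core graph. Fix a partition $P$ with $\Gamma/P=\Delta$: quotienting by $P$ first yields a graph with $|P|$ vertices and $|E(\Gamma)|$ edges, and a sequence of Stallings foldings then produces $\Delta$. Classify each folding as type (a) if the two merged edges have distinct termini (so the folding identifies a pair of vertices, reducing both vertex and edge counts by one) or type (b) if the two edges already share both endpoints in the current graph (only the edge count decreases). If $a,b$ count the two types, then $|P|-a=|V(\Delta)|$ and $|E(\Gamma)|-a-b=|E(\Delta)|$; subtracting and invoking Euler's formula yields
\[
\|P\| \;=\; \rk(J)-\rk(H)+b.
\]
As $b\ge 0$, this immediately gives the lower bound $\rho_{X}(H,J)\ge\rk(J)-\rk(H)$.

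\textbf{Step 2 (upper bound).} I will exhibit a partition witnessing $\rho_{X}(H,J)\le\rk(J)$. Fix a spanning tree of $\Delta$ rooted at the basepoint; the $r:=\rk(J)$ remaining edges furnish a standard basis $b_{1},\ldots,b_{r}$ of $J$, each $b_{i}$ being a reduced closed path at the basepoint of $\Delta$. Since $\eta_{H\to J}^{X}$ is an immersion, each $b_{i}$ lifts uniquely to a path in $\Gamma$ starting at the basepoint; let $v_{i}\in V(\Gamma)$ be its endpoint. Take $P$ to glue $\{\otimes,v_{1},\ldots,v_{r}\}$ into a single block, with all other vertices of $\Gamma$ left as singletons, so $\|P\|\le r$. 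A direct verification — lifting the tree edges of $\Delta$ into $\Gamma$ and checking that each $b_{i}$ becomes a closed path in the quotient — shows that $\Gamma/P$ Stallings-folds onto $\Delta$.

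\textbf{Step 3 (equality characterization).} By Step 1, $\rho_{X}(H,J)=\rk(J)-\rk(H)$ is equivalent to the existence of a partition $P$ and a folding sequence using no type-(b) foldings, i.e.\ never needing to collapse a genuinely parallel pair of edges. For the easy direction $H\ff J\Rightarrow$ equality, extend a basis of $H$ to a basis of $J$; the $\rk(J)-\rk(H)$ extra generators are reduced closed paths in $\Delta$ and, used as in Step 2, yield a partition of norm $\rk(J)-\rk(H)$ whose associated folding sequence is type-(a) only. The converse requires reading off a basis of $J$ extending a basis of $H$ from any type-(a)-only folding sequence: at each such folding the spanning-tree basis undergoes a Nielsen-type move, and the plan is to show inductively that the free-factor structure is preserved throughout.

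\textbf{Main obstacle.} Steps 1 and 2 are essentially bookkeeping, modulo verifying that the partition of Step 2 really does fold to $\Delta$. The substantive content lies in the converse of Step 3: translating the combinatorial condition ``no type-(b) folding is ever required'' into the algebraic statement that $H$ is a free factor of $J$. This is the analogue in core-graph language of the classical Whitehead/Stallings characterization of free factors via foldings, and is the piece for which the theorem rightly cites \cite{Pud14a}.
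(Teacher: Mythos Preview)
The paper does not prove this theorem; it is quoted from \cite{Pud14a} (Theorem~1.1 and Lemma~3.3 there) and used as a black box, so there is no in-paper proof to compare against.

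That said, your sketch is essentially correct. Step~1 is right: each fold drops $|E|$ by one and $|V|$ by one (type~(a)) or zero (type~(b)), giving $\|P\|=\rk(J)-\rk(H)+b$. In Step~2 the verification you flag as pending goes through because the $v_i$ all lie in the $\eta_{H\to J}^{X}$-fiber of $\otimes$ (so the quotient still admits a morphism to $\Delta$, hence $\pi_1^X\le J$) while each $b_i$ becomes a closed loop in the quotient (hence $\pi_1^X\ge\langle H,b_1,\dots,b_r\rangle=J$). For the forward direction of Step~3, note that the $v_i$ are automatically pairwise distinct and distinct from $\otimes$, since $v_i=v_j$ would force $b_ib_j^{-1}\in H$, contradicting freeness of the chosen basis of $J$; so $\|P\|=\rk(J)-\rk(H)$ exactly.

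You correctly isolate the real content as the converse of Step~3 --- that a partition realizing $\|P\|=\rk(J)-\rk(H)$ forces $H\ff J$ --- and, like the paper, defer it to \cite{Pud14a}. In sum, you have supplied the elementary surrounding arguments that the paper omits entirely, and you cite out at exactly the same (unavoidable) point.
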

\noindent This theorem is used, in particular, in the proof in \cite{PP15}
of Theorem \thmref{avg_fixed_pts}.

So far the partitions considered here were partitions of the vertex
set\linebreak{}
 $V\left(\Gamma_{X}\left(H\right)\right)$. However, it is also possible
to identify (merge) different \emph{edges} in $\Gamma_{X}\left(H\right)$,
as long as they share the same label, and then, as before, perform
the folding process to obtain a valid core graph. Moreover, it is
possible to consider several partitions $P_{1},\ldots,P_{r}$, each
one \emph{either} of the vertices \emph{or} of the edges of $\Gamma_{X}\left(H\right)$,
identify vertices and edges according to these partitions and then
fold. We denote the resulting core graph by \marginpar{$\nicefrac{\Gamma_{X}\left(H\right)}{\left\langle P_{1},\ldots,P_{r}\right\rangle }$}$\nicefrac{\Gamma_{X}\left(H\right)}{\left\langle P_{1},\ldots,P_{r}\right\rangle }$.
It is easy to see that one can incorporate this more involved definition
into the definition of the distance function $\rho_{X}\left(H,J\right)$,
because, for instance, identifying two edges has the same effect as
identifying their origins (or termini). In fact, the following holds:
\begin{equation}
\rho_{X}\left(H,J\right)=\min\left\{ \left\Vert P_{1}\right\Vert +\ldots+\left\Vert P_{r}\right\Vert \,\middle|\,{P_{i}:\,\,\mathrm{a\, partition\, of}\, V\left(\Gamma_{X}\left(H\right)\right)\,\mathrm{or\, of}\, E\left(\Gamma_{X}\left(H\right)\right)\atop \mathrm{s.t.\,}\,\nicefrac{\Gamma_{X}\left(H\right)}{\left\langle P_{1},\ldots,P_{r}\right\rangle }=\Gamma_{X}\left(J\right)}\right\} .\label{eq:long-distance-definition}
\end{equation}

\subsection{From random elements of $S_{n}$ to random subgroups\label{sub:Phi}}

Recall that Theorem \ref{thm:avg_fixed_pts} estimates $\mathbb{E}\left[{\cal F}_{w,n}\right]$,
the expected number of fixed points of $w\left(\sigma_{1},\ldots,\sigma_{k}\right)$,
where $\sigma_{1},\ldots,\sigma_{k}\in S_{n}$ are chosen independently
at random in uniform distribution. The first step in its proof consists
of a \emph{generalization of the problem to} \emph{subgroups:}

For every f.g.~subgroups $H\leq J\leq\F_{k}$, let $\alpha_{J,S_{n}}:J\to S_{n}$
be a random homomorphism chosen at uniform distribution (there are
exactly $\left|S_{n}\right|^{\mathrm{rk}\left(J\right)}$ such homomorphisms).
Then $\alpha_{J,S_{n}}\left(H\right)$ is a random subgroup of $S_{n}$,
and we count the number of common fixed points of this subgroup, namely
the number of elements in $\left\{ 1,\ldots,n\right\} $ fixed by
all permutations in $\alpha_{J,S_{n}}\left(H\right)$. Formally, we
define\marginpar{$\Phi_{H,J}$}
\[
\Phi_{H,J}\left(n\right)\stackrel{\mathrm{def}}{=}\mathbb{E}\left|_{\mathrm{fixed-points}}^{\mathrm{common}}\left(\alpha_{J,S_{n}}\left(H\right)\right)\right|.
\]
This indeed generalizes $\mathbb{E}\left[{\cal F}_{w,n}\right]$ for
\begin{equation}
\mathbb{E}\left[{\cal F}_{w,n}\right]=\Phi_{\left\langle w\right\rangle ,\F_{k}}\left(n\right).\label{eq:E[f_w]=00003DPhi}
\end{equation}

\subsection{Möbius inversions\label{sub:M=0000F6bius-Inversions}}

The theory of Möbius inversions applies to every poset (partially
ordered set) with a \emph{locally-finite} order (recall that an order
$\preceq$ is locally-finite if for every $x,y$ with $x\preceq y$,
the interval $\left[x,y\right]_{\preceq}\stackrel{def}{=}\left\{ z\,\middle|\, x\preceq z\preceq y\right\} $
is finite). Here we skip the general definition and define these inversions
directly in the special case of interest (for a more general point
of view see \cite{PP15}).

\begin{wrapfigure}{R}{0.4\columnwidth}%
\[
\xymatrix{ & \Phi\ar@{-}[dl]\ar@{-}[dr]\\
L\ar@{-}[dr] &  & R\ar@{-}[dl]\\
 & C
}
\]
\end{wrapfigure}%
 In our case, the poset in consideration is $\mathfrak{sub}_{f\! g}\left(\mathbf{F}_{k}\right)=\left\{ H\leq\F_{k}\,\middle|\, H\,\,\mathrm{is\,\, f.g.}\right\} $,
and the partial order is $\covers$, which is indeed locally-finite
(Claim \claref{O_X(H)-is-finite}). We define three derivations of
the function $\Phi$ defined in Section \subref{Phi}: the left one
($L$), the right one ($R$) and the two-sided one ($C$). These are
usually formally defined by convolution of $\Phi$ with the\emph{
}Möbius function of $\mathfrak{sub}_{f\! g}\left(\mathbf{F}_{k}\right)_{\covers}$
(see \cite{PP15}) but here we define them in an equivalent simpler
way: these are the functions satisfying, for every $H\covers J$,
\begin{equation}
\Phi_{H,J}\left(n\right)=\sum_{M\in\XC{H}{J}}L_{M,J}\left(n\right)=\negthickspace\sum_{M,N:\, H\covers M\covers N\covers J}\negthickspace C_{M,N}\left(n\right)=\negthickspace\sum_{N\in\XC{H}{J}}\negthickspace R_{H,N}\left(n\right).\label{eq:mobius-inversions}
\end{equation}
Note that the summations in \eqref{mobius-inversions} are well defined
because the order is locally finite. To see that \eqref{mobius-inversions}
can indeed serve as the definition for the three new functions, use
induction on $\left|\left[H,J\right]\right|$: for example, for any
$H\covers J$, $L_{H,J}\left(n\right)=\Phi_{H,J}\left(n\right)-\sum_{M\in\XCO{H}{J}}L_{M,J}\left(n\right)$
and all pairs $\left(M,J\right)$ on the r.h.s.~satisfy $\left|\left[M,J\right]\right|<\left|\left[H,J\right]\right|$.

With all this defined, we can state the main propositions along the
proof of the main result in \cite{PP15}.
\begin{prop}
[\cite{PP15}, Proposition 5.1]\label{prop:R_supported-on-algebraic}The
function $R$ is supported on algebraic extensions. 
\end{prop}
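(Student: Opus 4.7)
The plan is to induct on the cardinality of $[H,J]_{\Xcov}$, which is finite by Claim~\claref{O_X(H)-is-finite}. The goal is to show $R_{H,J}(n) \equiv 0$ whenever $H \covers J$ but $H \not\alg J$; the base case $H=J$ is vacuous since $H \alg H$ holds trivially.

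The central tool is a factorization identity for $\Phi$: if $H \le L \le J$ with $L \ff J$, then
\[
\Phi_{H,J}(n) \;=\; \Phi_{H,L}(n).
\]
Writing $J = L * L'$ as a free product, a uniform random $\alpha \in \Hom(J, S_n)$ decomposes into independent uniform homomorphisms $\alpha|_L$ and $\alpha|_{L'}$. Since $H \le L$, the image $\alpha(H)$ coincides with $\alpha|_L(H)$, so the expected number of common fixed points depends only on the marginal $\alpha|_L$, which is itself uniform on $\Hom(L,S_n)$.

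For the inductive step, suppose $H \not\alg J$. Takahasi's theorem provides a (unique) algebraic closure $L$ of $H$ in $J$ satisfying $H \alg L \lneq_{ff} J$. The factorization identity gives $\Phi_{H,J}(n) = \Phi_{H,L}(n)$. Substituting the defining recurrence $\Phi_{H,N}(n) = \sum_{M \in [H,N]_{\Xcov}} R_{H,M}(n)$ on both sides, and invoking the inductive hypothesis to discard every term indexed by a non-algebraic extension, one isolates $R_{H,J}(n)$ on one side and concludes it must vanish.

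The main obstacle is poset-theoretic: the free factor $L$ need not itself lie in $[H,J]_{\Xcov}$, because the morphism $\Gamma_X(H) \to \Gamma_X(L)$ need not be surjective even though $\Gamma_X(H) \to \Gamma_X(J)$ is (only the second factor $\Gamma_X(L)\to\Gamma_X(J)$ is forced to be surjective by Claim~\claref{morphism-properties}). This is handled by replacing $L$ with an appropriate $X$-closure $\widetilde{L} \in [H,J]_{\Xcov}$ sitting between $L$ and $J$, and verifying, via Theorem~\thmref{distance} together with Claim~\claref{morphism-properties}, that enough of the free-factor structure survives the passage $L \rightsquigarrow \widetilde{L}$ for the cancellation argument to close. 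The delicate bookkeeping of which subgroups in $[H,J]_{\Xcov}$ correspond, via algebraic closure, to the same free-factor $\widetilde L$ is what makes the proof nontrivial.
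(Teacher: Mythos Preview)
The paper does not prove this statement here; it is quoted from \cite{PP14a} as background (Proposition 5.1 there), so there is no in-paper argument to compare against. I will therefore assess your sketch on its own.

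Your overall plan is the right one and is essentially how the result is proved in \cite{PP14a}: the factorization identity $\Phi_{H,J}=\Phi_{H,L}$ whenever $H\le L\ff J$, combined with induction on $\left|[H,J]_{\Xcov}\right|$. Two points, however, need correction.

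First, the direction of your replacement $\widetilde L$ is inverted. The natural choice sits \emph{below} $L$, not between $L$ and $J$: take $\widetilde L=\pi_1^X\bigl(\mathrm{im}\,\eta^X_{H\to L}\bigr)$. Then $H\covers\widetilde L$ by construction; $\Gamma_X(\widetilde L)$ is a subgraph of $\Gamma_X(L)$, so $\widetilde L\ff L\ff J$ by Claim~\claref{morphism-properties}(4) and transitivity; and since $H\covers J$ factors through $\Gamma_X(\widetilde L)$, also $\widetilde L\covers J$. Thus $\widetilde L\in[H,J]_{\Xcov}$ is a \emph{proper} free factor of $J$, and the factorization identity yields $\Phi_{H,J}=\Phi_{H,\widetilde L}$. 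Theorem~\thmref{distance} is not needed here.

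Second, the ``delicate bookkeeping'' you defer is precisely the content of the proof, and once $\widetilde L$ is chosen as above it is short. Subtracting the two recursions gives
\[
\sum_{N\in[H,J]_{\Xcov}\setminus[H,\widetilde L]_{\Xcov}} R_{H,N}(n)=0.
\]
By induction every term with $N\ne J$ and $H\not\alg N$ already vanishes, so you must rule out the possibility that some $N\ne J$ in this difference set satisfies $H\alg N$. This is where the free-factor property of $\widetilde L$ is used: if $H\alg N$ and $N\le J$, then $N\cap\widetilde L\ff N$ (intersection with a free factor of the ambient group), and since $H\le N\cap\widetilde L$, algebraicity forces $N\cap\widetilde L=N$, i.e.\ $N\le\widetilde L$; then $N\covers\widetilde L$ because $H\covers\widetilde L$ factors through $\Gamma_X(N)$. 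Hence every algebraic $N\ne J$ already lies in $[H,\widetilde L]_{\Xcov}$, the only surviving term is $R_{H,J}(n)$, and it must vanish. Your sketch stops just short of this step; without it the argument is incomplete.
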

Namely, if $J$ is not an algebraic extension of $H$, then $R_{H,J}\left(n\right)=0$
for every $n$. Since, if $H\leq_{\mathrm{alg}}J$ then $H\covers J$
(e.g. \cite[Claim 4.2]{PP15}), we obtain that 
\begin{equation}
\Phi_{H,J}\left(n\right)=\sum_{N:\, H\alg N\leq J}R_{H,N}\left(n\right).\label{eq:phi=00003Dsum-of-alg}
\end{equation}

Next, $\Phi_{H,J}\left(n\right)$ is given a geometric interpretation:
it turns out it equals the expected number of lifts of $\eta_{H\to J}:\Gamma_{X}\left(H\right)\to\Gamma_{X}\left(J\right)$
to a random $n$-covering of $\Gamma_{X}\left(J\right)$ in the model
${\cal C}_{n,\Gamma_{X}\left(J\right)}$ \cite[Lemma 6.2]{PP15}.
Similarly, $L_{H,J}\left(n\right)$ counts the average number of \emph{injective}
lifts \cite[Lemma 6.3]{PP15}. For given $H$ and $J$, it is not
hard to come up with an exact rational expression in $n$ for the
expected number of injective lifts, i.e. of $L_{H,J}\left(n\right)$,
for large enough $n$ (in fact, $n\geq\left|E\left(\Gamma_{X}\left(H\right)\right)\right|$
suffices, see \cite[Lemma 6.4]{PP15}) . As the other three functions
($\Phi$, $R$ and $C$) are obtained via addition and subtraction
of a finite number of $L_{M,J}\left(n\right)$'s, we obtain
\begin{claim}
\label{claim:rational-expressions}Let $H,J\leq\F_{k}$ be f.g.~subgroups
such that $H\covers J$. Then for $n\geq\left|E\left(\Gamma_{X}\left(H\right)\right)\right|$,
the functions $\Phi_{H,J}\left(n\right)$, $L_{H,J}\left(n\right)$,
$R_{H,J}\left(n\right)$ and $C_{H,J}\left(n\right)$ can all be expressed
as rational expressions in $n$.
\end{claim}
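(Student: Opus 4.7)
The plan is to derive all four rational expressions from the explicit rational formula for $L_{H,J}(n)$, which is provided by \cite[Lemma 6.4]{PP14a} in the range $n\geq |E(\Gamma_X(H))|$. That lemma gives an honest closed form: an injective lift of $\eta_{H\to J}^X$ to a random covering in ${\cal C}_{n,\Gamma_X(J)}$ is a choice of distinct preimages for each vertex of $\Gamma_X(H)$ in its appropriate fiber together with the requirement that the relevant edge permutations agree on these preimages, and averaging over ${\cal C}_{n,\Gamma_X(J)}$ yields a product of falling factorials divided by powers of $n$ — a rational function of $n$ once $n$ is at least the number of edges of $\Gamma_X(H)$ (so that no falling factorial forces zero).

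Once $L$ is in hand, the key structural input is that the interval $\XC{H}{J}$ is \emph{finite}, by Claim \ref{cla:O_X(H)-is-finite}. Moreover, for every intermediate $M\in\XC{H}{J}$ the morphism $\Gamma_X(H)\twoheadrightarrow\Gamma_X(M)$ is surjective on edges, so $|E(\Gamma_X(M))|\leq |E(\Gamma_X(H))|$, and the threshold $n\geq |E(\Gamma_X(H))|$ is simultaneously good for all $L_{M,J}(n)$ with $M\in\XC{H}{J}$. Then the first defining identity in \eqref{mobius-inversions},
\[
\Phi_{H,J}(n)\;=\;\sum_{M\in\XC{H}{J}}L_{M,J}(n),
\]
exhibits $\Phi_{H,J}(n)$ as a finite sum of rational functions, hence rational.

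For $R$ and $C$, I would use the third identity in \eqref{mobius-inversions} to invert, working downwards in the finite poset $\XC{H}{J}$. Concretely, $R_{H,N}(n)$ is defined by $\Phi_{H,N}(n)=\sum_{N'\in\XC{H}{N}}R_{H,N'}(n)$, so setting $N=H$ gives $R_{H,H}(n)=\Phi_{H,H}(n)$, and for general $N$, induction on $|\XC{H}{N}|$ yields
\[
R_{H,N}(n)\;=\;\Phi_{H,N}(n)-\sum_{N'\in\XCO{H}{N}}R_{H,N'}(n),
\]
a finite combination of rational functions. The same inductive argument, applied in both coordinates of the two-sided identity $\Phi_{H,J}(n)=\sum_{H\covers M\covers N\covers J}C_{M,N}(n)$, produces $C_{H,J}(n)$ as a finite combination of $\Phi_{M,N}(n)$'s (this is just a finite M\"obius inversion over the locally finite poset $\mathfrak{sub}_{fg}(\F_k)_{\covers}$), hence rational.

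The only place where something could fail is the uniform choice of threshold. This is why the hypothesis $n\geq |E(\Gamma_X(H))|$ is important: it ensures the rational form of $L_{M,J}(n)$ holds for every $M$ appearing on the right-hand sides above, since $|E(\Gamma_X(M))|\leq |E(\Gamma_X(H))|$ for every $M\in\XC{H}{J}$. With that observation, all four statements reduce to a finite amount of bookkeeping in the finite interval $\XC{H}{J}$.
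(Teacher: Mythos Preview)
Your proposal is correct and follows essentially the same approach as the paper: both derive everything from the explicit rational formula for $L_{M,N}(n)$ (via \cite[Lemma~6.4]{PP14a}) together with the finiteness of the interval $\XC{H}{J}$, using the M\"obius-inversion identities \eqref{mobius-inversions} to express $\Phi$, $R$, and $C$ as finite $\mathbb{Z}$-linear combinations of such $L$'s. Your explicit observation that $|E(\Gamma_X(M))|\leq |E(\Gamma_X(H))|$ for every $M\in\XC{H}{J}$, which guarantees the single threshold $n\geq |E(\Gamma_X(H))|$ works uniformly, is a detail the paper leaves implicit but is exactly what is needed.
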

After some involved combinatorial arguments, one obtains from this
the following expression for $C_{M,N}\left(n\right)$: Denote by $\mathrm{Sym}\left(S\right)$
the set of permutations of a given set $S$. Every permutation $\sigma\in\mathrm{Sym}\left(S\right)$
defines, in particular, a partition on $S$ whose blocks are the cycles
of $\sigma$. By abuse of notation we denote by $\sigma$ both the
permutation and the corresponding partition. For instance, one can
consider its {}``norm'' $\left\Vert \sigma\right\Vert $ (see \eqref{Partition_norm};
this is also the minimal length of a product of transpositions that
gives the permutation $\sigma$). We also use $V_{M}$ and\marginpar{$V_{M},\, E_{M}$}
$E_{M}$ as short for $V\left(\Gamma_{X}\left(M\right)\right)$ and
$E\left(\Gamma_{X}\left(M\right)\right)$, respectively.
\begin{prop}
[\cite{PP15}, Section 7.1] \label{prop:C_MN}Let $M,N\leq\F_{k}$
be f.g.~subgroups with $M\covers N$. Consider the set
\begin{align*}
\mathcal{T}_{M,N} & =\left\{ \left(\sigma_{0},\sigma_{1},\ldots,\sigma_{r}\right)\,\middle|\,\begin{matrix}r\in\mathbb{N},\:\sigma_{0}\in\mathrm{\Sym}\left(V_{M}\right)\\
\sigma_{1},\ldots,\sigma_{r}\in\Sym\left(E_{M}\right)\backslash\left\{ \mathrm{id}\right\} \vphantom{\Big|}\\
\nicefrac{\Gamma_{X}\left(M\right)}{\left\langle \sigma_{0},\sigma_{1},\ldots,\sigma_{r}\right\rangle }=\Gamma_{X}\left(N\right)
\end{matrix}\right\} .
\end{align*}
Then 
\[
C_{M,N}\left(n\right)=\frac{1}{n^{\rk\left(M\right)-1}}\sum_{\left(\sigma_{0},\sigma_{1},\ldots,\sigma_{r}\right)\in\mathcal{T}_{M,N}}\left(-1\right)^{r}\cdot\left(\frac{-1}{n}\right)^{\sum\limits _{i=0}^{r}\left\Vert \sigma_{i}\right\Vert }.
\]

\end{prop}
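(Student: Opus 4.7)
The plan is to derive Proposition \ref{prop:C_MN} in three stages: extract an explicit rational formula for $L_{M,N}(n)$ from its geometric interpretation (Claim \ref{claim:rational-expressions}); expand the falling factorials that appear via Möbius identities on the partition lattice, using a geometric series on the denominator; and apply the two-sided Möbius inversion of (\ref{eq:mobius-inversions}) over the interval $[M,N]_\covers$ to single out the tuples in $\mathcal{T}_{M,N}$.

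For the first stage, a direct counting argument in a random cover $\Gamma\in\mathcal{C}_{n,\Gamma_X(N)}$ gives, for $n$ large enough,
\[
L_{M,N}(n) \;=\; \frac{\prod_{v\in V_N} n^{\underline{|\eta^{-1}(v)|}}}{\prod_{e\in E_N} n^{\underline{|\eta^{-1}(e)|}}},
\]
where $\eta=\eta_{M\to N}^{X}$ and $n^{\underline{k}}=n(n-1)\cdots(n-k+1)$. The numerator counts fiber-wise injective vertex assignments $V_M\to[n]$, and each such assignment lifts compatibly with the random cover's permutations with probability $\prod_{e}1/n^{\underline{|\eta^{-1}(e)|}}$, because at each $e\in E_N$ the $|\eta^{-1}(e)|$ lift constraints pin down that many distinct values of $\sigma_e$.

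For the second stage, I would expand each falling factorial via the identity $\frac{n^{\underline{k}}}{n^{k}}=\sum_{\sigma\in\Sym([k])}(-1/n)^{\left\Vert \sigma\right\Vert }$, equivalent to $\mu_{\Pi_k}(\hat{0},\mathrm{cyc}(\sigma))=(-1)^{\left\Vert \sigma\right\Vert }$ when summed over permutations with each cycle partition. Applied to the numerator this produces a single fiber-preserving permutation $\sigma_0$ of $V_M$. For the denominator, reciprocating the polynomial and invoking the geometric series $1/(1+x)=\sum_{r\geq 0}(-x)^{r}$ produces a sum over arbitrary-length tuples $(\sigma_1,\ldots,\sigma_r)$ of non-identity fiber-preserving permutations of $E_M$, accompanied by the alternating sign $(-1)^{r}$. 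Combining, $L_{M,N}(n)$ becomes a sum over tuples $(\sigma_0,\sigma_1,\ldots,\sigma_r)$, weighted by $(-1)^{r}(-1/n)^{\sum\left\Vert \sigma_i\right\Vert }$ times the prefactor $n^{|V_M|-|E_M|}=n^{\,1-\rk(M)}$ (using Claim \ref{cla:core-graphs-properties}(\ref{enu:euler})).

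The third stage applies the two-sided Möbius inversion (\ref{eq:mobius-inversions}) on $[M,N]_\covers$ to extract $C_{M,N}$. By Section \ref{sub:Partitions-and-Quotients}, each $L\in[M,N]_\covers$ is parameterized by a partition-pair $(P_V,P_E)$ on $(V_M,E_M)$ producing a core-graph quotient sandwiched between $\Gamma_X(M)$ and $\Gamma_X(N)$, and a permutation-tuple encodes such a pair through cycle partitions (with $\sigma_0$ giving $P_V$ and the join of the cycle partitions of $\sigma_1,\ldots,\sigma_r$ giving $P_E$, using equation (\ref{eq:long-distance-definition}) to identify quotients by multiple partitions with the iterated folding). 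The inversion then selects exactly those tuples whose combined partition yields $\Gamma_X(N)$, namely $\mathcal{T}_{M,N}$, cancelling all intermediate-quotient contributions. The main obstacle is verifying this cancellation: one must check that the Step 2 geometric-series expansion aligns with the inversion structure so that the $L$-inversion and $R$-inversion in (\ref{eq:mobius-inversions}) together peel away exactly the terms indexed by $\nicefrac{\Gamma_X(M)}{\langle\sigma_0,\ldots,\sigma_r\rangle}\ne\Gamma_X(N)$, leaving the clean sum of the statement without spurious multiplicities from cycle structures or folding-order ambiguities.
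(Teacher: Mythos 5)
The paper itself contains no proof of this proposition --- it is imported from \cite[Section 7.1]{PP14a} --- so your reconstruction has to stand on its own, and it essentially does: all three stages are sound and, as far as I can tell, follow the route of the cited reference. The formula $L_{M,N}(n)=\prod_{v\in V_{N}}n^{\underline{|\eta^{-1}(v)|}}\,/\,\prod_{e\in E_{N}}n^{\underline{|\eta^{-1}(e)|}}$ is correct; the identity $n^{\underline{k}}/n^{k}=\sum_{\sigma\in\Sym([k])}(-1/n)^{\Vert\sigma\Vert}$ (Stirling numbers of the first kind) is the right numerator expansion; and the geometric series on the denominator, together with $n^{|V_{M}|-|E_{M}|}=n^{1-\rk(M)}$, yields $L_{M,N}(n)=n^{1-\rk(M)}\sum(-1)^{r}(-1/n)^{\sum_{i}\Vert\sigma_{i}\Vert}$, the sum running over all tuples in which every permutation preserves the fibers of $\eta_{M\to N}^{X}$. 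The ``main obstacle'' you flag at the end does close, and there is no delicate cancellation to verify --- only a grouping. A tuple $(\sigma_{0},\dots,\sigma_{r})$ is fiber-preserving for $\eta_{M\to N}^{X}$ if and only if the quotient $N'$ it generates lies in $\XC{M}{N}$: in one direction, the fiber partition of $\eta_{M\to N}^{X}$ is a core-graph (hence folding-closed) partition coarser than every cycle partition of the $\sigma_{i}$, so $\Gamma_{X}(N)$ is a further quotient of $\Gamma_{X}(N')$ and $N'\covers N$; in the other, each cycle partition refines the fibers of $\eta_{M\to N'}^{X}$, which refine those of $\eta_{M\to N}^{X}$ whenever $N'\covers N$. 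Since each tuple generates exactly one $N'$, the fiber-preserving tuples are the disjoint union $\bigsqcup_{N'\in\XC{M}{N}}\mathcal{T}_{M,N'}$ --- no spurious multiplicities arise because a tuple is never counted for two different quotients --- whence $L_{M,N}=\sum_{N'\in\XC{M}{N}}\widetilde{C}_{M,N'}$ with $\widetilde{C}$ the claimed expression. Uniqueness of the solution of this triangular system (which is the only role of the inversion (\ref{eq:mobius-inversions}) here; it does not itself ``select'' tuples) gives $C=\widetilde{C}$. Two cosmetic caveats: justify the geometric series as an identity of rational functions (for $n>|E_{M}|$ the denominator lies in $(0,1]$, so it converges), and note that vertex-injectivity of a lift forces edge-injectivity because a core graph has no two equally-labeled edges sharing both endpoints.
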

The derivation of the main result of \cite{PP15} (Theorem \ref{thm:avg_fixed_pts})
from Theorem \ref{thm:distance} and Propositions \ref{prop:R_supported-on-algebraic}
and \ref{prop:C_MN} is short: see the beginning of Section 7 in \cite{PP15}.

\subsection{Proving the uniform bound for the error term\label{sub:A-uniform-bound}}

We now have all the tools required for proving Proposition \propref{controling-the-O}.
Namely, we now prove that every $1\ne w\in\F_{k}$ of length $t$
and every $n>t^{2}$, 
\[
\mathbb{E}\left[{\cal F}_{w,n}\right]\leq1+\frac{1}{n^{\pi\left(w\right)-1}}\left(\left|\crit\left(w\right)\right|+\frac{t^{2+2\pi\left(w\right)}}{n-t^{2}}\right).
\]
(Note that we pass here to reduced words. Reducing an element of $\left(X\cup X^{-1}\right)^{t}$
does not affect $\mathbb{E}\left[{\cal F}_{w,n}\right]$, and only
tightens the upper bound.)
\begin{proof}
{[}of Proposition \propref{controling-the-O}{]} Recall (Section \ref{sub:Phi})
that $\mathbb{E}\left[{\cal F}_{w,n}\right]=\Phi_{\left\langle w\right\rangle ,\F_{k}}\left(n\right)$
and this quantity is given by some rational expression in $n$ (for
large enough $n$, say $n\geq\left|w\right|$, see Claim \claimref{rational-expressions}).
This expression can be expressed as a Taylor series in $\frac{1}{n}$,
so write
\[
\mathbb{E}\left[{\cal F}_{w,n}\right]=\sum_{s=0}^{\infty}\frac{a_{s}\left(w\right)}{n^{s}}
\]
where $a_{s}\left(w\right)\in\mathbb{R}$ (in fact these are integers:
see \cite[Claim 5.1]{Pud14a} and also the sequel of the current proof).
By Theorem \ref{thm:avg_fixed_pts}, $ $$a_{0}=1$, $a_{1}=a_{2}=\ldots=a_{\pi\left(w\right)-2}=0$
and $\alpha_{\pi\left(w\right)-1}=\left|\crit\left(w\right)\right|$
(unless $\pi\left(w\right)=1$ in which case $a_{0}=1+\left|\crit\left(w\right)\right|$).
So our goal here is to bound the remaining coefficients $a_{s}\left(w\right)$
for $s\geq\pi\left(w\right)$.

The discussion in Section \ref{sub:M=0000F6bius-Inversions} yields
the following equalities:
\begin{eqnarray*}
\mathbb{E}\left[{\cal F}_{w,n}\right] & = & \Phi_{\left\langle w\right\rangle ,\F_{k}}\left(n\right)=\sum_{N:\,\left\langle w\right\rangle \alg N\leq\F_{k}}R_{\left\langle w\right\rangle ,N}\left(n\right)=\\
 & = & \sum_{M,N:\,\left\langle w\right\rangle \covers M\covers N}C_{M,N}\left(n\right)=\sum_{M:\,\left\langle w\right\rangle \covers M}\sum_{N:\, M\covers N}C_{M,N}\left(n\right)
\end{eqnarray*}
From Proposition \ref{prop:C_MN} we obtain that for a fixed $M$,
\begin{eqnarray*}
\sum_{N:\, M\covers N}C_{M,N}\left(n\right) & = & \frac{1}{n^{\mathrm{rk}\left(M\right)-1}}\sum_{r\in\mathbb{N}}\left(-1\right)^{r}\sum_{\substack{\sigma_{0}\in\mathrm{Sym}\left(V_{M}\right)\\
\sigma_{1},\ldots,\sigma_{r}\in\mathrm{Sym}\left(E_{M}\right)\setminus\left\{ id\right\} 
}
}\left(\frac{-1}{n}\right)^{\left\Vert \sigma_{0}\right\Vert +\ldots+\left\Vert \sigma_{r}\right\Vert }.
\end{eqnarray*}
For every $q\geq0$ define the following set:
\begin{equation}
\mathcal{P}_{M,q}=\left\{ \left(\sigma_{0},\ldots,\sigma_{r}\right)\,\middle|\,\begin{matrix}r\in\mathbb{N},\:\sigma_{0}\in\mathrm{\Sym}\left(V_{M}\right)\\
\sigma_{1},\ldots,\sigma_{r}\in\Sym\left(E_{M}\right)\backslash\left\{ \mathrm{id}\right\} \vphantom{\Big|}\\
\left\Vert \sigma_{0}\right\Vert +\ldots+\left\Vert \sigma_{r}\right\Vert =q
\end{matrix}\right\} ,\label{eq:P_Mq}
\end{equation}
so that
\[
\sum_{N:\, M\covers N}C_{M,N}\left(n\right)=\frac{1}{n^{\mathrm{rk}\left(M\right)-1}}\sum_{q=0}^{\infty}\frac{\left(-1\right)^{q}}{n^{q}}\sum_{\left(\sigma_{0},\ldots,\sigma_{r}\right)\in{\cal P}_{M,q}}\left(-1\right)^{r}.
\]
Hence, 
\begin{equation}
a_{s}\left(w\right)=\sum_{i=1}^{s+1}\sum_{\substack{M:\,\left\langle w\right\rangle \covers M\\
\rk\left(M\right)=i
}
}\left(-1\right)^{s-\left(i-1\right)}\sum_{\substack{\left(\sigma_{0},\ldots,\sigma_{r}\right)\in{\cal P}_{M,s-\left(i-1\right)}}
}\left(-1\right)^{r}.\label{eq:a_s-equality}
\end{equation}
In what follows we ignore the alternating signs of the summands in
(\ref{eq:a_s-equality}) and bound $\left|a_{s}\left(w\right)\right|$
by 
\begin{equation}
\left|a_{s}\left(w\right)\right|\leq\sum_{i=1}^{s+1}\sum_{\substack{M:\,\left\langle w\right\rangle \covers M\\
\rk\left(M\right)=i
}
}\left|{\cal P}_{M,s-\left(i-1\right)}\right|.\label{eq:a_s_ineq}
\end{equation}
\textbf{Claim: }For every $M\fg\F_{k}$ with $\left\langle w\right\rangle \covers M$,
we have $\left|{\cal P}_{M,q}\right|\leq t^{2q}$.\\
\textbf{Proof of Claim: }Fix $M$ and denote $b_{q}=\left|{\cal P}_{M,q}\right|$.
Clearly, $b_{0}=1$, and we proceed by induction on $q$. Let $q\geq1$.
We split the set ${\cal P}_{M,q}$ by the value of $\sigma_{r}$.
For $r=0$ there are at most
\[
\left|\left\{ \sigma\in\mathrm{Sym}\left(V_{M}\right)\,\middle|\,\left\Vert \sigma\right\Vert =q\right\} \right|\leq\binom{\left|V_{m}\right|}{2}^{q}\leq\binom{t}{2}^{q}\leq\frac{t^{2q}}{2^{q}}
\]
elements with $r=0$. (For the middle inequality note that $\left|V_{M}\right|\leq\left|V_{\left\langle w\right\rangle }\right|\leq t$;
this is also the case with the edges: $\left|E_{M}\right|\leq\left|E_{\left\langle w\right\rangle }\right|\leq t$.)
For $r\geq1$, $\sigma_{r}$ is a permutation of the set of edges
$E_{M}$ and given $\sigma_{r}$, the number of options for $\sigma_{0},\ldots,\sigma_{r-1}$
is exactly $b_{q-\left\Vert \sigma_{r}\right\Vert }$. By the induction
hypothesis we obtain:
\begin{eqnarray*}
b_{q} & \leq & \frac{t^{2q}}{2^{q}}+\sum_{\sigma_{r}\in\mathrm{Sym}\left(E_{M}\right)\setminus\left\{ id\right\} }b_{q-\left\Vert \sigma_{r}\right\Vert }=\frac{t^{2q}}{2^{q}}+\sum_{\alpha=1}^{q}b_{q-\alpha}\left|\left\{ \sigma\in\mathrm{Sym}\left(E_{M}\right)\,\middle|\,\left\Vert \sigma\right\Vert =\alpha\right\} \right|\\
 & \leq & \frac{t^{2q}}{2^{q}}+\sum_{\alpha=1}^{q}t^{2q-2\alpha}\frac{t^{2\alpha}}{2^{\alpha}}=t^{2q}.\qed
\end{eqnarray*}

We proceed with the proof of the proposition. For a given $w\in\left(X\cup X^{-1}\right)^{t}$
there are at most $\binom{\left|V_{\left\langle w\right\rangle }\right|}{2}^{\beta}\leq\binom{t}{2}^{\beta}$
partitions of norm $\beta$ of $V_{\left\langle w\right\rangle }$,
and so at most $\binom{t}{2}^{\beta}$ subgroups $M$ of rank $\beta$
with $\left\langle w\right\rangle \covers M$ (see Theorem \thmref{distance}).
Hence from \eqref{a_s_ineq} we obtain,
\begin{eqnarray*}
\left|a_{s}\left(w\right)\right| & \leq & \sum_{i=1}^{s+1}\binom{t}{2}^{i}t^{2\left(s-\left(i-1\right)\right)}\leq\sum_{i=1}^{s+1}\frac{t^{2i}}{2^{i}}\cdot t^{2\left(s-i+1\right)}\leq t^{2s+2}.
\end{eqnarray*}
Finally,
\begin{eqnarray*}
\left|\mathbb{E}\left[{\cal F}_{w,n}\right]-1-\frac{\left|\crit\left(w\right)\right|}{n^{\pi\left(w\right)-1}}\right| & = & \left|\sum_{s=\pi\left(w\right)}^{\infty}\frac{a_{s}\left(w\right)}{n^{s}}\right|\leq\sum_{s=\pi\left(w\right)}^{\infty}\frac{\left|a_{s}\left(w\right)\right|}{n^{s}}\\
 & \leq & \sum_{s=\pi\left(w\right)}^{\infty}\frac{t^{2s+2}}{n^{s}}=t^{2}\cdot\left(\frac{t^{2}}{n}\right)^{\pi\left(w\right)}\cdot\frac{n}{n-t^{2}}.
\end{eqnarray*}
This finishes the proof.
\end{proof}

\section{Completing the Proof for Regular Graphs\label{sec:Completing-the-Proof-d-reg}}

In this section we complete the proofs of Theorems \ref{thm:2sqrt(d-1)+1}
and \thmref{base-d-regular}. In addition, we explain (in Section
\ref{sub:the-gap}) the source of the gap between these results on
the one hand and Friedman's result and Conjecture \ref{conj:friedman}
on the other.

\subsection{Proof of Theorem \ref{thm:2sqrt(d-1)+1} for $d$ even}

We begin with the case of even $d$ in Theorem \ref{thm:2sqrt(d-1)+1}.
We show that a random $d$-regular graph $\Gamma$ on $n$ vertices
in the permutation model (a random $n$-covering of the bouquet with
$\frac{d}{2}$ loops) satisfies a.a.s.~$\lambda\left(\Gamma\right)<2\sqrt{d-1}+0.84$,
where $\lambda\left(\Gamma\right)$ is the largest non-trivial eigenvalue
of $A_{\Gamma}$. As explained in more details in Appendix \ref{sec:contiguity},
this yields the same result for a uniformly random $d$-regular simple
graph.

So let $d=2k$ and $n,t=t\left(n\right)$ be such that $n>t^{2}$
and $t$ is even. The base graph $\Omega$ is the bouquet with $k$
loops, so $\cpt\left(\Omega\right)=\left(X\cup X^{-1}\right)^{t}$.
By \eqref{bounding-lambda-with-thm}, Proposition \propref{controling-the-O}
and Claim \ref{claim:controlling-the-O-for-m=00003D0},
\begin{eqnarray*}
\mathbb{E}\left[\lambda\left(\Gamma\right)^{t}\right] & \leq & \sum_{w\in\left(X\cup X^{-1}\right)^{t}}\left(\mathbb{E}\left[{\cal F}_{w,n}\right]-1\right)=\\
 & = & \sum_{m=0}^{k}\sum_{\substack{w\in\left(X\cup X^{-1}\right)^{t}:\\
\pi\left(w\right)=m
}
}\left(\frac{\left|\crit\left(w\right)\right|}{n^{m-1}}+O\left(\frac{1}{n^{m}}\right)\right)\\
 & \leq & \sum_{m=0}^{k}\frac{1}{n^{m-1}}\sum_{\substack{w\in\left(X\cup X^{-1}\right)^{t}:\\
\pi\left(w\right)=m
}
}\left(\left|\crit\left(w\right)\right|+\frac{t^{2+2m}}{n-t^{2}}\right)\\
 & \leq & \left(1+\frac{t^{2+2k}}{n-t^{2}}\right)\sum_{m=0}^{k}\frac{1}{n^{m-1}}\sum_{\substack{w\in\left(X\cup X^{-1}\right)^{t}:\\
\pi\left(w\right)=m
}
}\left|\crit\left(w\right)\right|
\end{eqnarray*}
Let $\varepsilon>0$. For $m\in\left\{ 0,1,\ldots,k\right\} $, Corollary
\corref{non-reduced-words_upper_bound} (for $m\geq1$) and Claim
\claimref{bound-for-m=00003D0} (for $m=0$) yield that for large
enough t, 
\[
\sum_{\substack{w\in\left(X\cup X^{-1}\right)^{t}:\\
\pi\left(w\right)=m
}
}\left|\crit\left(w\right)\right|\leq\left[g\left(2m-1\right)+\varepsilon\right]^{t},
\]
where $g\left(\cdot\right)$ is defined as in \eqref{g} with an extended
domain:
\[
g\left(2m-1\right)=\begin{cases}
2\sqrt{d-1} & 2m-1\in\left[-1,\sqrt{d-1}\right]\\
2m-1+\frac{d-1}{2m-1} & 2m-1\in\left[\sqrt{d-1},d-1\right]
\end{cases}.
\]
Thus
\begin{eqnarray}
\mathbb{E}\left[\lambda\left(\Gamma\right)^{t}\right] & \leq & \left(1+\frac{t^{2+2k}}{n-t^{2}}\right)\sum_{m=0}^{k}\frac{\left[g\left(2m-1\right)+\varepsilon\right]^{t}}{n^{m-1}}\nonumber \\
 & \leq & \left(1+\frac{t^{2+2k}}{n-t^{2}}\right)\cdot\left(k+1\right)\cdot\nonumber \\
 &  & \cdot\left[\max\left\{ \begin{array}{c}
n^{1/t}\left[g\left(-1\right)+\varepsilon\right],g\left(1\right)+\varepsilon,\frac{g\left(3\right)+\varepsilon}{n^{1/t}}\ldots\\
\ldots,\frac{g\left(2k-3\right)+\varepsilon}{\left(n^{1/t}\right)^{k-2}},\frac{2k+\varepsilon}{\left(n^{1/t}\right)^{k-1}}
\end{array}\right\} \right]^{t}\label{eq:final1}
\end{eqnarray}
Recall that $\Gamma$ is a random graph on $n$ vertices. In order
to obtain the best bound, $t$ needs to be chosen to minimize the
maximal summand in the r.h.s.~of \eqref{final1}. This requires $t=\theta\left(\log n\right)$:
if $t$ is larger than that, the last elements are unbounded, and
if $t$ is smaller than that, the first element is unbounded. Thus,
in particular, $\left(1+\frac{t^{2+2k}}{n-t^{2}}\right)=1+o_{n}\left(1\right)$.
We show that for every $d$ there is some constant $c=c\left(d\right)$,
such that if $t$ is chosen so that $n^{1/t}\thickapprox c$, then
all $k+1$ elements in the set in the r.h.s.~of \eqref{final1} are
strictly less than $2\sqrt{d-1}+0.835$ (for small enough $\varepsilon$).
Thus, for large enough $t$, $\mathbb{E}\left[\lambda\left(\Gamma\right)^{t}\right]\leq\left[2\sqrt{d-1}+0.835\right]^{t}$.
A standard application of Markov's inequality then shows that $\mathrm{Prob}\left[\lambda\left(\Gamma\right)<2\sqrt{d-1}+0.84\right]\underset{n\to\infty}{\to}1$.

Indeed, for $d\geq26$, one can set $n^{1/t}=e^{\frac{2}{5\sqrt{d-1}}}$.
Simple analysis shows that for $d\geq26$, $e^{\frac{2}{5\sqrt{d-1}}}<1+\frac{5}{12\sqrt{d-1}}$,
so the element corresponding to $m=0$ is at most $2\sqrt{d-1}\cdot e^{\frac{2}{5\sqrt{d-1}}}<2\sqrt{d-1}\left(1+\frac{5}{12\sqrt{d-1}}\right)=2\sqrt{d-1}+\frac{5}{6}<2\sqrt{d-1}+0.835$.
This first element is clearly larger than all other elements $ $corresponding
to $m$ such that $2m-1\leq\sqrt{d-1}$. Among all other values of
$m$, the maximal element is obtained when $2m-1\approx4.55\sqrt{d-1}$,
but its value is bounded from above by $1.94\sqrt{d-1}+0.4$ (again,
by simple analysis). For all remaining $d's$ ($4,6,\ldots,24$),
it can be checked case by case that choosing $n^{1/t}$ so that $n^{1/t}\cdot2\sqrt{d-1}=2\sqrt{d-1}+0.8$
works (and see the table in Section \subref{From-even-to-odd}).~$\qed$

\subsection{From even $d$ to odd $d$\label{sub:From-even-to-odd}}

In this subsection we derive the statement of Theorem \thmref{2sqrt(d-1)+1}
for $d$ odd from the now established statement for $d$ even. We
showed that for $d$ even we have a.a.s.~$\lambda\left(\Gamma\right)<2\sqrt{d-1}+0.84$.
The idea is that every upper bound applying to some value of $d$
also applies to $d-1$. 

As explained in Appendix \secref{contiguity}, by contiguity results
from \cite{GJKW02}, it is enough to show the $2\sqrt{d-1}+1$ upper
bound for random graphs $\Gamma$ in a random model denoted ${\cal G}_{n,d}^{*}$
(the result for random simple graphs then follows immediately). 
\begin{claim}
Let $d\ge3$ be odd. Assume that a random $\left(d+1\right)$-regular
graph $\Gamma$ in the permutation model satisfies a.a.s.~$\lambda\left(\Gamma\right)<C$.
Then a random $d$-regular graph $\Gamma$ in ${\cal G}_{n,d}^{*}$
also satisfies a.a.s.~$\lambda\left(\Gamma\right)<C$.\end{claim}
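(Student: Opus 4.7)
The plan is to couple a random graph $\Gamma\in\mathcal{G}_{n,d}^{*}$ with a random graph $\Gamma'\in\mathcal{P}_{n,d+1}$ so that $\Gamma'=\Gamma\cup M$ with $M$ an \emph{independent} uniform random perfect matching of $[n]$, and then to show that the hypothesis $\lambda(\Gamma')<C$ a.a.s.\ forces $\lambda(\Gamma)<C$ a.a.s.\ via a conditional probabilistic argument that avoids the additive $+1$ loss of a naive Weyl estimate.

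First I would build the coupling through the configuration (pairing) model. Because $d$ is odd, $n$ is forced to be even. Attach $d+1$ half-edges to each vertex, one designated ``extra''; a uniform matching of all $(d+1)n$ half-edges then yields a random graph in $\mathcal{G}_{n,d+1}^{*}$ that decomposes as $\Gamma\cup M$, where $\Gamma\sim\mathcal{G}_{n,d}^{*}$ comes from matching the non-extra half-edges and $M$ is an independent uniform random perfect matching arising from the extras. By the contiguity results of Greenhill--Janson--Kim--Wormald cited in Appendix~\ref{sec:contiguity}, $\mathcal{G}_{n,d+1}^{*}$ is contiguous to $\mathcal{P}_{n,d+1}$, so the hypothesis ``$\lambda(\Gamma')<C$ a.a.s.'' transfers from the permutation model to this pairing-model coupling.

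The heart of the proof is the spectral step, which I would carry out by conditioning on $\Gamma$ and exploiting the independence of $M$. Let $v\in\mathbf{1}^{\perp}$ be a unit top eigenvector of $A_{\Gamma}|_{\mathbf{1}^{\perp}}$, so that $v^{\top}A_{\Gamma}v=\lambda_{2}(\Gamma)$; conditional on $\Gamma$ the vector $v$ is deterministic while $M$ remains uniform. A direct moment computation gives $\mathbb{E}\bigl[v^{\top}A_{M}v\bigr]=-\tfrac{1}{n-1}$ and $\mathrm{Var}\bigl(v^{\top}A_{M}v\bigr)=O(1/n)$, since $v^{\top}A_{M}v=2\sum_{\{u,w\}\in M}v_{u}v_{w}$ is a quadratic form in the uniformly random matching. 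By Chebyshev (or a Berry--Esseen estimate) this yields $\Pr[\,v^{\top}A_{M}v\ge 0\mid\Gamma\,]\ge c$ for some absolute $c>0$. On this event $v^{\top}A_{\Gamma'}v=\lambda_{2}(\Gamma)+v^{\top}A_{M}v\ge\lambda_{2}(\Gamma)$, giving $\lambda_{2}(\Gamma')\ge\lambda_{2}(\Gamma)$; the symmetric argument applied to the bottom nontrivial eigenvector delivers $-\lambda_{n}(\Gamma')\ge-\lambda_{n}(\Gamma)$ with constant probability. Combining these and integrating over $\Gamma$ yields $\Pr[\,\lambda(\Gamma)\ge C\,]\le (2/c)\Pr[\,\lambda(\Gamma')\ge C\,]=o(1)$, which is the desired conclusion.

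The main obstacle is making the anti-concentration bound $\Pr[\,v^{\top}A_{M}v\ge 0\mid\Gamma\,]\ge c$ rigorous and uniform over the pair $(\Gamma,v)$: the quadratic-form summands are not independent, and an adversarially localized $v$ could in principle skew the distribution of $v^{\top}A_{M}v$ away from the Gaussian regime. The likely remedies are either (i) a preliminary delocalization result on the extremal eigenvectors of $A_{\Gamma}$ under $\mathcal{G}_{n,d}^{*}$, reducing to sufficiently spread-out $v$, or (ii) a uniform concentration estimate for $A_{M}$ via an $\varepsilon$-net on $\mathbf{1}^{\perp}$. Once this anti-concentration step is in place, the coupling and contiguity portions of the argument are routine applications of the cited results.
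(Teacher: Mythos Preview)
Your coupling and moment computations are essentially the paper's, but two things go wrong.

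First, a minor issue: your pairing-model construction does not do what you claim. If you match \emph{all} $(d+1)n$ half-edges uniformly, the ``extra'' half-edges are not matched among themselves, so the graph does not decompose as $\Gamma\in\mathcal{G}_{n,d}^{*}$ plus an independent perfect matching on $[n]$. The paper sidesteps this by invoking \cite[Thm~1.3]{GJKW02} directly: $\mathcal{P}_{n,d+1}$ is contiguous to the distribution obtained by sampling $\Gamma\in\mathcal{G}_{n,d}^{*}$ and then adding an independent uniform perfect matching $m$. You can simply quote this.

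Second, and more importantly, your anti-concentration step is both unjustified and unnecessary. You write that Chebyshev yields $\Pr[\,v^{\top}A_{M}v\ge 0\mid\Gamma\,]\ge c$; it does not. Chebyshev is a concentration inequality and says nothing about the lower tail being nontrivial --- a random variable with mean $-1/(n-1)$ and variance $O(1/n)$ could perfectly well be deterministic and negative. You correctly flag this as the ``main obstacle'' and propose delocalization or $\varepsilon$-net arguments, but all of this is avoidable. The paper uses the very same moments in the opposite direction: writing $R=2\sum_{e\in m}f(e^{+})f(e^{-})$ for the normalized top eigenvector $f$ of $A_{\Gamma}$, one has $\mathbb{E}[R]=-\tfrac{1}{n-1}$ and $\mathbb{E}[R^{2}]=O(1/n)$, hence by Chebyshev $R=o_{n}(1)$ a.a.s. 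This immediately gives
\[
\lambda(\hat{\Gamma})\ \ge\ f^{\top}A_{\hat{\Gamma}}f\ =\ \lambda(\Gamma)+R\ \ge\ \lambda(\Gamma)-o_{n}(1)\quad\text{a.a.s.},
\]
and combined with the hypothesis $\lambda(\hat{\Gamma})<C$ a.a.s.\ one obtains $\lambda(\Gamma)<C+o_{n}(1)$ a.a.s.\ (the $o_{n}(1)$ is absorbed in the constants in the paper's application). Your worry about ``adversarially localized'' $v$ is a non-issue for this argument: the moment bounds are \emph{uniform} in $f$ because $\mathbb{E}[R]$ and $\mathbb{E}[R^{2}]$ are symmetric polynomials in $f(v_{1}),\ldots,f(v_{n})$, hence polynomials in the power sums $p_{k}=\sum_{v}f(v)^{k}$, and these satisfy $p_{1}=0$, $p_{2}=1$, $|p_{k}|\le 1$ regardless of $\Gamma$.
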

\begin{proof}
Let $\Gamma$ be a random $d$-regular graph in ${\cal G}_{n,d}^{*}$.
By (\cite[Theorem 1.3]{GJKW02}, the permutation model ${\cal P}_{n,d+1}$
is contiguous to the distribution on $\left(d+1\right)$-regular graphs
obtained by considering $\Gamma$ and adding a uniformly random perfect
matching $m$. (As $d$ is odd, the number of vertices $n$ in $\Gamma$
is necessarily even.) Denote by $\hat{\Gamma}$ the random graph obtained
this way. It is enough to show that $\lambda\left(\hat{\Gamma}\right)\ge\lambda\left(\Gamma\right)-o_{n}\left(1\right)$
with probability tending to $1$ as $n\to\infty$.

Indeed, let $\mu$ be the eigenvalue of $\Gamma$ whose absolute value
is largest (so $\lambda\left(\Gamma\right)=\left|\mu\right|$), and
let $f\in\ell^{2}\left(V\left(\Gamma\right)\right)$ be a corresponding
real eigenfunction with $\left\Vert f\right\Vert =1$. In particular,
$\sum_{v\in V\left(\Gamma\right)}f\left(v\right)=0$ and $\sum_{v\in V\left(\Gamma\right)}f\left(v\right)^{2}=1$.
We have 
\[
\lambda\left(\hat{\Gamma}\right)\ge\left\langle A_{\hat{\Gamma}}f,f\right\rangle =\left\langle A_{\Gamma}f,f\right\rangle +2\sum_{e\in m}f\left(e^{+}\right)f\left(e^{-}\right)=\mu+2\sum_{e\in m}f\left(e^{+}\right)f\left(e^{-}\right),
\]
where the summation is over all edges $e$ in the random perfect matching
$m$, and $e^{+}$ and $e^{-}$ mark the two endpoints of $e$. Let
$R$ denote the random summation $2\sum_{e\in m}f\left(e^{+}\right)f\left(e^{-}\right)$.
We finish by showing that $R$ is generally very small. 

To accomplish that we use standard identities involving symmetric
polynomials over $f\left(v_{1}\right),\ldots,f\left(v_{n}\right)$.
Let $p_{k}=\sum_{v}f\left(v\right)^{k}$ be the $k$'th symmetric
Newton polynomial, so $p_{1}=0$ and $p_{2}=1$. Moreover, since $\left|f\left(v\right)\right|<1$
for every $v$, $\left|p_{k}\right|<p_{2}=1$. We use the fact that
every symmetric polynomial is a polynomial in the $p_{k}$'s and is
thus bounded. 

To begin with, 
\[
\mathbb{E}\left[R\right]=n\cdot\frac{1}{\binom{n}{2}}\sum_{\left\{ u,v\right\} \in\binom{V}{2}}f\left(v\right)f\left(u\right)=\frac{2}{n-1}s_{2}\left(f\left(v_{1}\right),\ldots,f\left(v_{n}\right)\right),
\]
where $s_{2}$ is the second elementary symmetric function: $s_{2}\left(x_{1},\ldots,x_{n}\right)=\sum_{i<j}x_{i}x_{j}$.
Since $s_{2}=\frac{1}{2}\left(p_{1}^{2}-p_{2}\right)=-\frac{1}{2}$,
we conclude that $\mathbb{E}\left[R\right]=-\frac{1}{n-1}=o_{n}\left(1\right)$.

Similarly,
\begin{eqnarray*}
\mathbb{E}\left[R^{2}\right] & = & 4\cdot\frac{n}{2}\cdot\frac{1}{\binom{n}{2}}\sum_{\left\{ u,v\right\} \in\binom{V}{2}}f\left(v\right)^{2}f\left(u\right)^{2}+8\cdot\binom{n/2}{2}\cdot\frac{1}{\binom{n}{4}}\sum_{\left\{ u,v,w,x\right\} \in\binom{V}{4}}f\left(u\right)f\left(v\right)f\left(w\right)f\left(x\right),\\
 & = & \frac{4}{n-1}\sum_{\left\{ u,v\right\} \in\binom{V}{2}}f\left(v\right)^{2}f\left(u\right)^{2}+\frac{48}{\left(n-1\right)\left(n-3\right)}\sum_{\left\{ u,v,w,x\right\} \in\binom{V}{4}}f\left(u\right)f\left(v\right)f\left(w\right)f\left(x\right).
\end{eqnarray*}
Since the two summations here are symmetric polynomials, they are
bounded, and thus $\mathbb{E}\left[R^{2}\right]=o_{n}\left(1\right)$
and so is the variance of $R$. Thus $R=o_{n}\left(1\right)$ with
probability tending to $1$ as $n\to\infty$. 
\end{proof}
If $d\ge3$ is odd, we can thus use our bound for $d+1$ to obtain
that a.a.s.
\[
\lambda\left(\Gamma\right)<2\sqrt{\left(d+1\right)-1}+0.84=2\sqrt{d}+0.84\approx2\sqrt{d-1}+\frac{1}{\sqrt{d}}+0.84.
\]
This proves our result for large enough $d$. Indeed, for $d\ge41$,
$2\sqrt{d}+0.84<2\sqrt{d-1}+1$. 

For smaller values of odd $d$ we use tighter results for $d+1$.
For example, we seek the smallest constant $c$ for which a bound
of $2\sqrt{4-1}+c$ can be obtained for $4$-regular graphs in our
methods. In order to minimize $\max\left\{ n^{1/t}\cdot2\sqrt{d-1},2\sqrt{d-1},\frac{4}{n^{1/t}}\right\} $
(see \eqref{final1}), we choose $n^{1/t}=\sqrt{\frac{4}{2\sqrt{d-1}}}$
to get an upper bound of $3.723$ (compared with $2\sqrt{d-1}=3.464$,
so here $c\thickapprox0.259$). For $d=3$ this bound is useless (it
is larger than the trivial bound of $3$).

The following table summarizes the bounds we obtain for $d\le20$
in the scenario of Theorem $1$. This can be carried on to establish
Theorem \thmref{2sqrt(d-1)+1} for $d\le40$.\\

\begin{tabular}{|c|c|c|c|c|c|c|}
\hline 
$d$ & Upper Bound & $c$ in $2\sqrt{d-1}+c$ & $n^{1/t}$ & $d$ & Uppder Bound & $c$ in $2\sqrt{d-1}+c$\tabularnewline
\hline 
\hline 
$4$ & 3.723 & $0.259$ & $1.075$ & $3$ & 3 & $0.172$\tabularnewline
\hline 
$6$ & $4.933$ & $0.460$ & $1.103$ & $\negthickspace\negthickspace\negthickspace\negthickspace\Longrightarrow~~~5$ & $4.933$ & $0.933$\tabularnewline
\hline 
$8$ & $5.868$ & $0.576$ & $1.109$ & $\negthickspace\negthickspace\negthickspace\negthickspace\Longrightarrow~~~7$ & $5.868$ & $0.969$\tabularnewline
\hline 
$10$ & $6.646$ & $0.646$ & 1.108 & $\negthickspace\negthickspace\negthickspace\negthickspace\Longrightarrow~~~9$ & 6.646 & 0.989\tabularnewline
\hline 
$12$ & 7.323 & $0.689$ & 1.104 & $\negthickspace\negthickspace\negthickspace\negthickspace\Longrightarrow~~~11$ & 7.323 & 0.998\tabularnewline
\hline 
$14$ & 7.928 & $0.7169$ & 1.099 & $\negthickspace\negthickspace\negthickspace\negthickspace\Longrightarrow~~~13$ & 7.928 & 0.9998\tabularnewline
\hline 
$16$ & 8.482 & $0.7352$ & 1.095 & $\negthickspace\negthickspace\negthickspace\negthickspace\Longrightarrow~~~15$ & 8.482 & 0.999\tabularnewline
\hline 
$18$ & 8.994 & $0.747$ & 1.091 & $\negthickspace\negthickspace\negthickspace\negthickspace\Longrightarrow~~~17$ & 8.994 & 0.994\tabularnewline
\hline 
$20$ & 9.473 & $0.755$ & 1.087 & $\negthickspace\negthickspace\negthickspace\negthickspace\Longrightarrow~~~19$ & 9.473 & 0.988\tabularnewline
\hline 
\end{tabular}\medskip{}

\begin{rem}
\textbf{\label{remark:Odd-ds}} Of course, the method presented here
to derive the statement of Theorem \thmref{2sqrt(d-1)+1} for odd
$d$'s from the statement for even $d$'s works only because of the
small additive constant we have in the result. To obtain a tight result
(Friedman's Theorem) in our approach, we will need another method
to work with odd $d$'s. 

One plausible direction is as follows. We may construct a random $d$-regular
graph with $d$ odd using $k=\frac{d-1}{2}$ random permutations plus
one random perfect matching. If we label the edges corresponding to
the perfect matching by $b$, and orient the edges corresponding to
the permutations and label them by $a_{1},\ldots,a_{k}$, the graphs
become Schreier graphs of subgroups of $\F_{k}*\nicefrac{\mathbb{Z}}{2\mathbb{Z}}=\left\langle a_{1},\ldots,a_{k},b\,\middle|\, b^{2}=1\right\rangle $.
It is conceivable that the machinery we developed for the free group
(and especially, Theorem \thmref{avg_fixed_pts}) can be also developed
for this kind of free products.
\end{rem}

\subsection{Proof of Theorem \thmref{base-d-regular}\label{sub:Proof-of-Theorem-base-regular}}

\noindent The only change upon the previous case (Theorem \thmref{2sqrt(d-1)+1}
with $d$ even) is that the summation in \eqref{final1} over the
primitivity rank $m$ does not stop at $k=\frac{d}{2}$ but continues
until $\rk\left(\Omega\right)=\left|V\left(\Omega\right)\right|\left(\frac{d}{2}-1\right)+1$.
However, when $m>k$, it follows from Corollary \corref{bounds-for-non-red-words-and-reg-base-graph}
that the corresponding term inside the $\max$ operator is $\frac{d}{\left(n^{1/t}\right)^{m-1}}$
which is strictly less than $\frac{d}{\left(n^{1/t}\right)^{d/2-1}}$
(for every choice of $t$ and $n$), but this latter term is already
there in \eqref{final1}. Thus, the maximal term is remained unchanged,
and we obtain the same bound overall as in the even case of Theorem
\thmref{2sqrt(d-1)+1}, namely $2\sqrt{d-1}+0.84$.

Let us stress that in this case the proof as is works for all $d\ge3$
(odd and even alike). As before, for small $d$'s we can obtain better
bounds, even if $d$ is odd. For example, for $d=3$ one can obtain
an upper bound of $\sqrt{3\cdot2\sqrt{d-1}}\approx2.913$.

\subsection{The source of the gap\label{sub:the-gap}}

It could be desirable to use the approach presented in this paper
and replace the constant $1$ in Theorem \ref{thm:2sqrt(d-1)+1} with
an arbitrary $\varepsilon>0$, to obtain Friedman's tight result.
Unfortunately, this is still beyond our reach. It is possible, however,
to point out the source of the gap and how it may be potentially overcome.

In the first inequality in our proof (as outlined in Section \ref{sec:Overview-of-the-proof}),
we bound $\left\{ \mathbb{E}\left[\lambda\left(\Gamma\right)^{t}\right]\right\} ^{1/t}$
by $\left\{ \mathbb{E}\left[\sum_{\mu\in\mathrm{Spec}\left(A_{\Gamma}\right)\setminus\left\{ d\right\} }\mu^{t}\right]\right\} ^{1/t}$.
Since 
\[
\lambda\left(\Gamma\right)^{t}\le\sum_{\mu\in\mathrm{Spec}\left(A_{\Gamma}\right)\setminus\left\{ d\right\} }\mu^{t}\le n\cdot\lambda\left(\Gamma\right)^{t}=\left[n^{1/t}\cdot\lambda\left(\Gamma\right)\right]^{t},
\]
as long as $t=\theta\left(\log n\right)$ the loss here is bounded,
and if $t\gg\log n$ we lose nothing. 

On the other hand, if $t\ll\log n$, one cannot obtain anything: It
is known (e.g.~\cite[Corollary 1]{GZ99}) that for every $\delta>0$
there exists $0<\varepsilon<1$ such that at least $\varepsilon\cdot n$
of the eigenvalues of $\Gamma$ satisfy $\left|\mu\right|\geq\rho-\delta$
(here $\rho=2\sqrt{d-1}$). If $t\in o\left(\log n\right)$ then $n^{1/t}$
tends to infinity, and thus 
\[
\left\{ \sum_{\mu\in\mathrm{Spec}\left(A_{\Gamma}\right)\setminus\left\{ d\right\} }\mu^{t}\right\} ^{1/t}>\left\{ \varepsilon n\left(\rho-\delta\right)^{t}\right\} ^{1/t}\underset{n\to\infty}{\to}\infty.
\]

Our proof proceeds by bounding this $t$-th moment of the non-trivial
spectrum. Let us stress that as long as $t=t\left(n\right)$ is small
enough in terms of $n$ so that the error term in Proposition \ref{prop:controling-the-O}
is negligible ($t=o\left(n^{1/\left(2+2k\right)}\right)$ suffices),
the upper bound our technique yields for $\mathbb{E}\left[\sum_{\mu\in\mathrm{Spec}\left(A_{\Gamma}\right)\setminus\left\{ d\right\} }\mu^{t}\right]$
is tight. In particular, for large enough $d$, and $t\approx c\log n$
with a suitable constant $c=c\left(d\right)$, 
\[
\left\{ \mathbb{E}\left[\sum_{\mu\in\mathrm{Spec}\left(A_{\Gamma}\right)\setminus\left\{ d\right\} }\mu^{t}\right]\right\} ^{1/t}\approx2\sqrt{d-1}+0.84.
\]
To see why, note that all relevant steps of the proof yield equalities
or tight bounds: the second step, which relies on Theorem \thmref{avg_fixed_pts},
has only equalities so it is surely tight. In the third step, we prove
that the error term is $o_{n}\left(1\right)$ for every $w$ of length
$t$ (note the proof bounds the absolute value of the error term).
As mentioned above, the bound we have in the fourth step for the exponential
growth rate of $\sum_{w\in\left(X\cup X^{-1}\right)^{t}:\,\pi\left(w\right)=m}\left|\crit\left(w\right)\right|$
~is, in fact, the correct value (see Theorem \thmref{prim-rank-distr}).
In the final, fifth step we may tighten our calculation in order to
come closer to the real constant (slightly smaller than $0.84$),
but we cannot improve it considerably.\\

What is, then, the source of this gap? It seems, therefore, that the
reason the bound we get for $\lambda\left(\Gamma\right)$ is not tight
lies in rare events that enlarge $\mathbb{E}\left[\lambda\left(\Gamma\right)^{t}\right]$
substaintially. For example, in the permutation model every vertex
of $\Gamma$ is isolated with probability $\frac{1}{n^{k}}$, so overall
there are on average $\frac{1}{n^{k-1}}$ isolated vertices. Each
such vertex is responsible to an additional eigenvalue $d$, alongside
the trivial one. These rare events alone contribute $\frac{1}{n^{k-1}}\cdot d^{t}$
to $\mathbb{E}\left[\lambda\left(\Gamma\right)^{t}\right]$. For example,
for $d=4$ ($k=2$) and $n^{1/t}\approx1.075$ as in the table in
Section \subref{From-even-to-odd}, isolated vertices contribute about
$\left[\frac{4}{\left(1.075\right)}\right]^{t}\approx3.721^{t}$ to
$\mathbb{E}\left[\lambda\left(\Gamma\right)^{t}\right]$, which is
roughly the bound we obtain in this case.

There are other, slightly more complicated, rare events that contribute
much to $\mathbb{E}\left[\lambda\left(\Gamma\right)^{t}\right]$.
Consider, for instance, the event that when $d=4$ the random graph
$\Gamma$ contains the subgraph ~~~~~~$\xymatrix@1@C=15pt{\bullet\ar@{-}@(dl,ul)&\bullet\ar@{-}@(dr,ur)\ar@{-}[l]}$~~~~~~.
If this subgraph is completed to a $4$-regular graph by attaching
a tree to each vertex, its spectral radius becomes $3.5$. Since this
resulting graph topologically covers (the connected component of the
subgraph in) $\Gamma$, we get a non-trivial eigenvalue which is at
least $3.5$ (but normally very close to $3.5$). On average, there
are $\frac{2}{n}$ such subgraphs in $\Gamma$, so they contribute
about $\frac{2}{n}\cdot3.5^{t}$ to $\mathbb{E}\left[\lambda\left(\Gamma\right)^{t}\right]$.
When $n^{1/t}$ is small enough, this is strictly larger than $\left[2\sqrt{d-1}\right]^{t}\approx3.464^{t}$.

Each such small graph corresponds to a few particular subgroups of
$\F_{k}$. For example, the subgraph ~~~~~~~~$\xymatrix@1@C=15pt{\bullet\ar@{-}@(dl,ul)&\bullet\ar@{-}@(dr,ur)\ar@{-}[l]}$~~~~~~~~
corresponds to one of four subgroups, one of which is $\xymatrix@1@C=15pt{\bullet\ar@(dl,ul)[]^{x_1}&\otimes\ar@(dr,ur)[]_{x_1}\ar[l]_{x_2}}$.One
therefore needs to realize which all these {}``bad'' subgroups are,
show their overall {}``probability'' is small (the average number
of appearances of $H\le\F_{k}$ in $\Gamma$ is exactly $L_{H,\F_{k}}$),
and somehow omit their contribution to $\mathbb{E}\left[\lambda\left(\Gamma\right)^{t}\right]$.
This would be relatively easy were our analysis of $\mathbb{E}\left[{\cal F}_{w}\right]$
based on $\mathbb{E}\left[{\cal F}_{w}\right]=\sum_{M\in\XC{\left\langle w\right\rangle }{\F_{k}}}L_{M,\F_{k}}$.
However, it is based, instead, on $\mathbb{E}\left[{\cal F}_{w}\right]=\sum_{N\in\XC{\left\langle w\right\rangle }{\F_{k}}}R_{\left\langle w\right\rangle ,N}$
(see Section \secref{Controlling-the-O}). It seems that overcoming
this difficulty requires a better control over the error term: this
might enable us to omit the contribution of these {}``bad'' subgroups
from our bounds.
\begin{rem}
These {}``bad'', rare events are somewhat parallel to the notion
of \emph{tangles} in \cite{Fri08}.
\end{rem}

\section{Completing the Proof for Arbitrary Graphs\label{sec:Completing-the-Proof-general-Omega}}

The completion of the proof of Theorem \ref{thm:sqrt3-times-rho}
is presented in this Section. We begin with the proof of the first
statement of the theorem which concerns the spectrum of the adjacency
operator of $\Gamma$, the random $n$-covering of the fixed base
graph $\Omega$. The variations needed in order to establish the statement
about the Markov operator are described in Section \ref{sub:The-Markov-Operator}.

Recall that $\rho=\rho_{A}\left(\Omega\right)$ denotes the spectral
radius of the adjacency operator of the covering tree. Our goal now
is to prove that for every $\varepsilon>0$, $\lambda_{A}\left(\Gamma\right)$,
the largest absolute value of a non-trivial eigenvalue of the adjacency
operator $A_{\Gamma}$, satisfies asymptotically almost surely 
\begin{equation}
\lambda_{A}\left(\Gamma\right)<\sqrt{3}\cdot\rho+\varepsilon.\label{eq:adj-upp-bnd}
\end{equation}
As in the proof of Theorem \ref{thm:2sqrt(d-1)+1} (the beginning
of Section \ref{sec:Completing-the-Proof-d-reg}), let $n,t=t\left(n\right)$
be so that $n>t^{2}$ and $t$ is even. Using \eqref{bounding-lambda-with-thm},
Proposition \propref{controling-the-O}, Claim \ref{claim:controlling-the-O-for-m=00003D0}
and Lemma \ref{lem:prim-rank-at-most-rk-Omega}, one obtains
\begin{eqnarray*}
\mathbb{E}\left[\lambda_{A}\left(\Gamma\right)^{t}\right] & \leq & \sum_{w\in\cpt\left(\Omega\right)}\left(\mathbb{E}\left[{\cal F}_{w}\right]-1\right)=\\
 & \leq & \left(1+\frac{t^{2+2\rk\left(\Omega\right)}}{n-t^{2}}\right)\sum_{m=0}^{\rk\left(\Omega\right)}\frac{1}{n^{m-1}}\sum_{w\in\cptm\left(\Omega\right)}\left|\crit\left(w\right)\right|
\end{eqnarray*}
Let $\varepsilon>0$. From Theorem \ref{thm:bound-for-m-in-general-Omega}
and Lemma \ref{claim:bound-for-m=00003D0-general-Omega} it follows
now that for $t$ even and large enough, 
\begin{eqnarray}
\mathbb{E}\left[\lambda_{A}\left(\Gamma\right)^{t}\right] & \leq & \left(1+\frac{t^{2+2\rk\left(\Omega\right)}}{n-t^{2}}\right)\left[n\cdot\left[\rho+\varepsilon\right]^{t}+\sum_{m=1}^{\rk\left(\Omega\right)}\frac{\left[\left(2m-1\right)\cdot\rho+\varepsilon\right]^{t}}{n^{m-1}}\right].\nonumber \\
 & \leq & \left(1+\frac{t^{2+2\rk\left(\Omega\right)}}{n-t^{2}}\right)\left(1+\rk\left(\Omega\right)\right)\cdot\nonumber \\
 &  & \cdot\left[\max\left\{ n^{1/t}\left[\rho+\varepsilon\right],\rho+\varepsilon,\frac{3\rho+\varepsilon}{n^{1/t}},\frac{5\rho+\varepsilon}{\left(n^{1/t}\right)^{2}},\ldots,\frac{\left(2\rk\left(\Omega\right)-1\right)\rho+\varepsilon}{\left(n^{1/t}\right)^{\rk\left(\Omega\right)-1}}\right\} \right]^{t}\label{eq:choose-the-max}
\end{eqnarray}
Again, to obtain a bound we must have $t\in\theta\left(\log n\right)$,
and the best bound we can obtain in this general case is obtained
by choosing $n^{1/t}\approx\sqrt{3}$ , so $\left(1+\frac{t^{2+2\rk\left(\Omega\right)}}{n-t^{2}}\right)^{1/t}\underset{n\to\infty}{\to}1$,
and the maximal value inside the set in \eqref{choose-the-max} is
then $\sqrt{3}\left(\rho+\epsilon\right)$. Again, a standard application
of Markov inequality finishes the proof. $\qed$

\subsection{The spectrum of the Markov operator \label{sub:The-Markov-Operator}}

After establishing the first statement of Theorem \thmref{sqrt3-times-rho},
we want to explain how the proof should be modified to apply to $\lambda_{M}\left(\Gamma\right)$,
the maximal absolute value of a non-trivial eigenvalue of the \emph{Markov}
operator on $\Gamma$. The goal is to show that for every $\varepsilon>0$

\begin{equation}
\lambda_{M}\left(\Gamma\right)<\sqrt{3}\cdot\rho_{M}\left(\Omega\right)+\varepsilon\label{eq:mrkv-upp-bnd}
\end{equation}
asymptotically almost surely. 

As we note in Appendix \secref{Operators-on-Non-Regular}, the Markov
operator is given by $B_{\Gamma}D_{\Gamma}^{-1}$, where $B_{\Gamma}$
is the adjacency matrix and $D_{\Gamma}$ the diagonal matrix with
the degrees of vertices in the diagonal. This is conjugate to and
thus share the same spectrum with $Q_{\Gamma}=D_{\Gamma}^{-1/2}B_{\Gamma}D_{\Gamma}^{-1/2}$,
but the latter has the advantage of being symmetric, so we work with
it.

The $\left(u,v\right)$ entry of $Q_{\Gamma}$ equals $\frac{1}{\sqrt{\deg\left(u\right)\deg\left(v\right)}}$
times the number of edges between $u$ and $v$. For every walk $w$
in $\Gamma$ we assign a weight function $f\left(w\right)$ as follows:
if $w$ starts at $v_{0}$, then visits $v_{1},v_{2},\ldots,v_{t-1}$
and ends at $v_{t}$, then $f\left(w\right)=\frac{1}{\sqrt{\deg v_{0}}\cdot\deg v_{1}\cdot\ldots\cdot\deg v_{t-1}\cdot\sqrt{\deg v_{t}}}$.
It is easy to see that $\left[Q_{\Gamma}^{\,\, t}\right]_{u,v}$ equals
the sum of $f\left(w\right)$ over all walks $w$ of length $t$ from
$u$ to $v$, and thus 
\[
\sum_{\lambda\in\mathrm{Spec}\left(M_{\Gamma}\right)}\lambda^{t}=\mathrm{tr}M_{\Gamma}=\sum_{w\in\cpt\left(\Gamma\right)}f\left(w\right).
\]
Moreover, note that when a walk from the covering $\Gamma$ projects
to the base graph $\Omega$, its weight does not change. Using this
fact, we can imitate step I from Section \secref{Overview-of-the-proof}
to obtain, for $t$ even,
\begin{eqnarray*}
\lambda_{M}\left(\Gamma\right)^{t} & \leq & \sum_{\mu\in\mathrm{Spec}\left(M_{\Gamma}\right)}\mu^{t}-\sum_{\mu\in\mathrm{Spec}\left(M_{\Omega}\right)}\mu^{t}=\sum_{w\in\cpt\left(\Gamma\right)}f\left(w\right)-\sum_{w\in\cpt\left(\Omega\right)}f\left(w\right)=\\
 & = & \sum_{w\in\cpt\left(\Omega\right)}f\left(w\right)\left[{\cal F}_{w,n}\left(\sigma_{1},\ldots,\sigma_{k}\right)-1\right].
\end{eqnarray*}
The second and third steps remain the same, obtaining 
\[
\mathbb{E}\left[\lambda_{M}\left(\Gamma\right)^{t}\right]\leq\left(1+\frac{t^{2+2\rk\left(\Omega\right)}}{n-t^{2}}\right)\sum_{m=0}^{\rk\left(\Omega\right)}\frac{1}{n^{m-1}}\sum_{w\in\cptm\left(\Omega\right)}f\left(w\right)\left|\crit\left(w\right)\right|.
\]
The next modification needs take place in the fourth step, where instead
of bounding $\sum_{w\in\cpt\left(\Omega\right):\,\pi\left(w\right)=m}\left|\crit\left(w\right)\right|$,
one needs to bound $\sum_{w\in\cpt\left(\Omega\right):\,\pi\left(w\right)=m}f\left(w\right)\left|\crit\left(w\right)\right|$.
But the exact same proofs work if we merely replace $\rho_{A}\left(\Omega\right)$
with $\rho_{M}\left(\Omega\right)$. Theorem \thmref{bound-for-m-in-general-Omega}
becomes
\begin{equation}
\limsup_{t\to\infty}\left[\sum_{w\in\cptm\left(\Omega\right)}f\left(w\right)\left|\crit\left(w\right)\right|\right]^{1/t}\leq\left(2m-1\right)\cdot\rho_{M}\left(\Omega\right).\label{eq:bound-for-words-markov-oper}
\end{equation}
and likewise, Lemma \claimref{bound-for-m=00003D0-general-Omega}
becomes 
\[
\limsup_{t\to\infty}\left[\sum_{w\in\cpt^{0}\left(\Omega\right)}f\left(w\right)\right]^{1/t}=\rho_{M}\left(\Omega\right).
\]
Similarly, the definition of $\beta_{t}\left(N\right)$ (preceding
Claim \claimref{ CR-subgroups}) should be modified to 
\[
\beta_{t}\left(N\right)=\sum_{w\in\cpt\left(\Omega\right):\,\left\langle w\right\rangle \neqalg N}f\left(w\right)
\]
and in the proof of Claim \claimref{ CR-subgroups} one should use
the fact that $f\left(w\right)$ does not change when the closed walk
$w$ is being cyclically rotated. Finally, in the proof of Proposition
\propref{key-prop-for-top-graph} we sometimes replace a walk with
its inverse and use the symmetry of the operator. This is the reason
for working with $Q_{\Gamma}$ rather than with $M_{\Gamma}$. Also,
the coefficient $\left|V\left(\Omega\right)\right|$ from the statement
of the proposition needs be replaced with some constant function of
the degrees of all vertices.

Because the bounds in \eqref{bound-for-words-markov-oper} are exactly
those in Theorem \thmref{bound-for-m-in-general-Omega} only with
$\rho_{M}\left(\Omega\right)$ instead of $\rho_{A}\left(\Omega\right)$,
the final step of the proof (which appears in Section \secref{Completing-the-Proof-general-Omega})
also remains unchanged.

\section{The Distribution of Primitivity Ranks\label{sec:distr-of-prim-rank}}

In this subsection we show that the upper bounds from Proposition
\ref{prop:reduced-words-upper-bound} and Corollary \ref{cor:non-reduced-words_upper_bound}
are the accurate exponential growth rates of the number of words (reduced
or not) and critical subgroups with a given primitivity rank. This
is not needed for the proof of the main results of this paper. However,
it does show that in the proof of Theorem \ref{thm:2sqrt(d-1)+1},
the fourth step of the proof, where words and critical subgroups are
counted, yields a tight bound. Thus, the origin of the gap between
our result and Friedman's lies elsewhere (see Section \ref{sub:the-gap}).

First, let us recall a theorem due to the author and Wu which counts
primitive words in $\F_{k}$.
\begin{thm}
\label{thm:prim-words-growth}\cite{PW14} For every $k\geq3$, let
$p_{k}\left(t\right)$ denote the number of primitive words of length
$t$ in $\F_{k}$. Then, 
\[
\lim_{t\to\infty}p_{k}\left(t\right)^{1/t}=2k-3.
\]

\end{thm}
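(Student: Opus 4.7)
\medskip

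Plan: The result splits into a lower bound $\liminf p_k(t)^{1/t} \geq 2k-3$ and a matching upper bound $\limsup p_k(t)^{1/t} \leq 2k-3$. The lower bound is elementary; the upper bound is the real content.

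For the lower bound, I would use a direct construction via Nielsen transformations. For any reduced word $u \in \F(x_1,\ldots,x_{k-1}) = \F_{k-1}$, the assignment $x_i \mapsto x_i$ for $i<k$ and $x_k \mapsto u x_k$ extends to an automorphism of $\F_k$, and it carries the standard basis to $\{x_1,\ldots,x_{k-1}, u x_k\}$. Since $u$ involves only $x_1^{\pm 1},\ldots,x_{k-1}^{\pm 1}$, the word $u x_k$ is already reduced, of length $|u|+1$, and it is primitive as a member of a basis. Counting reduced words of length $t-1$ in $\F_{k-1}$ yields $p_k(t) \geq 2(k-1)(2k-3)^{t-2}$, so $\liminf_{t\to\infty} p_k(t)^{1/t} \geq 2k-3$.

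For the upper bound, I would invoke Whitehead's classical lemma: if $w$ is cyclically reduced and primitive but not conjugate to a generator, then there is a Whitehead automorphism strictly reducing $|w|$, which forces the Whitehead graph $\mathrm{Wh}(w)$ (on $2k$ vertices labeled by $X \cup X^{-1}$, with edges recording consecutive pairs in the cyclic word) to be either disconnected or to possess a cut vertex. Both conditions are strong combinatorial restrictions on which two-letter blocks can appear consecutively in $w$. The counting then proceeds by fixing the cut vertex $y \in X \cup X^{-1}$ (giving only a factor of $2k$) and partitioning $X \cup X^{-1} \setminus\{y, y^{-1}\}$ into two "sides'' that interact only via $y$; between successive occurrences of $y^{\pm 1}$, the word is a reduced word on one side, a sub-alphabet of size essentially $2(k-1)-1 = 2k-3$ (the $-1$ accounts for the reduction constraint at the boundary letters). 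Concatenating these blocks and summing geometric-like contributions yields growth rate at most $2k-3$. Passing from cyclic words to linear words and from minimal-length orbit representatives to general primitives costs only polynomial factors in $t$, since each cyclic word gives at most $t$ linear representatives and each $\mathrm{Aut}(\F_k)$-orbit is handled through its minimal representative.

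The main obstacle is making this cut-vertex counting both rigorous and tight. Minimality in the Whitehead orbit is what licenses Whitehead's lemma, so one must either argue that every primitive is itself Whitehead-minimal (false), or bound the number of primitives by bounding the orbit sizes of minimal representatives—this last step is delicate and requires care to avoid losing the sharp constant. An alternative, possibly more amenable to the algebraic apparatus of this paper, is to use the characterization $w$ primitive $\iff \langle w \rangle \ff \F_k$ together with the core-graph and partition framework of Section \ref{sec:Core-Graphs-and-alg-extensions}: one would enumerate primitives as closed paths $w$ in the bouquet whose core graph $\Gamma_X(\langle w \rangle)$ is a free factor subgraph of $\Gamma_X(\F_k)$, and use Theorem \thmref{distance} (equality $\rho_X(\langle w\rangle,\F_k) = k-1$ in the primitive case) to control the count. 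Either way, obtaining the exact constant $2k-3$ rather than an approximate asymptotic is the key technical hurdle.
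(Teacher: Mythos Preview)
The paper does not actually prove this theorem: it is quoted from \cite{PW14} and used as a black box in Section~\ref{sec:distr-of-prim-rank}. So there is no ``paper's own proof'' to compare against here.

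That said, your proposal is pointed in the right direction. The lower bound via $u x_k$ with $u\in\F_{k-1}$ is correct and is essentially the standard argument. For the upper bound, the Whitehead cut-vertex lemma is indeed the engine of the proof in \cite{PW14}, and you have correctly identified the two genuine obstacles: (i) Whitehead's lemma applies only to minimal-length representatives in the $\Aut(\F_k)$-orbit, not to arbitrary primitives, so one must control how many primitives of length $t$ lie above each minimal one; and (ii) extracting the exact constant $2k-3$ from the cut-vertex condition requires a careful transition-counting argument, not just the heuristic you sketch. Your description of the counting (``between successive occurrences of $y^{\pm1}$ the word lives on one side of size $2k-3$'') is too loose to yield the sharp bound as stated: the cut vertex is $y$ alone, the component structure involves $y^{-1}$ on one side, and the transitions through $y$ must be tracked more carefully. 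The alternative route you suggest via core graphs and Theorem~\ref{thm:distance} is not the one taken in \cite{PW14}; that paper stays on the Whitehead side and resolves both obstacles there.
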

For $\F_{2}$ it is known that this exponential growth rate equals
$\sqrt{3}$ (\cite{rivin2004remark}). These results show that the
portion of primitive words among all words of length $t$ decays exponentially
fast%
\footnote{That primitive words in $\F_{k}$ are negligible in this sense follows
also from the earlier results \cite{BV02counting}, \cite[Thm 10.4]{BMS02}
and \cite{shpilrain2005counting}, where the exponential growth rate
from Theorem \ref{thm:prim-words-growth} is shown to be $\leq2k-2-o_{k}\left(1\right)$.%
}. They are used in the following theorem, which states that the upper
bounds from Proposition \ref{prop:reduced-words-upper-bound} are
accurate. 
\begin{thm}
\label{thm:prim-rank-distr-on-reduced-words}Let $k\geq2$ and $m\in\left\{ 1,2,\ldots,k\right\} $.
Let\marginpar{$c_{k,m}\left(t\right)$} 
\[
c_{k,m}\left(t\right)=\left|\left\{ w\in\F_{k}\,\middle|\,\left|w\right|=t,\,\pi\left(w\right)=m\right\} \right|.
\]
Then, 

\begin{equation}
\limsup_{t\to\infty}c_{k,m}\left(t\right)^{1/t}=\begin{cases}
\sqrt{2k-1} & 2m-1\le\sqrt{2k-1}\\
2m-1 & 2m-1\ge\sqrt{2k-1}
\end{cases}.\label{eq:reduced-words-limit}
\end{equation}

\end{thm}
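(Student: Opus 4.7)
The upper bound is immediate from Proposition \ref{prop:reduced-words-upper-bound}, since $|\crit(w)|\ge 1$ whenever $\pi(w)<\infty$, giving $c_{k,m}(t)\le\sum_{|w|=t,\,\pi(w)=m}|\crit(w)|$. For the matching lower bound, my plan is to exhibit two separate families of reduced length-$t$ words of primitivity rank $m$: a "conjugate" family of size $\sim(2k-1)^{t/2}$ for all $m\ge 1$, and (for $m\ge 2$) an "embedding" family of size $\sim(2m-1)^t$. The larger of the two in each regime matches the upper bound $\max(\sqrt{2k-1},\,2m-1)^t$.

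For the conjugate family I use that $\pi$ is conjugation-invariant: if $L\le\F_k$ witnesses the non-primitivity of $w$, then $uLu^{-1}$ witnesses that of $uwu^{-1}$. I fix once and for all a cyclically reduced word $v$ of bounded length with $\pi(v)=m$, for example $v=x_1^{\,2}\cdots x_m^{\,2}$ when $m\ge 2$ (whose primitivity rank $m$ is indicated in Table \ref{tab:Primitivity-Rank-and-fixed-points}) and $v=x_1^{\,2}$ when $m=1$. For each reduced word $u$ of length $\approx(t-|v|)/2$ whose final letter is not inverse to the initial letter of $v$, the word $w:=uvu^{-1}$ is reduced, of length $t$, and has $\pi(w)=m$. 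The number of such $u$'s is $\Theta\!\bigl((2k-1)^{(t-|v|)/2}\bigr)$, and distinct $u$'s yield the same $w$ only when $u_2^{-1}u_1$ lies in the centralizer of $v$, which is cyclic (for $m=1$ I would instead count squares $w_0^{\,2}$ of cyclically reduced $w_0$ directly), so the overcount is only polynomial. This yields $\limsup_{t\to\infty} c_{k,m}(t)^{1/t}\ge\sqrt{2k-1}$.

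For the embedding family (needed only for $m\ge 2$), I view $\F_m=\langle x_1,\ldots,x_m\rangle$ as a free factor of $\F_k$, and claim $\pi_{\F_k}(w)=\pi_{\F_m}(w)$ for every $w\in\F_m$. The inequality $\pi_{\F_k}\le\pi_{\F_m}$ is trivial; the reverse follows because any $w$-critical subgroup $N\le\F_k$ is a proper algebraic extension of $\langle w\rangle$ (Claim \ref{claim:critical-is-algebraic}), and the free-factor argument behind Claim \ref{claim: only-subgroups-matter} (applicable because $\F_m\ff\F_k$ and $w\in\F_m$) forces $N\le\F_m$. Hence $c_{k,m}(t)\ge c_{m,m}(t)$, and I count inside $\F_m$: of the $\sim(2m-1)^t$ reduced length-$t$ words in $\F_m$, Proposition \ref{prop:reduced-words-upper-bound} applied inside $\F_m$ bounds those with $\pi=m'<m$ by $\max(\sqrt{2m-1},\,2m-3)^t$ up to polynomial factors, and Theorem \ref{thm:prim-words-growth} (with Rivin's $\sqrt{3}$ bound for the $m=2$ case) bounds the primitives by $(2m-3)^t$ (resp.\ $\sqrt{3}^t$). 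Both rates are strictly below $2m-1$, so $c_{m,m}(t)\ge(2m-1)^t(1-o(1))$, giving $\limsup_{t\to\infty} c_{k,m}(t)^{1/t}\ge 2m-1$.

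The delicate point is the algebraic identity $\pi_{\F_k}(w)=\pi_{\F_m}(w)$ for $w$ in the free factor $\F_m$; this is structural rather than combinatorial, and hinges on the fact that algebraic extensions of $\langle w\rangle$ remain inside any free factor containing $w$. Everything else reduces to elementary counting — conjugate multiplicities and the observation that primitives and lower-primitivity-rank words are exponentially negligible inside $\F_m$ — using only inputs already developed in the paper.
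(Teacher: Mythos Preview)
Your proposal is correct. Your ``embedding family'' for the $2m-1$ lower bound is essentially the paper's argument (words in $\F(S)$ for $|S|=m$), with the added explicit lemma $\pi_{\F_k}(w)=\pi_{\F_m}(w)$ for $w\in\F_m\ff\F_k$; the paper instead rules out $\pi(w)<m$ by counting directly inside $\F_k$ via Proposition~\ref{prop:reduced-words-upper-bound}, but the two are equivalent.

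Your ``conjugate family'' for the $\sqrt{2k-1}$ lower bound, however, is genuinely different from and simpler than the paper's construction. The paper builds words $w=x_1^{\,2}\cdots x_{m-1}^{\,2}u^2$ inside the rank-$m$ subgroup $H=\langle x_1,\ldots,x_{m-1},u\rangle$ for varying $u$ of length~$\sim t/2$; establishing $\pi(w)\le m$ requires invoking the primitivity criterion of \cite{Pud14a}, and ruling out $\pi(w)<m$ for generic $u$ takes a further argument about long repeated subwords in $u$. Your conjugation trick bypasses all of this: once $\pi(v)=m$ is known for the fixed seed $v$, conjugation-invariance of $\pi$ gives $\pi(uvu^{-1})=m$ immediately for every $u$. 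What the paper's construction buys in exchange is the structural observation (see the remark following the proof) that the dominant contribution for small $m$ comes from critical subgroups whose core graphs have size~$\sim t/2$, and it also handles both parities of $t$ (yielding an honest $\lim$ for $m\ge2$), whereas your conjugates $uvu^{-1}$ with $|v|=2m$ have only even length---though this is harmless for the $\limsup$ actually claimed.

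Two cosmetic remarks: you need both end conditions ($u$'s last letter distinct from $v_1^{-1}$ \emph{and} from $v_s$) for $uvu^{-1}$ to be reduced, not just the first; and your parenthetical about $m=1$ is unnecessary, since the conjugate construction and its injectivity-up-to-centralizer argument already work there.
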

In fact, as the proof shows, for $m\ge2$, we can replace the $\limsup$
with regular $\lim$, and for $m=1$ we can replace $\limsup_{t\to\infty}c_{k,1}\left(t\right)^{1/t}$
with $\lim_{t\to\infty}c_{k,1}\left(2t\right)^{1/2t}$.
\begin{cor}
A generic word in $\F_{k}$ has primitivity rank $k$. \end{cor}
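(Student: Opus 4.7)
The plan is to interpret ``generic'' in the natural asymptotic sense: among reduced words $w \in \F_k$ of length $t$, the proportion satisfying $\pi(w) = k$ tends to $1$ as $t \to \infty$. Recalling that the possible values of $\pi(w)$ are $\{0,1,\ldots,k\} \cup \{\infty\}$, it suffices to show that the count of words in each of the ``bad'' classes $\pi(w) \in \{0,1,\ldots,k-1,\infty\}$ is exponentially negligible compared with the total number of reduced words of length $t$, which equals $2k(2k-1)^{t-1}$ and hence has exponential growth rate exactly $2k-1$.

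First I would dispose of the two easy extremes. For $m=0$, the only element with $\pi(w)=0$ is $w=1$, which in reduced form has length $0$, so $c_{k,0}(t)=0$ for every $t\geq 1$. For $m=\infty$, namely primitive words, Theorem~\ref{thm:prim-words-growth} (together with the known value $\sqrt{3}$ in rank $2$) gives exponential growth rate $2k-3$ for $k\geq 3$ and $\sqrt{3}$ for $k=2$, both strictly smaller than $2k-1$.

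For the intermediate cases $m \in \{1,2,\ldots,k-1\}$ I would invoke Proposition~\ref{prop:reduced-words-upper-bound}, which yields
\[
\limsup_{t\to\infty} c_{k,m}(t)^{1/t} \;\leq\; \max\bigl\{\sqrt{2k-1},\,2m-1\bigr\}.
\]
Since $m \leq k-1$, the right-hand side is bounded above by $\max\{\sqrt{2k-1},\,2k-3\}$, and a direct check shows this quantity is strictly less than $2k-1$ for every $k\geq 2$ (indeed $2k-3 < 2k-1$ and $\sqrt{2k-1} < 2k-1$ once $2k-1>1$). Thus each of the finitely many classes $c_{k,m}(t)$ with $m\neq k$ has exponential growth rate strictly below $2k-1$.

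Combining these bounds, there exists $\alpha < 2k-1$ and a sub-exponential factor $P(t)$ such that
\[
\sum_{m \in \{0,1,\ldots,k-1,\infty\}} c_{k,m}(t) \;\leq\; P(t)\cdot \alpha^t,
\]
whereas the total number of reduced words of length $t$ is $2k(2k-1)^{t-1}$. Dividing, the proportion of reduced words of length $t$ with $\pi(w)\neq k$ decays at least as $(\alpha/(2k-1))^t$ up to polynomial factors, hence tends to $0$. Consequently $c_{k,k}(t) / \bigl(2k(2k-1)^{t-1}\bigr) \to 1$, which is the precise meaning of the corollary. There is no genuine obstacle here, as the entire argument consists of collecting upper bounds already in hand and comparing them to the total; the only thing that must be verified is the arithmetic inequality $\max\{\sqrt{2k-1},\,2k-3\} < 2k-1$ for every $k \geq 2$, which is immediate.
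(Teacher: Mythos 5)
Your argument is correct and is essentially the paper's intended derivation: the corollary follows because every class $\pi(w)\neq k$ has exponential growth rate strictly below $2k-1$, the rate of all reduced words. The only (harmless) difference is that you invoke just the upper-bound half of the story — Proposition \ref{prop:reduced-words-upper-bound} together with Theorem \ref{thm:prim-words-growth} — rather than the full Theorem \ref{thm:prim-rank-distr-on-reduced-words}, which is all that genericity requires.
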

\begin{proof}
{[}of Theorem \ref{thm:prim-rank-distr-on-reduced-words}{]} The r.h.s.~of
(\ref{eq:reduced-words-limit}) is an upper bound for the $\limsup$
by Proposition \ref{prop:reduced-words-upper-bound}. It remains to
show that for every $m\in\left\{ 1,\ldots,k\right\} $, there is some
subset of words with primitivity rank $m$ and growth rate $\max\left(\sqrt{2k-1},2m-1\right)$.

Consider first the case $2m-1>\sqrt{2k-1}$. Take any subset of the
generators $S\subseteq X$ of size $m$ and consider the subgroup
$H=\F\left(S\right)$. Its core graph is a bouquet of $m$ loops.
The number of words of length $t$ in $H$ is $2m\cdot\left(2m-1\right)^{t-1}$.
By Theorem \ref{thm:prim-words-growth}, a random word in $H$ of
length $t$ is a.a.s.~non-primitive in $H$, so its primitivity rank
is at most $m$. On the other hand, the exponential growth rate of
all words with $\pi\left(w\right)<m$ combined is smaller than $\left(2m-1\right)$
(by Proposition \ref{prop:reduced-words-upper-bound}). Thus, a word
$w\in H$ of length $t$ satisfies $\pi\left(w\right)=m$ a.a.s.,
and we are done. In particular, we proved that for such values of
$m$, 
\[
\limsup_{t\to\infty}c_{k,m}\left(t\right)^{1/t}=\lim_{t\to\infty}c_{k,m}\left(t\right)^{1/t}=2m-1.
\]

Now assume that $2m-1\le\sqrt{2k-1}$. Consider subgroups of the form
$H=\left\langle x_{1},\ldots,x_{m-1},u\right\rangle $ where $u$
is a cyclically reduced word of length $\sim\frac{t}{2}$ such that
its first and last letters are \emph{not} one of $\left\{ x_{1}^{\pm1},\ldots,x_{m-1}^{\pm1}\right\} $.
Then, $\Gamma_{X}\left(H\right)$ has the form of a bouquet of $m-1$
small loops of size 1 and one large loop of size $\sim\frac{t}{2}$.
Now consider the word $w=w\left(u\right)=x_{1}^{\,2}x_{2}^{\,2}\ldots x_{m-1}^{\,\,\,\,\,\,\,\,\,\,2}u^{2}$.
Obviously, the growth rate of the number of possible $u$'s (as a
function of $t$) is $\sqrt{2k-1}$, hence also the growth rate of
the number of different $w$'s. It can be shown that $w$ is \emph{not
}primitive in $H$, using the primitivity criterion from Theorem \ref{thm:distance}
(\cite[Thm 1.1]{Pud14a}). (In fact, it follows from \cite[Lemma 6.8]{Pud14a}
that as an element of the free group $H$, $w$ has primitivity rank
$m$ with $H$ being the sole $w$-critical subgroup.) Thus, $\pi\left(w\right)\le m$.
In general, the primitivity rank might be strictly smaller. For example,
for $m=3$ and $u=x_{3}x_{1}^{2}x_{2}^{2}x_{3}$, we have $\pi\left(w\right)=2$
because $w$ is not a proper power yet is not primitive in $\left\langle x_{3},x_{1}^{2}x_{2}^{2}\right\rangle $.
However, we claim that for a generic $u$, the primitivity rank of
$w$ is exactly $m$.

Indeed, if this is not the case, then there is some $\tilde{m}<m$
such that the growth rate of words $w=w\left(u\right)$ as above with
$\pi\left(w\right)=\tilde{m}$ is $\sqrt{2k-1}$. By the proof of
Proposition \ref{prop:reduced-words-upper-bound} and especially (\ref{eq:bound-for-reduced-words-1})
, it follows that most of these words ($w=w\left(u\right)$ with $\pi\left(w\right)=\tilde{m}$)
have an algebraic extension N of rank $\tilde{m}$ such that the number
of edges in $\Gamma_{X}\left(N\right)$ is close to $\frac{t}{2}$.
(By (\ref{eq:bound-for-reduced-words-1}), the total number of words
of length $t$ with an algebraic extension $N$ of rank $\tilde{m}$
and $\delta t$ edges in $\Gamma_{X}\left(N\right)$, for some $\delta<\frac{1}{2}$,
grows strictly slower than $\sqrt{2k-1}$.) So almost all these words
$w=w\left(u\right)$ trace twice every edge of some $\Gamma_{X}\left(N\right)$
of rank $\tilde{m}$ with roughly $\frac{t}{2}$ edges. In particular,
each such $w=w\left(u\right)$ traces twice some topological edge
in $\Gamma_{X}\left(N\right)$ of length at least $\frac{1}{2\left(3m-1\right)}t$.
This implies that there is some linear-size two overlapping subwords
of $u$ or of $u^{-1}$. But for a generic $u$, the longest subword
appearing twice in $u$ or in $u^{-1}$ has length of order $\log t$. 

Since the $w$'s we obtained are of arbitrary even length, this shows
that if $2m-1\le\sqrt{2k-1}$, then 
\[
\limsup_{t\to\infty}c_{k,m}\left(t\right)^{1/t}=\lim_{t\to\infty}c_{k,m}\left(2t\right)^{1/2t}=\sqrt{2k-1}.
\]
If, in addition, $m\ge2$, the same argument as above works also for
$w=w\left(u\right)=x_{1}^{\,3}x_{2}^{\,2}\ldots x_{m-1}^{\,\,\,\,\,\,\,\,\,\,2}u^{2}$
which is of arbitrary odd length. Thus, $\lim_{t\to\infty}c_{k,m}\left(t\right)^{1/t}=\sqrt{2k-1}.$\end{proof}
\begin{rem}
It follows from the proofs of Proposition \ref{prop:reduced-words-upper-bound}
and Theorem \ref{thm:prim-rank-distr-on-reduced-words} that while
for $2m-1>\sqrt{2k-1}$ the main source for words with $\pi\left(w\right)=m$
is in subgroups with core graphs of minimal size (and their conjugates),
the main source for $2m-1<\sqrt{2k-1}$ is in subgroups with core
graphs of maximal size, namely of size roughly $\frac{t}{2}$. 
\end{rem}
Recall that in the proof of Theorem \ref{thm:2sqrt(d-1)+1} we used
bounds on the number of \emph{not-necessarily-reduced} words (and
their critical subgroups). Here, too, the bounds from Corollary \ref{cor:non-reduced-words_upper_bound}
are accurate for every value of $m$:
\begin{thm}
\label{thm:prim-rank-distr}Let $k\geq2$ and $m\in\left\{ 0,1,2,\ldots,k,\infty\right\} $.
Let\marginpar{$b_{k,m}\left(t\right)$} 
\[
b_{k,m}\left(t\right)=\left|\left\{ w\in\left(X\cup X^{-1}\right)^{t}\,\middle|\,\pi\left(w\right)=m\right\} \right|.
\]
Then for $m=0$ we have 
\[
\lim_{\substack{t\to\infty\\
t\,\mathrm{even}
}
}b_{k,0}\left(t\right)^{1/t}=2\sqrt{2k-1}.
\]
For $m\in\left\{ 1,\ldots,k\right\} $, 
\[
\lim_{t\to\infty}b_{k,m}\left(t\right)^{1/t}=\begin{cases}
2\sqrt{2k-1} & 2m-1\leq\sqrt{2k-1}\\
2m-1+\frac{2k-1}{2m-1} & 2m-1\geq\sqrt{2k-1}
\end{cases}.
\]
Finally, for $m=\infty$ we have
\[
\lim_{t\to\infty}b_{k,\infty}\left(t\right)^{1/t}=2k-2+\frac{2}{2k-3}.
\]

\end{thm}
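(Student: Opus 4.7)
The proof divides by regime. The case $m = 0$ is exactly Claim \ref{claim:bound-for-m=00003D0}, so the plan is to handle $m \in \{1,\ldots,k\}$ and $m = \infty$ by reducing to a count of \emph{reduced} words and then invoking the cogrowth machinery of \cite{Pud14c}.

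\textbf{Upper bounds.} For $m \in \{1,\ldots,k\}$, since $|\crit(w)|\ge 1$ whenever $\pi(w)$ is finite, one has $b_{k,m}(t) \le \sum_{w\in\cpt^m(B_{d/2})}|\crit(w)|$, and Corollary \ref{cor:non-reduced-words_upper_bound} delivers the claimed upper bound. For $m = \infty$, I would apply the extended cogrowth formula (Theorem \ref{thm:cogrowth-formula-extended}) on the $2k$-regular Cayley tree of $\F_k$ to $f = \mathbbm{1}_{\pi(w)=\infty}$. By Theorem \ref{thm:prim-words-growth}, the number of \emph{reduced} primitives of length $t$ grows at rate $2k-3$, and since $2k-3 \ge \sqrt{2k-1}$ we compute $g(2k-3) = (2k-3) + \frac{2k-1}{2k-3} = 2k-2 + \frac{2}{2k-3}$, which is the target value (the case $k=2$ being handled by the corresponding result of \cite{rivin2004remark}).

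\textbf{Lower bounds.} The matching lower bounds I would obtain from the equality direction of the cogrowth formula, established alongside Theorem \ref{thm:cogrowth-formula-extended} in \cite{Pud14c}: for a nonnegative $f$ on $V(T_{2k})$ whose cogrowth rate $\lim \nu_f(t)^{1/t}$ equals $\alpha$, one has $\lim \beta_f(t)^{1/t} = g(\alpha)$. Taking $f = \mathbbm{1}_{\pi(w)=m}$ and using the reduced-word rate $\alpha = \max(\sqrt{2k-1},\,2m-1)$ supplied by Theorem \ref{thm:prim-rank-distr-on-reduced-words}, $g(\alpha)$ evaluates precisely to the claimed expression in both regimes $2m-1 \le \sqrt{2k-1}$ and $2m-1 \ge \sqrt{2k-1}$. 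Similarly, $f = \mathbbm{1}_{\pi(w)=\infty}$ together with Theorem \ref{thm:prim-words-growth} gives the lower bound in the primitive case.

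\textbf{Main obstacle.} The crux is the converse direction of the cogrowth formula, which goes beyond what is stated as Theorem \ref{thm:cogrowth-formula-extended}. Concretely one writes $b_{k,m}(t) = \sum_{\ell\equiv t\,(2)} c_{k,m}(\ell)\,q(t,\ell)$, where $q(t,\ell)$ counts walks of length $t$ from the root of $T_{2k}$ to a prescribed vertex at distance $\ell$, and optimizes $\ell$ against the large-deviation asymptotics of $q(t,\ell)$; this maximization reproduces the function $g$ and is the nontrivial input from \cite{Pud14c}. A minor additional subtlety is that Theorem \ref{thm:prim-rank-distr-on-reduced-words} formally supplies only a $\limsup$ for $c_{k,1}(t)$, but the remark following it shows the true $\lim$ exists for $m \ge 2$, and for $m = 1$ proper-powers of generators realize the lower bound along an arithmetic progression of lengths, which suffices once combined with the matching upper bound.
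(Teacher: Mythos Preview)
Your proposal is correct and closely mirrors the paper's proof. The one difference worth noting is in the regime $2m-1 \le \sqrt{2k-1}$: the paper bypasses the converse direction of the cogrowth formula entirely here by observing that once one exhibits a single reduced word of each parity with $\pi(w)=m$ (namely $x_1^{\,2}x_2^{\,2}\cdots x_m^{\,2}$ and $x_1^{\,3}x_2^{\,2}\cdots x_m^{\,2}$), the argument of Claim~\ref{claim:bound-for-m=00003D0} already yields the lower bound $2\sqrt{2k-1}$, since the number of length-$t$ words in $(X\cup X^{-1})^t$ reducing to any fixed reduced word grows at exactly this rate. This is more elementary than invoking the full equality case of the extended cogrowth formula, and it disposes of the $m=1$ parity issue you flag without further work. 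The sharper results from \cite{Pud14c} (beyond the inequality stated as Theorem~\ref{thm:cogrowth-formula-extended}) are genuinely needed, by both you and the paper, only when $2m-1 > \sqrt{2k-1}$ and for $m=\infty$.
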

This shows, in particular, that as in the case of reduced words, a
generic word in $\left(X\cup X^{-1}\right)^{t}$ is of primitivity
rank $k$, namely, the share of words with this property tends to
$1$ as $t\to\infty$. It also shows that for every $m$, the growth
rate of the number of words with primitivity rank $m$ is equal to
the growth rate of the larger quantity of $\sum_{w\in\left(X\cup X^{-1}\right)^{t}:\,\pi\left(w\right)=m}\left|\crit\left(w\right)\right|$.
\begin{proof}
For $m=0$ this is (the proof of) Claim \ref{claim:bound-for-m=00003D0}
(evidently, there are no odd-length words reducing to 1). For $1\leq m$
with $2m-1\leq\sqrt{2k-1}$ the same proof (as in Claim \ref{claim:bound-for-m=00003D0})
can be followed as long as we present at least one even-length and
one odd-length words with primitivity rank $m$. And indeed, as mentioned
above (and see \cite[Lemma 6.8]{Pud14a}), $\pi\left(x_{1}^{\,2}x_{2}^{\,2}\ldots x_{m}^{\,2}\right)=\pi\left(x_{1}^{\,3}x_{2}^{\,2}\ldots x_{m}^{\,2}\right)=m$.
If $2m-1>\sqrt{2k-1}$, the statement follows from the statements
on reduced words (Theorems \ref{thm:prim-rank-distr-on-reduced-words}
and \ref{thm:prim-words-growth}) and an application of the extended
cogrowth formula \cite{Pud15+} (here a bit more elaborated results
from \cite{Pud15+}, not mentioned in Theorem \ref{thm:cogrowth-formula-extended},
are required).
\end{proof}
The statements of the last theorem are summarized in Table \ref{tab:Primitivity-Rank-and-fixed-points}.

\section{Open Questions}

We end with some open problems that suggest themselves from this paper:
\begin{itemize}
\item Can one obtain a better control over the error term in Theorem \ref{thm:avg_fixed_pts}?
This would probably require not ignoring the alternating signs in
\eqref{a_s-equality}. As explained in Section \subref{the-gap},
this may be the seed to closing the gap in the result of Theorem \thmref{2sqrt(d-1)+1}.
\item Is it possible to generalize the techniques in this paper (and even
more so the ones from \cite{PP15}) to odd values of $d$? (See Remark
\ref{remark:Odd-ds}).
\item Can one obtain the accurate exponential growth rate of the number
of not-necessarily-reduced words with a given primitivity rank in
a general base graph $\Omega$, thus improving the statements of Theorems
\ref{thm:bound-for-m-in-general-Omega} and \thmref{sqrt3-times-rho}?
This may require some sort of clever extension of the cogrowth formula
that applies to non-regular graphs (there have been a few attempts
in this aim, see e.g.~\cite{Bar99,Nor04,AFH07}, but see limitations
in \cite{Pud15+}).
\item Several classic results from the theory of expansion in graphs were
generalized lately to simplicial complexes of dimension greater than
one (see e.g.~\cite{GW12,PRT12,Lub13}). In particular, a parallel
of Alon-Boppana Theorem is presented in \cite{PR12}. Is there a parallel
to Alon's conjecture in this case? Can the methods of the current
paper be extended to higher dimensions?
\end{itemize}

\section*{Acknowledgments}

We would like to thank Nati Linial and Ori Parzanchevski for their
valuable suggestions and useful comments. We would also like to thank
Miklós Abért, Noga Alon, Itai Benjamini, Ron Rosenthal and Nick Wormald
for their beneficial comments.

\section*{Late Remark\label{sec:Late-Remark}}

Slightly over a year after this manuscript was written and submitted,
Friedman and Kohler wrote \cite{friedman2014relativized}, where they
prove an asymptotic probabilistic upper bound of $2\sqrt{d-1}+\varepsilon$
for $\lambda\left(\Gamma\right)$, where $\Gamma$ is a random covering
of an arbitrary \emph{$d$-regular} base graph $\Omega$. They improve
Friedman's former techniques from \cite{Fri08} to apply to this more
general case. This bound is tight and improves on the statements from
Theorem \ref{thm:base-d-regular} and Corollary \ref{cor:bipartite}
in the current paper. It is claimed in \cite{friedman2014relativized}
that at present, they are unable to make their techniques apply to
the most general case of an arbitrary (not necessarily regular) base
graph $\Omega$.

\section*{Appendices}

\begin{appendices}

\section{Contiguity and Related Models of Random Graphs\label{sec:contiguity}}

\subsection*{Random $d$-regular graphs }

In this paper, the statement of Theorem \thmref{2sqrt(d-1)+1} is
first proved for the permutation model of random $d$-regular graphs
with $d$ even. We then derive Theorem \thmref{2sqrt(d-1)+1}, stated
for the uniform distribution on all $d$-regular \emph{simple }graphs
on $n$ vertices with $d$ even or odd, using results of Wormald \cite{Wor99}
and Greenhill et al.~\cite{GJKW02}. These works show the \emph{contiguity
}(see footnote \vpageref{fn:contiguity}) of different models of random
regular graphs.

In particular, they describe the following model: consider $dn$ labeled
points, with $d$ points in each of $n$ buckets, and take a random
perfect matching of the points. Letting the buckets be vertices and
each pair represent an edge, one obtains a random $d$-regular graph.
This model is denoted ${\cal G}_{n,d}^{*}$. It is shown \cite[Theorem 1.3]{GJKW02}
that ${\cal G}_{n,d}^{*}$ is contiguous to the permutation model
${\cal P}_{n,d}$ (for $d$ even). If $\Gamma$ is a random $d$-regular
graph in ${\cal G}_{n,d}^{*}$, the event that $\Gamma$ is a simple
graph (with no loops nor multiple edges) has positive probability,
bounded away from 0. Moreover, within this event, simple graphs are
distributed uniformly%
\footnote{To be precise, \emph{vertex-labeled} simple graphs are distributed
uniformly in this event. Unlabeled simple graphs have probability
proportional to the order of their automorphism group. Then again,
for $d\geq3$, this group is a.a.s.~trivial, so the result of Theorem
\ref{thm:2sqrt(d-1)+1} applies both to the uniform model of labeled
graph and to the uniform model of unlabeled graphs.%
}. Thus, for even values of $d$, Theorem \thmref{2sqrt(d-1)+1} follows
from the corresponding result for the permutation model. The derivation
of the odd case also uses contiguity results, as explained in Section
\subref{From-even-to-odd}.

\subsection*{Random $d$-regular bipartite graphs}

As an immediate corollary from Theorem \ref{thm:base-d-regular} we
deduced that a random $d$-regular bipartite graph is {}``nearly
Ramanujan'' in the sense that besides its two trivial eigenvalues
$\pm d$, all other eigenvalues are at most $2\sqrt{d-1}+0.84$ in
absolute value a.a.s.~(Corollary \ref{cor:bipartite}). Our proof
works in the model $C_{n,\Omega}$ (here $\Omega$ is the graph with
$2$ vertices and $d$ parallel edges connecting them). However, by
the results of \cite{Ben74}, the probability that our graph has no
multiple edges is bounded away from zero (asymptotically it is $e^{-\binom{d}{2}}$).
Thus, our result applies also to the model of $d$ random \emph{disjoint
}perfect matchings between two sets of $n$ vertices. This model,
in turn, is contiguous to the uniform model of bipartite (vertex-labeled)
$d$-regular simple graphs (for $d\geq3$: see \cite[Section 4]{MRRW97}%
\footnote{In fact, there is an explicit proof there only for $d=3$. To derive
the general case, one can show that a random $\left(d+1\right)$-regular
graph is contiguous to a random $d$-regular bipartite graph plus
one edge-disjoint random matching (following, e.g., the computations
in \cite{BM86}). We would like to thank Nick Wormald for helpful
private communications surrounding this point.%
}), so our result applies in the latter model as well.

\subsection*{Random coverings of a fixed graph}

In Theorem \ref{thm:sqrt3-times-rho} we consider random $n$-coverings
of a fixed graph $\Omega$ in the model ${\cal C}_{n,\Omega}$, where
a uniform random permutation is generated for every edge of $\Omega$.
An equivalent model is attained if we cover some spanning tree of
$\Omega$ by $n$ disjoint copies and then choose a random permutation
for every edge outside the tree (that is, the same automorphism-types
of non-labeled graphs are obtained with the same distribution). In
fact, picking a basepoint $\otimes\in V\left(\Omega\right)$, there
is yet another description for this model: The classification of $n$-sheeted
coverings of $\Omega$ by the action of $\pi_{1}\left(\Omega,\otimes\right)$
on the fiber $\left\{ \otimes\right\} \times\left[n\right]$ above
$\otimes$ shows that ${\cal C}_{n,\Omega}$ is equivalent to choosing
uniformly at random an action of the free group $\pi_{1}\left(\Omega,\otimes\right)$
on $\left\{ \otimes\right\} \times\left[n\right]$. 

A different but related model uses the classification of \emph{connected},
pointed coverings of $\left(\Omega,\otimes\right)$ by the corresponding
subgroups of $\pi_{1}\left(\Omega,\otimes\right)$. A random $n$-covering
is thus generated by choosing a random subgroup of index $n$. However,
it seems that this model is contiguous to ${\cal C}_{n,\Omega}$ if
$\rk\left(\Omega\right)\ge2$ (note that the random covering $\Gamma$
in ${\cal C}_{n,\Omega}$ is a.a.s.~connected provided that $\mathrm{rk\left(\Omega\right)\geq2}$).
Indeed, the only difference is that in the new model, the probability
of every connected graph $\Gamma$ from ${\cal C}_{n,\Omega}$ is
proportional to $\frac{1}{\left|\mathrm{Aut}\left(\Gamma\right)\right|}$.
When $\mathrm{rk\left(\Omega\right)\geq2}$, it seems that a.a.s.~$\left|\mathrm{Aut}\left(\Gamma\right)\right|=1$,
which would show that our result applies to this model as well.

Finally, there is another natural model that comes to mind: given
a periodic infinite tree, namely a tree that covers some finite graph,
one can consider a random (simple) graph $\Gamma$ with $n$ vertices
covered by this tree (with uniform distribution among all such graphs
with $n$ vertices, for suitable $n$'s only). One can then analyze
$\lambda\left(\Gamma\right)$, the largest absolute value of an eigenvalue
besides%
\footnote{Leighton showed that two finite graphs with a common covering share
also some common finite covering \cite{leighton1982finite}. It follows
that all finite quotients of the same tree share the same Perron-Frobenius
eigenvalue.%
} $\pf\left(\Gamma\right)$. (This generalizes the uniform model on
$d$-regular graphs.) Occasionally, all the quotients of some given
periodic tree $ $$T$ cover the same finite {}``minimal'' graph
$\Omega$. Interestingly, Lubotzky and Nagnibeda \cite{LN98} showed
that there exist such $T$'s with a minimal quotient $\Omega$ which
is \emph{not }Ramanujan (in the sense that $\lambda\left(\Omega\right)$
is strictly larger than $\rho\left(T\right)$, the spectral radius
of $T$). Since all the quotients of $T$ inherit the eigenvalues
of $\Omega$, their $\lambda\left(\cdot\right)$ is also bounded away
from $\rho\left(T\right)$ (from above). Hence, the corresponding
version of Conjecture \ref{conj:friedman} is false in this general
setting.

\section{Spectral Expansion of Non-Regular Graphs\label{sec:Operators-on-Non-Regular}}

In this section we provide some background on the theory of expansion
of irregular graphs, describing how spectral expansion is related
to other measurements of expansion (combinatorial expansion, random
walks and mixing). This further motivates the claim that Theorem \ref{thm:sqrt3-times-rho}
shows that if the base graph $\Omega$ is a good (nearly optimal)
expander, then a.a.s.~so are its random coverings. We would like
to thank Ori Parzanchevski for his valuable assistance in writing
this appendix.

The spectral expansion of a (non-regular) graph $\Gamma$ on $m$
vertices is measured by some function on its spectrum, and most commonly
by the \emph{spectral gap}: the difference\emph{ }between the largest
eigenvalue and the second largest. As mentioned above, it is not apriori
clear which operator best describes in spectral terms the properties
of the graph. There are three main candidates (see, e.g.~\cite{GW12}),
all of which are bounded%
\footnote{All operators considered here are bounded provided that the degree
of vertices in $\Gamma$ is bounded. This is the case in all the graphs
considered in this paper.%
}, self-adjoint operators and so have real spectrum:
\begin{enumerate}
\item The\emph{ adjacency operator} $A_{\Gamma}$ on $\left(\ell^{2}\left(V\left(\Gamma\right)\right),1\right)$%
\footnote{Here, $\left(\ell^{2}\left(V\left(\Gamma\right)\right),1\right)$
stands for $\ell^{2}$-functions on the set of vertices $V\left(\Gamma\right)$
with the standard inner product: $\left\langle f,g\right\rangle =\sum_{v}f\left(v\right)\overline{g\left(v\right)}$;
In the summation $\sum_{w\sim v}$, each vertex $w$ is repeated with
multiplicity equal to the number of edges between $v$ and $w$.%
}: 
\[
(A_{\Gamma}f)(v)=\sum_{w\sim v}f\left(w\right)
\]
If $\Gamma$ is finite this operator is represented in the standard
basis by the adjacency matrix, and its spectral radius is the \emph{Perron-Frobenius}
eigenvalue $\pf\left(\Gamma\right)$. The spectrum in this case is
\[
\pf\left(\Gamma\right)=\lambda_{1}\geq\lambda_{2}\geq\ldots\geq\lambda_{m}\geq-\pf\left(\Gamma\right),
\]
and the spectral gap is $\pf\left(\Gamma\right)-\lambda\left(\Gamma\right)$,
where $\lambda\left(\Gamma\right)=\max\left\{ \lambda_{2},-\lambda_{n}\right\} $%
\footnote{Occasionally, the spectral gap is taken to be $\pf\left(\Gamma\right)-\lambda_{2}\left(\Gamma\right)$.%
}. The spectrum of $A_{\Gamma}$ was studied in various works, for
instance \cite{Gre95,LN98,Fri03,LP10}.
\item The \emph{averaging Markov operator }$M_{\Gamma}$ on $\left(\ell^{2}\left(V\left(\Gamma\right)\right),\deg\left(\cdot\right)\right)$%
\footnote{Here, $\left(\ell^{2}\left(V\left(\Gamma\right)\right),\deg\left(\cdot\right)\right)$
stands for $l^{2}$-functions on the set of vertices $V\left(\Gamma\right)$
with the inner product: $\left\langle f,g\right\rangle =\sum_{v}f\left(v\right)\overline{g\left(v\right)}\deg\left(v\right)$.%
}:
\[
(M_{\Gamma}f)(v)=\frac{1}{\deg\left(v\right)}\sum_{w\sim v}f\left(w\right)
\]
This operator is given by $D_{\Gamma}^{-1}A_{\Gamma}$, and its spectrum
is contained in $\left[-1,1\right]$. The eigenvalue $1$ corresponds
to locally-constant functions when $\Gamma$ is finite, and in this
case the spectrum is
\[
1=\mu_{1}\geq\mu_{2}\geq\ldots\geq\mu_{m}\geq-1.
\]
The spectral gap is then $1-\mu\left(\Gamma\right)$ here $\mu\left(\Gamma\right)=\max\left\{ \mu_{2},-\mu_{m}\right\} $.
Up to a possible affine transformation, the spectrum of $M_{\Gamma}$
is the same as the spectrum of the simple \emph{random walk operator}
($A_{\Gamma}D_{\Gamma}^{-1}$) or of one of the \emph{normalized Laplacian
}operators\emph{ }($I-A_{\Gamma}D_{\Gamma}^{-1}$ or $I-D_{\Gamma}^{-1/2}A_{\Gamma}D_{\Gamma}^{-1/2}$).
This spectrum is considered for example in \cite{Sin93,Chu97,GZ99}.
\item The \emph{Laplacian operator }$\Delta_{\Gamma}^{+}$ on $\left(\ell^{2}\left(V\left(\Gamma\right)\right),1\right)$:
\[
\left(\Delta_{\Gamma}^{+}f\right)\left(v\right)=\deg\left(v\right)f\left(v\right)-\sum_{w\sim v}f\left(w\right)
\]
The Laplacian equals $D_{\Gamma}-A_{\Gamma}$, where $D_{\Gamma}$
is the diagonal operator $\left(D_{\Gamma}f\right)\left(v\right)=\deg\left(v\right)\cdot f\left(v\right)$.
The entire spectrum is non-negative, with $0$ corresponding to locally-constant
functions when $\Gamma$ is finite. In the finite case, the spectrum
is 
\[
0=\nu_{1}\leq\nu_{2}\leq\ldots\leq\nu_{m},
\]
the spectral gap being $\nu_{2}-\nu_{1}=\nu_{2}$. The Laplacian operator
is studied e.g. in \cite{AM85}.
\end{enumerate}
For a regular graph $\Gamma$, all different operators are identical
up to an affine shift. However, in the general case there is no direct
connection between the three different spectra. In this paper we consider
the spectra of $A_{\Gamma}$ and of $M_{\Gamma}$. At this point we
do not know how to extend our results to the Laplacian operator $\Delta_{\Gamma}^{+}$. 

The spectrum of all three operators is closely related to different
notions of expansion in graphs. The adjacency operator, for example,
has the following version of \emph{the expander} \emph{mixing lemma}:
for every two subsets $S,T\subseteq V\left(\Gamma\right)$ (not necessarily
disjoint), one has
\[
\left|E\left(S,T\right)-\pf\left(\Gamma\right)\mathrm{vol}_{\pf}\left(S\right)\mathrm{vol}_{\pf}\left(T\right)\right|\leq\lambda\left(\Gamma\right)\frac{\sqrt{\left|S\right|\cdot\left|T\right|}}{m},
\]
where $\mathrm{vol}_{\pf}\left(S\right)=\left\langle \mathfrak{1}_{S},f_{\pf}\left(\Gamma\right)\right\rangle $
and $f_{\pf}\left(\Gamma\right)$ is the (normalized) Perron-Frobenius
eigenfunction. This is particularly useful in the ${\cal C}_{n,\Omega}$
model since the $f_{\pf}\left(\Gamma\right)$ is easily obtained from
the Perron-Frobenius eigenfunction of $\Omega$ by 
\[
f_{\pf}\left(\Gamma\right)=\frac{1}{\sqrt{n}}f_{\pf}\left(\Omega\right)\circ\pi.
\]
In the $d$-regular case, this amounts to the usual mixing lemma:
$\left|E\left(S,T\right)-d\frac{\left|S\right|\cdot\left|T\right|}{m}\right|\leq\lambda\left(\Gamma\right)\sqrt{\left|S\right|\cdot\left|T\right|}$.
If one takes $T=V\setminus S$, one can attain a bound on the Cheeger
constant of $\Gamma$ (see (\ref{eq:cheeger-def})).

As for the averaging Markov operator, it is standard that $\mu\left(\Gamma\right)$
controls the speed in which a random walk converges to the stationary
distribution. In addition, if one defines $\mathrm{deg}\left(S\right)$
to denote the sum of degrees of the vertices in $S$, then 
\[
\left|E\left(S,T\right)-\frac{\mathrm{deg}\left(S\right)\mathrm{deg}\left(T\right)}{2\left|E\left(\Gamma\right)\right|}\right|\leq\mu\left(\Gamma\right)\sqrt{\mathrm{deg}\left(S\right)\mathrm{deg}\left(T\right)}.
\]
Moreover, consider the \emph{conductance} of $\Gamma$
\[
\phi\left(\Gamma\right)=\min_{{\emptyset\ne S\subseteq V\atop \mathrm{deg}\left(S\right)\leq\frac{\mathrm{deg}\left(V\right)}{2}}}\frac{\left|E\left(S,V\setminus S\right)\right|}{\mathrm{deg}\left(S\right)}.
\]
Then the following version of the Cheeger inequality holds \cite[Lemmas 2.4, 2.6]{Sin93}:
\[
\frac{\phi^{2}\left(\Gamma\right)}{2}\leq1-\mu_{2}\leq2\phi\left(\Gamma\right).
\]

Finally, the spectrum of the Laplacian operator is related to the
standard \emph{Cheeger Constant} of $\Gamma$, defined as 
\begin{equation}
h\left(\Gamma\right)=\min_{{\emptyset\ne S\subseteq V\atop \left|S\right|\leq\frac{\left|V\right|}{2}}}\frac{\left|E\left(S,V\setminus S\right)\right|}{\left|S\right|}.\label{eq:cheeger-def}
\end{equation}
By the so-called {}``discrete Cheeger inequality'' \cite{AM85}:
\[
\frac{h^{2}\left(\Gamma\right)}{2k}\leq\nu_{2}\leq2h\left(\Gamma\right)
\]
with $k$ being the largest degree of a vertex. In addition, one has
a variation on the mixing lemma for $\Delta_{\Gamma}^{+}$ as well
\cite[Thm 1.4]{PRT12}.

\end{appendices}

\bibliographystyle{amsalpha} \bibliographystyle{amsalpha}
\bibliography{united,self}

\end{document}